\numberwithin{equation}{section}
\newtheorem{theorem}{Theorem}[section]
\newtheorem{definition}{Definition}[section]
\newtheorem{conjecture}{Conjecture}[section]
\newtheorem{proposition}{Proposition}[section]
\newtheorem{lemma}{Lemma}[section]
\newtheorem{corollary}{Corollary}[section]
\newtheorem{remark}{Remark}[section]
\DeclareMathOperator{\diam}{\mathrm{diam}}
\DeclareMathOperator{\riem}{\mathrm{Rm}}
\DeclareMathOperator{\ric}{\mathrm{Ric}}
\DeclareMathOperator{\scal}{\mathrm{Scal}}
\DeclareMathOperator{\hess}{\mathrm{Hess}}
\DeclareMathOperator{\vol}{\mathrm{vol}}
\newcommand{\img}{\mathrm{Im}}
\title[Diameter bounds in 3d Type I Ricci flows]{Diameter bounds in 3d Type I Ricci flows}
\author[]{Panagiotis Gianniotis}
\address{Department of Mathematics\\
National and Kapodistrian University of Athens}
\email{pgianniotis@math.uoa.gr}
\begin{document}

\begin{abstract}
We prove that a three dimensional compact Ricci flow that encounters a Type I singularity has uniformly bounded diameter up to the singular time, thus giving an affirmative answer - for Type I singularities - to a conjecture of Perelman. To achieve this, we introduce a concept of a neck-region for a Ricci flow, analogous to the neck-regions introduced by Jiang--Naber and Cheeger--Jiang--Naber, in the study of Ricci limit spaces. We then prove that the associated packing measure is, in a certain sense, Ahlfors regular, a result that holds in any dimension.
\end{abstract}

\maketitle
\tableofcontents

\section{Introduction}
A Ricci flow $g(t)$, $t\in [0,T]$ on a compact Riemannian manifold $M$ is a smooth one parameter family of smooth Riemannian metrics on $M$ such that 
\begin{equation*}
\frac{\partial g}{\partial t}= -2\ric_{g(t)},
\end{equation*}
where $\ric_{g(t)}$ is the Ricci tensor of $g(t)$. The Ricci flow has proven to be a powerful tool in the study of the interaction between Geometry and Topology, most notably after the proof of Thurston's Geometrization Conjecture of $3$-manifolds, by Perelman in \cite{Per02,Per03a,Per03b}. 

Understanding the singularity formation of Ricci flow is a crucial step towards its potential applications. By now there is a quite complete understanding of the behaviour of Ricci flow in dimension three, starting from the work of Hamilton \cite{Ham82,Ham93} and Perelman \cite{Per02,Per03a,Per03b}, until the work of Kleiner--Lott \cite{KL17,KL20} and Bamler--Kleiner \cite{BamK22}, which allows the construction of a unique 3d Ricci flow through singularities.

From the work of Perelman, high curvature regions of a 3d (orientable) Ricci flow either resemble tubes, diffeomorphic to $\mathbb S^2\times \mathbb R$ or tubes capped on one of their ends. Subsequently, at the singular time, these regions pinch into horns, namely regions diffeomorphic to $\mathbb S^2\times \mathbb R$ whose curvature becomes unbounded at least at one of their ends. Although the local, at the curvature scale, geometry of all these regions resembles that of a long piece of the round cylinder $\mathbb S^2\times\mathbb R$ their global geometry is less understood. In fact, Perelman in Section 13.2 of \cite{Per02} states that ``the horns most likely have finite diameter, but at the moment I don't have a proof of that".

The main result of this paper confirms this conjecture for 3d Ricci flows that develop Type I singularities, namely their curvature tensor satisfies \eqref{eqn:intro_type_i}.

\begin{theorem}\label{thm:intro_bounded_diameter}
Let $M$ be a compact three dimensional manifold and $g(t)$, $t\in [0,T)$ be a smooth Ricci flow on $M$, which may become singular at $t=T$. Suppose that there is a constant $C_I<+\infty$ such that
\begin{equation}\label{eqn:intro_type_i}
|\riem|\leq \frac{C_I}{T-t}
\end{equation}
on $M\times [0,T)$. Then, there is a constant $C=C(g(0))<+\infty$ such that
\begin{align*}
\diam_{g(t)}(M) \leq C,\\
\int_M |\riem|(\cdot,t) d\vol_{g(t)} \leq C,
\end{align*}
for every $t\in [0,T)$.
\end{theorem}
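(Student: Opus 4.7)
The plan is to combine Perelman's canonical neighbourhood theorem for 3d Ricci flows with a packing-measure and Ahlfors regularity argument for neck regions, adapted to the parabolic setting from the techniques of Jiang--Naber and Cheeger--Jiang--Naber. Both conclusions of the theorem will be reduced to controlling the length of the neck portion of $M$ at each time $t$ close to $T$.

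I would first perform this reduction. Fix small $\delta>0$ and let $A_\delta(t)\subset M$ be the set where $|\riem|\leq \delta/(T-t)$. Combining the Type I hypothesis with $\kappa$-noncollapsing and Shi-type local gradient estimates, $A_\delta(t)$ has uniformly bounded diameter. The complement is, by the canonical neighbourhood theorem in 3d, covered by $\eps$-necks of scale comparable to $\sqrt{T-t}$ together with finitely many $\eps$-caps. Each cap contributes a uniformly bounded amount to the diameter and to $\int|\riem|d\vol$, while on each neck of scale $r$ one has $|\riem|\sim r^{-2}$ and $\vol\sim r^{3}$ per unit length, so $\int_{\tm{neck}}|\riem|d\vol$ and the length of the neck are comparable. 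Hence it suffices to bound the total length of the neck portion uniformly in $t$.

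To that end I would define a neck-region $\mathcal{N}$, in analogy with Cheeger--Jiang--Naber, as a collection of center-scale pairs $\{(x_i,r_i)\}$ with the balls $B_{r_i/2}(x_i)$ pairwise disjoint and each $B_{r_i}(x_i)$ an $\eps$-neck at time $t$, together with the associated packing measure $\mu=\sum_i r_i^{n-2}\delta_{x_i}$. The heart of the argument is the Ahlfors regularity $\mu(B_s(x))\asymp s^{n-2}$ for $x$ in the support and $s$ in the admissible scale range. The lower bound is immediate from the packing condition; the upper bound is the main analytic step. I would prove it by a Vitali-type scale propagation, using the defining neck estimates (closeness to the shrinking cylinder) to transfer approximate cylindrical symmetry across scales, in analogy with the quantitative cone-splitting used in the Ricci-limit setting.

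In dimension three, $n-2=1$, so $\mu$ is $1$-Ahlfors regular and the total mass $\mu(\mathcal{N})$ controls the length of the $1$-dimensional spine of the neck region, and hence its diameter at time $t$. To bound $\mu(\mathcal{N})$ in terms of $g(0)$ I would pull the packing back by the flow to an earlier time where the geometry is bounded, using Perelman's pseudolocality or a monotone Ricci flow quantity (reduced volume, $\mathcal{W}$-entropy) to propagate the disjointness and neck estimates, and then use boundedness of $g(0)$ to cap the total mass. The main obstacle is clearly the Ahlfors upper bound itself: in the Ricci-limit literature it rests on a static cone-splitting machinery, and the parabolic replacement needed here, where the model is a shrinking cylinder and the ambient geometry evolves in $t$, requires a notion of neck that is simultaneously scale invariant, stable under small perturbations, and compatible enough with the time-dependence to admit an iterative argument across scales.
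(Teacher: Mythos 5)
Your proposal correctly identifies the shape of the argument (reduce to bounding the total neck length, view the necks as a packing, prove Ahlfors regularity of the packing measure), but it leaves the central technical step — the Ahlfors upper bound — unresolved, and you explicitly acknowledge this. The paper's main contribution is precisely the machinery that closes this gap, and it differs from your sketch in a way that matters.

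The key difference is that you work at a \emph{fixed} time slice: your packing balls are $B_{g(t)}(x_i,r_i)$, and you propose to ``pull the packing back by the flow'' to time $0$ via pseudolocality. The paper instead makes the neck region an intrinsically \emph{parabolic} object: the ball at scale $r$ centered at $x$ is $\tilde B_\rho(x,r)=B(x,-r^2,\rho r)$, i.e.\ measured with $g(-r^2)$, and ``distance'' is the $R$-scale $D_R(x,y)$, which is shown (Proposition \ref{prop:D_metric}) to satisfy only an approximate triangle inequality with constant $\rho/(\rho-K)\to 1$. This is essential because the metrics $g(t)$ for $t$ near $T$ have no a priori Gromov--Hausdorff control, so a fixed-time packing has no stable multiscale structure to iterate on. Working at one time and then trying to transport the estimate backwards is exactly the obstruction the paper is designed around; pseudolocality does not preserve disjointness of the covering balls across long time intervals. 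Once $D_R$ is in place, the Ahlfors upper bound is obtained not by a Vitali-type cone-splitting at time $t$, but by constructing a \emph{parabolic} $(k,\delta)$-splitting map $v$ (a solution of the heat equation whose Hessian is small in a conjugate-heat-kernel-weighted $L^2$ sense), controlling its small-scale nondegeneration via an entropy-pinching sum (Theorems \ref{thm:sharp_splitting}, \ref{thm:non_degen}), establishing a bi-Lipschitz estimate $|\tilde v(x)-\tilde v(y)|\approx \sigma D_\sigma(x,y)$ on a large sub-collection $C_\eps$ of centers (Proposition \ref{prop:bilip}), and then running an induction on the depth of the neck region (Theorem \ref{thm:neck_structure}). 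Your sketch also asserts without justification that the low-curvature set $A_\delta(t)$ has uniformly bounded diameter; this is not a consequence of non-collapsing and Shi estimates alone, and the paper avoids it by covering the whole manifold directly (Theorem \ref{thm:neck_decomp}) rather than first isolating a ``good'' region. Finally, the passage from the packing bound to $\int_M|\riem|\,d\vol\le C$ uses a non-inflating volume estimate (Corollary \ref{cor:L1_bound_Q}) rather than a pointwise comparison of curvature and volume on each neck, and the diameter bound then follows from Topping's theorem.
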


Clearly, one can hope that Theorem \ref{thm:intro_bounded_diameter} remains valid in higher dimensions as well.
\begin{conjecture}\label{conj}
Let $M$ be a compact manifold and $g(t)$, $t\in [0,T)$ be a smooth Ricci flow on $M$, which may become singular at $t=T$. Then, possibly under \eqref{eqn:intro_type_i}, the diameter of $g(t)$ is uniformly bounded for every $0\leq t<T$.
\end{conjecture}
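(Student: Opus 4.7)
\emph{Step 1: reduction to a neck-length bound.} I would argue by contradiction, assuming a sequence $t_k \to T$ along which $\diam_{g(t_k)}(M)$ or $\int_M |\riem|(\cdot, t_k)\, d\vol_{g(t_k)}$ diverges. Since the flow is three-dimensional and satisfies \eqref{eqn:intro_type_i}, Perelman's canonical neighbourhood theorem (combined with Hamilton--Ivey pinching and the $\kappa$-non-collapsing that follows from the Type~I hypothesis) produces a canonical scale $r_0 = r_0(g(0)) > 0$ such that every point with $|\riem| \ge r_0^{-2}$ lies at the centre of an $\eps$-neck or of an $\eps$-cap. The low-curvature region $\{|\riem| < r_0^{-2}\}$ is $C^{2}$-controlled at scale $r_0$, and a standard ball-counting argument via $\kappa$-non-collapsing bounds its diameter and volume in terms of $g(0)$. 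Blow-up of $\diam_{g(t)}(M)$ or of $\int |\riem|\, d\vol$ must therefore come from an unbounded total length of chains of $\eps$-necks.

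\emph{Step 2: a Ricci flow neck region.} Following the strategy announced in the abstract, at each time $t < T$ I introduce a neck region $\mathcal{N}_t$: a maximal collection of disjoint parabolic balls $\{P(x_i, r_i, r_i^2)\}_{i\in I}$ centred at spacetime points $(x_i, t)$, each of which is $\eps$-close -- in the smooth Cheeger--Gromov sense, or in Bamler's $\mathbb F$-sense -- to a piece of the round shrinking cylinder $\mathbb S^2 \times \mathbb R$, with scales $r_i \simeq |\riem|^{-1/2}(x_i, t) \le r_0$. To this region I attach the packing measure
\[
\mu_t \;=\; \sum_{i\in I} r_i\, \delta_{x_i}.
\]
Since an $\eps$-neck of scale $r_i$ has length of order $\eps^{-1} r_i$ along its $\mathbb R$-factor, and since the union of the neck balls covers the high-curvature region, the total mass $\mu_t(M)$ is, up to universal constants, the total $1$-dimensional length of the neck chains at time $t$; similarly $\int_M |\riem|\, d\vol_{g(t)}$ is comparable to $\mu_t(M)$ plus a bounded cap contribution. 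Step~1 then reduces the theorem to the uniform bound $\mu_t(M) \le C(g(0))$.

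\emph{Step 3: Ahlfors regularity in any dimension.} The technical core of the argument, valid in all dimensions, is to show that $\mu_t$ is Ahlfors regular at the upper end:
\[
\mu_t\bigl(B_{g(t)}(x, r)\bigr) \;\le\; C\, r, \qquad x \in M,\; 0 < r \le r_0.
\]
To prove this I would adapt the neck-decomposition scheme of Jiang--Naber and Cheeger--Jiang--Naber, replacing their static monotone quantity by a Ricci-flow counterpart: Perelman's reduced volume $\widetilde V_{(x_0, t_0)}$ based at a well-chosen spacetime point just to the future of the ball. Effective cylindrical rigidity of gradient shrinking solitons produces, at each neck ball $P(x_i, r_i, r_i^2)$, a definite drop $\eta(\eps) > 0$ of $\widetilde V$ below the Euclidean value. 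When the neck balls are disjointed at their own scales and approximately aligned along the $1$-dimensional splitting axis, these drops aggregate almost additively, giving
\[
\sum_{x_i \in B_{g(t)}(x, r)} \eta(\eps)\, \frac{r_i}{r} \;\le\; \widetilde V_{\max} - \widetilde V_{(x_0, t_0)}(r^2) \;\le\; C,
\]
which, after rescaling, is the Ahlfors bound. A scale-by-scale tree refinement in the spirit of Jiang--Naber promotes this first-generation bound to the full packing measure.

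\emph{Step 4: conclusion and the main obstacle.} Because $g(0)$ is smooth with bounded geometry, an easy cover at scale $r_0$ together with the monotonicity of the reduced volume bounds the $\mu_t$-mass concentrated at scales comparable to $r_0$; Ahlfors regularity then propagates this bound through all smaller scales to give $\mu_t(M) \le C(g(0))$, completing both the diameter and the $L^{1}$-curvature estimates via Step~2. I expect the main obstacle to lie in Step~3: adapting the static neck-region technology to a parabolic, spacetime setting demands a notion of parabolic neck that is stable under the Ricci flow and compatible with Bamler's $\mathbb F$-convergence, together with a quantitative cylindrical rigidity of shrinking solitons uniform enough to yield additive reduced-volume drops across disjoint necks. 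The additivity across scales, and the compatibility of the packing argument with possible changes in the approximate neck axis at different resolutions, seems to be the most delicate ingredient.
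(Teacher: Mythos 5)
The item you are addressing is Conjecture~\ref{conj}, which the paper explicitly leaves open: it concerns general dimension, and even in dimension three the paper only resolves the Type~I case (Theorem~\ref{thm:intro_bounded_diameter}). There is therefore no proof in the paper to compare against for the full conjecture. Your proposal is in fact aimed at the three-dimensional Type~I theorem, so I will compare it with the paper's proof of Theorem~\ref{thm:intro_bounded_diameter}.

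\textbf{Where the proposal agrees with the paper.} Steps~1 and~2 capture the right macroscopic picture: use canonical neighbourhoods and $\kappa$-non-collapsing to control the low-curvature part at a canonical scale, define a packing measure supported on centers of cylindrical regions, and reduce the diameter and $L^1$-curvature bounds (via Topping's estimate) to uniform Ahlfors regularity of this measure. This is precisely the architecture of Section~4 and Section~5 of the paper, culminating in Theorem~\ref{thm:neck_structure}, Theorem~\ref{thm:neck_decomp} and Corollary~\ref{cor:L1_bound_Q}.

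\textbf{The genuine gap is in Step~3.} Your mechanism for Ahlfors regularity is that each $\eps$-neck ball $P(x_i,r_i,r_i^2)$ forces a definite drop $\eta(\eps)>0$ of reduced volume below the Euclidean value, and that disjoint necks ``aggregate almost additively'' along a $1$-dimensional axis to give
\[
\sum_{x_i\in B_{g(t)}(x,r)}\eta(\eps)\,\frac{r_i}{r}\;\le\;C.
\]
This inequality does not follow from monotonicity. The quantity $\widetilde V_{\max}-\widetilde V_{(x_0,t_0)}(r^2)$ bounds the total \emph{drop in scale} of a single monotone quantity based at a single point, not a weighted sum of per-ball drops at a fixed time. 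A ball $B(x,r)$ can contain arbitrarily many disjoint neck balls at a smaller scale, and the reduced volume at scale $r$ sees only one cylindrical drop, not one per neck. Your argument silently smuggles in the conclusion: the hypothesis that the necks are ``approximately aligned along the $1$-dimensional splitting axis'' is exactly the Reifenberg-type $1$-rectifiability that the Ahlfors bound is supposed to establish. Without an independent proof that the cylinder axis does not rotate or wander as scale decreases, no version of this additivity holds, and indeed this is the precise difficulty Jiang--Naber and Cheeger--Jiang--Naber had to overcome.

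\textbf{What the paper does instead.} The paper does \emph{not} try to make entropy drops additive across disjoint balls. Instead, it sums pinched entropy over \emph{scales} around a single point (Definition~\ref{def:entropy_pinching}) and uses this sum to control the Hessian of a \emph{parabolic almost splitting map} solving the heat equation (Theorem~\ref{thm:sharp_splitting}), and then the non-degeneration Theorem~\ref{thm:non_degen} keeps that splitting map an almost splitting map at all smaller scales. The bi-Lipschitz estimate (Proposition~\ref{prop:bilip}) is proved for this splitting map with respect to the new ``$R$-scale'' pseudo-distance $D_R$ of Definition~\ref{def:r_scale}, which is needed because the evolving metrics $d_{g(t)}$ have no uniform control; the almost triangle inequality (Proposition~\ref{prop:D_metric}) is what makes the covering and packing arguments go through. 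Only then do Lemmas~\ref{lemma:apriori_upper}, \ref{lemma:covering} and~\ref{lemma:apriori_lower} deliver the Ahlfors bounds, via a packing argument in the \emph{image} of the splitting map in $\mathbb{R}^k$, and these are assembled by an induction on depth in the proof of Theorem~\ref{thm:neck_structure}. Replacing these two ingredients --- the parabolic splitting maps and the $R$-scale --- by a direct reduced-volume counting is the point where the proposal breaks down, and it is also why the paper cannot dispense with the Type~I assumption: assumption (RF4) is what makes $D_R$ an almost-metric.

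\textbf{Minor remarks.} Using reduced volume instead of $\mathcal W$-entropy is cosmetic; both give the needed monotonicity. Also note that the packing measure in the paper is $\mu=\sum_{x\in C}(Rr_x)^k\delta_x$ with $k$ the number of split Euclidean factors (so $k=1$ for $3$d necks), and is based at a fixed time slice, not at parabolic balls; this matters for making the covering lemma (Lemma~\ref{lemma:10_covering}) work with the $R$-scale.
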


In fact, the main tool developed in this paper, the neck structure Theorem \ref{thm:neck_structure}, holds in any dimension, under some a priori assumptions which are natural in the setting of Type I singularities. 

\subsection{Discussion of the problem}
It has long been understood, by the work of Topping in \cite{Top05}, that the diameter under a Ricci flow on a compact $n$-dimensional manifold is controlled by the $L^{\frac{n-1}{2}}$ norm of the scalar curvature, which exhibits the same scaling behaviour. In dimension three, in particular, this is just the $L^1$ norm of the scalar curvature.

Kleiner--Lott in \cite{KL20} proved that in a three dimensional Ricci flow - and in fact even along the three dimensional singular Ricci flow through singularities they construct in \cite{KL17} - for any $p\in (0,1)$ the $L^p$-norm of the scalar curvature is uniformly bounded. One can construct examples, however, in which the scalar curvature is uniformly bounded in $L^1$. Such example is the ``shrinking ring" Ricci flow on $\mathbb S^2\times \mathbb S^1$, which is the quotient of the shrinking cylinder Ricci flow on $\mathbb S^2\times \mathbb R$ by a $\mathbb Z$-action of translations along the $\mathbb R$ factor. It becomes singular in finite time and the $L^p$ norm of the Riemann tensor is uniformly bounded if and only if $p\leq 1$.

Thus it is reasonable to expect that for a general compact 3d Ricci flow this is also true, which is what Theorem \ref{thm:intro_bounded_diameter} confirms for Type I singularities. However, in higher dimensions, the $L^{\frac{n-1}{2}}$ norm of the curvature is not necessarily uniformly bounded, as it fails to do so even in the simple example of the product Ricci flow on $\mathbb S^2\times T^{n-2}$, where $T^{n-2}$ is the flat $(n-2)$-dimensional torus and the Ricci flow on $\mathbb S^2$ is just the shrinking sphere. In this example, the curvature is only uniformly bounded in $L^1$, and this is the best one can expect in general. For Type I singularities, it was shown by the author in \cite{G19} that for any $p<1$ the curvature is uniformly bounded in $L^p$. Recently, this estimate was shown to hold in general by Fang--Li in \cite{FangLi25}.

Although the Ricci curvature along the ``shrinking ring" Ricci flow is non-negative, this is no longer true when a 3d Ricci flow encounters a neck-pinch singularity, say at $t=0$. In fact, considering times $t_i=-4^{-i}\rightarrow 0$, the flow, in high curvature regions and in the duration the time intervals  $[t_i,t_{i+1}]$, is only $\varepsilon$-close to the shrinking cylinder. Thus, the length of the neck might be increasing by a factor $1+O(\varepsilon)$ in the duration of each interval $[t_i,t_{i+1}]$, becoming unbounded as $t\rightarrow 0$. On the other hand, $\varepsilon\rightarrow 0$ as the curvature blows up, so in this toy example we actually know that $\varepsilon_i\rightarrow 0$. The main issue here is the precise rate of this convergence, since controlling the total expansion of the neck requires that $\sum_i \varepsilon_i <+\infty$.

Similar issues, regarding the rate of convergence to a singularity model, are common in the study of the singular set of solutions to many other geometric pde, and are related to the rectifiability of the singular set and the uniqueness of pointed blow up limits at singular points. The influential work of L. Simon \cite{Simon83, Simon96} on energy minimizing maps with analytic targets established \L ojasiewicz inequalities as a powerful tool to understand these issues. For the mean curvature flow, using an appropriate \L ojasiewicz inequality Schulze \cite{Schulze14} proved the uniqueness of compact tangent flows,  and later Colding--Minicozzi \cite{CM15,CM16} established a \L ojasiewicz inequality for the mean curvature flow close to cylindrical shrinkers, and proved uniqueness of cylindrical tangent flows and the rectifiability of the singular set. Regarding the behaviour of the mean curvature flow exhibiting asymptotically conical singularities, Chodosh--Schulze \cite{CS21} established the uniqueness of asymptotically conical tangent flows, also by proving an appropriate \L ojasiewicz inequality near asymptotically cylindrical shrinkers. The \L ojasiewicz inequality of \cite{CM15} was also exploited by the author and Haslhofer in \cite{GH20} to prove that the intrinsic diameter of a $2$-convex mean curvature flow of hypersurfaces in Euclidean space is uniformly bounded, thus answering the analogue of Conjecture \ref{conj} in this setting. Later, Wenkui Du in \cite{Du21}, taking advantage of the \L ojasiewicz inequality of \cite{CS21} as well as the proof of the mean convex neighbourhood conjecture by Choi--Haslhofer--Hershkovits \cite{CHH22} and Choi--Haslhofer--Herschkovits--White \cite{CHHW22}, extended the results in \cite{GH20},  to mean curvature flows that only exhibit singularities of either neck or conical type. A \L ojasiewicz-type inequality for non-compact singularity models of Ricci flow remains an interesting open problem, although Colding--Minicozzi \cite{CM25} have recently managed to prove the strong rigidity of cylinders, as shrinking Ricci solitons, see also the work of Li--Wang \cite{LiWang24} for the particular case of cylinders splitting one Euclidean factor.

Unfortunately, there are many situations in which an appropriate \L ojasiewicz inequality is not available, and might even not  hold. Such is the case, for example, of stationary harmonic maps, or even energy minimizing maps to a non-analytic target. For the mean curvature flow, the \L ojasiewicz inequalities in \cite{CM15,CS21} depend strongly on the geometry of the singularity models (cylindrical and asymptotically conical, respectively), and although it is expected that these results have analogues for the Ricci flow, it is known \cite{BCCD24} that there are shrinking Ricci solitons that belong to none of these categories.

However, it has recently been possible to establish rectifiability results for singular sets in a series of such situations, using weaker methods. For instance, the singular set of stationary or energy minimizing maps is indeed rectifiable, by the work of Naber--Valtorta in \cite{NV}. The same is true for the singular set of non-collapsed Gromov--Hausdorff limits of sequences of Riemannian manifolds with bounded Ricci curvature, by Jiang--Naber \cite{JN}, and with merely lower bounded Ricci curvature, by Cheeger--Jiang--Naber \cite{CJN}.

In particular, to prove the rectifiability of the singular sets in \cite{JN, CJN}, the authors not only need to establish a Reifenberg type property for the singular set, but also to control the linear (splitting) behaviour of the geometry of the space itself, as it may degenerate in small scales. This requires them to perform a very fine analysis of \textit{harmonic almost splitting maps}, which quantify the extend that the local geometry splits Euclidean directions, since these may degenerate in small scales. 

Clearly, one should expect similar issues when dealing with a Ricci flow $(M,g(t))_{-A\leq t<0}$ that encounters a finite time singularity at $t=0$. However, in contrast to \cite{JN,CJN} where the underlying geometry is well behaved in the Gromov--Hausdorff sense, the geometry of the evolving metric spaces $(M,d_{g(t)})$  may (potentially) degenerate in such a way that the limit as $t\rightarrow 0$ doesn't even exist.

\subsection{Discussion of the proof} Our approach in proving Theorem \ref{thm:intro_bounded_diameter} is inspired by \cite{JN,CJN}. Let $M$ be a compact 3-manifold and $g(t)$, $t\in [-A,0]$ a Ricci flow satisfying \eqref{eqn:intro_type_i}, for $t<0$. For $A>>1$,  we prove a neck decomposition theorem, Theorem \ref{thm:neck_decomp}, that covers a manifold by balls $B(x,-1,R)=B_{g(-1)}(x,R)$, where $R>>1$ is a large constant, and separate them into two kinds. The first kind are these balls whose points have some lower bound on the regularity scale, see Definition \ref{def:curvature_scale}, while the second kind are these balls that contain at least a point with very low regularity scale, i.e. high curvature. These are the regions of the manifold that require fine control, since the curvature is not a priori bounded there.

We then show that any ball $B(p,-1,2R)$, where $p$ is the center of a ball of the second kind, carries the structure of a \textit{neck region}, as in Definition \ref{def:neck_region}. This is similar to the neck regions in \cite{JN,CJN}, however some key modifications are needed in order to account for the lack of a fixed background metric structure. Our definition of a neck region, in rough terms, consists of a ball $B(p,-1,2R)$ of large radius, a finite subset $C\subset B(p,-1,2R)$ and a scale function $r_x>0$ for any $x\in C$, such that the flow is close to a shrinking cylinder Ricci flow at every scale $r\in [r_x,1]$. In the setting of Theorem \ref{thm:intro_bounded_diameter} we also achieve that $r_x$ is comparable to the curvature scale around $x$, in other words, the neck regions we construct are able to detect the curvature scale. Moreover, the balls $B\left(x,-r_x^2,10^{-4}Rr_x\right)$, $x\in C$, are mutually disjoint and
$$B(p,-1,R)\subset \bigcup_{x\in C\cap B\left(p,-1,\frac{3R}{2}\right)} B(x,-r_x^2,Rr_x).$$ 
It is also crucial to construct $C$ so that the function $r_y$ does not vary much around any $x\in C$, i.e. it is in some sense $\delta$-Lipschitz, again as in \cite{JN,CJN}.

However, it is unnatural to define this $\delta$-Lipschitz property with respect to any of the metrics $g(t)$, $t \in [-1,0]$, since there is no a priori control on their geometry, which is in fact what we aim to control. It turns out that the appropriate notion of ``distance" between any two points $x,y\in C$ is what we call the $R$-scale, $D_R(x,y)$, in Definition \ref{def:r_scale}. This is, essentially, the infimum of the  scales $r>0$ such that $y\in B(x,-r^2,Rr)$. We discover, in Proposition \ref{prop:D_metric}, that the $R$-scale almost satisfies the triangle inequality, in the sense that
\begin{equation}\label{eqn:intro_almost_triangle}
D_R(x,z)\leq \sigma_R (D_R(x,y)+D_R(y,z))
\end{equation}
for any $x,y,z\in M$, with $1<\sigma_R \rightarrow 1$, as $R\rightarrow +\infty$. 
This is a crucial property of $D_R$ that has implications throughout this work. It allows for a reasonable definition of a neck region and is used crucially in the construction of a neck region in Lemma \ref{lemma:3d_neck_region}, in particular in the definition of the function $r_x$. The triangle inequality \eqref{eqn:intro_almost_triangle} is  a consequence of the lower distance distortion estimate available under \eqref{eqn:intro_type_i}, and is established in Section \ref{sec:apriori}. Note that a similar notion of distance has recently appeared in \cite{FangLi25}, involving however the $W^1$-Wasserstein distance of conjugate heat kernel measures. In fact, the authors in \cite{FangLi25} establish that the usual triangle inequality holds.

Having proved that any such ball $B(p,-1,2R)$ has a neck structure, we can the main technical tool of this paper, the neck structure Theorem \ref{thm:neck_structure}, to obtain an estimate of the form
\begin{equation}\label{eqn:intro_sum}
\sum_{x\in C} r_x \leq L.
\end{equation}
This suffices to obtain an $L^1$ bound on the norm of the curvature tensor and prove Theorem \ref{thm:intro_bounded_diameter}, using Topping's result from \cite{Top05}. 

The neck structure Theorem \ref{thm:neck_structure} holds in any dimension and it does not depend on the particular gradient shrinking Ricci soliton that appears as a singularity model at each scale, as long as it splits a fixed number of $k$ Euclidean factors.

In simple terms, in the setting of 3d Ricci flow, the key ingredient in the proof Theorem \ref{thm:neck_structure} to obtain the estimate \eqref{eqn:intro_sum}, is the construction of a function $\tilde v: C\rightarrow \mathbb R$ that is, in most of $C$, bi-Lipschitz in the sense that 
\begin{equation}\label{eqn:intro_biLip}
(1-\varepsilon) \sigma D_\sigma (x,y)\leq |\tilde v(x)-\tilde v(y)|\leq (1+\varepsilon) \sigma D_\sigma(x,y)
\end{equation}
for any $x,y\in C_\varepsilon\subset C$ and some $\sigma=\sigma_{x,y}\sim R$. Here $C_\varepsilon$ is constructed to satisfy, essentially, the estimate
 $$\sum_{x\in C} r_x \leq (1+\varepsilon)\sum_{x\in C_\varepsilon} r_x.$$
 The function $\tilde v$ is defined as $\tilde v(x) = v(x,0)$,
 where $v: M\times [-1,0]\rightarrow \mathbb R$ is a solution to the heat equation which is a \textit{parabolic} almost splitting map, see Definition \ref{def:splitting_map}. 

 To prove \eqref{eqn:intro_biLip} we have to exclude the degeneration of such an almost splitting map in small scales, around points in $C_\varepsilon$, namely that its gradient does not become arbitrarily small around $x\in C_\varepsilon$, in a weighted $L^2$ sense. To achieve this, we use recent results of the author in \cite{G25}. We refer the reader in Subsection \ref{sec:apriori} for more details.
 
 {\bf Acknowledgments:}  The author would like to thank Andrea Mondino for bringing \cite{CJN} to his attention. This research was supported by the Hellenic Foundation for Research
and Innovation (H.F.R.I.) under the “2nd Call for H.F.R.I. Research Projects to support
Faculty Members \& Researchers” (Project Number: HFRI-FM20-2958). The author would also like to thank Konstantinos Leskas for his helpful comments on earlier versions of the paper.

\section{Preliminaries}

\subsection{The heat and conjugate heat equations}
Let $I\subset \mathbb R$ be an interval, and let $(M,g(t))_{t\in I}$ be a smooth compact Ricci flow. We will say that $v\in C^\infty(M\times I)$ is a solution to the heat equation if 
\begin{equation}\label{eqn:heat_equation}
\left(\frac{\partial}{\partial t} - \Delta \right)v=0
\end{equation}
 on $M\times I$, and that $u\in C^\infty(M\times I)$ satisfies the conjugate heat equation if
\begin{equation}\label{eqn:conjugate_heat_equation}
\left(\frac{\partial}{\partial t} +\Delta + \scal\right) u =0,
\end{equation}
where $\scal$ denotes the scalar curvature. 

It is well known that under \eqref{eqn:heat_equation} and \eqref{eqn:conjugate_heat_equation} 
the function $t\mapsto \int_M v(\cdot,t) u(\cdot,t) d\vol_{g(t)}$ is constant. In particular if $\int_M u(\cdot,t_0) d\vol_{g(t_0)}=1$, for some $t_0\in I$, then
$$\int_M u(\cdot,t) d\vol_{g(t)} =1$$
for every $t\in I$. It follows that for every $t\in I$, $d\nu_t = u(\cdot,t) d\vol_{g(t)}$ defines a probability measure on $M$. We will call the family $\nu=(\nu_t)_{t\in I}$ a conjugate heat flow on $(M,g(t))_{t\in I}$.

Given $(x,t)\in M\times I$ we denote by $u_{(x,t)}$ the conjugate heat kernel starting at $(x,t)$, namely $u_{(x,t)}$ solves the conjugate heat equation and for every $\varphi\in C^\infty(M)$
$$\int_M \varphi(y) u_{(x,t)}(y,s) d\vol_{g(s)} \rightarrow \varphi(x)$$
as $s\rightarrow t$. Conjugate heat kernels always exist and are unique, see for instance \cite{RFpartII}. Moreover, we will denote by $\nu_{(x,t)}=(\nu_{(x,t),s})_{s<t, s\in I}$ the associated conjugate heat flow, where $d\nu_{(x,t),s}=u_{(x,t)}(\cdot,s) d\vol_{g(s)}$ for every $s<t$.

Fix a large positive constant $H<+\infty$. Given any $p\in M$ and assuming that $\sup I=0$, we will say that a conjugate heat flow $\nu_t$, $d\nu_t = (4\pi |t|)^{-n/2} e^{-f(\cdot,t)} d\vol_{g(t)}$ satisfies the condition (CHF) with respect to $p$ if 
\begin{equation}\label{eqn:CHF}
\frac{d_{g(t)}(p,x)^2}{H|t|} - H \leq f(x,t) \leq H\left( \frac{d_{g(t)}(p,x)^2}{|t|} +1\right),\tag{CHF}
\end{equation}
for every $(x,t) \in M\times I$.

\subsection{Monotone quantities} Let $M$ be a compact manifold, $n=\dim M$, $f\in C^\infty(M)$ and $\tau>0$. Perelman's $\mathcal W$-entropy \cite{Per02} is given by 
\begin{equation*}
\mathcal W(g,f,\tau)=\int_M \left(\tau(\scal+|\nabla f|^2) +f - n\right) u d\vol,
\end{equation*}
where $u=(4\pi \tau)^{-n/2} e^{-f}$, while the $\mu$-entropy is given by
$$\mu(g,\tau) = \inf \left\{\mathcal W(g,f,\tau), f\in C^\infty(M), \int_M (4\pi \tau)^{-n/2} e^{-f} d\vol=1\right\},$$
and the $\nu$-entropy by
$$\nu(g,\tau) = \inf\{\mu(g,\tau'), 0<\tau'\leq \tau\}.$$
By \cite{Per02}, if $(M,g(t))_{t\in I}$ is smooth Ricci flow and for $t_0\in I$ $u=(4\pi |t_0-t|)^{-n/2} e^{-f}$ evolves by the conjugate heat equation then if $t_1<t_2<t_0$
$$\mathcal W(g(t_1),f(\cdot,t_1), |t_0-t_1|) \leq \mathcal W(g(t_2),f(\cdot,t_2), |t_0-t_2|),$$
and for every $\tau>0$
\begin{align*}
\mu(g(t_1),\tau+t_2-t_1)\leq \mu(g(t_2),\tau),\\
\nu(g(t_1),\tau+t_2-t_1)\leq \nu(g(t_2),\tau).
\end{align*}
Moreover, if $\max I=0$ we defined the pointed entropy $\mathcal W_p(\tau)$ at scale $\tau>0$ by
\begin{equation}\label{eqn:def_pointed_entropy}
\mathcal W_p(\tau) = \mathcal W(g(-\tau),f_{(p,0)}(\cdot,-\tau),\tau),
\end{equation}
where $u_{(p,0)}=(4\pi \tau)^{-n/2} e^{-f_{(p,0)}}$ is the conjugate heat kernel starting at $(p,0)$.
\subsection{Gradient shrinking Ricci solitons} Let $(M,\bar g)$ be a complete Riemannian manifold and $\bar f\in C^\infty (M)$. We will say that $(M,\bar g,\bar f)$ is a gradient shrinking Ricci soliton at scale $\tau>0$ if 
$$\ric_{\bar g}+\hess_{\bar g} \bar f = \frac{\bar g}{2\tau}.$$
We call $\bar f$ a soliton function. Note that soliton functions are uniquely defined, modulo linear functions, namely a functions with vanishing Hessian.

We will say that $(M,\bar g,\bar f)$ is normalized if
\begin{equation}\label{eqn:gsrs_n}
\int_M (4\pi \tau)^{-n/2} e^{-\bar f} d\vol_{\bar g}=1.
\end{equation}
The integral on the right-hand side of \eqref{eqn:gsrs_n} is finite, since $\bar f$ has quadratic behaviour while the volume grows polynomially, see \cite{HasMu}. When $(M,\bar g,\bar f)$ is normalized, the entropy 
\begin{equation}\label{eqn:entropy_soliton}
\mu(\bar g)= \mathcal W(\bar g,\bar f,\tau)
\end{equation}
is well defined and depends only on the metric $\bar g$, by \cite{MM15}.

A gradient shrinking Ricci soliton $(M,\bar g,\bar f)$ at scale $1$ induces a Ricci flow $g(t)=|t| \varphi_t^*\bar g$, $t<0$, on $M$, where $\varphi_t: M\rightarrow M$ is the smooth one parameter family of diffeomorphisms that satisfies
\begin{align*}
\frac{d}{dt} \varphi_t &= \frac{1}{|t|} \nabla^{\bar g} \bar f \circ\varphi_t,\\
\varphi_{-1}&=id_M.
\end{align*}
We will say that $g(t)$ is the selfsimilar Ricci flow induced by the gradient shrinking Ricci soliton $(M,\bar g,\bar f)$. Then, setting $f_t(x):=f(x,t)=\bar f(\varphi_t(x))$, it is standard that $(M,g(t),f_t)$ is a gradient shrinking Ricci soliton at scale $|t|$ with $\mu(g(t))=\mu(\bar g)$, for every $t<0$. Moreover, $u=(4\pi |t|)^{-n/2} e^{-f}$ evolves by the conjugate heat equation. Denote by $\nu_f$ the associated conjugate heat flow on $(M,g(t))_{t<0}$. By \cite{HasMu}, there is a $p\in M$ such that $f_t$ attains a minimum for every $t<0$ and $u$ satisfies \eqref{eqn:CHF} for $H=H(n,\mu(\bar g))$. 

Given a selfsimilar Ricci flow $(M,g(t))_{t<0}$ induced by a gradient shrinking Ricci soliton we define its spine $\mathcal S$ as
$$\mathcal S=\{\nu_f,  (M,g(t),f_t) \;\textrm{is a normalized gradient shrinking Ricci soliton at scale}\; |t| \}.$$
Moreover, its point-spine $\mathcal S_{\textrm{point}}$ is defined as
$$\mathcal S_{\textrm{point}} = \{x\in M, \textrm{ there is $\nu_f\in\mathcal S$ such that $f_t$ attains its minimum at $p$ for every $t<0$}\}.$$

The following proposition elaborates on the structure of $\mathcal S$ and $\mathcal S_{\textrm{point}}$, and is crucial for what follows.

\begin{proposition}[Proposition 4.1 in \cite{G25}] \label{prop:spine}
 Let $(M,g(t))_{t<0}$, $\dim M=n$, be the selfsimilar Ricci flow induced by a gradient shrinking Ricci soliton, with spine $\mathcal S$. There is a maximal $0\leq k\leq n$ such that, up to isometry, $(M,g(t))_{t<0}=(M'\times \mathbb R^k, g'(t)\oplus g_{\mathbb R^k})_{t<0}$, where $(M',g'(t))_{t<0}$ is the Ricci flow induced by a gradient shrinking Ricci soliton on $M'$, and $g_{\mathbb R^k}$ is the Euclidean metric in $\mathbb R^k$. Moreover,
\begin{enumerate}
\item The spine $\mathcal S'$ of $(M',g'(t))_{t<0}$ consists of a unique conjugate heat flow $\nu_{f'}$ and the elements $\nu_f$ of $\mathcal S$ are exactly of the form 
$$f((x,a), t) =\frac{|a-a_f|^2}{4|t|} +f'(x,t)$$
for some $a_f\in\mathbb R^k$.
\item If $\nu_f\in\mathcal S$ satisfies (CHF) with respect to $p$ for some constant $H<+\infty$ then $p=(q,a_f)$ for some $q\in M'$.
\item There is $\mathcal K\subset M'$ such that $\mathcal S_{\textrm{point}}=\mathcal K\times \mathbb R^k$ and for every $t<0$
$$\diam_{g'(t)} (\mathcal K) \leq A(n) \sqrt{|t|}.$$
\end{enumerate}
\end{proposition}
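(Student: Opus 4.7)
The plan is to extract an isometric product structure from the space of soliton functions on $(M,\bar g)$, and then analyze each of the three claims on the two factors separately.

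\emph{The splitting.} The starting observation is that two soliton functions at the same scale differ by a function of vanishing Hessian, so their gradients differ by a parallel vector field. I would let $V$ denote the (finite-dimensional) space of parallel vector fields on $(M,\bar g)$, set $k:=\dim V$, and invoke the de Rham decomposition (on the universal cover, then descended via deck transformations, which must preserve parallel fields) to obtain $(M,\bar g)=(M'\times\mathbb R^k, g'\oplus g_{\mathbb R^k})$ isometrically, with $M'$ carrying no non-trivial parallel vector field. Since the soliton vector field $\nabla^{\bar g}\bar f$ preserves the splitting, the induced selfsimilar Ricci flow also splits, and $(M',g'(t))_{t<0}$ is selfsimilar for the restricted soliton $(M',\bar g',\bar f_1)$.

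\emph{Claim (1).} On $M'\times\mathbb R^k$, the mixed components of $\ric_{\bar g}+\hess_{\bar g}\bar f=\bar g/(2\tau)$ force $\partial_a\partial_x\bar f=0$, hence $\bar f(x,a)=\bar f_1(x)+\bar f_2(a)$. The $\mathbb R^k$-block yields $\hess\bar f_2 = g_{\mathbb R^k}/(2\tau)$, which integrates to $\bar f_2(a)=|a-a_f|^2/(4\tau)+c$ for some $a_f\in\mathbb R^k$ and $c\in\mathbb R$; and $\bar f_1$ satisfies the soliton equation on $M'$. By the maximality of $k$, any two soliton functions on $M'$ differ by a constant, which the normalization \eqref{eqn:gsrs_n} uniquely pins down, so $\mathcal S'=\{\nu_{f'}\}$ consists of a single element. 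The normalization on $M$ then absorbs $c$, producing the stated product form of $f$ parametrized by $a_f\in\mathbb R^k$.

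\emph{Claim (2)}, which I expect to be the main subtlety since (CHF) involves only a loose distance bound with constant $H$ yet must pin down $b=a_f$ exactly. Write $p=(q_0,b)$ with $b\in\mathbb R^k$ and plug $(x,a)=(q_0,b)$ into the upper bound of \eqref{eqn:CHF}: using $d_{g(t)}(p,p)=0$,
\begin{equation*}
f'(q_0,t)+\frac{|b-a_f|^2}{4|t|}=f((q_0,b),t)\leq H.
\end{equation*}
I would then invoke the universal pointwise lower bound $\bar f'\geq -C(n,\mu(\bar g'))$ from \cite{HasMu}, which via $f'(q_0,t)=\bar f'(\varphi_t(q_0))$ gives $f'(q_0,t)\geq -C$ uniformly in $t<0$. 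Combining the two yields $|b-a_f|^2\leq 4(H+C)|t|$ for every $t<0$; letting $t\to 0^-$ forces $b=a_f$.

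\emph{Claim (3).} From the product form of $f$, the minimum of $f_t$ on $M$ is attained at $(q_t^{*},a_f)$ with $q_t^{*}\in\arg\min f'(\cdot,t)$, so $(q,a)\in\mathcal S_{\textrm{point}}$ if and only if $a=a_f$ for some admissible $a_f$ (a free parameter) and $q$ is a common minimizer of $f'(\cdot,t)$ for every $t<0$. Since $f'(\cdot,t)=\bar f'\circ\varphi_t$, the common minimizers are precisely the fixed points of all $\varphi_t$, namely the critical points of $\bar f'$, that realize $\min_{M'}\bar f'$; hence $\mathcal K=\arg\min_{M'}\bar f'$. The universal quadratic lower bound on $\bar f'$ from \cite{HasMu} yields $\diam_{\bar g'}\mathcal K\leq A(n)$, and rescaling to $g'(t)=|t|\varphi_t^{*}\bar g'$, together with the fact that $\varphi_t$ fixes $\mathcal K$ pointwise, produces the $\sqrt{|t|}$ factor.
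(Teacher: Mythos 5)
The proposition is imported from \cite{G25} without a proof in the present paper, so there is no in-paper argument to compare against; I assess your proposal on its own terms. The strategy and the arguments for (1)--(3) are sound. The mechanism for (2) --- plugging $x=p$ into the upper bound of \eqref{eqn:CHF} to get $f(p,t)\leq H$, combining with a $t$-uniform lower bound on $f'(\cdot,t)$, and sending $t\to 0^-$ so that the $|t|^{-1}$ prefactor of the Gaussian term forces $b=a_f$ --- is exactly right. Likewise, identifying $\mathcal K$ with the set of minimizers of $\bar f'$ and invoking the Cao--Zhou / Haslhofer--M\"uller growth estimates (whose constants depend only on $n$) yields the dimension-only diameter bound in (3), with the factor $\sqrt{|t|}$ coming from the rescaling and the fact that $\varphi_t'$ fixes $\mathcal K$ pointwise.

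One step deserves more care: the splitting. You pass to the universal cover, invoke de Rham, and descend, asserting that deck transformations preserve parallel fields. This hides a real subtlety --- deck transformations preserve the flat de Rham factor of $\tilde M$ as a set but may act on it by rotations about the soliton's centre, so the space of parallel fields on $M$ can have strictly smaller dimension than the flat factor of $\tilde M$ (e.g.\ a $\mathbb Z_2$-quotient of a round cylinder, where $l=1$ upstairs but $k=0$ on $M$). Your definition $k=\dim V$ on $M$ is the right one, but the de Rham detour does not directly deliver $(M,\bar g)=(M'\times\mathbb R^k,\dots)$. The clean route stays on $M$: for any parallel field $X$ the soliton equation forces $\ric(X,\cdot)=0$, hence $u_X:=2\tau\langle\nabla\bar f,X\rangle$ satisfies $\nabla u_X=X$ and $\hess u_X=0$; the $k$ globally defined linear functions $u_X$ split $M$ isometrically as $M'\times\mathbb R^k$ (the quadratic growth of $\bar f$ along $X$ ruling out a torus quotient), $M'$ carries no parallel field by maximality and hence a unique normalized soliton function, and (1) follows without the universal cover.
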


\subsection{$k$-independent subsets}

\begin{definition}\label{def:k_ind}
Let $(X,d,p)$ be a complete pointed metric space, $\alpha>0$ and $R<+\infty$. We will call as subset $\{x_i\}_{i=0}^k$,
\begin{enumerate}
\item $(k,\alpha)$-independent in $B(p,R)$ if there is no $0\leq l<k$ and $\{x_i'\}_{i=0}^k\subset\mathbb R^k$ such that
$$|d(x_i,x_j) - |x_i'-x_j'||<\alpha R.$$
\item $(k,\alpha, D)$-independent in $B(p,R)$, where $D>0$, if there is no $0\leq l <k$, a compact metric space $(K,d_K)$ with $\diam(K)\leq D$, and $\{x_i'\}_{i=0}^k \subset K\times \mathbb R^l$ such that for any $i,j$
$$|d(x_i,x_j) - d_{K\times \mathbb R^l}(x_i',x_j')| < D+\alpha R,$$
where $K\times \mathbb R^l$ is endowed with the product metric.
\end{enumerate}
\end{definition}
Recall the following remark form \cite{G25}.

\begin{remark}[Remark 8.1 in \cite{G25}]\label{rmk:k_ind}
If $X\subset B(0,R)\subset \mathbb R^n$ does not contain any $(k,\delta)$-independent subsets and $0<\delta\leq \delta(n|\varepsilon)$, then there is a $k-1$ plane $L$ in $\mathbb R^n$ such that $X\subset B_{\varepsilon R} (L)$.
\end{remark}

The following lemma gives the relation between the two notions of $k$-independence of Definition \ref{def:k_ind}.

\begin{lemma}[Lemma 8.2 in \cite{G25}]\label{lemma:k_ind}
Let $(K,d_K)$ be a metric space with $\diam(K)\leq D$, and let $\{x_i=(q_i,a_i)\}_{i=0}^k \subset B((q,0) , R) \subset K\times \mathbb R^k$, where $K\times \mathbb R^k$ is endowed with the product metric. Then,
\begin{enumerate}
\item If $\{x_i\}$ is $(k,\alpha,D)$-independent then $\{a_i\}\subset B(0,R)$ is $(k,\alpha)$-independent.
\item If $\{a_i\}\subset B(0,R)$  is $(k,3\alpha)$-independent and $R\geq \frac{D}{\alpha}$ then $\{x_i\}$ is $(k,\alpha,D)$-independent.
\end{enumerate}
\end{lemma}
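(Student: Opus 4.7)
The plan is to prove both parts by contrapositive, using only the following basic projection property of the product metric: for any $(q,a),(q',a')\in K\times\mathbb{R}^l$ one has
\[
0\le d_{K\times\mathbb{R}^l}\bigl((q,a),(q',a')\bigr)-|a-a'|\le d_K(q,q')\le D.
\]
Thus the discrepancy between a distance in $K\times\mathbb{R}^l$ and the Euclidean distance of its $\mathbb{R}^l$-components is a nonnegative quantity bounded by $D$. This is the only geometric input I will need.

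For part (1), suppose $\{a_i\}$ is not $(k,\alpha)$-independent, so there are $l<k$ and $\{a_i'\}_{i=0}^k\subset\mathbb{R}^l$ with $\bigl||a_i-a_j|-|a_i'-a_j'|\bigr|<\alpha R$. The natural test configuration for the $(k,\alpha,D)$-independence of $\{x_i\}=\{(q_i,a_i)\}$ is to keep the $K$-component unchanged and set $x_i'=(q_i,a_i')\in K\times\mathbb{R}^l$; this is permitted since $\diam K\le D$. Because the map $t\mapsto\sqrt{A^2+t^2}$ is $1$-Lipschitz, setting $A=d_K(q_i,q_j)$ yields
\[
|d(x_i,x_j)-d_{K\times\mathbb{R}^l}(x_i',x_j')|\le\bigl||a_i-a_j|-|a_i'-a_j'|\bigr|<\alpha R\le D+\alpha R,
\]
which exhibits the failure of $(k,\alpha,D)$-independence of $\{x_i\}$.

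For part (2), again contrapositively, I am given $l<k$, a compact $(K',d_{K'})$ with $\diam K'\le D$, and $\{x_i'=(q_i',a_i')\}_{i=0}^k\subset K'\times\mathbb{R}^l$ with $|d(x_i,x_j)-d_{K'\times\mathbb{R}^l}(x_i',x_j')|<D+\alpha R$, and I must find $\{a_i''\}\subset\mathbb{R}^l$ witnessing the failure of $(k,3\alpha)$-independence of $\{a_i\}$. The natural candidate is $a_i''=a_i'$. Denoting the projection errors by $\epsilon_{ij}=d(x_i,x_j)-|a_i-a_j|$ and $\epsilon_{ij}'=d_{K'\times\mathbb{R}^l}(x_i',x_j')-|a_i'-a_j'|$, the initial observation tells us $\epsilon_{ij},\epsilon_{ij}'\in[0,D]$, hence $|\epsilon_{ij}'-\epsilon_{ij}|\le D$. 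Writing
\[
|a_i-a_j|-|a_i'-a_j'|=\bigl(d(x_i,x_j)-d_{K'\times\mathbb{R}^l}(x_i',x_j')\bigr)+(\epsilon_{ij}'-\epsilon_{ij})
\]
and taking absolute values gives $\bigl||a_i-a_j|-|a_i'-a_j'|\bigr|<2D+\alpha R$. Applying the hypothesis $R\ge D/\alpha$, i.e.\ $D\le\alpha R$, one obtains $2D+\alpha R\le 3\alpha R$, completing the argument.

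The proof is essentially routine, with no real obstacle once one commits to the correct test configurations ($x_i'=(q_i,a_i')$ in (1), $a_i''=a_i'$ in (2)). The only nontrivial observation, which is responsible for the optimal factor $3$ in part (2), is that the two projection errors $\epsilon_{ij}$ and $\epsilon_{ij}'$ both lie in the interval $[0,D]$, so their difference is controlled by $D$ and not by $2D$; a naive triangle-inequality bound on each error separately would only yield $(k,4\alpha)$-independence.
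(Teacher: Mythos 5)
Your proof is correct. Note that the paper does not actually prove this lemma: it is cited as Lemma 8.2 from \cite{G25} and imported without argument, so there is no in-paper proof to compare against. Your argument is the natural one — both directions by contrapositive, using the projection estimate $0\le d_{K\times\mathbb R^l}-|a-a'|\le d_K(q,q')\le D$; in (1) keeping the $K$-coordinate and using $1$-Lipschitz dependence of $\sqrt{A^2+t^2}$ on $t$, and in (2) discarding the $K'$-coordinate and absorbing the two projection errors into a single $D$ via the observation that both lie in $[0,D]$, which together with $D\le\alpha R$ yields the factor $3$. The only technicality you elide, that the ambient $K$ is only assumed to be a metric space rather than compact, is harmless: in (1) one replaces $K$ by the finite set $\{q_0,\dots,q_k\}$ with the induced metric, which is compact and has diameter $\le D$, and the pairwise distances are unchanged.
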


\section{Ricci flows under a priori assumptions}\label{sec:apriori}
\subsection{A priori assumptions}
Let $I\subset\mathbb R$ be an interval with $\sup I=0$ and $(M,g(t))_{t\in I}$ be a smooth   Ricci flow. We will say that $(M,g(t))_{t\in I}$ satisfies
\begin{enumerate}
\item[(RF1)]  if $(M,g(t))_{t\in I}$ satisfies
$$\sup_M |\riem|(\cdot,t) \leq \frac{C_I}{|t|},$$
for every $t\in I$, $t<0$.
\item[(RF2)] if $M$ is compact and $\nu(g(t), |t|) \geq -\Lambda$, for every $t\in I$, $t<0$.
\item[(RF3)] if for every $x,y\in M$ and $s,t\in I$, $s<t$ the conjugate heat kernel $$u_{(x,t)}(y,s)=(4\pi |t-s|)^{-n/2} e^{-f_{(x,t)}(y,s)}$$ satisfies
\begin{equation*}
\frac{d_{g(s)}(x,y)^2}{H|t-s|} - H\leq f_{(x,t)}(y,s) \leq H\left(\frac{d_{g(s)}(x,y)^2}{|t-s|} + 1\right).
\end{equation*}
\item[(RF4)] if for every $x,y\in M$ and $s,t\in I$, $s<t$,  
\begin{equation*}
d_{g(s)}(x,y) \leq d_{g(t)}(x,y) + K\left(\sqrt{|s|}-\sqrt{|t|}\right).
\end{equation*}
\end{enumerate}

We will always assume that the constants $C_I,H,K<+\infty$ are large, in particular 
$$C_I\geq 1, H\geq 1, K\geq 1.$$

\begin{remark}\label{rmk:nc}
Note that under condition (RF2) and Perelman's no local collapsing theorem we know that there is a constant $\kappa=\kappa(n,\Lambda)$ such that for every $x\in M$, $t\in I$, $t<0$, and $0<r\leq \sqrt{|t|}$, such that $\scal \leq r^{-2}$ in $B(x,t,r)$
$$\vol_{g(t)}(B(x,t,r)) \geq \kappa r^n.$$
In particular, under both assumptions (RF1-2), since there is $a(n)>0$ such that in $B(x,t, \sqrt{|t|})$
$$\scal_{g(t)} \leq \frac{a(n,C_I)}{|t|},$$
it follows that there is a constant $\kappa_{nc}=\kappa_{nc}(n,C_I,\Lambda)>0$ such that
$$\vol_{g(t)}(B(x,t,\sqrt{|t|})\geq \kappa_{nc} |t|^{n/2}.$$
\end{remark}

\begin{remark}\label{rmk:chk}
By Proposition 2.7 and Proposition 2.8 in \cite{MM15} we know that under (RF1-2) there is a constant $H=H(n,C_I,\Lambda)<+\infty$ such that (RF3) holds.
\end{remark}

\begin{remark}
Note that condition (RF4) here is stronger than the analogous condition in \cite{G25}. This will be needed for the results of the following Subsection \ref{sec:distortion}.
\end{remark}

\begin{remark}\label{rmk:RF1_RF4}
If $(M,g(t))_{t\in I}$, $\sup I=0$ satisfies (RF1), then there is a $K=K(n,C_I)$ such that (RF4) holds. One can see this by applying Perelman's distance distortion Lemma I.8.3(b) in \cite{Per02}. Suppose that $x,y\in M$ satisfy $d_{g(t_0)}(x,y) \geq 2\sqrt{|t_0|}$, for some $t_0\in I$. Since, by (RF1), $\ric_{g(t_0)}\leq C(n,C_I) |t_0|^{-1}$ in $B(x,t_0,\sqrt{|t_0|})\cup B(y,t_0,\sqrt{|t_0|})$, applying that lemma gives that
\begin{equation}\label{eqn:d_dist_root}
\left.\frac{d}{dt}\right|_{t=t_0} d_{g(t)}(x,y) \geq -\frac{A}{\sqrt{|t_0|}}
\end{equation}
for some constant $A=A(n,C_I)$. On the other hand, if $d=d_{g(t_0)}(x,y)<2\sqrt{|t_0|}$, we know by (RF1) that along a minimizing unit speed geodesic $\gamma$ connecting $x$ with $y$
$$\int_{0}^d \ric_{g(t_0)}(\gamma'(s),\gamma'(s)) ds\leq C(n,C_I) |t_0|^{-1} d < \frac{2C(n,C_I)}{\sqrt{|t_0|}},$$
which again gives \eqref{eqn:d_dist_root}. Integrating \eqref{eqn:d_dist_root} gives (RF4).
\end{remark}

\subsection{Smooth convergence of sequences of Ricci flows} Throughout the paper we will be considering sequences of smooth Ricci flows equipped with conjugate heat flows. In our setting, in particular assumptions (RF1-3), it is quite easy to extract convergent subsequences, in the smooth Hamilton--Cheeger--Gromov topology. In order to clarify our setup, we quickly state the necessary definitions and the compactness result that we will rely on, and refer the reader to \cite{G25} for more details.

\begin{definition}
Consider a sequence $I_j\subset \mathbb R$ of intervals that converge to the interval $I_\infty\subset \mathbb R$ in the sense that for any closed interval $J\subset I_\infty$, $J\subset I_j$ for large $j$.
\begin{enumerate}
\item We say that a pointed sequence $(M_j,g_j(t),p_j)_{t\in I_j}$ of Ricci flows converges to a pointed Ricci flow $(M_\infty,g_\infty(t),p_\infty)_{t\in I_\infty}$ if for some, and thus for any,  $t_0\in I_\infty$ the following holds: for every $R<+\infty$ there are smooth maps $F_j: B(p_\infty,t_0,R)\rightarrow M_j$, diffeomorphisms onto their image, such that $F_j(p_\infty) = p_j$ and $F_j^* g_j$ converges to $g_\infty$ in the smooth and uniform in compact subsets of $B(p_\infty,t_0,R)\times I_\infty$ topology.
\item Given a convergent sequence $(M_j,g_j(t),p_j)_{t\in I_j}\rightarrow (M_\infty,g_\infty(t),p_\infty)_{t\in I_\infty}$, we will say that $x_j\in M_j$ converges to $x_\infty\in M_\infty$, and denote by $x_j\rightarrow x_\infty$, if there is $R<+\infty$ and $F_j :B(p_\infty,t_0,R)\rightarrow M_j$ as in (1) such that $F_j^{-1}(x_j)\rightarrow x_\infty$.
\item Given a convergent sequence $(M_j,g_j(t),p_j)_{t\in I_j}\rightarrow (M_\infty,g_\infty(t),p_\infty)_{t\in I_\infty}$, we will say that a sequence of functions $f_j\in C^\infty(M_j \times I_j)$ converges to $f_\infty \in C^\infty(M_\infty\times I_\infty)$  if for any $R<+\infty$ there is $F_j :B(p_\infty,t_0,R)\rightarrow M_j$ as in (1) such that $F_j^* f_j$ converges smoothly uniformly in compact subsets of $B(p_\infty,t_0,R)\times I_\infty$ to $f_\infty$.
\item Given a convergent sequence $(M_j,g_j(t),p_j)_{t\in I_j}\rightarrow (M_\infty,g_\infty(t),p_\infty)_{t\in I_\infty}$, we will say that a sequence $(\nu_{j,t})_{t\in I_j}$ of conjugate heat flows converges to a conjugate heat flow $(\nu_{\infty,t})_{t\in I_\infty}$ if the associated solutions $u_j\in C^\infty(M_j \times I_j)$ to the conjugate heat equation converge to $u_\infty\in C^\infty(M_\infty\times I_\infty)$, associated to the conjugate heat flow $\nu_{\infty,t}$.
\end{enumerate}
\end{definition}

\begin{proposition}[Compactness of Ricci flows paired with conjugate heat flows]\label{prop:compactness_rf}
Let $(M^n_j,g_j(t),p_j)_{t\in I}$, $\sup I = 0$,  be a sequence of smooth compact Ricci flows satisfying (RF1-2). Moreover, suppose that  there is sequence $(\nu_{j,t})_{t\in I}$ of conjugate heat flows that satisfy (CHF) with respect to $p_j\in M_j$, for a uniform constant $H<+\infty$. Then there is a pointed smooth complete Ricci flow $(M_\infty,g_\infty(t),p_\infty)_{t\in I}$ satisfying (RF1), and a conjugate heat flow $(\nu_{\infty,t})_{t\in I}$ on $(M_\infty,g_\infty(t))_{t\in I}$, satisfying (CHF) with respect to $p_\infty$, such that a subsequence of $(M_j,g_j(t),p_j)_{t\in I}$ converges to $(M_\infty,g_\infty(t),p_\infty)_{t\in I}$ and $(\nu_{j,t})_{t\in I}$ converges to $(\nu_{\infty,t})_{t\in I}$.
\end{proposition}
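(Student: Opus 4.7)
The plan is to follow the standard Hamilton--Cheeger--Gromov compactness strategy, supplemented by parabolic regularity for the conjugate heat equation and a tightness argument to prevent loss of mass at infinity. For any compact subinterval $J\subset I$ (which necessarily stays away from $\sup I=0$), condition (RF1) yields a uniform curvature bound $|\riem_{g_j(t)}|\leq C(J,C_I)$ on $M_j\times J$, and Shi's derivative estimates then provide bounds on all covariant derivatives of curvature on slightly shrunken intervals. The no--collapsing estimate of Remark \ref{rmk:nc}, a consequence of (RF2), supplies the uniform lower volume bound $\vol_{g_j(t)}(B(p_j,t,\sqrt{|t|}))\geq \kappa_{nc}|t|^{n/2}$ at the basepoints. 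Exhausting $I$ by compact subintervals and diagonalizing, Hamilton's pointed compactness theorem produces a subsequence converging to a smooth complete pointed Ricci flow $(M_\infty,g_\infty(t),p_\infty)_{t\in I}$, and (RF1) survives in the limit by continuity of the bound.

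For the conjugate heat flows, I would write $u_j=(4\pi|t|)^{-n/2}e^{-f_j}$ and pull back via the diffeomorphisms $F_j: B(p_\infty,t_0,R)\to M_j$ provided by the convergence. The two-sided bound (CHF), together with the smooth convergence of the metrics, bounds $f_j$ uniformly on any compact subset of $M_\infty\times I$ and hence pins $F_j^*u_j$ between two positive constants there. Since the pulled-back coefficients of the linear parabolic equation $(\partial_t+\Delta_{g_j(t)}+\scal_{g_j(t)})u_j=0$ are smooth and uniformly bounded, interior parabolic Schauder estimates applied iteratively yield uniform $C^k$ bounds on $F_j^*u_j$ on interior compact subsets for every $k$. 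A further diagonal extraction then produces a subsequence along which $u_j$ converges smoothly to a positive solution $u_\infty\in C^\infty(M_\infty\times I)$ of the conjugate heat equation along $g_\infty(t)$, and passing (CHF) to the limit yields the same estimate for $u_\infty$ with respect to $p_\infty$ and the same constant $H$.

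The remaining and, to my mind, principal technical point is to verify that the candidate measures $d\nu_{\infty,t}:=u_\infty(\cdot,t)\,d\vol_{g_\infty(t)}$ actually have total mass $1$, so that $(\nu_{\infty,t})_{t\in I}$ is a bona fide conjugate heat flow rather than merely a sub-probability family. Here the Gaussian tail
\begin{equation*}
u_j(x,t)\leq (4\pi|t|)^{-n/2}\exp\!\left(H-\frac{d_{g_j(t)}(p_j,x)^2}{H|t|}\right),
\end{equation*}
coming from the lower bound in (CHF), combines with a Bishop--Gromov-type volume comparison on each time slice (valid since (RF1) yields a uniform two-sided Ricci bound on any compact $J\subset I$, giving at worst exponential volume growth that the Gaussian dominates) to show that for every $\eta>0$ there exists $R_0=R_0(\eta,J,n,C_I,H)$, independent of $j$, with $\nu_{j,t}(M_j\setminus B(p_j,t,R_0))<\eta$ for all $t\in J$. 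Because $\nu_{j,t}$ has total mass $1$, this tightness together with the smooth convergence on bounded sets forces $\int_{M_\infty}u_\infty(\cdot,t)\,d\vol_{g_\infty(t)}=1$, which completes the construction. The mass-escape step is where the full strength of the CHF assumption is indispensable: without the Gaussian lower bound on $f_j$, the limiting $u_\infty$ could carry mass strictly less than one on the (generally non-compact) limit space.
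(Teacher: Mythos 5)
Your proof is correct, and since the paper states Proposition~\ref{prop:compactness_rf} without proof (deferring to \cite{G25}), it supplies exactly the standard argument the paper is implicitly relying on: Hamilton--Cheeger--Gromov compactness from (RF1) plus the $\kappa$-noncollapsing of Remark~\ref{rmk:nc}, parabolic bootstrapping for $u_j$ using the two-sided (CHF) bounds, and a Gaussian-tail tightness argument to rule out mass escape so that $\nu_{\infty,t}$ remains a probability measure. You also correctly pinpointed that the tightness step is the only place where the lower bound on $f_j$ in (CHF) is actually indispensable.
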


\subsection{Distance distortion} \label{sec:distortion}

In this subsection we turn our attention into various consequences of the a priori assumption (RF4), the lower distance distortion estimate. The results in this subsection are crucial for the rest of the paper. Roughly speaking, given a compact Ricci flow $(M,g(t))_{t\in I}$, $\sup I=0$, satisfying (RF4), we will define an \textit{almost} distance function on $M$, in the sense that the triangle inequality is almost satisfied.

\begin{lemma}\label{lemma:balls_inclusion}
Let $I\subset \mathbb R$ be an interval with $\sup I=0$ and $(M,g(t))_{t\in I}$ be a Ricci flow satisfying (RF4). Then,  if $\rho \geq K$ for every $s,t\in I$ with $t<s$, 
\begin{equation*}
B\left(x,s,\rho \sqrt{|s|} \right) \subset B\left(x,t, \rho\sqrt{|t|}\right).
\end{equation*}

In particular, given $x,y\in M$ the set 
$$I^\rho_{x,y}=\left\{r\in \left(0,\sqrt{-\inf I} \right), d_{g(-r^2)}(x,y) <r\rho\right\} =\left\{r\in \left(0,\sqrt{-\inf I}\right), y\in B(x,-r^2,r\rho) \right\},$$
if not empty, it is an interval of the form $\left(s,\sqrt{-\inf I}\right)$. 

\end{lemma}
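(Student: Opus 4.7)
The plan is to derive the lemma essentially as a direct consequence of (RF4); the statement is in fact a book-keeping exercise, and the only subtlety is matching the sign conventions (the axiom is stated with the earlier time as $s < t$, whereas the lemma is phrased with $t < s \le 0$, so ``earlier'' corresponds to the lemma's $t$).

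For the first claim, fix $y\in B(x,s,\rho\sqrt{|s|})$, so that $d_{g(s)}(x,y) < \rho\sqrt{|s|}$. Since $t < s$, I apply (RF4) with the roles of its variables renamed so that the earlier time is the lemma's $t$ and the later is the lemma's $s$; this yields
\begin{equation*}
d_{g(t)}(x,y) \le d_{g(s)}(x,y) + K\bigl(\sqrt{|t|}-\sqrt{|s|}\bigr) < \rho\sqrt{|s|} + K\bigl(\sqrt{|t|}-\sqrt{|s|}\bigr) = (\rho-K)\sqrt{|s|} + K\sqrt{|t|}.
\end{equation*}
Because $\rho \ge K$ and $|s|\le |t|$, the right-hand side is bounded above by $(\rho-K)\sqrt{|t|} + K\sqrt{|t|} = \rho\sqrt{|t|}$. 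Hence $y\in B(x,t,\rho\sqrt{|t|})$, establishing the inclusion.

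For the ``in particular'' part, I would apply the first claim with $s = -r^2$ and $t = -(r')^2$ for any $0 < r < r' < \sqrt{-\inf I}$, so that $\sqrt{|s|} = r$ and $\sqrt{|t|} = r'$. If $r \in I^\rho_{x,y}$, i.e.\ $y \in B(x,-r^2,\rho r)$, the inclusion just proved gives $y \in B(x,-(r')^2, \rho r')$, so $r' \in I^\rho_{x,y}$. Thus $I^\rho_{x,y}$ is upward-closed in $(0,\sqrt{-\inf I})$, so, if non-empty, it is an interval of the form $(s_0,\sqrt{-\inf I})$ where $s_0 = \inf I^\rho_{x,y}$.

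There is no real obstacle: the only delicate point is to confirm that the constant $\rho - K$ (which appears when one rearranges (RF4) into the desired inclusion) is non-negative, which is exactly the hypothesis $\rho \ge K$, and that the strict inequality in the hypothesis propagates through the non-strict bound $\sqrt{|s|} \le \sqrt{|t|}$.
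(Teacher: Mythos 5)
Your proof is correct and follows essentially the same computation as the paper: apply (RF4) once, rearrange into $(\rho-K)\sqrt{|s|}+K\sqrt{|t|}$, and use $\rho\geq K$ with $\sqrt{|s|}\leq\sqrt{|t|}$. You also correctly handle the sign-convention swap (the paper's (RF4) is stated with the earlier time denoted $s$, the lemma with $t$), a point the paper glosses over, and you spell out the upward-closure argument for the ``in particular'' clause which the paper leaves implicit.
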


\begin{proof}
Let $s,t\in I$, with $t\leq s<0$ and $y\in B(x,s,\rho \sqrt{|t|})$. Then by (RF4) we know that
\begin{equation*}
d_{g(t)}(x,y)  \leq d_{g(s)}(x,y) + K(\sqrt{|t|} -\sqrt{|s|})\leq (\rho-K)\sqrt{|s|}+ K\sqrt{|t|} \leq \rho\sqrt{|t|},
\end{equation*}
which proves the result.
\end{proof}

\begin{definition}[$\rho$-scale]\label{def:r_scale}
Let $I\subset\mathbb R$ be an interval with $\sup I=0$ and $(M,g(t))_{t\in I}$ be a Ricci flow satisfying (RF4). Given $\rho\geq K$, we define the $\rho$-scale $D_\rho:M\times M\rightarrow [0,+\infty)$ between two points  $x,y\in M$ as
\begin{equation*}
D_\rho(x,y)=\inf I_{x,y}^\rho.
 \end{equation*}
If the set $I^\rho_{x,y}$ is empty, the definition above is understood to mean that $D_\rho(x,y)=+\infty$.
\end{definition}

In the following proposition, we observe that under (RF4), if $\rho\geq 2K$ the $\rho$-scale $D_\rho$ behaves almost like a distance function on $M$.

\begin{proposition}\label{prop:D_metric}
Let $I\subset \mathbb R$ be an interval with $\sup I=0$, $(M,g(t))_{t\in I}$ be a Ricci flow satisfying (RF4) and let $\rho\geq 2K$. Then the following hold:

\begin{enumerate}
\item For any $x,y\in M$, $D_\rho(x,y)=D_\rho(y,x)\geq 0$.

\item For any $x,y\in M$  and $0<\bar r<\sqrt{-\inf I}$, $D_\rho(x,y)<\bar r$ if and only if $d_{g(-\bar r^2)}(x,y) <\bar r \rho$. In particular, $D_\rho(x,y)=0$ if and only if $d_{g(-r^2)}(x,y) <r \rho$ for every $r\in \left(0,\sqrt{-\inf I}\right)$.

\item If $\rho_i\geq \rho-K>K$, $i=1,2$, then for any $x,y,z\in M$ 
\begin{equation}
(\rho - K) D_\rho(x,y) \leq \rho_1 D_{\rho_1}(x,z) + \rho_2 D_{\rho_2}(z,y).
\end{equation}
In particular,
\begin{equation}\label{eqn:D_triangle}
D_\rho(x,y) \leq \frac{\max(\rho_1,\rho_2)}{\rho-K} (D_{\rho_1}(x,z) + D_{\rho_2}(z,y)).
\end{equation}
and if $\rho_1=\rho_2=\rho-K$, 
\begin{equation}
D_{\rho}(x,y)\leq D_{\rho-K}(x,z)+D_{\rho-K}(z,y).
\end{equation}
\item If $K\leq \rho_1\leq \rho_2$ then
\begin{equation}\label{eqn:D_compared}
D_{\rho_2}(x,y) \leq D_{\rho_1} (x,y)\leq \frac{\rho_2 -K}{\rho_1 - K} D_{\rho_2}(x,y).
\end{equation}
\item If $\rho_i\geq 2K$, $i=1,2$, then for any $x,y,z\in M$
\begin{equation}
D_{\rho_1+\rho_2} (x,z) \leq \max(D_{\rho_1}(x,y),D_{\rho_2}(y,z)).
\end{equation}
\end{enumerate}
\end{proposition}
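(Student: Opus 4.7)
The unifying theme is to unwrap the definition $D_\rho(x,y)=\inf I^\rho_{x,y}$, where $I^\rho_{x,y}=\{r:d_{g(-r^2)}(x,y)<r\rho\}$, and to exploit that by Lemma \ref{lemma:balls_inclusion} this set is upward-closed of the form $(D_\rho(x,y),\sqrt{-\inf I})$. Items (1) and (2) are then immediate: symmetry is inherited from the Riemannian distance, while (2) is a direct rephrasing of the lemma. For (5), I fix any $r>\max(D_{\rho_1}(x,y),D_{\rho_2}(y,z))$, so that $d_{g(-r^2)}(x,y)<r\rho_1$ and $d_{g(-r^2)}(y,z)<r\rho_2$, and apply the ordinary triangle inequality in $g(-r^2)$ to conclude $r\in I^{\rho_1+\rho_2}_{x,z}$. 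For (4), $D_{\rho_2}\leq D_{\rho_1}$ when $\rho_1\leq\rho_2$ follows from the inclusion $I^{\rho_1}_{x,y}\subset I^{\rho_2}_{x,y}$; for the upper estimate, given $r>\tfrac{\rho_2-K}{\rho_1-K}D_{\rho_2}(x,y)$, I set $r'=\tfrac{\rho_1-K}{\rho_2-K}r<r$, note that $r'>D_{\rho_2}(x,y)$, and apply (RF4) to obtain $d_{g(-r^2)}(x,y)<r'\rho_2+K(r-r')=Kr+r'(\rho_2-K)=r\rho_1$ by the defining relation of $r'$, giving $r\in I^{\rho_1}_{x,y}$.

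The heart of the proposition is the almost-triangle inequality (3). My strategy is to fix $r_i>D_{\rho_i}(\cdot,\cdot)$, select an intermediate scale $r$ dominating both $r_i$, and combine the triangle inequality in $g(-r^2)$ with (RF4) applied to each segment. In the clean case $\rho_1=\rho_2=\rho-K$, the choice $r=r_1+r_2$ works directly: combining
\[d_{g(-r^2)}(x,z)<r_1(\rho-K)+Kr_2\quad\text{and}\quad d_{g(-r^2)}(z,y)<r_2(\rho-K)+Kr_1\]
via the triangle inequality yields $d_{g(-r^2)}(x,y)<r(\rho-K)+Kr=r\rho$, the crucial point being that the two $K$-corrections reassemble into $Kr$, filling exactly the gap between $r(\rho-K)$ and $r\rho$; letting $r_i\searrow D_{\rho-K}$ then gives the specialized triangle inequality $D_\rho(x,y)\leq D_{\rho-K}(x,z)+D_{\rho-K}(z,y)$. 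For general $\rho_i\geq\rho-K$, I plan to take the balanced radius $r=\tfrac{r_1\rho_1+r_2\rho_2}{\rho-K}$, which dominates $r_1+r_2$ (and hence each $r_i$) thanks to $\rho_i\geq\rho-K$; the same (RF4)+triangle argument at scale $-r^2$ then yields the desired inequality $(\rho-K)D_\rho(x,y)\leq\rho_1 D_{\rho_1}(x,z)+\rho_2 D_{\rho_2}(z,y)$ upon letting $r_i\searrow D_{\rho_i}$, and the two displayed consequences follow by bounding $\rho_i\leq\max(\rho_1,\rho_2)$ and by specializing to $\rho_i=\rho-K$.

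The main obstacle is the sharpness of the constant in (3): a direct application of (RF4) to both segments at an arbitrary intermediate radius naturally introduces a $2Kr$ correction, which would only yield a weaker $(\rho-2K)$ coefficient on the left. Obtaining the sharp constant $(\rho-K)$ requires the precise balancing of $r$ against the data $(r_i,\rho_i)$, so that the $K$-corrections from the two (RF4) applications absorb optimally against the terms $r_i\rho_i$ — a delicate algebraic verification which I expect to be the most finicky part of the proof.
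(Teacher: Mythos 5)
Your treatments of Assertions (1), (2), (4), and (5) are correct and essentially coincide with the paper's arguments. The gap is in Assertion (3), in the general case $\rho_i \geq \rho - K$.

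Your clean case $\rho_1=\rho_2=\rho-K$ with $r=r_1+r_2$ is fine: the two (RF4) corrections $K(r-r_1)=Kr_2$ and $K(r-r_2)=Kr_1$ sum to exactly $Kr$, the margin between $(\rho-K)r$ and $\rho r$. But for the general case you propose to run the same one-step argument at the balanced radius $r=\frac{r_1\rho_1+r_2\rho_2}{\rho-K}$. Applying (RF4) to each segment directly from $-r_i^2$ to $-r^2$ produces a total correction $K(r-r_1)+K(r-r_2)=K(2r-r_1-r_2)$; since the balancing was chosen so that $(\rho-K)r=r_1\rho_1+r_2\rho_2$, only a correction of $Kr$ is affordable, and the direct estimate gives
\[
d_{g(-r^2)}(x,y) < (\rho-K)r + K(2r-r_1-r_2) = \rho r + K(r-r_1-r_2),
\]
which overshoots $\rho r$ exactly when $r>r_1+r_2$, i.e.\ whenever some $\rho_i>\rho-K$. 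The balancing of $r$ does not repair this: the excess $K(r-r_1-r_2)$ grows with $r$, and there is no ``optimal absorption'' against the $r_i\rho_i$ terms.

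The correct repair, which is the paper's argument, is a two-step descent. Apply (RF4) to each segment only down to the intermediate scale $-(r_1+r_2)^2$; the segment corrections are then $Kr_2$ and $Kr_1$, and the triangle inequality gives $d_{g(-(r_1+r_2)^2)}(x,y) < r_1\rho_1 + r_2\rho_2 + K(r_1+r_2)$. Then apply (RF4) once more, now to the single distance between $x$ and $y$ rather than to each segment separately, to descend from $-(r_1+r_2)^2$ to $-s^2$ with $s=\frac{r_1\rho_1+r_2\rho_2}{\rho-K}$. This costs one additional $K(s-(r_1+r_2))$, so the total correction is $K(r_1+r_2)+K(s-r_1-r_2)=Ks$, exactly the available margin. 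The key is that the second (RF4) application is charged once, not once per segment; this is what eliminates the double-counting. With this modification, the rest of your outline goes through.
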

\begin{proof}
The symmetry and the non-negativity of $D_\rho$  follow directly from Definition \ref{def:r_scale}.

To prove Assertion 2 of the proposition, first suppose that $D_\rho(x,y)<\bar r <\sqrt{-\inf I}$ for two points $x,y\in M$. Therefore,
\begin{equation}\label{eqn:inf0}
D_\rho(x,y) = \inf I^\rho_{x,y}=\inf\left\{ r\in \left(0,\sqrt{-\inf I} \right], d_{g(-r^2)}(x,y)<r\rho\right\}<\bar r.
\end{equation}
 By Lemma \ref{lemma:balls_inclusion}, we know that $I^\rho_{x,y}=\left(D_\rho(x,y),\sqrt{-\inf I}\right]$, hence $\bar r\in I^\rho_{x,y}$. This gives $ d_{g(-\bar r^2)}(x,y) <\bar r\rho$.
 
 On the other hand, if $d_{g(-\bar r^2)} (x,y) <\bar r \rho$, then $\bar r \in I^\rho_{x,y}$ hence $D_\rho(x,y) = \inf I^\rho_{x,y}< \bar r$.

To prove Assertion 3, fix any $\varepsilon>0$ and let $x,y,z\in M$. Take $r_1,r_2>0$ such that 
\begin{align*}
  D_{\rho_1}(x,z)&<r_1< D_{\rho_1}(x,z)+\varepsilon,\\
D_{\rho_2}(y,z)&<r_2< D_{\rho_2}(y,z)+\varepsilon.
\end{align*}
By Assertion 2, since $\rho_i \geq 2K$, it follows that
\begin{align*}
d_{g(-r_1^2)}(x,z)&< r_1 \rho_1,\\
d_{g(-r_2^2)}(y,z)& <r_2 \rho_2.
\end{align*}

By (RF4) we obtain
\begin{align*}
d_{g(-r_1^2)}(x,z) &\geq d_{g(-(r_1+r_2)^2)}(x,z) - Kr_2,\\
d_{g(-r_2^2)}(y,z)& \geq d_{g(-(r_1+r_2)^2)}(y,z) - Kr_1.
\end{align*}
The triangle inequality then gives
\begin{align*}
d_{g(-(r_1+r_2)^2)}(x,y) &\leq d_{g(-(r_1+r_2)^2)}(x,z) +d_{g(-(r_1+r_2)^2)}(z,y), \\
&\leq d_{g(-r_1^2)}(x,z) +d_{g(-r_2^2)}(z,y) +K(r_1+r_2),\\
&< r_1\rho_1+r_2\rho_2 +K(r_1+r_2).
\end{align*}

Now, since $\rho_i\geq \rho-K$ and   $s=\frac{\rho_1}{\rho - K}r_1+\frac{\rho_2}{\rho - K}r_2\geq r_1+r_2$, applying (RF4) once more we obtain
\begin{align*}
d_{g(-s^2)}(x,y)& \leq d_{g(-(r_1+r_2)^2)}(x,y) + K(s-(r_1+r_2))\\
&< r_1\rho_1+r_2\rho_2+K(r_1+r_2)+ K(s-(r_1+r_2))\\
&=r_1\rho_1+r_2\rho_2 + Ks\\
&=\rho s,
\end{align*}
Therefore, by Assertion 2 of the proposition,
\begin{align*}
D_{\rho}(x,y) &< s= \frac{\rho_1}{\rho - K}r_1+\frac{\rho_2}{\rho - K}r_2\\
&\leq (\rho - K)^{-1} \left(\rho_1 D_{\rho_1}(x,z) + \rho_2 D_{\rho_2}(z,y)\right)+\frac{\rho_1+\rho_2}{\rho-K}\varepsilon.
\end{align*}
Since $\varepsilon>0$ is arbitrary,  we obtain the triangle-like inequality
\begin{equation*}
(\rho - K) D_\rho (x,y)  \leq   \rho_1 D_{\rho_1}(x,z)+\rho_2 D_{\rho_2}(y,z).
\end{equation*}
The proof of Assertion 4 is similar. First observe that if $\rho_1\leq \rho_2$ it follows from the definition of $D_{\rho_1},D_{\rho_2}$ that $D_{\rho_2}(x,y)\leq D_{\rho_1}(x,y)$.
On the other hand, suppose that
$$D_{\rho_2}(x,y)<r<D_{\rho_2}(x,y)+\varepsilon.$$
Then, $d_{g(-r^2)}(x,y)<\rho_2 r$ and, for every $s\geq r$, (RF4) gives 
\begin{align*}
d_{g(-s^2)}(x,y)&\leq d_{g(-r^2)}(x,y)+K(s-r)\\
&< \rho_2 r + K(s-r)\\
&= (\rho_2 -K)r +Ks.
\end{align*}
Setting $s=\frac{\rho_2-K}{\rho_1-K} r\geq r$, we obtain $d_{g(-s^2)}(x,y) < \rho_1 s$.
Therefore, by Assertion 2 of the proposition,  
$$D_{\rho_1}(x,y) < s=\frac{\rho_2-K}{\rho_1-K} r <\frac{\rho_2-K}{\rho_1-K} \left(D_{\rho_2}(x,y)+\varepsilon\right).$$
Since $\varepsilon>0$ is arbitrary, this proves Assertion 4.

To prove Assertion 5, observe that if $r=D_{\rho_1}(x,y)=D_{\rho_2}(y,z)$ the result follows from the triangle inequality at $t=-r^2$. Thus, it suffices to assume that $D_{\rho_1}(x,y) < D_{\rho_2}(y,z)$, since otherwise we can switch the roles of $x$ and $z$.

Let $\varepsilon>0$ small enough and $r_i>0$, $i=1,2$, be such that
\begin{equation*}
D_{\rho_1}(x,y) <r_1<D_{\rho_1}(x,y)+\varepsilon<D_{\rho_2}(y,z) <r_2 <D_{\rho_2}(y,z)+\varepsilon.
\end{equation*}
By Assertion 2, it follows that
\begin{equation*}
d_{g(-r_2^2)}(x,y) <\rho_1 r_2 \quad \textrm{and}\quad
d_{g(-r_1^2)}(y,z) <\rho_2 r_2.
\end{equation*}
The triangle inequality at time $t=-r_2^2$ then gives
$$d_{g(-r_2^2)}(x,z) \leq (\rho_1+\rho_2)r_2$$
which, by Assertion 2, implies that $D_{\rho_1+\rho_2} (x,z) < r_2 <D_{\rho_2}(y,z)+\varepsilon$. Since $\varepsilon>0$ can be chosen arbitrarily small, we obtain that $D_{\rho_1+\rho_2} (x,z) \leq D_{\rho_2}(y,z)$, which proves the result.
\end{proof}

\begin{definition}\label{def:lip}
Let $I\subset\mathbb R$ be an interval with $\sup I=0$, $(M,g(t))_{t\in I}$ be a Ricci flow satisfying (RF4) and $\rho\geq 2K$. We will say that a function $f:M\rightarrow \mathbb R$ is $\rho$-scale $(\sigma,\delta)$-Lipschitz, where $\delta>0$ and $\sigma>1$, if for every $x,y\in M$
$$f(y)\leq \sigma f(x) + \delta D_\rho (x,y).$$
\end{definition}

\begin{remark}\label{eqn:Lip_distance}
In the setting of Definition \ref{def:lip}, for any $p\in M$ the function $d(x) = D_\rho (p,x)$ is $\rho$-scale $\left(\frac{\rho}{\rho-K},\frac{\rho}{\rho-K}\right)$-Lipschitz, since by Proposition \ref{prop:D_metric}
\begin{equation*}
d(x)=D_\rho(p,x) \leq \frac{\rho}{\rho - K}\left(D_\rho (p,y) + D_\rho(x,y)\right)=\frac{\rho}{\rho - K} d(y) +\frac{\rho}{\rho-K} D_\rho(x,y).
\end{equation*}
\end{remark}

In light of Proposition \ref{prop:D_metric}, it will prove very  convenient to introduce the following notation for the balls with respect to $D_\rho$:
\begin{equation}
\begin{aligned}
\tilde B_\rho(x,r) &= \{y\in M, D_\rho(x,y) <r\} = B(x,-r^2,\rho r)\\
\tilde B_\rho(x,0)&=\{y\in M, D_\rho(x,y) =0\}.
\end{aligned}
\end{equation}

\begin{lemma}[$10$-covering] \label{lemma:10_covering}
Let $(M,g(t))_{t\in [-10,0]}$ be a smooth Ricci flow on a compact manifold that satisfies (RF4), $S\subset M$, and suppose that $\rho\geq 2K$. Suppose that there is a function $0<r_x\leq 1$ for every $x\in S$ such that $\inf_{x\in S} r_x>0$. Then there is a maximal finite collection of mutually disjoint balls $\{\tilde B_\rho (x,r_x)\}_{x\in A}$, such that
\begin{equation}\label{eqn:10_covering}
\bigcup_{x\in S} \tilde B_\rho(x,r_x)\subset \bigcup_{x\in A} B_{\rho}(x,10r_x).
\end{equation}
\end{lemma}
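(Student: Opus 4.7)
The plan is a Vitali-type greedy construction adapted to the almost-triangle behaviour of $D_\rho$ recorded in Proposition \ref{prop:D_metric}. I build the collection inductively, at each stage choosing a ball whose radius is within a factor two of the supremum of the still-available candidates. Concretely, set $\mathcal{S}_1 = S$; if $\mathcal{S}_k \neq \emptyset$, let $s_k = \sup_{x \in \mathcal{S}_k} r_x \leq 1$, pick $x_k \in \mathcal{S}_k$ with $r_{x_k} > s_k/2$, and then set
$$\mathcal{S}_{k+1} = \{x \in \mathcal{S}_k : \tilde B_\rho(x, r_x) \cap \tilde B_\rho(x_k, r_{x_k}) = \emptyset\}.$$
Letting $r_0 = \inf_{x \in S} r_x > 0$, each selected ball contains $\tilde B_\rho(x_k, r_0) = B(x_k, -r_0^2, \rho r_0)$, and these must be mutually disjoint metric balls of a fixed positive radius in the compact Riemannian manifold $(M, g(-r_0^2))$, which forces the process to stop after finitely many steps. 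Declare $A = \{x_1, \ldots, x_N\}$; maximality of $A$ is automatic since $\mathcal{S}_{N+1} = \emptyset$.

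For the covering assertion, fix $x \in S$ and choose $k$ with $x \in \mathcal{S}_k \setminus \mathcal{S}_{k+1}$; then there is $y \in \tilde B_\rho(x, r_x) \cap \tilde B_\rho(x_k, r_{x_k})$ and $r_x \leq s_k < 2 r_{x_k}$. I first apply assertion (5) of Proposition \ref{prop:D_metric} with $\rho_1 = \rho_2 = \rho \geq 2K$ to get
$$D_{2\rho}(x, x_k) \leq \max(D_\rho(x, y), D_\rho(y, x_k)) < \max(r_x, r_{x_k}) \leq 2 r_{x_k}.$$
For any $z \in \tilde B_\rho(x, r_x)$, a second application of the same assertion with $\rho_1 = \rho$, $\rho_2 = 2\rho$ yields
$$D_{3\rho}(z, x_k) \leq \max(D_\rho(z, x), D_{2\rho}(x, x_k)) < 2 r_{x_k}.$$
Assertion (4) then converts this into $D_\rho(z, x_k) \leq \tfrac{3\rho - K}{\rho - K}\, D_{3\rho}(z, x_k) < 10\, r_{x_k}$, where the constant uses that $\tfrac{3\rho - K}{\rho - K} \leq 5$ precisely when $\rho \geq 2K$. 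Hence $z \in \tilde B_\rho(x_k, 10\, r_{x_k})$, giving the desired covering.

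There is no serious obstacle here; the only mildly delicate point is extracting exactly the constant $10$. A direct use of the quasi-triangle inequality of assertion (3), with constant $\rho/(\rho - K) \leq 2$, would only produce a covering factor of order sixteen. Iterating assertion (5) instead enlarges only the scale index of $D$ without incurring a multiplicative loss; converting back to $D_\rho$ via a single scale-index adjustment using assertion (4) at the very end then produces the constant $10$, and is sharp at $\rho = 2K$.
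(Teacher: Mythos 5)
Your proof is correct and takes a genuinely different route from the paper's. The paper partitions $S$ into dyadic scale buckets $U_j=\{x : 2^{-j}<r_x\le 2^{-(j-1)}\}$ and builds the maximal disjoint family one bucket at a time, then derives the covering by iterating the quasi-triangle inequality of Proposition \ref{prop:D_metric}(3): first $D_\rho(x,z)\le\frac{\rho}{\rho-K}(r_x+r_z)\le\frac{3\rho}{\rho-K}r_x<5r_x$, then $D_\rho(x,y)\le\frac{\rho}{\rho-K}(5r_x+r_z)\le\frac{7\rho}{\rho-K}r_x<10r_x$. Both of these final strict inequalities need $\rho$ somewhat larger than $2K$ (the first requires $\rho>\tfrac{5}{2}K$, the second $\rho>\tfrac{10}{3}K$), which is why the paper writes ``if $\rho\ge\rho(K)$'' — the lemma's stated hypothesis $\rho\ge 2K$ is, strictly speaking, not sufficient for the paper's own estimates. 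Your greedy Vitali construction replaces the dyadic bucketing (and yields finiteness and maximality just as directly, via the nesting $\tilde B_\rho(x_k,r_0)\subset\tilde B_\rho(x_k,r_{x_k})$ from Lemma \ref{lemma:balls_inclusion}), and more importantly your route through assertion (5) of Proposition \ref{prop:D_metric} — which raises the scale index of $D$ at no multiplicative cost — followed by a single index-lowering via assertion (4) — whose factor $\frac{3\rho-K}{\rho-K}\le 5$ precisely when $\rho\ge 2K$ — produces the constant $10$ sharply at the stated threshold $\rho=2K$. So your argument is not only correct, it is in fact cleaner than the paper's and closes the small gap between hypothesis and proof that the paper leaves open.
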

\begin{proof}
For every $j\geq 1$ define
$$U_j=\{x\in S, 2^{-j} < r_x \leq 2^{-(j-1)} \}.$$
We will construct finite subsets $A_j\subset U_j$, $j\geq 1$, maximal with respect to the property that each ball $\tilde B_\rho (x,r_x)$, $x\in A_j$ does not intersect any of the balls $\tilde B_\rho(y,r_y)$, $y\in A_1\cup\cdots \cup A_j$. 

The set $A_1\subset U_1$ is constructed by choosing any arbitrary, possibly finite, maximal sequence $A_1=\{x_i^1\}_{i\geq 1}\in U_1$ such that the balls $\tilde B_\rho (x,r_x)$, $x\in A_1$ are mutually disjoint. Then, we can construct each set $A_j\subset U_j$ by choosing $A_j=\{x^j_i\}_{i\geq 1}$, where $\{x^j_i\}_{i\geq 1}$ is a, possibly finite, maximal sequence such that  each ball $\tilde B_\rho(x^j_l,r_{x^j_l})$ does not intersect any of the balls $\tilde B_\rho(y,r_y)$, $y\in A_1\cup\cdots\cup A_{j-1}\cup \bigcup_{k=1}^{l-1} \{x_k^j\}$. By construction, it is clear that the balls $\tilde B_\rho(x,r_x)$, $x\in A_1\cup\cdots\cup A_j$ are mutually disjoint.

Each set $A_j$ constructed above is finite. Otherwise, by the compactness of $M$, we would be able to find a sequence in $A_j$ which is Cauchy with respect to the metric $d_{g(-4^{-j})}$. This  would contradict the requirement that the balls $\tilde B_\rho(x,r_x)$, $x\in A_j$ are mutually disjoint, by the distance distortion assumption (RF4) and Lemma \ref{lemma:balls_inclusion}, since $r_x\geq 2^{-j}$ in $U_j$.  The finiteness of $A_j$  implies that it is maximal with respect to the required property.

Moreover, by the assumption $\inf_{x\in S} r_x>0$, there is only a finite number of $j\geq 1$ such that $U_j$ is non-empty. It follows that $A=\bigcup_{j\geq 1} A_j$ is a finite set, as a finite union of finite sets.

We will show that $A$ is maximal with the property that the balls $\tilde B_\rho(x,r_x)$, $x\in A$ are mutually disjoint. Take any $z\in U_j$, for some $j\geq 1$. Since $A_j$ is maximal, we know that $\tilde B_\rho(z,r_z)$ must  intersect one of the balls $\tilde B_\rho(x,r_x)$, for some $x\in  A_1\cup \cdots \cup A_j \subset A$. 

It remains to prove \eqref{eqn:10_covering}. We will first show that
\begin{equation}\label{eqn:5_cover}
S\subset \bigcup_{x\in A} \tilde B_\rho(x,5r_x).
\end{equation}
Take any $z\in U_j$. It is then clear that if $x\in A_j\subset U_j$, then $r_z \leq 2r_x$. On the other hand, if $x\in A_i$ for some $1\leq i<j$, it is clear that $r_z<r_x$. 

By the maximality of $A$ we know that there is $x\in A$ and a $w\in\tilde B_\rho(x,r_x) \cap \tilde B_\rho(z,r_z)$. Applying the triangle inequality of Proposition \ref{prop:D_metric} we obtain
\begin{equation*}
D_\rho(x,z) \leq \frac{\rho}{\rho-K} (D_\rho(x,w) + D_\rho(w,z))\leq \frac{\rho}{\rho-K} (r_x + r_z)\leq \frac{3\rho}{\rho-K} r_x < 5r_x,
\end{equation*}
if $\rho\geq \rho(K)$. This proves that $z\in \tilde B_\rho(x,5r_x)$, showing \eqref{eqn:5_cover}. 

To prove \eqref{eqn:10_covering} we just need to prove that $\tilde B_\rho(z,r_z)\subset \tilde B_\rho(x,10r_x)$, where $x,z$ are as above. For this, note that applying the triangle inequality of Proposition \ref{prop:D_metric} once more, for any $y\in \tilde B_\rho(z,r_z)$, we obtain
\begin{equation*}
D_\rho(x,y) \leq \frac{\rho}{\rho-K} (D_\rho(x,z) + D_\rho (z,y)) \leq \frac{\rho}{\rho-K}(5r_x + r_z)\leq \frac{7\rho}{\rho-K} r_x <10r_x,
\end{equation*}
if $\rho\geq \rho(K)$. This proves that $\tilde B_\rho(z,r_z)\subset \tilde B_\rho(x,10r_x)$, and thus \eqref{eqn:10_covering}.

\end{proof}

We conclude this section we the following lemma.

\begin{lemma}\label{lemma:tilde_balls_inclusion}
Let $(M, g(t))_{t\in [-1,0]}$ be a smooth complete Ricci flow satisfying (RF4), and $p\in M$. If $R\geq 12 K$ then, for every $r_i=2^{-i}\leq 2^{-5}$, the following inclusions hold.
\begin{enumerate}
\item For every $z\in \tilde B_{\frac{3R}{2}}(p,1)$, $\tilde B_{2R}(z,r_i)\subset \tilde B_{\frac{5R}{3}}(p,1)$.
\item For every $z\in\tilde B_{\frac{5R}{3}}(p,1)$, $\tilde B_{2R}(z,2r_i) \subset \tilde B_{2R}(p,1).$
\end{enumerate}

\end{lemma}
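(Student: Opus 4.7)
The plan is to unpack both $\tilde B$-notations via Assertion (2) of Proposition \ref{prop:D_metric}, transport all the resulting Riemannian distances to time $t=-1$ using (RF4), and then close by the ordinary triangle inequality on $(M,g(-1))$. The tempting shortcut would be to invoke the scale triangle inequality of Proposition \ref{prop:D_metric}(3) directly on $D_{5R/3}$ (resp.\ $D_{2R}$), but that inequality requires $\min(\rho_1,\rho_2)\geq \rho-K$, which in our regime ($\rho_1=3R/2,\ \rho=5R/3$) forces $K\geq R/6$ — incompatible with $R\geq 12K$. Working at the raw distance level avoids this loss.

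For (1), let $z\in\tilde B_{3R/2}(p,1)$ and $y\in\tilde B_{2R}(z,r_i)$. By Proposition \ref{prop:D_metric}(2), $d_{g(-1)}(p,z)<\tfrac{3R}{2}$ and $d_{g(-r_i^2)}(z,y)<2R\,r_i$. Since $r_i<1$, the pair $s=-1<t=-r_i^2$ lies in $I=[-1,0]$, so (RF4) yields
\begin{equation*}
d_{g(-1)}(z,y)\ \leq\ d_{g(-r_i^2)}(z,y)+K(1-r_i)\ <\ 2R\,r_i+K.
\end{equation*}
The triangle inequality at time $-1$ then gives
\begin{equation*}
d_{g(-1)}(p,y)\ <\ \tfrac{3R}{2}+2R\,r_i+K.
\end{equation*}
Using $r_i\leq 2^{-5}$ and $K\leq R/12$ (from $R\geq 12K$), the right-hand side is at most $\tfrac{3R}{2}+\tfrac{R}{16}+\tfrac{R}{12}=\tfrac{79R}{48}<\tfrac{80R}{48}=\tfrac{5R}{3}$. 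Invoking Proposition \ref{prop:D_metric}(2) in the reverse direction, $D_{5R/3}(p,y)<1$, i.e.\ $y\in\tilde B_{5R/3}(p,1)$.

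For (2), repeat verbatim with $z\in\tilde B_{5R/3}(p,1)$ and $y\in\tilde B_{2R}(z,2r_i)$; note $2r_i\leq 2^{-4}<1$ so $-4r_i^2\in I$ and (RF4) applies with $s=-1$, $t=-4r_i^2$. One obtains
\begin{equation*}
d_{g(-1)}(p,y)\ <\ \tfrac{5R}{3}+4R\,r_i+K\ \leq\ \tfrac{5R}{3}+\tfrac{R}{8}+\tfrac{R}{12}\ =\ \tfrac{47R}{24}\ <\ 2R,
\end{equation*}
again using $r_i\leq 2^{-5}$ and $K\leq R/12$, which by Proposition \ref{prop:D_metric}(2) translates to $y\in\tilde B_{2R}(p,1)$.

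The only nontrivial thing to check is the arithmetic: that the radii $3R/2$ and $5R/3$ leave sufficient slack against $5R/3$ and $2R$ respectively to absorb both the initial radius term $2Rr_i$ (or $4Rr_i$) and the distortion loss $K$. The threshold $r_i\leq 2^{-5}$ together with $R\geq 12K$ is tuned precisely so that both conditions $2Rr_i+K<R/6$ and $4Rr_i+K<R/3$ hold with room to spare; no further analytic input is required.
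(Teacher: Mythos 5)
Your proof is correct and follows the same route as the paper: unpack the $\tilde B$ balls, shift the time-$(-r_i^2)$ distance to time $-1$ via (RF4), and close with the ordinary triangle inequality at $t=-1$; the threshold arithmetic with $r_i\le 2^{-5}$ and $K\le R/12$ is exactly as in the paper. (Minor slip: $\tfrac{5R}{3}+\tfrac{R}{8}+\tfrac{R}{12}=\tfrac{45R}{24}$, not $\tfrac{47R}{24}$, but the conclusion $<2R$ is unaffected.)
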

\begin{proof}

Recall that, by the distance distortion estimate, for every $t\in [-1,0]$ and $x,y\in M$, we have
\begin{equation*}
d_{g(t)}(x,y) \geq d_{g(-1)}(x,y) - K(1-\sqrt{|t|}),
\end{equation*}
therefore for any $z\in M$
\begin{equation*}
B(z,-r_i^2, 2R r_i) \subset B(z,-1,2 R r_i + K).
\end{equation*}
The claim will follow by the triangle inequality at $t=-1$, once $K\leq \frac{R}{12}$ and $r_i\leq 2^{-5}$, since
\begin{align*}
\frac{3R}{2} + 2Rr_i + K <\frac{5 R}{3}  \quad\Longleftrightarrow \quad 2Rr_i + K< \frac{1}{6}R,\\
\frac{5R}{3}+4R r_i +K <2R \quad \Longleftrightarrow\quad 4Rr_i +K< \frac{1}{3}R.
\end{align*}

\end{proof}

\subsection{Almost selfsimilarity}

We will say that a pointed, smooth, complete Ricci flow $(M,g(t),p)_{t<0}$ is $k$-selfsimilar if $(M,g(t))=(M'\times\mathbb R^k, g'(t)\oplus g_{\mathbb R^k})_{t<0}$ is the selfsimilar Ricci flow induced by a gradient shrinking Ricci soliton and $p\in\mathcal S_{\textrm{point}}$. Note that for a $k$-selfsimilar Ricci flow, $k$ need not be maximal.

\begin{definition}[$(k,\delta)$-selfsimilar]\label{def:almost_ss}
A pointed smooth complete Ricci flow $(M,g(t),p)_{t\in (-2\delta^{-1} r^2,0)}$ is said to be $(k,\delta)$-selfsimilar at scale $r>0$ if $g_r(t)= r^{-2} g(r^2 t)$, $t\in (-2\delta^{-1},0)$ satisfies the following: there is a $k$-selfsimilar Ricci flow $(\tilde M,\tilde g(t),q)_{t<0}$ and a diffeomorphism onto its image $F: \tilde B_{\delta^{-1}}(q,1) \rightarrow M$ such that $F(q)=p$ and
\begin{align*}
|\tilde \nabla^l(F^* g_r - \tilde g)|_{\tilde g} <\delta,\\
|d_{\tilde g(t)}(q, \cdot) - d_{g_r(t)} (p,F(\cdot))|<\delta,
\end{align*}
in $\tilde B_{\delta^{-1}}(q,1)\times [-\delta^{-1},-\delta]$, for every $0\leq l <\delta^{-1}$.

We define $\mathcal L_{p,r} = F(\mathcal S_{\textrm{point}}\cap \tilde B_{\delta^{-1}} (q,1))$ and say that $(M,g(t),p)_{t\in (-2\delta^{-1} r^2,0)}$ is $(k,\delta)$-selfsimilar at scale $r>0$ with respect to $\mathcal L_{p,r}$.

We will also say that $(M,g(t),p)_{t\in (-2\delta^{-1} r^2,0)}$ is $\delta$-close at scale $r>0$ to $(\tilde M,\tilde g(t),q)_{t<0}$.
\end{definition}

Below we collect two results from \cite{G25} which relate almost selfsimilarity with the pointed entropy \eqref{eqn:def_pointed_entropy}.

\begin{lemma}[Lemma 4.1 in \cite{G25}] \label{lemma:a_ss_to_ss}
Let $\delta_j\rightarrow 0$ and $(M_j,g_j(t),p_j)_{t\in (-2\delta_j^{-1},0)}$ be a pointed  sequence of complete $(k,\delta_j)$-selfsimilar at scale $1$ Ricci flows, converging smoothly to a complete Ricci flow $(M_\infty,g_\infty(t),p_\infty)_{t<0}$. Then $(M_\infty,g_\infty,p_\infty)_{t<0}$ is $k$-selfsimilar with spine $\mathcal S$. Moreover, if $M_j$ are compact and $g_j(t)$ is defined in $(-2\delta_j^{-1},0]$ and satisfies (RF2-3) then the conjugate heat flows $\nu_{(p_j,0)}$ converge to a conjugate heat flow $\nu_\infty\in\mathcal S$ and 
$$\mathcal W_{p_j}(|t|) \rightarrow \mu(g_\infty(-1))$$
for every $t<0$.
\end{lemma}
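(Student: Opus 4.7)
For each $j$ the $(k,\delta_j)$-selfsimilar hypothesis provides a true $k$-selfsimilar Ricci flow $(\tilde M_j,\tilde g_j(t),q_j)_{t<0}$ induced by a gradient shrinking soliton $(\tilde M_j,\bar g_j,\bar f_j)$, together with a diffeomorphism $F_j:\tilde B_{\delta_j^{-1}}(q_j,1)\to M_j$ with $F_j(q_j)=p_j$ and $|\tilde\nabla^l(F_j^*g_j-\tilde g_j)|_{\tilde g_j}<\delta_j$ for $0\leq l<\delta_j^{-1}$. Since $\delta_j\to 0$ the domains exhaust $\tilde M_j$, and Hamilton--Cheeger--Gromov compactness combined with the standard curvature and potential growth estimates on gradient shrinking solitons cited in \cite{HasMu} yields, after a diagonal extraction, a smooth pointed limit $(\tilde M_\infty,\tilde g_\infty(t),q_\infty)$ and a limiting potential $\bar f_\infty$ satisfying $\ric_{\bar g_\infty}+\hess \bar f_\infty=\bar g_\infty/2$. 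The closeness estimate identifies this limit isometrically with $(M_\infty,g_\infty(t),p_\infty)$, so $(M_\infty,g_\infty(t),p_\infty)_{t<0}$ is self-similar; by Proposition \ref{prop:spine} it splits as $M'_\infty\times\mathbb R^l$ for some maximal $l\geq k$. Each $q_j\in\mathcal S^{\tilde g_j}_{\textrm{point}}$, and the uniform diameter bound of Proposition \ref{prop:spine}(3) transfers this closed property to the limit to give $p_\infty\in\mathcal S_{\textrm{point}}$.

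\textbf{Step 2: conjugate heat flow limit and entropy convergence.} Under (RF2--3), Remark \ref{rmk:chk} supplies a uniform constant $H=H(n,C_I,\Lambda)$ for which each $\nu_{(p_j,0)}$ satisfies (CHF) with respect to $p_j$. Proposition \ref{prop:compactness_rf} then extracts a further subsequential limit $\nu_{(p_j,0)}\to\nu_\infty$, a conjugate heat flow on $(M_\infty,g_\infty(t))$ satisfying (CHF) with respect to $p_\infty$. To identify $\nu_\infty\in\mathcal S$ and establish $\mathcal W_{p_j}(|t|)\to\mu(\bar g_\infty)$ I would combine entropy monotonicity with self-similarity. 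The Gaussian tail from (CHF) together with smooth convergence give by dominated convergence $\mathcal W_{p_j}(|t|)\to \mathcal W_\infty(|t|):=\mathcal W(g_\infty(t),f_\infty(\cdot,t),|t|)$ for each $t<0$; monotonicity in $|t|$ combined with the lower bound $\mathcal W_{p_j}(|t|)\geq \mu(g_j(-|t|),|t|)\to \mu(\bar g_\infty)$ (the convergence of $\mu$ following from scale-invariance and the smooth convergence $g_j(-|t|)\to g_\infty(-|t|)$) gives $\mathcal W_\infty(|t|)\geq \mu(\bar g_\infty)$. On the other hand, the pointed entropy on the true model at $q_j\in\mathcal S^{\tilde g_j}_{\textrm{point}}$ asymptotes to $\mu(\bar g_j)$ as $|t|\to\infty$, which together with the almost selfsimilar control on the large time window $|t|\in[\delta_j,\delta_j^{-1}]$ and the continuity $\mu(\bar g_j)\to\mu(\bar g_\infty)$ produced by the explicit formula \eqref{eqn:entropy_soliton} yields the matching upper bound $\mathcal W_{p_j}(|t|)\leq\mu(\bar g_j)+o(1)$. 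Constancy $\mathcal W_\infty(|t|)\equiv\mu(\bar g_\infty)$ then triggers the equality case of Perelman's monotonicity formula, forcing $(g_\infty(t),f_\infty(\cdot,t))$ to satisfy the gradient shrinking soliton identity and hence $\nu_\infty\in\mathcal S$.

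\textbf{Main obstacle.} The delicate point is the upper entropy bound $\mathcal W_{p_j}(|t|)\leq \mu(\bar g_j)+o(1)$ in the almost selfsimilar window. Since $F_j$ provides only a quantitative $C^{\delta_j^{-1}}$ closeness to the model $\tilde g_j$, the true conjugate heat kernel of $\tilde g_j$ cannot be directly transported onto $g_j$ without a stability statement for conjugate heat kernels on sequences of Ricci flows in our class. A cleaner route, which avoids such stability, is a soft rescaling argument: extract subsequential parabolic blow-down limits of $\nu_\infty$, observe that the underlying flow is already self-similar so the blow-down remains $(M_\infty,g_\infty)$, and invoke the rigidity in Perelman's monotonicity to force the blow-down limit to be a soliton measure; constancy of the limit entropy then propagates back to $\nu_\infty$ itself. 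Every remaining ingredient---smooth extraction, passage of the soliton identity, closedness of $\mathcal S_{\textrm{point}}$ under the splitting, and dominated convergence of the entropy integrand---is a routine application of the compactness machinery already set up in this section.
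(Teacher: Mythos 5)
Since this lemma is quoted from \cite{G25} and its proof does not appear in the present paper, I cannot line you up against the authors' actual argument; I can only assess your proposal on its own terms. Your Step 1 is in the right spirit (extract a limit of the model solitons, normalize potentials, pass the soliton identity to the limit), although you assert $l\geq k$ and $p_\infty\in\mathcal S_{\textrm{point}}$ rather breezily: the fact that a $k$-fold Euclidean splitting of the comparison models survives the limit, and that minimizers of $\tilde f_j(\cdot,t)$ converge to a minimizer of $f_\infty(\cdot,t)$, each need a short argument (e.g.\ passing parallel vector fields to the limit, and using the quadratic lower bound on the potential to prevent the minimizers from escaping). These are closable but not free.

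The real problem is in Step 2, and it is twofold. First, your lower bound rests on the claim $\mu(g_j(-|t|),|t|)\to\mu(\bar g_\infty)$. This is not justified: $\mu(g_j(-|t|),|t|)$ is a \emph{global} infimum over all of the compact manifold $M_j$, whereas the convergence $(M_j,g_j,p_j)\to(M_\infty,g_\infty,p_\infty)$ is only pointed Cheeger--Gromov. Regions of $M_j$ far from $p_j$ can drag $\mu(g_j(-|t|),|t|)$ strictly below $\mu(\bar g_\infty)$ in the limit, so you only get a uniform lower bound $-\Lambda$ from (RF2), not the convergence you need. Second, you correctly identify the upper entropy bound $\mathcal W_{p_j}(|t|)\leq\mu(\bar g_j)+o(1)$ as the delicate point, but the ``soft rescaling'' workaround you sketch does not close it: a parabolic blow-down of $\nu_\infty$ having constant entropy controls $W(t):=\mathcal W(g_\infty(t),f_\infty(\cdot,t),|t|)$ only in the limit $t\to-\infty$. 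Since $W$ is merely monotone, it could be strictly increasing on every finite window and still have a constant blow-down limit, so constancy does not ``propagate back'' to $\nu_\infty$. What is actually needed is a quantitative stability statement for conjugate heat kernels under the $C^{\delta_j^{-1}}$-closeness of Definition \ref{def:almost_ss} -- precisely the thing you flag as unavailable -- or an argument showing directly that on the $k$-selfsimilar limit flow, any conjugate heat flow satisfying (CHF) with respect to a point of the point-spine \emph{must} lie in $\mathcal S$ (using the concentration forced by (CHF) as $t\to 0^-$). Without one of these, the identification $\nu_\infty\in\mathcal S$ and the entropy limit are not established.
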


\begin{lemma}[Proposition 4.5 in \cite{G25}] \label{lemma:entropy_to_ss}
Let $(M_j, g_j(t),p_j)_{t\in (-2\delta_j^{-1},0]}$, $\delta_j\rightarrow 0$, be a sequence of compact Ricci flow satisfying (RF1-3) such that
$$\mathcal W_{p_j}(\delta_j)-\mathcal W_{p_j}(\delta_j^{-1}) <\delta_j.$$
Then, passing to a subsequence, we may assume that $(M_j,g_j(t),p_j)_{t\in (-2\delta_j^{-1},0]}$ converges to a smooth complete Ricci flow $(M_\infty,g_\infty(t),p_\infty)_{t<0}$ which is induced by a gradient shrinking Ricci soliton. Moreover, there is $D=D(n,H)<+\infty$ such that 
$$d_{g(t)}(p_\infty,\mathcal S_{\textrm{point}})\leq D\sqrt{|t|}$$
for every $t<0$.
\end{lemma}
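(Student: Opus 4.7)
The plan is to extract a subsequential limit via the compactness proposition, identify it as the selfsimilar Ricci flow induced by a gradient shrinking Ricci soliton using Perelman's monotonicity formula together with the near-equality of the pointed entropies across the scales $\delta_j$ and $\delta_j^{-1}$, and finally locate the point-spine using the (CHF) bound inherited from (RF3).

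First I would apply Proposition \ref{prop:compactness_rf} to the sequence $(M_j, g_j(t), p_j)_{t\in(-2\delta_j^{-1},0]}$. Assumption (RF3) ensures that the conjugate heat kernel based at $(p_j, 0)$ gives rise to a conjugate heat flow $\nu_{(p_j,0)}$ satisfying (CHF) with respect to $p_j$ for the uniform constant $H$; combined with (RF1--2) this places the sequence in the hypotheses of that proposition. After passing to a subsequence, I obtain a smooth complete limit $(M_\infty, g_\infty(t), p_\infty)_{t<0}$ satisfying (RF1), together with a limiting conjugate heat flow $\nu_\infty$ satisfying (CHF) with respect to $p_\infty$ for the same constant $H$. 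Write $u_\infty = (4\pi|t|)^{-n/2} e^{-f_\infty(\cdot,t)}$ for the limiting conjugate heat kernel.

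Next I would pass Perelman's monotonicity formula to the limit to force the soliton equation. Writing $u_j = (4\pi |t|)^{-n/2} e^{-f_j}$ for the $j$-th conjugate heat kernel based at $(p_j, 0)$, the formula reads
\begin{equation*}
\mathcal W_{p_j}(\tau_2) - \mathcal W_{p_j}(\tau_1) = \int_{\tau_2}^{\tau_1} 2\tau \int_{M_j} \Bigl| \ric_{g_j(-\tau)} + \hess_{g_j(-\tau)} f_j(\cdot,-\tau) - \tfrac{g_j(-\tau)}{2\tau} \Bigr|^2 u_j(\cdot,-\tau)\, d\vol_{g_j(-\tau)}\, d\tau,
\end{equation*}
for any $0 < \tau_2 < \tau_1$, and the integrand is non-negative. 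Fix $0<\tau_2<\tau_1<\infty$; for $j$ large we have $\delta_j < \tau_2 < \tau_1 < \delta_j^{-1}$, so by monotonicity of $\mathcal W_{p_j}$ the displayed quantity is bounded above by $\mathcal W_{p_j}(\delta_j)-\mathcal W_{p_j}(\delta_j^{-1})<\delta_j\to 0$. Using smooth convergence on compact parabolic subsets, together with the Gaussian tail control on $u_j$ provided by (CHF) and the uniform bounds on $|\ric|$, $|\nabla f_j|$, $|\hess f_j|$ in parabolic neighbourhoods (from (RF1) and standard parabolic estimates for $f_j$), I would justify dropping the contribution outside large balls and pass to the limit to obtain
\begin{equation*}
\ric_{g_\infty(-\tau)} + \hess f_\infty(\cdot,-\tau) = \frac{g_\infty(-\tau)}{2\tau} \qquad \text{on } M_\infty
\end{equation*}
for almost every, and hence by smoothness every, $\tau \in (\tau_2, \tau_1)$. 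As $\tau_1, \tau_2$ were arbitrary, the soliton identity holds on all of $M_\infty \times (-\infty, 0)$, exhibiting $(M_\infty, g_\infty(t))_{t<0}$ as the selfsimilar Ricci flow associated to the normalized gradient shrinking Ricci soliton $(M_\infty, g_\infty(-1), f_\infty(\cdot,-1))$, with $\nu_{f_\infty}\in\mathcal S$.

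For the point-spine bound, once the soliton structure is established, \cite{HasMu} supplies a point $q\in M_\infty$ at which $f_\infty(\cdot, t)$ attains its minimum simultaneously for every $t<0$; by the definition in the preliminaries, $q\in\mathcal S_{\textrm{point}}$. Since $\nu_\infty$ satisfies (CHF) with respect to $p_\infty$ with the constant $H$, the upper bound at $x=p_\infty$ gives $f_\infty(p_\infty, t) \leq H$, hence $f_\infty(q,t) \leq f_\infty(p_\infty, t) \leq H$, while the lower bound at $x=q$ gives
\begin{equation*}
\tfrac{d_{g_\infty(t)}(p_\infty, q)^2}{H |t|} - H \leq f_\infty(q, t) \leq H,
\end{equation*}
so that $d_{g_\infty(t)}(p_\infty, q) \leq H\sqrt{2|t|}$; any $D = D(n,H) \geq H\sqrt{2}$ then works. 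The main obstacle is the passage to the limit in the soliton-defect integral: justifying it requires combining local smooth convergence with a tail argument based on the (CHF) Gaussian decay, to ensure that the mass of $u_j$ outside sufficiently large time-dependent balls is uniformly negligible over compact ranges of $\tau$.
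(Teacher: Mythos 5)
This lemma is stated in the paper as a citation of Proposition~4.5 in~\cite{G25}, so the paper itself contains no proof to compare against; I can only assess your argument on its own merits. Your route---extract a subsequential limit via Proposition~\ref{prop:compactness_rf}, force the soliton identity by passing Perelman's $\mathcal W$-entropy monotonicity formula to the limit under the entropy-pinching hypothesis, and then locate the spine point using \cite{HasMu} together with the inherited (CHF) bound---is the standard and expected one, and each step is correct. In particular, the monotonicity inequality $\mathcal W_{p_j}(\tau_2)-\mathcal W_{p_j}(\tau_1)\le \mathcal W_{p_j}(\delta_j)-\mathcal W_{p_j}(\delta_j^{-1})<\delta_j$ for $\delta_j<\tau_2<\tau_1<\delta_j^{-1}$ is used correctly, as is the chain $f_\infty(q,t)\le f_\infty(p_\infty,t)\le H$ followed by the (CHF) lower bound at $x=q$, yielding $D=H\sqrt2$.

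One small remark: the "main obstacle" you flag at the end---controlling the Gaussian tails of $u_j$ to pass the soliton-defect integral to the limit---is in fact not needed for this argument. Since the integrand $2\tau\,\lvert\ric+\hess f_j - g_j/(2\tau)\rvert^2\,u_j$ is nonnegative, restricting the spatial integral to the image $F_j(K)$ of any fixed compact $K\subset M_\infty$ only decreases it, and local smooth convergence then gives
\begin{equation*}
\int_{\tau_2}^{\tau_1}\int_K 2\tau\,\Bigl\lvert\ric_\infty+\hess f_\infty-\tfrac{g_\infty}{2\tau}\Bigr\rvert^2 u_\infty \,d\vol_{g_\infty(-\tau)}\,d\tau \;\le\; \liminf_j\bigl(\mathcal W_{p_j}(\tau_2)-\mathcal W_{p_j}(\tau_1)\bigr)=0,
\end{equation*}
from which the soliton identity follows on $K\times[\tau_2,\tau_1]$ by positivity of $u_\infty$. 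Gaussian tail bounds would be needed only if you wanted convergence of the full integral over $M_j$, which is not required to conclude that the limit defect vanishes. Aside from this, the only other place where a careful write-up would need a sentence is the standard rigidity statement that the soliton identity at every scale, together with the conjugate heat equation and bounded curvature (so uniqueness of complete Ricci flow applies), identifies $(M_\infty,g_\infty(t))_{t<0}$ with the selfsimilar flow induced by $(M_\infty,g_\infty(-1),f_\infty(\cdot,-1))$; you state this without proof, but it is standard.
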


\subsection{Almost splitting maps} In this subsection we review the notion of a $(k,\delta)$-splitting map that we will use, and recall some of their basic properties, established in \cite{G25}. 

\begin{definition}[$(k,\delta)$-splitting map]
\label{def:splitting_map}
Let $(M,g(t),p)_{t\in [-r^2,0]}$ be a smooth complete pointed Ricci flow and let $1\leq k\leq n$. Then $v=(v^1,\ldots,v^k): M\times (-r^2,0]\rightarrow \mathbb R^k$ is a $(k,\delta)$-splitting map around $p$ at scale $r$ if
\begin{enumerate}
\item Each $v^a$ solves the heat equation $\frac{\partial v^a}{\partial t} = \Delta_{g(t)} v^a$.
\item For every $a=1,\ldots,k$
\begin{equation}\label{eqn:def_splitting_map_hessian}
\int_{-r^2}^{-\delta r^2}\int_{M} | \hess_{g(t)} v^a|^2 d\nu_{(p,0),t} dt \leq\delta
\end{equation}
\item For any $a,b=1,\ldots,k$
\begin{equation}\label{eqn:def_splitting_map_gradient}
\int_{-r^2}^{-\delta r^2}\int_{M} \left| \langle\nabla v^a,\nabla v^b\rangle -\delta^{ab} \right|^2  d\nu_{(p,0),t} dt\leq\delta r^2.
\end{equation}
\end{enumerate}
In general, if $(M,g(t))_{t\in [-r^2,0)}$ is a smooth complete Ricci flow and $\nu=(\nu_t)_{t\in [-r^2,0)}$ is a conjugate heat flow, we will say that a solution $v: M\times (-r^2,0)\rightarrow \mathbb R^k$ to the heat equation is a $(k,\delta)$-splitting map at scale $r$ if \eqref{eqn:def_splitting_map_hessian} and \eqref{eqn:def_splitting_map_gradient} hold with respect to $\nu_t$.
\end{definition}

\begin{lemma}[Change of center and scale of splitting maps, Lemma 7.1 in \cite{G25}]\label{lemma:near_splitting}
 Let $(M,g(t))_{t\in [-1,0]}$ be a smooth compact  Ricci flow and let $v:M\times (-1,0)\rightarrow \mathbb R^k$ be a $(k,\delta)$-splitting map around $p$ at scale $1$, and fix $r_0 \in (0,1]$ and $\varepsilon>0$.
\begin{enumerate}
\item If $0<\delta\leq\delta(r_0|\varepsilon)$ then $v$ is also a $(k,\varepsilon)$-splitting map around $p$ at scale $r$ for every $r\in [r_0,1]$.
\item Fix $s>0$. If $(M,g(t))_{t\in [-1,0]}$ satisfies (RF1-3) then there is $\gamma=\gamma(n,H)\in (0,1]$ such that if $0<\delta\leq \delta(n,C_I,H|s,\varepsilon)$ then $v$ is a $(k,\varepsilon)$-splitting map around any $q\in B(p,-1,s)$ at scale $\gamma$.
\end{enumerate}
\end{lemma}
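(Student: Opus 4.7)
The two parts are of a quite different nature: I would handle Part (1) by a direct interval comparison, and Part (2) by a smooth compactness argument based on Proposition \ref{prop:compactness_rf}.

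\emph{Part (1).} The measure $\nu_{(p,0)}$ in Definition \ref{def:splitting_map} is independent of the scale $r$, so the only changes between scales $1$ and $r$ are the integration interval and the factor $\varepsilon r^2$ on the right-hand side of the gradient inequality. For $r \in [r_0, 1]$ the interval $[-r^2, -\varepsilon r^2]$ is contained in $[-1, -\varepsilon r_0^2]$, hence setting $\delta(r_0 \mid \varepsilon) := \varepsilon r_0^2$ ensures $[-r^2, -\varepsilon r^2] \subset [-1, -\delta]$. Since the Hessian and gradient deviation integrands are non-negative, both integrals at scale $r$ are dominated by their counterparts at scale $1$, which are at most $\delta \leq \min(\varepsilon, \varepsilon r^2)$ for $r \geq r_0$. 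Both conditions in Definition \ref{def:splitting_map} with parameter $\varepsilon$ at scale $r$ then follow.

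\emph{Part (2).} I argue by contradiction. Any fixed $\gamma \in (0,1]$ will in fact work, so one may simply declare $\gamma(n,H) := 1$. Suppose the conclusion fails for some $\varepsilon, s, C_I$. Then one obtains a sequence of compact smooth Ricci flows $(M_j, g_j(t), p_j)_{t \in [-1,0]}$ satisfying (RF1-3) with uniform constants, $(k,\delta_j)$-splitting maps $v_j$ around $p_j$ at scale $1$ with $\delta_j \to 0$, and points $q_j \in B(p_j, -1, s)$, none of which is $(k,\varepsilon)$-splitting at scale $\gamma$ around $q_j$. Applying Proposition \ref{prop:compactness_rf} and passing to subsequences, $(M_j, g_j(t), p_j) \to (M_\infty, g_\infty(t), p_\infty)$ smoothly, $\nu_{(p_j,0)} \to \nu_{(p_\infty,0)}$, and after a further extraction $q_j \to q_\infty \in \overline{B(p_\infty,-1,s)}$ together with $\nu_{(q_j,0)} \to \nu_{(q_\infty,0)}$ (the latter from standard parabolic regularity for conjugate heat kernels together with (RF3)). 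The weighted gradient bound from the splitting hypothesis, combined with the positive lower Gaussian bound on $u_{(p_j,0)}$ provided by (RF3), yields uniform local $L^2$ bounds on $\nabla v_j$; parabolic Moser/Schauder bootstrap for the heat equation then upgrades these to uniform $C^\infty$ bounds on compact subsets of $M_\infty \times (-1,0)$, so that $v_j \to v_\infty$ smoothly and $v_\infty$ solves the heat equation on $M_\infty \times (-1,0)$.

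Letting $\delta_j \to 0$ in the two splitting inequalities, the smooth convergence of $v_j$ and $\nu_{(p_j,0)}$, together with the strict positivity of $u_{(p_\infty,0)}$, forces $|\hess v_\infty^a| \equiv 0$ and $\langle \nabla v_\infty^a, \nabla v_\infty^b\rangle \equiv \delta^{ab}$ pointwise on $M_\infty \times (-1,0)$. Thus the $\nabla v_\infty^a$ are parallel and orthonormal, $v_\infty$ realises an isometric $\mathbb{R}^k$ splitting of $(M_\infty, g_\infty)$, and in particular is a $(k,0)$-splitting at scale $\gamma=1$ around $q_\infty$. The smooth convergence of $v_j$ and $\nu_{(q_j,0)}$ on the compact space-time region relevant to the splitting definition then forces $v_j$ to be $(k,\varepsilon)$-splitting at scale $\gamma$ around $q_j$ for all large $j$, contradicting the assumption. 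The main obstacle is securing this smooth convergence of $v_j$ and justifying the limit passage in the two weighted integral inequalities; this is where one uses (RF3) crucially, both to convert the weighted $L^2$ gradient control into a local unweighted bound via the Gaussian lower bound on $u_{(p_j,0)}$, and to provide uniform Gaussian majorants near the endpoints of the time interval so that dominated convergence applies to the weighted integrals.
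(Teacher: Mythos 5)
Your argument is correct. With $\delta(r_0\mid\varepsilon)=\varepsilon r_0^2$ the integration interval $[-r^2,-\varepsilon r^2]$ sits inside $[-1,-\delta]$ for all $r\in[r_0,1]$, both integrands are non-negative, and $\delta=\varepsilon r_0^2\leq\min(\varepsilon,\varepsilon r^2)$ for $r\geq r_0$; both splitting inequalities at scale $r$ follow. (The paper cites this lemma from \cite{G25} rather than proving it, but this step is elementary and your treatment is fine.)

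\textbf{Part (2).} Here the outline (compactness, pass to a split limit, derive a contradiction) is reasonable, but the proof as written has a genuine gap, and your declaration ``$\gamma:=1$'' is almost certainly not defensible. The contradiction argument requires you to show
\[
\int_{-\gamma^2}^{-\varepsilon\gamma^2}\int_{M_j}|\hess v_j^a|^2\,d\nu_{(q_j,0),t}\,dt\longrightarrow 0,
\]
but pointed Cheeger--Gromov convergence only gives you $\hess v_\infty\equiv 0$ together with locally uniform convergence of the integrand on \emph{compact} subsets of $M_\infty\times(-1,0)$. You must control the far-field contribution, and the device you invoke for this (``uniform Gaussian majorants \dots so that dominated convergence applies'') does not close. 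The only pointwise information supplied by (RF3) gives the upper bound $u_{(q,0)}(y,t)\leq(4\pi|t|)^{-n/2}e^{H-d_{g(t)}(q,y)^2/(H|t|)}$ and the lower bound $u_{(p,0)}(y,t)\geq(4\pi|t|)^{-n/2}e^{-H-Hd_{g(t)}(p,y)^2/|t|}$, so
\[
\frac{u_{(q,0)}(y,t)}{u_{(p,0)}(y,t)}\leq e^{2H}\exp\!\Bigl(\tfrac{1}{|t|}\bigl(H\,d_{g(t)}(p,y)^2-\tfrac{1}{H}d_{g(t)}(q,y)^2\bigr)\Bigr),
\]
which, since $H\geq1$, blows up as $d_{g(t)}(p,y)\to\infty$ even after applying the triangle inequality. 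Consequently the known smallness of $\int|\hess v_j|^2\,d\nu_{(p_j,0),t}$ cannot be transferred to a bound on $\int|\hess v_j|^2\,d\nu_{(q_j,0),t}$ by kernel comparison, and you also have no a priori pointwise control of $|\hess v_j|$ away from $p_j$ with which to build an integrable majorant. So the crucial limit passage is not justified.

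This far-field control is exactly what the constant $\gamma=\gamma(n,H)$ in the lemma is for; if $\gamma=1$ sufficed the statement would say so. A more robust route would replace pointwise kernel domination by the Bochner identity and the semigroup (reproduction) structure: since $(\partial_t-\Delta)|\nabla v_j^a|^2=-2|\hess v_j^a|^2$ under Ricci flow, the quantity $W_j(t):=\int_M|\nabla v_j^a|^2\,d\nu_{(q_j,0),t}$ is non-increasing and the Hessian integral in question equals $\tfrac12\bigl(W_j(-\gamma^2)-W_j(-\varepsilon\gamma^2)\bigr)$; one then bounds $W_j$ by reproducing $u_{(q_j,0)}$ through an intermediate time and comparing to the monotone quantity $\int|\nabla v_j^a|^2\,d\nu_{(p_j,0),t}$, and it is this step that naturally produces a $\gamma$ depending on $H$. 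As it stands, your Part (2) does not establish the conclusion.
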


\begin{lemma}[Normalizing a splitting map, Lemma 7.3 in \cite{G25}]\label{lemma:normalize_splitting}
Let $(M,g(t),p)_{t\in [-1,0]}$ be a smooth complete Ricci flow and $v:M \times [-1,0]\rightarrow \mathbb R$ a $(k,\delta)$-splitting map around $p$ at scale $1$. Then for every  $r\in [2\sqrt \delta, 1]$ there is a unique lower triangular $k\times k$ matrix $T_r$ such that  $v_r=T_r v$, with $v_r^a=(T_r)^a_m v^m$, which satisfies for every $a,b=1,\ldots,k$
\begin{equation}\label{eqn:Tr_on}
\frac{4}{3r^{2}}\int_{-r^2}^{-r^2/4}\int_M \langle \nabla v_r^a,\nabla v_r^b \rangle  d\nu_{(p,0),t} dt =  \delta^{ab}.
\end{equation}
Moreover, $||T_r-I_k||\leq C(n) r^{-2}\sqrt\delta$, where $||\cdot||$ denotes the maximum norm of a $k\times k$ matrix.
\end{lemma}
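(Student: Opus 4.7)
My plan is to read off $T_r$ from a Cholesky-type factorization of an averaged Gram matrix of $\nabla v$. I would first introduce the symmetric $k\times k$ matrix
\[
Q_r^{ab}:=\frac{4}{3r^{2}}\int_{-r^2}^{-r^2/4}\int_M \langle\nabla v^a,\nabla v^b\rangle\,d\nu_{(p,0),t}\,dt,
\]
so that the normalization condition \eqref{eqn:Tr_on} is exactly $T_rQ_rT_r^T=I_k$. For $r\in[2\sqrt\delta,1]$ one has $r^2/4\geq\delta$, so $[-r^2,-r^2/4]\subset[-1,-\delta]$, and the almost-orthonormality bound \eqref{eqn:def_splitting_map_gradient} in the definition of a $(k,\delta)$-splitting map applies on this interval. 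Applying Cauchy--Schwarz in the time variable, together with the fact that each $\nu_{(p,0),t}$ is a probability measure, I would obtain
\[
|Q_r^{ab}-\delta^{ab}|\leq \frac{4}{3r^{2}}\sqrt{\tfrac{3r^2}{4}}\sqrt{\delta}\leq C(n)\,r^{-1}\sqrt{\delta}.
\]

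Once $\delta$ is small enough that this error is less than, say, $1/2$, the matrix $Q_r$ is symmetric and positive definite. I would then invoke the uniqueness of the Cholesky factorization to produce a unique lower triangular $L_r$ with positive diagonal such that $Q_r=L_rL_r^T$. Unwinding the recursion that expresses the entries of $L_r$ in terms of the entries of $Q_r$ gives the perturbation estimate $\|L_r-I_k\|\leq C(n)\,r^{-1}\sqrt{\delta}$ by induction on the row index. Setting $T_r:=L_r^{-1}$, which is itself lower triangular with positive diagonal, produces $T_rQ_rT_r^T=I_k$ and, by a second Neumann-series perturbation bound for the inverse, $\|T_r-I_k\|\leq C(n)\,r^{-1}\sqrt{\delta}\leq C(n)\,r^{-2}\sqrt{\delta}$, where the last inequality uses $r\leq 1$.

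For uniqueness, suppose $\tilde T_r$ is another lower triangular matrix satisfying \eqref{eqn:Tr_on}. Then $A:=T_r\tilde T_r^{-1}$ is lower triangular and satisfies $AA^T=I_k$, so it is diagonal with entries $\pm 1$; the positive-diagonal convention (which is implicit, since the claimed bound $\|T_r-I_k\|\leq C(n)r^{-2}\sqrt\delta<1$ forces the diagonal entries of $T_r$ to be close to $1$) then selects $A=I_k$ and hence $\tilde T_r=T_r$. I do not expect a genuine obstacle in this argument: the analytic content reduces to Cauchy--Schwarz applied to the inequality \eqref{eqn:def_splitting_map_gradient}, which already yields the sharper bound $\|Q_r-I_k\|\lesssim r^{-1}\sqrt\delta$. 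The $r^{-2}\sqrt\delta$ appearing in the statement leaves ample room to absorb the dimensional constants coming from the Cholesky perturbation, and nothing beyond the probability-measure property of $\nu_{(p,0),t}$ is needed from the Ricci flow geometry.
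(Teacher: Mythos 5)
Your strategy --- define the time-averaged Gram matrix
$Q_r^{ab}=\frac{4}{3r^2}\int_{-r^2}^{-r^2/4}\int_M\langle\nabla v^a,\nabla v^b\rangle\,d\nu_{(p,0),t}\,dt$,
observe that $[-r^2,-r^2/4]\subset[-1,-\delta]$ precisely when $r\geq 2\sqrt\delta$, apply Cauchy--Schwarz against the $L^2$ bound in \eqref{eqn:def_splitting_map_gradient} to get $|Q_r^{ab}-\delta^{ab}|\leq\frac{2}{\sqrt3\,r}\sqrt\delta$, and then read off $T_r$ as the inverse of the Cholesky factor of $Q_r$, with uniqueness from the positive-diagonal convention and the rigidity of lower-triangular orthogonal matrices --- is the natural one, and almost certainly matches the argument in Lemma~7.3 of \cite{G25} (the present paper only cites that lemma; it does not reprove it). The algebra and the Cauchy--Schwarz step are correct, and your entrywise estimate is in fact one power of $r$ better than what the statement requires.

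There is, however, a quantitative gap at the small end of the range. Your error bound $\epsilon(r)=\frac{2}{\sqrt3\,r}\sqrt\delta$ equals $\frac{1}{\sqrt3}$ at $r=2\sqrt\delta$, independently of $\delta$. So the sentence ``once $\delta$ is small enough that this error is less than, say, $1/2$'' never kicks in near the endpoint: shrinking $\delta$ does not shrink $\epsilon(2\sqrt\delta)$. Moreover, for $k\geq 2$ an entrywise bound of size $1/\sqrt3$ on $Q_r-I_k$ does not force $Q_r$ to be positive definite (already for $k=2$, $\det(I_2+E)$ can be negative when $|E_{ij}|\leq 1/\sqrt3$), so the Cholesky factor and your row-by-row induction are not guaranteed to exist there. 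This is not fatal to the lemma: one can still argue $Q_r\succ 0$ for every $r\in[2\sqrt\delta,1]$ abstractly (if $\nabla(\xi\cdot v)$ vanished on a spacetime slab, parabolic unique continuation would propagate this to $t\in[-1,-1/4]$, contradicting $Q_1\succ 0$), and the claimed bound $C(n)r^{-2}\sqrt\delta$ is huge and essentially vacuous for $r\sim\sqrt\delta$, which leaves room for a crude estimate on $\|T_r\|$ there. But your proposal as written asserts the sharper estimate $\|T_r-I_k\|\leq C(n)r^{-1}\sqrt\delta$ uniformly on $[2\sqrt\delta,1]$, which the entrywise bound alone does not deliver near $r=2\sqrt\delta$; you should either supply the unique-continuation/compactness argument for positive definiteness at the bottom of the range and then handle that regime with a soft bound, or simply restrict the quantitative Cholesky perturbation to $r\geq c(n)\sqrt\delta$ for a suitable $c(n)>2$.
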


\begin{proposition}[Compactness of splitting maps, Proposition 7.1 in \cite{G25}]\label{prop:s_map_compactness}
Let $(M_j,g_j(t),p_j)_{t\in (-1,0]}$ be a pointed sequence of smooth compact Ricci flows satisfying (RF1-3), a sequence $\varepsilon_j$ such that $\liminf_j \varepsilon_j = \varepsilon \in [0,1]$ and a sequence $v_j:M_j\times [-1,0]\rightarrow \mathbb R^k$ of $(k,\varepsilon_j)$-splitting maps around $p_j$ at scale $1$, normalized so that
\begin{equation}\label{eqn:compactness_average_assumption}
\int_{-1}^0\int_{M_j} v_j^a d\nu_{(p_j,0),t}dt = 0.
\end{equation}
Then, there is a pointed smooth complete  Ricci flow $(M_\infty,g_\infty(t),p_\infty)_{t\in (-1,0)}$ satisfying (RF1), a conjugate heat flow $\nu_{\infty,t}$, satisfying (CHF) with respect to $p_\infty$, and a $(k,\varepsilon)$-splitting map $v_\infty:M_\infty\times (-1,0)\rightarrow \mathbb R^k$ with respect to $\nu_\infty$ at scale $1$, which, after passing to a subsequence, are the limits of $(M_j,g_j(t),p_j)_{t\in (-1,0]}$, $\nu_{(p_j,0)}$ and $v_j$ respectively.

Moreover, $v_\infty$ satisfies
\begin{equation}\label{eqn:compactness_average_conclusion}
 \int_{M_\infty} v_\infty^a d\nu_{\infty,t} dt=0,
\end{equation}
for every $t\in (-1,0)$ and
\begin{equation}\label{eqn:compactness_B_convergence}
\lim_{j\rightarrow+\infty}\int_{-1}^{-1/4}\int_{M_j} \langle\nabla v_j^a,\nabla v_j^b\rangle d\nu_{(p_j,0),t} dt = \int_{-1}^{-1/4} \int_{M_\infty}\langle\nabla v_\infty^a,\nabla v_\infty^b\rangle d\nu_{\infty,t}dt.
\end{equation}
\end{proposition}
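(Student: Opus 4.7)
The plan is to combine Proposition \ref{prop:compactness_rf} with uniform $L^2$-bounds on $v_j$ (relative to $\nu_{(p_j,0)}$) to extract a smooth limit $v_\infty$, and then pass each of the defining properties to the limit using the Gaussian tightness of $\nu_{(p_j,0)}$ coming from (RF3). First, by Proposition \ref{prop:compactness_rf}, after passing to a subsequence (also with $\varepsilon_j\to\varepsilon$), we have $(M_j,g_j(t),p_j)_{t\in(-1,0]}\to(M_\infty,g_\infty(t),p_\infty)_{t\in(-1,0)}$ with $(M_\infty,g_\infty)$ satisfying (RF1), and $\nu_{(p_j,0)}\to\nu_\infty$ with $\nu_\infty$ satisfying (CHF) relative to $p_\infty$. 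It remains to extract a smooth limit of $v_j$ and verify the properties.

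For the key a priori estimate, the reproducing kernel identity together with the normalization \eqref{eqn:compactness_average_assumption} forces $v_j^a(p_j,0)=0$, and hence $\int v_j^a\,d\nu_{(p_j,0),t}=0$ for every $t\in(-1,0]$. The standard identity $\frac{d}{dt}\int v^2\,d\nu_t=-2\int|\nabla v|^2\,d\nu_t$ for heat--conjugate-heat pairs, combined with $\int(v_j^a)^2\,d\nu_{(p_j,0),t}\to 0$ as $t\to 0^-$, integrates to $\int(v_j^a)^2\,d\nu_{(p_j,0),t}=2\int_t^0\int|\nabla v_j^a|^2\,d\nu\,ds$. Bochner gives $(\partial_t-\Delta)|\nabla v|^2=-2|\mathrm{Hess}\,v|^2\leq 0$, so $s\mapsto\int|\nabla v_j^a|^2\,d\nu_s$ is monotone nonincreasing; since by \eqref{eqn:def_splitting_map_gradient} its values on $[-1,-\varepsilon_j]$ are at most $1+\sqrt{\varepsilon_j}$, we get $\int|\nabla v_j^a|^2\,d\nu_s\leq 2$ uniformly in $j,s$. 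Hence $\int(v_j^a)^2\,d\nu_{(p_j,0),t}\leq 4|t|$, and the Gaussian lower bound for $u_{(p_j,0)}$ from (RF3) converts this into uniform $L^2(d\mathrm{vol}_{g_j(t)})$-bounds on any ball $B(p_j,t_0,R)$ with $t_0\in(-1,0)$.

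Since (RF1) yields bounded local parabolic geometry on every compact subinterval of $(-1,0)$, standard interior parabolic regularity for the heat equation upgrades these $L^2$-bounds to uniform $C^\infty$-bounds of $v_j$ on precompact subsets of $M_\infty\times(-1,0)$. A diagonal Arzel\`a--Ascoli extraction produces a smooth limit $v_\infty:M_\infty\times(-1,0)\to\mathbb{R}^k$, each of whose components solves the heat equation on $(M_\infty,g_\infty(t))$. To pass the splitting conditions \eqref{eqn:def_splitting_map_hessian} and \eqref{eqn:def_splitting_map_gradient} to $v_\infty$ with parameter $\varepsilon$, I apply Fatou's lemma with $\mathbf{1}_{[-1,-\varepsilon_j]}\to\mathbf{1}_{[-1,-\varepsilon]}$ a.e., using smooth convergence of $v_j$ and $u_{(p_j,0)}$ on compact sets, the uniform Gaussian tail bound $\nu_{(p_j,0),t}(M_j\setminus B(p_j,t,R))\leq Ce^{-cR^2}$ from (CHF), and the $L^4(\nu)$-bound on $|\nabla v_j^a|$ implied by \eqref{eqn:def_splitting_map_gradient}, to render the tail contributions uniformly negligible.

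Finally, for the ``moreover" claims, since $v_\infty^a$ solves the heat equation against the conjugate heat flow $\nu_\infty$, the function $t\mapsto\int v_\infty^a\,d\nu_{\infty,t}$ is constant in $t$; passing \eqref{eqn:compactness_average_assumption} to the limit using tightness and the uniform $L^2(\nu)$-bound forces the constant to be $0$, yielding \eqref{eqn:compactness_average_conclusion}. For \eqref{eqn:compactness_B_convergence}, I split each integral into a compact part (where smooth convergence of $\nabla v_j^a$ and smooth convergence of the densities $u_{(p_j,0)}$ give convergence of the integrals) and a tail part bounded by $(\int|\nabla v_j^a|^4 d\nu)^{1/4}(\int|\nabla v_j^b|^4 d\nu)^{1/4}(\nu(\mathrm{tail}))^{1/2}\leq Ce^{-cR^2}$, which vanishes as $R\to\infty$. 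The main technical obstacle is exactly this tail-control step: the splitting-map conditions and the bilinear integral are over the entirety of $M_j$, which is potentially noncompact in the limit, whereas convergence of $v_j$ is only on compact regions. Overcoming this cleanly hinges on combining the Gaussian tightness of $\nu_{(p_j,0)}$ from (CHF) with the a priori $L^2(\nu)$- and $L^4(\nu)$-bounds from the previous steps.
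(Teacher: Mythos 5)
The paper states this proposition as imported from \cite{G25} (Proposition 7.1 there) without reproving it, so a direct comparison against the paper's own proof is not possible. Taken on its own, your strategy is the natural one and plausibly reflects the cited source: apply Proposition \ref{prop:compactness_rf} to extract the geometric and conjugate-heat-flow limits, establish uniform weighted $L^2$ bounds on $v_j$, upgrade to local $C^\infty$ bounds via interior parabolic regularity, extract a smooth limit $v_\infty$, and push the splitting conditions through using Gaussian tightness from (RF3)/(CHF). The observation that \eqref{eqn:compactness_average_assumption} is equivalent to $v_j^a(p_j,0)=0$ (since $t\mapsto\int v_j^a\,d\nu_{(p_j,0),t}$ is constant and tends to $v_j^a(p_j,0)$), and the use of the identity $\frac{d}{dt}\int v^2\,d\nu_t=-2\int|\nabla v|^2\,d\nu_t$ to convert this into an $L^2(\nu)$ bound on $v_j$, are both correct.

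There is, however, a gap where you assert that, by \eqref{eqn:def_splitting_map_gradient}, the values of $\int|\nabla v_j^a|^2\,d\nu_t$ on $[-1,-\varepsilon_j]$ are at most $1+\sqrt{\varepsilon_j}$, hence $\leq 2$ uniformly. After Cauchy--Schwarz against the probability measure $\nu_t$, the splitting hypothesis gives only the \emph{time-integrated} estimate $\int_{-1}^{-\varepsilon_j}\left(\phi(t)-1\right)^2\,dt\leq\varepsilon_j$, where $\phi(t):=\int|\nabla v_j^a|^2\,d\nu_t$. Since $\phi$ is nonincreasing in $t$ (by your Bochner observation), this controls $\phi$ from above only at times bounded away from $t=-1$: it is perfectly consistent with $\phi(t)$ growing like $(1+t)^{-1/2}$ as $t\to -1^+$, so the claimed pointwise bound is not justified. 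The correct conclusion is $\phi(-1+\eta)\leq 1+\sqrt{\varepsilon_j/\eta}$ for each $\eta>0$, and then, because this possible singularity is integrable, $\int(v_j^a)^2\,d\nu_t=2\int_t^0\phi(s)\,ds$ is still uniformly bounded. This suffices for the compactness: carry out the interior parabolic estimates and diagonal extraction on $(-1+\eta,0)$ and send $\eta\downarrow 0$. A similar caveat applies to the $L^4(\nu)$ bound you invoke for the tail estimate in \eqref{eqn:compactness_B_convergence}: it is likewise only time-integrated over $[-1,-\varepsilon_j]$, which is compatible with the H\"older split you use (that estimate is itself time-integrated), but it should be stated as an integrated bound and one should note that it covers $[-1,-1/4]$ only once $\varepsilon_j<1/4$.
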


The following lemma and its consequence, Corollary \ref{cor:coverings} which is a covering result, will be very useful in the remaining sections of the paper.

\begin{lemma}\label{lemma:split_map_selfsimillar}
Fix a scale $r>0$, $s>0$, $\eta>0$, and $\varepsilon>0$. Let $(M,g(t),p)_{t\in (-2\delta^{-2}r^2,0]}$ be a  smooth compact Ricci flow satisfying (RF1-3) such that
$
\mathcal W_p(\delta)-\mathcal W_p(\delta^{-1})<\delta.
$
Suppose that there is $x\in \tilde B_{s}(p,r)$ such that $(M,g(t),x)_{t\in (-2\delta^{-2} r^2,0]}$  is $(k,\delta^2)$-selfsimilar at scale $r>0$ with respect to $\mathcal L_{x,r}$, but not $(k+1,\eta)$-selfsimilar at scale $r$ and that $v$ is a $(k,\theta)$-splitting map around $p$ at scale $r$ such that $v(p,0)=0$ and
\begin{equation}\label{eqn:n}
\frac{4r^2}{3}\int_{-r^2}^{-\frac{r^2}{4}}\int_M \langle \nabla v^a,\nabla v^b\rangle d\nu_{(p,0),t} dt = \delta^{ab}.
\end{equation}

Define $\tilde v: M\rightarrow \mathbb R^k$ by
$$\tilde v^a(x) := r^{-2}\int_{-r^2}^0\int_M v^a d\nu_{(x,0)} dt=v^a(x,0)$$
for each $a=1,\ldots,k$. If $0<\delta\leq \delta(n,C_I,\Lambda,H,\eta|s,\varepsilon)$ and $0<\theta\leq\theta(n,C_I,\Lambda,H,\eta|s,\varepsilon)$, then:
\begin{enumerate}
\item There is $E=E(n,H)<+\infty$ such that if $q_1,q_2\in \tilde B_{s}(p,r)$ satisfy $\mathcal W_{q_i}(r^2\delta) - \mathcal W_{q_i}(r^2\delta^{-1})<\delta$ and either $d_{g(-r^2)}(q_1,q_2)\geq \varepsilon r$ or $|\tilde v(q_1)-\tilde v(q_2)| \geq \varepsilon r$, then 
\begin{equation}\label{u_bilip}
\begin{aligned}
(1-\varepsilon)(d_{g(t)}(q_1,q_2))^2 &\leq |\tilde v(q_1) - \tilde v(q_2)|^2 +E^2 |t|,\\
|\tilde v(q_1) - \tilde v(q_2)| &\leq (1+\varepsilon) d_{g(t)}(q_1,q_2),
\end{aligned}
\end{equation}
for every $t\in [-\varepsilon^{-1} r^2,-\varepsilon r^2]$.

\item The function $\tilde v$ satisfies
$$B_{\frac{sr}{2}}(0)\subset B_{\varepsilon r}\left(\img \left(\tilde v|_{\tilde B_s(p,r)\cap \mathcal L_{x,r}}\right)\right).$$
\end{enumerate}
\end{lemma}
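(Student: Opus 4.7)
The plan is to argue by compactness and contradiction, after rescaling so that $r=1$. Suppose the conclusion fails for some fixed $s,\eta,\varepsilon>0$: there exist sequences $\delta_j,\theta_j\to 0$ and data $(M_j,g_j(t),p_j,x_j,v_j)$ verifying all the hypotheses of the lemma, but for which either assertion (1) or assertion (2) fails for each $j$. The entropy pinching $\mathcal W_{p_j}(\delta_j)-\mathcal W_{p_j}(\delta_j^{-1})<\delta_j$, together with Lemma~\ref{lemma:entropy_to_ss}, Proposition~\ref{prop:compactness_rf}, and Proposition~\ref{prop:s_map_compactness}, delivers along a subsequence a smooth pointed limit $(M_\infty,g_\infty(t),p_\infty)_{t<0}$ that is the selfsimilar Ricci flow induced by a gradient shrinking Ricci soliton, a limiting conjugate heat flow $\nu_\infty$, and a $(k,0)$-splitting map $v_\infty$ with $v_j\to v_\infty$ smoothly on compact subsets of $M_\infty\times(-1,0)$.

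Next I would pin down the algebraic structure of this limit. Up to a subsequence $x_j\to x_\infty$, and the $(k,\delta_j^2)$-selfsimilarity at $x_j$ upgrades via Lemma~\ref{lemma:a_ss_to_ss} to exact $k$-selfsimilarity around $x_\infty$; combined with the failure of $(k+1,\eta)$-selfsimilarity along the sequence, this forces the maximal Euclidean splitting of $M_\infty$ to be exactly $k$. Proposition~\ref{prop:spine} then produces $(M_\infty,g_\infty(t))=(M'_\infty\times\R^k,g'_\infty(t)\oplus g_{\R^k})$ with point-spine $\mathcal S_{\textrm{point}}=\mathcal K\times\R^k$ and $\diam_{g'_\infty(t)}(\mathcal K)\leq A(n)\sqrt{|t|}$. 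Being $(k,0)$-split, $v_\infty$ satisfies $\hess v_\infty^a\equiv 0$ and $\langle\nabla v_\infty^a,\nabla v_\infty^b\rangle\equiv\delta^{ab}$ pointwise, so it is parallel, harmonic, and, through the heat equation, time-independent; the maximality of the $\R^k$ factor forces its gradients to span that factor, giving $v_\infty(x',a)=R(a)+c$ for some $R\in O(k)$ and $c\in\R^k$. The identity $v_j(p_j,0)=\int v_j(\cdot,t)\,d\nu_{(p_j,0),t}$ at any fixed $t\in(-1,0)$ transfers $v_j(p_j,0)=0$ to $v_\infty(p_\infty)=0$, fixing $c=-R(a_\infty)$ where $p_\infty=(p'_\infty,a_\infty)$; hence $\tilde v_\infty(x',a)=R(a-a_\infty)$.

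For assertion~(1) I take counterexamples $q_1^j,q_2^j$ failing \eqref{u_bilip} at some $t_j\in[-\varepsilon^{-1},-\varepsilon]$; extracting subsequential limits $q_i^j\to q_i^\infty$ and $t_j\to t_\infty$, the either/or hypothesis preserves a definite $\varepsilon$-separation in at least one of $d_{g_\infty(-1)}$ or $|\tilde v_\infty|$. Applying Lemma~\ref{lemma:entropy_to_ss} to $q_i^j$ confines each $q_i^\infty=(x'_i,a_i)$ within $D(n,H)\sqrt{|t|}$ of $\mathcal S_{\textrm{point}}$, and with $\diam\mathcal K\leq A\sqrt{|t|}$ I conclude $d_{g'_\infty(t)}(x'_1,x'_2)^2\leq E^2|t|$ for some $E=E(n,H)$; the Pythagorean identity on the product, combined with $|\tilde v_\infty(q_1^\infty)-\tilde v_\infty(q_2^\infty)|=|a_1-a_2|$, gives both inequalities of \eqref{u_bilip} sharply in the limit, contradicting their failure for large $j$. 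Assertion~(2) is symmetric: an alleged uncovered $b_\infty\in\overline{B_{s/2}(0)}$ is realized as $\tilde v_\infty(y_\infty)$ at $y_\infty=(\kappa,a_\infty+R^{-1}b_\infty)\in\mathcal S_{\textrm{point}}\cap\tilde B_s(p_\infty,1)$, where $\kappa\in\mathcal K$ is chosen closest to $p'_\infty$; pulling $y_\infty$ back through the diffeomorphisms witnessing $(k,\delta_j^2)$-selfsimilarity at $x_j$ produces points in $\mathcal L_{x_j,1}$ whose $\tilde v_j$-values converge to $b_\infty$, yielding the contradiction.

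The main obstacle I anticipate is the passage to the limit at the time-slice $t=0$: the quantity $\tilde v_j(q)=v_j(q,0)$ is what appears in both assertions, yet the limit Ricci flow exists only on $(-\infty,0)$ and the smooth convergence of $v_j$ holds only on compact subsets of $(-1,0)\times M_\infty$. The resolution, as sketched above, is that $v_\infty$ is time-independent on the limit flow, so each $\tilde v_j(q)=\int_{M_j}v_j(\cdot,t)\,d\nu_{(q,0),t}$ passes cleanly through an intermediate time $t\in(-1,0)$ via smooth convergence of $v_j$ and weak convergence of the conjugate heat kernels provided by Proposition~\ref{prop:compactness_rf}. A secondary technical point is verifying that $\mathcal L_{x_j,1}$ exhausts $\mathcal S_{\textrm{point}}$ on any bounded region in the limit, which follows once the almost-selfsimilarity radii $\delta_j^{-1}\to+\infty$ are used to extend the witnessing diffeomorphisms.
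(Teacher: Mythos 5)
Your proposal is correct and follows essentially the same contradiction-compactness strategy as the paper: rescale to $r=1$, extract a $k$-selfsimilar limit flow $(M'\times\R^k,g'\oplus g_{\R^k})$ with a linear parallel limit map $v_\infty$ whose averaging against the spine conjugate heat flows reduces to $\R^k$-coordinates, and then use Proposition~\ref{prop:spine} together with the $D(n,H)\sqrt{|t|}$ proximity to $\mathcal S_{\mathrm{point}}$ from Lemma~\ref{lemma:entropy_to_ss} to close the Pythagorean argument for (1) and the surjectivity-onto-$B_{s/2}$ argument for (2). The one point you gloss over, and which the paper handles carefully in item~(D) of its limit setup, is that passing the time-zero evaluations $\tilde v_j(q_j)=v_j(q_j,0)$ through an interior slice $t=-\gamma^2$ requires applying Proposition~\ref{prop:s_map_compactness} to the recentred maps $v_j-v_j(q_j,0)$ so that the conjugate heat flows $\nu_{(q_j,0)}$ converge and the weighted averages pass to the limit; your ``smooth convergence plus weak convergence of kernels'' phrasing names the right ingredients but does not quite pin down this mechanism.
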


\begin{proof}
By rescaling we may assume that $r=1$. Consider any sequence $(M_j,g_j(t),p_j)_{t\in (-2\delta_j^{-2},0]}$ of smooth compact Ricci flows satisfying (RF1-3) and
\begin{equation}\label{eqn:ent_a_c}
\mathcal W_{p_j}(\delta_j)-\mathcal W_{p_j}(\delta_j^{-1})<\delta_j,
\end{equation}
with $\delta_j\rightarrow 0$, and suppose that there exist points $x_j\in \tilde B_{s}(p_j,1)$ such that $(M_j,g_j(t),x_j)_{t\in (-2\delta_j^{-2},0]}$ are $(k,\delta_j^2)$-selfsimilar with respect to $\mathcal L_{x_j,1}$ but not $(k+1,\eta)$-selfsimilar at scale $1$.

 Let $\nu_{(p_j,0)}$ denote the conjugate heat kernel  starting at $(p_j,0)$. 

Moreover, assume that there is a sequence $\theta_j\rightarrow 0$ and let $v_j=(v_j^1,\ldots,v_j^k):M_j\times [-1,0]\rightarrow\mathbb R^k$ be a sequence of $(k,\theta_j)$-splitting maps around $p_j$ at scale $1$, that satisfies  \eqref{eqn:n} and
\begin{equation*}
\tilde v_j^a(p_j)=\int_{-1}^0\int_{M_j} v_j^a d\nu_{(p_j,0),t} dt =v_j^a(p_j,0)=0
\end{equation*}
for every $a=1,\ldots,k$.

Observe that, given any $\theta_j'\rightarrow 0$, by Lemma \ref{lemma:near_splitting} there is $\gamma=\gamma(n,H)\in (0,1)$ such that, passing to a subsequence, we may assume that  $v_j:M_j\times [-4\gamma^2,0]\rightarrow \mathbb R^k$ is a $(k,\theta'_j)$-splitting map at scale $2\gamma$, around any $q\in \tilde B_s(p_j,1)$.

 Define $\tilde v_j=(\tilde v_j^1,\ldots,\tilde v_j^k):M_j\rightarrow \mathbb R^k$ by
$$\tilde v_j^a(q)= \int_{-1}^0\int_{M_j} v_j^a d\nu_{(q,0),t} dt = \int_{M_j} v_j^a d\nu_{(q,0),-\gamma^2}=v_j^a(q,0).$$

Assume, towards a contradiction, that one of the following holds for every $j$:
\begin{enumerate}
\item[I.] There are sequences $q_{1,j},q_{2,j}\in \tilde B_s(p_j,1)\subset M_j$, satisfying either $d_{g(-1)}(q_{1,j},q_{2,j})\geq \varepsilon$ or $|\tilde v(q_{1,j}) - \tilde v(q_{2,j})|\geq \varepsilon$, and $t_j \in [-\varepsilon^{-1},-\varepsilon]$ such that 
\begin{equation}\label{eqn:edj}
\mathcal W_{q_{i,j}}(\delta_j)-\mathcal W_{q_{i,j}}(\delta_j^{-1})<\delta_j,
\end{equation}
and for some $E<+\infty$ (which will be determined in the course of the proof to depend only on $n$ and $H$) either
\begin{equation}\label{eqn:caseii_a}
\left(1-\varepsilon\right)(d_{g(t_j)}(q_{1,j},q_{2,j}))^2 > |\tilde v_j(q_{1,j}) - \tilde v_j(q_{2,j})|^2 +E^2 |t_j|,
\end{equation}
or
\begin{equation}\label{eqn:caseii_b}
|\tilde v_j(q_{1,j}) - \tilde v_j(q_{2,j})| > \left(1+\varepsilon\right) d_{g(t_j)}(q_{1,j},q_{2,j}).
\end{equation}
\item[II.] There is $w_j\in B_{\frac{s}{2}}(0) \subset \mathbb R^k$ such that for any $y\in \tilde B_s(p_j,1)\cap \mathcal L_{x_j,1}$, we have that $|\tilde v_j(y) - w_j|\geq \varepsilon$.

\end{enumerate}

By Proposition \ref{prop:compactness_rf}, Lemma \ref{lemma:a_ss_to_ss}, Proposition \ref{prop:s_map_compactness} and assumption \eqref{eqn:ent_a_c}, passing to a subsequence we may assume that:
\begin{itemize}
\item[A.] $(M_j,g_j(t),p_j)_{t\in (-2\delta_j^{-1},0]}$ smoothly converges to a Ricci flow $(M_\infty,g_\infty(t),p_\infty)_{t\in (-\infty,0)}$, and there is an $x_\infty\in \overline{\tilde B_s (p_\infty,1)}$ such that $x_j\rightarrow x_\infty$ and $(M_\infty,g_\infty(t),x_\infty)_{t\in (-\infty,0)}$ is $k$-selfsimilar with spine $\mathcal S$ and point-spine $\mathcal S_{\textrm{point}}$, but not $k+1$-selfsimilar. Moreover, by \eqref{eqn:ent_a_c} and Lemma \ref{lemma:entropy_to_ss} we know that for every $t<0$
\begin{equation}\label{eqn:p_infty_S_point}
d_{g_{\infty}(t)}(p_\infty, \mathcal S_{\textrm{point}}) \leq D \sqrt{|t|},
\end{equation}
where $D=D(n,H)<+\infty$.
\item[B.] The conjugate heat flows $\nu_{(p_j,0)}$ smoothly converge to a conjugate heat flow $\nu_\infty\in \mathcal S$, that satisfies (CHF) with respect to $p_\infty$. Let $d\nu_{\infty,t}=(4\pi|t|)^{-n/2} e^{-f_\infty(\cdot,t)} d\vol_{g_\infty(t)}$.
\item[C.] $v_j$ smoothly converges to a map $v_\infty=(v_\infty^1,\ldots,v_\infty^k):M_\infty\times (-1,0)\rightarrow \mathbb R^k$ such that for any $a,b=1,\ldots,k$
\begin{equation}\label{eqn:vinfty}
\hess v_\infty^a =0\quad \textrm{and} \quad 
\langle \nabla v_\infty^a,\nabla v_\infty^b \rangle =\delta^{ab}
\end{equation}
in $M_\infty\times (-1,0)$ and
\begin{equation}\label{eqn:v_infty_av_z}
\int_{M_\infty} v_\infty^a d\nu_{\infty,-\gamma^2} = 0,
\end{equation}
for every $a=1,\ldots,k$.
\item[D.] If $q_j\in \tilde B_s(p_j,1)\subset M_j$, $q_j\rightarrow q_\infty \in M_\infty$, and the conjugate heat flows $\nu_{(q_j,0)}$ converge to a conjugate heat flow $\nu_{\{q_j\},\infty} $, then the $(k,\theta'_j)$-splitting maps $v_j$  around $q_j$ at scale $2\gamma$ smoothly converge to the map $v_{\infty}|_{M_\infty\times (-4\gamma^2,0)}$ and for every $a=1,\ldots,k$
$$\lim_{j\rightarrow +\infty} \int_{M_j} v_j^a d\nu_{(q_j,0),-\gamma^2}=\int_{M_\infty} v_\infty^a d\nu_{\{q_j\},\infty,-\gamma^2}.$$
To see this, apply Proposition \ref{prop:s_map_compactness} to the sequence $v_j - v_j(q_j,0)$ of almost splitting maps.
\end{itemize}
By (A) and (C) it follows that, up to isometry, $M_\infty=M'\times \mathbb R^k$, $g_{\infty}(t)=g'(t)\oplus g_{\mathbb R^k}$, for any $t<0$, and $(M',g'(t))_{t\in (-\infty,0)}$ does not split any Euclidean factors. Moreover, we may assume that the functions $v_\infty^a$, $a=1,\ldots,k$, are the corresponding Euclidean coordinate functions, for $t\in (-1,0)$. 

Now, since $\nu_\infty \in \mathcal S$ and satisfies (CHF) with respect to $p_\infty=(p'_\infty,a_\infty)$, by Proposition \ref{prop:spine}, it follows that for every $(q,z)\in M'\times \mathbb R^k$ and $t<0$
\begin{align*}
f_\infty(q,z,t)&= \frac{|z-a_\infty|^2}{4|t|}+f'(q,t),
\end{align*}
where $(M', g'(t), f'(\cdot,t))$ is a normalized gradient shrinking Ricci soliton at scale $|t|$.

By \eqref{eqn:v_infty_av_z} it follows that $a_\infty=0$, since for every $a=1,\ldots,k$,
\begin{equation}\label{eqn:ave_a_infty}
\begin{aligned}
0=\int_{M_\infty} v_\infty^a d\nu_{\infty,-\gamma^2} &= (4\pi \gamma^2)^{-n/2}\int_{-\infty}^{+\infty} \int_{M'} z^a  e^{-\frac{|z-a_\infty|^2}{4\gamma^2}-f'(\cdot,-\gamma^2)} d\vol_{g'(-\gamma^2)} d z^a\\
&=(4\pi \gamma^2)^{-n/2} \int_{M'} e^{-f'(\cdot,-\gamma^2)} d\vol_{g'(-\gamma^2)}  \int_{-\infty}^{+\infty} z^a  e^{-\frac{|z-a_\infty|^2}{4\gamma^2}}  d z^a\\
&=a_\infty^a.
\end{aligned}
\end{equation}
In particular, $p_\infty=(p_\infty',0)$.

In order to prove Assertion 1, suppose that (I) holds for every $j$. Then, passing to a further subsequence we may assume that for each $i=1,2$, $q_{i,j}$ converge to $q_{i,\infty}=(q'_i,a_i)\in \tilde B_{2s}(p_\infty,1)$, $q_{1,\infty}\not = q_{2,\infty}$, and  $t_j\rightarrow t_\infty \in [-\varepsilon^{-1},-\varepsilon]$. 

By \eqref{eqn:edj} and Lemma \ref{lemma:entropy_to_ss} 
$$d_{g_\infty(t_\infty)}(q_{i,\infty},\mathcal S_{\textrm{point}}) \leq D(n,H) \sqrt{|t_\infty|},$$
hence, by Proposition \ref{prop:spine}, we know that
\begin{equation}\label{eqn:dqi'}
d_{g_\infty(t_\infty)}(q_1',q_2') \leq D' \sqrt{|t_{\infty}|}, 
\end{equation}
where $D'=D(n,H)+A(n)$.

Moreover, by \eqref{eqn:edj}, passing to a subsequence we may assume that the conjugate heat flows $\nu_{(q_{i,j},0)}$ converge to the conjugate heat flows $\nu_{i,\infty}\in\mathcal S$. Therefore, by Proposition \ref{prop:spine},  $d\nu_{i,\infty,t} = (4\pi |t|)^{-n/2} e^{-f_{i,\infty}} d\vol_{g_\infty(t)}$ satisfies
\begin{equation*}
f_{i,\infty}(q,z,t) = \frac{|z-a_i|^2}{4|t|} + f'(q,t),
\end{equation*}
for every $t<0$.

Define  
 \begin{equation}
\tilde v_{\infty}^a(q_{i,\infty}):= \int_{M_\infty} v_\infty^a d\nu_{i,\infty,-\gamma^2},
\end{equation}
and note that (D) implies that
\begin{equation*}
\tilde v^a_\infty(q_{i,\infty})=\lim_{j\rightarrow+\infty} \int_{M_j} v_j^a d\nu_{(q_{i,j},0),-\gamma^2}=\lim_{j\rightarrow +\infty} \tilde v_j^a(q_{i,j}).
\end{equation*}
Then,  as in \eqref{eqn:ave_a_infty}, we have
\begin{equation}\label{eqn:ave_a_i}
\tilde v_\infty^a(q_{i,\infty})=\int_{M_\infty} v_\infty^a d\nu_{i,\infty,-\gamma^2}=a_i^a.
\end{equation}

Now, by Pythagoras' Theorem and \eqref{eqn:dqi'}
\begin{equation}\label{eqn:pythagoras}
\begin{aligned}
|\tilde v_\infty(q_{1,\infty}) - \tilde v_\infty(q_{2,\infty})|^2 &= |a_1 - a_2|^2 \\
&\leq |a_1 - a_2|^2 + (d_{g'(t_\infty)} (q'_1,q'_2))^2\\
&=(d_{g_\infty(t_\infty)}(q_{1,\infty},q_{2,\infty}))^2\\
&\leq |\tilde v_\infty(q_{1,\infty}) - \tilde v_\infty(q_{2,\infty})|^2 +(D')^2 |t_\infty|.
\end{aligned}
\end{equation}
On the other hand, by \eqref{eqn:caseii_a} and \eqref{eqn:caseii_b} we know that one of the following must hold
\begin{align*}
\left(1-\varepsilon\right)(d_{g_\infty(t_\infty)}(q_{1,\infty},q_{2,\infty}))^2 &\geq |\tilde v_\infty(q_{1,\infty}) - \tilde v_\infty(q_{2,\infty})|^2 +E^2 |t_\infty|^2 \\
|\tilde v_\infty(q_{1,\infty}) - \tilde v_\infty(q_{2,\infty})| &\geq \left(1+\varepsilon\right) d_{g_\infty(t_\infty)}(q_{1,\infty},q_{2,\infty}),
\end{align*}
and either one contradicts \eqref{eqn:pythagoras}, if we choose $E=2D'$. This proves Assertion 1 of the lemma.

To prove Assertion 2 of the lemma, suppose that (II) holds for every $j$. Then, passing to a further subsequence we may assume that $w_j\rightarrow w\in \overline{B_{\frac{s}{2}}(0)}\subset\mathbb R^k$. 

By \eqref{eqn:p_infty_S_point}, we know that there is $(x',0)\in \mathcal S_{\textrm{point}}$ such that $d_{g'(-1)}(p_\infty',x') \leq D$. Set $\tilde x_\infty = (x',w) \in \mathcal S_{\textrm{point}}$, so that

$$\tilde x_\infty=(x',w) \in  \left(\overline{B_{g'(-1)}(p'_\infty, D)}\times \overline{B_{\frac{s}{2}}(0)} \right) \cap \mathcal S_{\textrm{point}} \subset \mathcal S_{\textrm{point}} \cap \tilde B_s \left(p_\infty,1\right),$$
if $s>4D$.

Let $\tilde x_j\in \mathcal L_{x_j,1}\cap \tilde B_s(p_j,1)\subset M_j$ be any sequence such that $\tilde x_j\rightarrow \tilde x_\infty$. In particular, $(M_j,g_j(t),\tilde x_j)_{t\in (-\infty,0)}$ is $(0,\delta_j)$-selfsimilar by Proposition 4.4 in \cite{G25}. Then, by Proposition \ref{prop:compactness_rf} and Lemma \ref{lemma:a_ss_to_ss}, passing to a subsequence we may assume that the conjugate heat flows $\nu_{(\tilde x_j,0)}$ converge to a conjugate heat flow $\nu_{w} \in \mathcal S$ which satisfies (CHF) with respect to $\tilde x_\infty=(x',w)\in\mathcal S_{\textrm{point}}$. Therefore, by Proposition \ref{prop:spine}, we know that $d\nu_{w,t} = (4\pi |t|)^{-n/2} e^{-f_w(\cdot,t)} d\vol_{g(t)}$, where
\begin{equation*}
f_w(q,z,t) = \frac{|z-w|^2}{4|t|}+f'(q,t).
\end{equation*}
Thus, as in \eqref{eqn:ave_a_i},
\begin{equation*}
\lim_{j\rightarrow +\infty} \tilde v_j^a(\tilde x_j) = \lim_{j\rightarrow +\infty} \int_{M_j} v_j^a d\nu_{(\tilde x_j,0),-\gamma^2} = \int_{M_\infty} v_\infty^a d\nu_{w,-\gamma^2} = w^a.
\end{equation*}
It follows that for large $j$
$$|\tilde v_j^a(\tilde x_j) - w_j^a| <\varepsilon/2,$$
which contradicts (II).
\end{proof}

\subsection{Existence and small scale behaviour of almost splitting maps}
In this subsection we review the main results from \cite{G25} which we will need.

First, recall the concept of entropy pinching from \cite{G25}, which is fundamental for what will follow. It is a Ricci flow analogue of the entropy pinching that first appeared in \cite{CJN}.

\begin{definition}\label{def:entropy_pinching}
Let $T>1$, $r>0$, $I\subset \mathbb R$ and interval such that $ [-2Tr^2,0]\subset I$, and $(M^n,g(t))_{t\in I}$ a smooth compact Ricci flow. Given $\{x_i\}_{i=0}^k \subset M$, we define
\begin{equation*}
\mathcal E^k_r(\{x_i\}_{i=0}^k)=\sum_{i=0}^k \mathcal W_{x_i}(r^2)-\mathcal W_{x_i}(2Tr^2).
\end{equation*}
Moreover, if $p\in M$, $\alpha \in (0,1)$, $\delta>0$, $[-2Tr^2,0] 
\subset (-2\delta^{-1}r^2,0)\subset I$ and $R>>D'$, we define
\begin{equation*}
\begin{aligned}
\mathcal E^{(k,\alpha,\delta,R)}_r(p)= &\inf\left\{ \mathcal E^k_r(\{x_i\}_{i=0}^k),\{x_i\}_{i=0}^k \subset \tilde B_R(p,r)\textrm{ is $(k,\alpha, D'r)$-independent at $t=-r^2$}, \right.\\
&\left. \textrm{and for each $i=0,\ldots,k$, either } (M,g(t),x_i)_{t\in (-2\delta^{-2}r^2,0)} \textrm{is $(0,\delta)$-selfsimilar at scale $r$, or} \right.\\
&\left.\mathcal W_{x_i}(\delta r^2)-\mathcal W_{x_i}(\delta^{-1} r^2)<\delta.\right\}. 
\end{aligned}
\end{equation*}
We call $\mathcal E^{(k,\alpha,\delta,R)}_r(p)$ the $(k,\alpha,\delta,R)$-entropy pinching around $p$ at scale $r$.
\end{definition}

The following theorem from \cite{G25}, asserts that $T$ and $D'$ can be chosen so that the $(k,\alpha,\delta,R)$-entropy pinching  at each scale $r$ controls the Hessian of an almost splitting map at scale $r$.

\begin{theorem}[Theorem 8.1 in \cite{G25}]\label{thm:sharp_splitting}
Fix $\varepsilon>0$, $0<\alpha\leq \frac{1}{6}$. Let $(M,g(t),p)_{t\in(-2\delta^{-2},0]}$ is a smooth compact Ricci flow satisfying (RF1-4). Suppose that there is $q\in B(p,-1,R)$ such that $(M,g(t),q)_{t\in(-2\delta^{-2},0)}$ is $(k,\delta^2)$-selfsimilar around $p$ at scale $1$, and if $q\not = p$ suppose that $\mathcal W_p(\delta)-\mathcal W_p(\delta^{-1})<\delta$.

There are $D'(n,H)<+\infty$ and $T(n,H)<+\infty$ such that if in Definition \ref{def:entropy_pinching} we set $D'=D'(n,H)$, $T=T(n,H)$, $R\geq \frac{D'}{\alpha}$, and if $0<\delta\leq \delta(n,C_I,\Lambda,H,K|R,\alpha,\varepsilon)$, then there is a $(k,\varepsilon)$-splitting map $v:M\times [-1,0]\rightarrow \mathbb R^k$, $v=(v^1,\ldots,v^k)$, at scale $1$ around $p$ such that
\begin{equation*}
\int_M |\hess  v^a|^2 d\nu_{(p,0),t} \leq C(n,C_I,\Lambda,H|R,\alpha,\varepsilon) \mathcal E^{(k,\alpha,\delta,R)}_1(p),
\end{equation*}
for every $a=1,\ldots,k$ and $t\in [-1,-\varepsilon]$.
\end{theorem}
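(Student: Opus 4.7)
The plan is to combine a near-optimal selection of $(k,\alpha, D')$-independent points with Perelman's entropy monotonicity to produce approximate splitting functions whose Hessian is controlled by the pinching.

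\textbf{Step 1 (Selection).} By Definition \ref{def:entropy_pinching}, I would fix a $(k,\alpha, D')$-independent set $\{x_i\}_{i=0}^k \subset \tilde B_R(p,1)$ at $t=-1$ whose entropy drop $\sum_{i=0}^k (\mathcal W_{x_i}(1) - \mathcal W_{x_i}(2T))$ is within a factor $(1+\varepsilon)$ of $\mathcal E^{(k,\alpha,\delta,R)}_1(p)$. By the alternative in the definition, each $x_i$ is either $(0,\delta)$-selfsimilar at scale $1$ or satisfies $\mathcal W_{x_i}(\delta)-\mathcal W_{x_i}(\delta^{-1})<\delta$; in either case Lemmas \ref{lemma:a_ss_to_ss}--\ref{lemma:entropy_to_ss} ensure that the flow is $\delta'(\delta)$-close to a normalized shrinking Ricci soliton around $x_i$ at scale $1$, with $\delta'(\delta)\to 0$.

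\textbf{Step 2 (Potential Hessians from entropy).} Write $u_{(x_i,0)}=(4\pi|t|)^{-n/2} e^{-f_i(\cdot,t)}$ for the conjugate heat kernel potential. Perelman's pointwise entropy identity gives
$$\mathcal W_{x_i}(1) - \mathcal W_{x_i}(2T) \;=\; \int_{-2T}^{-1}\!\!\int_M 2|t|\,\bigl|\ric + \hess f_i - \tfrac{g}{2|t|}\bigr|^2\, d\nu_{(x_i,0),t}\, dt.$$
Forming the differences $b_a := f_a - f_0$ for $a=1,\dots,k$, the Ricci and $g/(2|t|)$ terms cancel, yielding
$$\int_{-2T}^{-1}\!\!\int_M |\hess b_a|^2\, d\nu_{(x_a,0),t}\, dt \;\leq\; C\cdot \mathcal E^{(k,\alpha,\delta,R)}_1(p).$$
Combining the Gaussian-type bounds (RF3) with $d_{g(-1)}(p,x_i)\leq R$, the ratio $u_{(x_a,0)}/u_{(p,0)}$ is bounded by $C(R,H)$ on the relevant region, so the same estimate holds with respect to $d\nu_{(p,0),t}$ up to the factor $C(R,H)$.

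\textbf{Step 3 (Heat-equation solutions).} Since Definition \ref{def:splitting_map} requires $v$ to solve the heat equation, I would let $v^a$ be the forward heat evolution on $[-2T,0]$ starting from a smooth truncation of $b_a(\cdot,-2T)$. The Ricci flow Bochner identity
$$(\partial_t - \Delta)|\nabla v^a|^2 \;=\; -2|\hess v^a|^2 - 2\,\ric(\nabla v^a,\nabla v^a),$$
integrated against $d\nu_{(p,0),t}$ and combined with (RF1), transports the weighted Hessian estimate from Step 2 forward in time. Provided $T = T(n,H)$ is chosen large enough that the parabolic smoothing from $t=-2T$ to $t\in[-1,-\varepsilon]$ is controlled and that almost-selfsimilarity at $q$ propagates to the whole interval (using Lemma \ref{lemma:near_splitting}), this yields
$$\int_M |\hess v^a|^2\, d\nu_{(p,0),t} \;\leq\; C(n,C_I,\Lambda,H|R,\alpha,\varepsilon)\,\mathcal E^{(k,\alpha,\delta,R)}_1(p)$$
for $t\in[-1,-\varepsilon]$, verifying condition (2) of Definition \ref{def:splitting_map}.

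\textbf{Step 4 (Normalization).} By Proposition \ref{prop:spine} the $k$-selfsimilar model near $q$ splits an $\mathbb R^k$-factor, and by Lemma \ref{lemma:k_ind} the $(k,\alpha,D')$-independence of $\{x_i\}$ at $t=-1$ forces $(k,\alpha/3)$-independence of their $\mathbb R^k$-projections in the model once $R \geq D'/\alpha$. This makes the Gram matrix $G^{ab} = \tfrac{4}{3}\int_{-1}^{-1/4}\!\!\int_M \langle \nabla v^a,\nabla v^b\rangle\, d\nu_{(p,0),t}\,dt$ uniformly positive-definite as $\delta\to 0$, hence the normalization Lemma \ref{lemma:normalize_splitting} applies, producing the final $(k,\varepsilon)$-splitting map and preserving the Hessian bound up to a bounded factor.

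The main obstacle will be Step 3: the potentials $f_i$ satisfy the conjugate heat equation rather than the heat equation, so producing honest heat-equation solutions $v^a$ with inherited weighted Hessian control requires both a careful choice of the reference time $-2T$ and a delicate parabolic estimate that absorbs the Ricci correction in the Bochner identity using (RF1). A secondary technical point is tuning $D'(n,H)$ so that the $(k,\alpha,D')$-independence is strong enough to beat the diameter bound $D=D(n,H)$ on the point-spine $\mathcal S_{\textrm{point}}$ in Lemma \ref{lemma:entropy_to_ss}, which is exactly the role of the constraint $R \geq D'/\alpha$.
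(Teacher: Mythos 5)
The high-level architecture is plausible and matches what one expects from CJN-type arguments: Perelman's entropy monotonicity identity bounds the weighted $L^2$ Hessian of the potential differences $b_a = f_a - f_0$, and $(k,\alpha,D')$-independence (via Proposition \ref{prop:spine} and Lemma \ref{lemma:k_ind}) is exactly what prevents the Gram matrix from degenerating. However there are two genuine gaps and one outright error.

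First, the measure comparison in Step~2 is wrong. The assertion that $u_{(x_a,0)}/u_{(p,0)}$ is bounded by $C(R,H)$ "on the relevant region" is not usable, and taken globally it is false: under (RF3) the exponent $f_{(p,0)}(y,s) - f_{(x_a,0)}(y,s)$ is only bounded by something of the form $(H - H^{-1})\,d_{g(s)}(p,y)^2/|s| + 2H$, which blows up as $d_{g(s)}(p,y)\to\infty$. Since the Perelman monotonicity identity furnishes control of $|\ric + \hess f_a - g/(2|t|)|^2$ only against $d\nu_{(x_a,0),t}$, you cannot swap in $d\nu_{(p,0),t}$ by a pointwise ratio bound. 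Any honest comparison introduces either a cutoff and a tail error, or requires exploiting decay of the integrand; and such additive errors are not obviously absorbed into the factor $C(n,C_I,\Lambda,H|R,\alpha,\varepsilon)\,\mathcal E^{(k,\alpha,\delta,R)}_1(p)$, which must vanish as $\mathcal E \to 0$.

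Second, the Bochner identity in Step~3 is incorrect. For a solution of the heat equation coupled to Ricci flow, the evolution of $|\nabla v|^2$ under the metric (which contributes $+2\ric(\nabla v,\nabla v)$) cancels exactly with the Ricci term in the static Bochner formula, giving
\begin{equation*}
\left(\frac{\partial}{\partial t}-\Delta\right)|\nabla v^a|^2 = -2|\hess v^a|^2,
\end{equation*}
with no $\ric(\nabla v^a,\nabla v^a)$ term. But even with the corrected identity, integrating it against $d\nu_{(p,0),t}$ only says $\int_M|\nabla v^a|^2\,d\nu_{(p,0),t}$ is non-increasing, with $\int_M|\hess v^a|^2\,d\nu_{(p,0),t}$ controlling the rate; it does not deliver a pointwise-in-time bound of $\int_M|\hess v^a|^2\,d\nu_{(p,0),t}$ by the entropy pinching. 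The crux of the theorem is precisely the parabolic mechanism that converts the integrated Hessian bound on the potential differences $b_a$, which solve a drift-diffusion (not heat) equation, into a Hessian bound on bona fide heat-equation solutions $v^a$ at every fixed $t\in[-1,-\varepsilon]$. You correctly identify this as "the main obstacle," but the proposed construction — forward heat evolution of a smooth truncation of $b_a(\cdot,-2T)$ — comes with no a priori Hessian estimate, and no mechanism is offered for why the resulting Hessian should be controlled by $\mathcal E$. As it stands, Step~3 is a statement of the difficulty rather than a resolution of it.

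Step~1's phrase "within a factor $(1+\varepsilon)$" of the infimum is also imprecise (an additive approximation or factor $2$ is what is available), though this is a minor point.
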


From now on, the constants $D'$ and $T$ in Definition \ref{def:entropy_pinching} are set to be the constants specified by Theorem \ref{thm:sharp_splitting}.

Given a $(k,\delta)$-splitting map $v$ around $p$ at scale $1$ it is not necessarily true that $v|_{M\times [-r^2,0]}$ is going to be a $(k,\varepsilon)$-splitting map around $p$ at an arbitrarily small scale $r\in (0,1)$, even if $\delta>0$ is small enough. However, according to the following theorem, if the Ricci flow remains sufficiently almost selfsimilar down to a small scale, then a linear transformation of $v$ is  a $(k,\varepsilon)$-splitting map at lower scales.

\begin{proposition}[Transformations, Proposition 9.1 in \cite{G25}]\label{prop:transformations}
Fix $\eta>0$, $\varepsilon>0$, $1\leq k\leq n$, $r\in (0,1)$ and $R<+\infty$, and 
let $(M,g(t),p)_{t\in (-2\delta^{-2},0]}$  be pointed smooth complete Ricci flow satisfying (RF1-3). Suppose that for every $s\in [r,1]$ there is $q_s\in \tilde B_R(p,s)$ such that $(M,g(t),q_s)_{t\in (-2\delta^{-2},0)}$  is $(k,\delta^2)$-selfsimilar but not $(k+1,\eta)$-selfsimilar, at scale $s\in [r,1]$ around $q_s$. Moreover, if $q_s\not = p$ suppose that $$\mathcal W_p(\delta s^2) -\mathcal W_p(\delta^{-1} s^2)<\delta.$$

Let $v$ be a $(k,\delta)$-splitting map around $p$ at scale $1$. If  $0<\delta \leq \delta(n,C_I,\Lambda,H,\eta|R,\varepsilon)$ then for each $s\in [r,1]$ there is a lower triangular $k\times k$ matrix $T_s$ such that
\begin{enumerate}
\item $v_s:=T_s v$, with $v_s^a= (T_s)^a_b v^b$ is a $(k,\varepsilon)$-splitting map at scale $s$.
\item For each $a,b=1,\ldots,k$,
 \begin{equation}\label{eqn:vr_normalization}
\frac{4}{3s^2}\int_{-s^2}^{-s^2/4}\int_M \langle \nabla v_s^a,\nabla v_s^b\rangle  d\nu_{(p,0),t} dt = \delta^{ab}.
\end{equation}
\item  $|| T_s\circ T_{2s}^{-1} - I_k||<C(n)\sqrt\varepsilon$ and whenever $r\leq s_1\leq s_2 \leq \frac{1}{2}$, we have 
\begin{equation}
||T_{s_1}\circ T_{s_2}^{-1}|| \leq \left(\frac{s_2}{s_1}\right)^{C(n)\sqrt{\varepsilon}} \quad \textrm{and}\quad ||T_{s}||\leq (1+\varepsilon) s^{-C(n)\sqrt\varepsilon},
\end{equation}
where $||\cdot||$ denotes the maximum norm of a $k\times k$ matrix.
\end{enumerate}
\end{proposition}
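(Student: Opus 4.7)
The plan is to define $T_s$ at every scale via the Cholesky-type normalization of Lemma \ref{lemma:normalize_splitting}, which directly gives assertion (2), and to establish the remaining assertions by propagating the splitting map property from larger to smaller scales via a compactness/contradiction argument exploiting the almost selfsimilarity at each scale. For every $s \in [2\sqrt\delta, 1]$, Lemma \ref{lemma:normalize_splitting} produces a unique lower triangular matrix $T_s$ such that $v_s = T_s v$ satisfies \eqref{eqn:vr_normalization} at scale $s$. Once the central claim that $v_s$ is a $(k,\varepsilon)$-splitting at scale $s$ is established, the transition bound in (3) follows by applying Lemma \ref{lemma:normalize_splitting} to $v_{2s}$ at scale $s$: by uniqueness the resulting matrix must coincide with $T_s \circ T_{2s}^{-1}$, and its norm is at most $4C(n)\sqrt{\varepsilon}$. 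The composed bound $\|T_{s_1} T_{s_2}^{-1}\| \leq (s_2/s_1)^{C(n)\sqrt{\varepsilon}}$ follows by multiplying the transition estimate over $\log_2(s_2/s_1)$ dyadic scales, and the bound on $\|T_s\|$ follows by composing with $T_1$, which is close to the identity by Lemma \ref{lemma:normalize_splitting} at the top scale.

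For the central claim, suppose for contradiction it fails: there is a sequence of flows $(M_j, g_j(t), p_j)_{t \in (-2\delta_j^{-2}, 0]}$ with $\delta_j \to 0$ satisfying the hypotheses, $(k,\delta_j)$-splitting maps $v^{(j)}$, and scales $\sigma_j \in [r_j, 1]$ such that $v^{(j)}_{\sigma_j} = T^{(j)}_{\sigma_j} v^{(j)}$ fails to be a $(k,\varepsilon)$-splitting at scale $\sigma_j$. Parabolically rescale by $\sigma_j$: set $\tilde g_j(t) = \sigma_j^{-2} g_j(\sigma_j^2 t)$ and $\tilde v_j(\cdot, t) = \sigma_j^{-1} v^{(j)}(\cdot, \sigma_j^2 t)$, so that the hypothesis provides $\tilde q_j \in \tilde B_R(p_j, 1)$ around which $(M_j, \tilde g_j, \tilde q_j)$ is $(k,\delta_j^2)$-selfsimilar at scale 1 but not $(k+1,\eta)$-selfsimilar, and $\tilde v_j$ is $(k, \delta_j\sigma_j^{-2})$-splitting at scale 1 on $\tilde g_j$ upon restricting the scale-$\sigma_j^{-1}$ bound to $[-1, -\delta_j \sigma_j^{-2}]$. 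Subtract $\tilde v_j(p_j, 0)$ to normalize the mean, then invoke Proposition \ref{prop:compactness_rf}, Lemma \ref{lemma:a_ss_to_ss}, and Proposition \ref{prop:s_map_compactness} to extract a subsequential smooth limit: a pointed $k$-selfsimilar flow $(M_\infty, g_\infty(t), p_\infty)_{t<0} \cong (M' \times \mathbb R^k, g' \oplus g_{\mathbb R^k})_{t<0}$ and a heat solution $v_\infty$ satisfying $\hess v_\infty \equiv 0$ and $\langle \nabla v_\infty^a, \nabla v_\infty^b \rangle \equiv \delta^{ab}$. Since $M'$ admits no further Euclidean splitting by Proposition \ref{prop:spine}, each parallel field $\nabla v_\infty^a$ lies in the $\mathbb R^k$-factor, so $v_\infty$ is linear in the Euclidean coordinates with orthogonal coefficient matrix, hence exactly normalized at every scale. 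By \eqref{eqn:compactness_B_convergence}, the Gram matrix of $\tilde v_j$ at scale 1 converges to $I_k$, so $T^{(j)}_{\sigma_j}$ (the Cholesky factor of the inverse Gram matrix) converges to $I_k$, and $T^{(j)}_{\sigma_j} \tilde v_j$ converges smoothly to $v_\infty$. For $j$ large, $T^{(j)}_{\sigma_j} \tilde v_j$ is a $(k,\varepsilon)$-splitting at scale 1 on $\tilde g_j$, equivalently $v^{(j)}_{\sigma_j}$ is $(k,\varepsilon)$-splitting at scale $\sigma_j$ on $g_j$, contradicting the choice of $\sigma_j$.

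The principal obstacle is executing the compactness uniformly in $\sigma_j \in [r_j,1]$ when $\sigma_j$ may degenerate. Two issues arise. First, the restricted bound $(k, \delta_j \sigma_j^{-2})$-splitting at scale 1 only tends to zero if $\delta_j = o(\sigma_j^2)$; when $\sigma_j \sim \sqrt{\delta_j}$ the naive restriction fails, and one must extract information through a finer rescaling or by working directly with the almost-selfsimilar Gram-matrix identity at scale $\sigma_j$ in the original frame. Second, after rescaling the base point $p_j$ may sit at $\tilde g_j$-distance up to $R$ from $\tilde q_j$, so a priori the limit conjugate heat flow $\nu_\infty$ based at $p_\infty$ might fail to admit the Pythagorean decomposition of Proposition \ref{prop:spine}, which is what allows the identification of $v_\infty$ as linear in the Euclidean factor. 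The entropy pinching hypothesis $\mathcal W_p(\delta s^2) - \mathcal W_p(\delta^{-1} s^2) < \delta$ (when $q_s \neq p$) resolves the second issue: combined with Lemma \ref{lemma:entropy_to_ss} it forces $p_j$ itself to be asymptotically a selfsimilar center, so $\nu_{(p_j, 0)}$ converges to an element of $\mathcal S$ centered within a bounded distance of $\mathcal S_{\textrm{point}}$, and the identification of $v_\infty$ then proceeds as in the proof of Lemma \ref{lemma:split_map_selfsimillar}.
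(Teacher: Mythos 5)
The paper cites this result from \cite{G25} without proving it, so there is no in-paper argument to compare against; I will assess the proposal on its own terms. Assertion (2) is immediate from Lemma \ref{lemma:normalize_splitting} once the existence of $T_s$ is known, and your derivation of assertion (3) from the central claim (one dyadic jump via Lemma \ref{lemma:normalize_splitting} applied to $v_{2s}$ at scale $s$, then multiplying transition matrices, then composing with $T_1\approx I$) is correct in outline. The identification of the limit $v_\infty$ as linear in the $\mathbb R^k$-factor, using Proposition \ref{prop:spine} and the entropy pinching hypothesis to place $p_\infty$ near $\mathcal S_{\textrm{point}}$, is the right idea and parallels the argument in Lemma \ref{lemma:split_map_selfsimillar}.

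However, the central claim has a genuine gap that you yourself flag but do not fill. Your one-shot contradiction argument rescales by $\sigma_j$ and views $v^{(j)}$ as a $(k,\delta_j\sigma_j^{-2})$-splitting at scale $1$ in the rescaled flow. When $\sigma_j\lesssim\sqrt{\delta_j}$ the parameter $\delta_j\sigma_j^{-2}$ does not tend to zero (and need not even stay bounded, since nothing couples $\sigma_j$ to $\delta_j$ from below beyond $\sigma_j\geq r_j$), so Proposition \ref{prop:s_map_compactness} does not yield a genuine splitting map in the limit, and you cannot conclude that $T_{\sigma_j}\to I_k$ --- indeed $\|T_{\sigma_j}\|$ is expected to grow like $\sigma_j^{-C\sqrt\varepsilon}$, precisely what assertion (3) quantifies, so the hoped-for convergence to the identity is false in general. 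Gesturing at "a finer rescaling" or "working directly with the Gram-matrix identity" does not constitute a proof. What is actually needed is a \emph{scale-by-scale induction} over dyadic scales $s_m=2^{-m}$: assuming $T_{s_m}v$ is a $(k,\varepsilon)$-splitting at scale $s_m$, use the almost-selfsimilarity at scale $s_m$ (via a compactness argument over a \emph{single bounded} scale jump, where the splitting parameter stays uniformly small) to show that $T_{s_{m+1}}v$ is a $(k,\varepsilon)$-splitting at scale $s_{m+1}$, with the Gram matrix reset by the normalization and the Hessian estimate recovered from the selfsimilar structure rather than merely restricted from scale $1$. Controlling the accumulation of error across infinitely many scales is the heart of this proposition, and your proposal does not carry it out.
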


\begin{remark}\label{rmk:no_shrink}
Given $(M,g(t),p)_{t\in (-2\delta^{-2},0]}$ and $v: M\times [-1,0]\rightarrow \mathbb R^k$ a $(k,\delta)$-splitting map, as in Proposition \ref{prop:transformations}, for each $s\in [r,1]$ we may define an inner product $(\cdot,\cdot)_s$ on $\mathbb R^k$ by
$$(e_i,e_j)_s = \frac{4}{3s^2} \int_{-s^2}^{-s^2/4} \int_M \langle \nabla v^i,\nabla v^j\rangle d\nu_{(p,0),t} dt,$$
where $e_1,\ldots,e_k$ denote the standard basis of $\mathbb R^k$. In particular, denoting by $\cdot$ the standard inner product on $\mathbb R^k$, for any $\xi=\xi^a e_a\in\mathbb R^k$
$$(\xi,\xi)_s = \frac{4 }{3s^2} \int_{-s^2}^{-s^2/4} \int_M \langle \nabla (\xi\cdot v),\nabla (\xi \cdot v)\rangle d\nu_{(p,0),t} dt = \frac{4 }{3s^2} \int_{-s^2}^{-s^2/4} \int_M | \nabla (\xi \cdot v)|^2  d\nu_{(p,0),t} dt.$$
Since $(\frac{\partial}{\partial t} -\Delta) (\xi \cdot v)=0$, the Bochner formula gives that $(\frac{\partial}{\partial t} -\Delta)|\nabla (\xi \cdot v)|^2 \leq 0$. Therefore,
\begin{equation*}
(\xi,\xi)_s \leq \int_M |\nabla (\xi \cdot v)|^2 d\nu_{(p,0),-s^2} \leq \int_M |\nabla (\xi \cdot v)|^2 d\nu_{(p,0),-1/4} \leq (\xi,\xi)_1.
\end{equation*}

Thus, in the setting of Proposition \ref{prop:transformations}, we obtain that if $\delta$ is even smaller (depending only on $\varepsilon$) then for every $s\in [r,1]$
\begin{equation*}
\begin{aligned}
|\xi|^2&=\xi^a\xi^b \delta_{ab} = \xi^a\xi^b (T_s)^c_a (T_s)^d_b (e_c,e_d)_s, \\
&=( T_s(\xi), T_s(\xi))_s \leq (T_s(\xi),T_s(\xi))_1 \leq (1+\varepsilon) |T_s(\xi)|^2,
\end{aligned}
\end{equation*}
since
\begin{equation*}
\begin{aligned}
|(e_i,e_j)_1 - \delta_{ij}|&= \left| \frac{4}{3} \int_{-1}^{-1/4} \int_M ( \langle\nabla v^i,\nabla v^j\rangle -\delta_{ij})d\nu_{(p,0),t} dt  \right|,\\
&\leq \frac{4}{3} \int_{-1}^{-1/4} \int_M | \langle\nabla v^i,\nabla v^j\rangle -\delta_{ij} | d\nu_{(p,0),t} dt, \\
&\leq \left( \frac{4}{3} \int_{-1}^{-1/4} \int_M     | \langle\nabla v^i,\nabla v^j\rangle -\delta_{ij} |^{2} d\nu_{(p,0),t} dt \right)^{1/2},\\
&\leq \frac{2}{\sqrt 3}\delta^{1/2}.
\end{aligned}
\end{equation*}

\end{remark}

Finally, the following theorem asserts that if the sum of the entropy pinching over all scales is small then an almost splitting map at a large scale, remains an almost splitting map even at smaller scales.

\begin{theorem}[Non-degeneration, Theorem 10.1 in \cite{G25}]\label{thm:non_degen}
Fix $\varepsilon>0, \eta>0$, $0<\alpha\leq \frac{1}{6}$, $R\geq \frac{D'}{\alpha}$,  $1\leq k \leq n$, and let $(M,g(t),p)_{t\in (-2\delta^{-2},0]}$ be a pointed smooth compact Ricci flow satisfying (RF1-4). Let $r\in (0,1)$ and suppose that for every $r\leq s\leq 1$, there is $q_s \in \tilde B_R(p,s)$ such that  $(M,g(t),q_s)_{t\in (-2\delta^{-2},0)}$ is $(k,\delta^2)$-selfsimilar but not $(k+1,\eta)$-selfsimilar  at scale $s$, and if $q_s \not = p$ suppose that $\mathcal W_p(\delta s^2)- \mathcal W_p(\delta^{-1} s^2)<\delta$.

Moreover, let  $v$ be a $(k,\delta)$-splitting map $v$ around $p$ at scale $1$, and suppose that for $s_j=2^{-j}$ we have
\begin{equation}
\sum_{r \leq s_j\leq 1} \mathcal E^{(k,\alpha,\delta,R)}_{s_j}(p) <\delta.
\end{equation}
If $0<\delta\leq\delta(n,C_I,\Lambda,H,K,\eta|R,\alpha,\varepsilon)$, then for every $r\leq s\leq 1$, $v: M\times [-s^2,0]\rightarrow \mathbb R^k$ is a $(k,\varepsilon)$-splitting map around $p$ at scale $s$.
\end{theorem}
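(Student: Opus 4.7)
The plan is to show that the normalizing transformations $T_s$ from Proposition~\ref{prop:transformations} stay uniformly close to $I_k$ throughout $[r,1]$; once this is done, writing $v = T_s^{-1} v_s$ transfers the $(k,\varepsilon_0)$-splitting property of $v_s$ to $v$ at each scale $s$, which yields the conclusion as soon as $\delta$ is chosen small. The key object is the scale-dependent quadratic form
\[
(\xi,\xi)_s \;:=\; \frac{4}{3s^2}\int_{-s^2}^{-s^2/4}\int_M |\nabla(\xi\cdot v)|^2\, d\nu_{(p,0),t}\, dt
\]
from Remark~\ref{rmk:no_shrink}. By Bochner, $a(t) := \int_M |\nabla(\xi\cdot v)|^2\, d\nu_{(p,0),t}$ satisfies $a'(t) = -2\int_M |\hess(\xi\cdot v)|^2\, d\nu_{(p,0),t} \leq 0$, so $s\mapsto (\xi,\xi)_s$ is nondecreasing and
\[
(\xi,\xi)_1 - (\xi,\xi)_r \;\leq\; a(-1) - a(-r^2/4) \;=\; 2\int_{-1}^{-r^2/4}\int_M |\hess(\xi\cdot v)|^2\, d\nu_{(p,0),t}\, dt.
\]

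The next step is to estimate this integral dyadically on the blocks $[-s_j^2,-s_{j+1}^2]$ with $s_j = 2^{-j}\in[r,1]$. Writing $v = T_{s_j}^{-1}v_{s_j}$ on each block gives $\hess(\xi\cdot v) = (T_{s_j}^{-T}\xi)\cdot\hess v_{s_j}$, whence
\[
|\hess(\xi\cdot v)|^2 \;\leq\; \|T_{s_j}^{-1}\|^2\,|\xi|^2\sum_{a=1}^k |\hess v_{s_j}^a|^2.
\]
Applying Theorem~\ref{thm:sharp_splitting} (after rescaling) at each scale $s_j$, combined with the essential uniqueness of normalized $(k,\varepsilon_0)$-splitting maps up to orthogonal transformations---under which the pointwise norm $|\hess|^2$ is invariant---yields $\int_M |\hess v_{s_j}^a|^2\, d\nu_{(p,0),t}\leq C\,\mathcal E_{s_j}^{(k,\alpha,\delta,R)}(p)$ on the relevant time range. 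Using the bound $\|T_{s_j}^{-1}\|\leq s_j^{-C(n)\sqrt{\varepsilon_0}}$ from Proposition~\ref{prop:transformations}(3), integrating in time and summing,
\[
(\xi,\xi)_1 - (\xi,\xi)_r \;\leq\; C|\xi|^2\sum_j s_j^{\,2-2C\sqrt{\varepsilon_0}}\,\mathcal E_{s_j}^{(k,\alpha,\delta,R)}(p) \;\leq\; C|\xi|^2\sum_j \mathcal E_{s_j}^{(k,\alpha,\delta,R)}(p) \;<\; C|\xi|^2\delta,
\]
provided $\varepsilon_0$ is small enough that $s_j^{2-2C\sqrt{\varepsilon_0}}\leq 1$. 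Since $v$ is itself a $(k,\delta)$-splitting map at scale $1$, we have $|(\xi,\xi)_1 - |\xi|^2|\leq C\sqrt{\delta}\,|\xi|^2$; combined with monotonicity this forces $|(\xi,\xi)_s - |\xi|^2|\leq C\sqrt{\delta}\,|\xi|^2$ uniformly for $s\in[r,1]$. Because $T_s$ is the lower triangular Cholesky factor that makes the $(\cdot,\cdot)_s$-Gram matrix equal to $I_k$, this closeness forces $\|T_s - I_k\|\leq C\sqrt{\delta}$ uniformly throughout.

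The two $(k,\varepsilon)$-splitting conditions for $v$ at any scale $s\in[r,1]$ now follow: the gradient condition from $\langle\nabla v^a,\nabla v^b\rangle = (T_s^{-1}(T_s^{-1})^T)^{ab} + O(\text{splitting error of }v_s)$, and the Hessian condition from $\hess v^a = (T_s^{-1})^a_b\,\hess v_s^b$ together with $\int_M|\hess v_s^a|^2\, d\nu_{(p,0),t}\leq C\,\mathcal E_s^{(k,\alpha,\delta,R)}(p)\leq C\delta$; both errors are bounded by a constant times $\delta$, hence smaller than $\varepsilon$ for $\delta$ sufficiently small. The main delicate point of this plan is the Hessian bound for our specific $v_s$: Theorem~\ref{thm:sharp_splitting} asserts only the \emph{existence} of a splitting map whose Hessian is controlled by the entropy pinching, so one must argue, via near-uniqueness of normalized almost splitting maps (two such at the same scale differing by an orthogonal rotation plus a constant modulo $O(\sqrt{\varepsilon_0})$-error), to transfer this pointwise bound onto the specific $v_s$ that we work with. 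A secondary subtlety is the precise balance $\|T_{s_j}^{-1}\|^2\cdot s_j^2 \leq s_j^{2-2C\sqrt{\varepsilon_0}} \leq 1$ in the dyadic telescoping, which is exactly what keeps the total sum dominated by $\sum_j \mathcal E_{s_j}^{(k,\alpha,\delta,R)}(p)$ uniformly in $r$, rather than by an $r$-dependent quantity that would diverge as $r\to 0$.
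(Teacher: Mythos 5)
You correctly identify the skeleton — the monotone quadratic form $(\xi,\xi)_s$ from Remark~\ref{rmk:no_shrink}, the telescoping of its drop via $a'(t)=-2\int_M|\hess(\xi\cdot v)|^2\,d\nu_{(p,0),t}$, and the reading of $T_s$ as the Cholesky factor of the Gram matrix $\bigl((e_i,e_j)_s\bigr)$ — but the estimate does not close the way you present it. After parabolic rescaling, the per-time-slice bound of Theorem~\ref{thm:sharp_splitting} at scale $s_j$ reads $\int_M|\hess v_{s_j}^a|^2\,d\nu_t\leq C\,s_j^{-2}\,\mathcal E_{s_j}^{(k,\alpha,\delta,R)}(p)$ for $t\in[-s_j^2,-\varepsilon_0 s_j^2]$; the factor $s_j^{-2}$ is dimensionally forced since $|\hess|^2$ has dimensions of inverse length squared while $\mathcal E$ is scale-invariant. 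Integrating over the dyadic block $[-s_j^2,-s_j^2/4]$, the $s_j^2$ from the time integral cancels against this $s_j^{-2}$, and the summand is $\|T_{s_j}^{-1}\|^2\,\mathcal E_{s_j}\lesssim s_j^{-2C(n)\sqrt{\varepsilon_0}}\,\mathcal E_{s_j}$, not $s_j^{2-2C\sqrt{\varepsilon_0}}\,\mathcal E_{s_j}$. Since $s_j^{-2C\sqrt{\varepsilon_0}}\geq1$ and diverges as $j\to\infty$, no fixed choice of $\varepsilon_0>0$ makes this multiplier $\leq1$ uniformly in $j$; the ``secondary subtlety'' you dismiss is where the naive telescoping actually breaks. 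What closes the estimate is a bootstrap, not a smallness choice of $\varepsilon_0$: take $j_0$ to be the largest index with $\|T_{s_{j'}}-I_k\|\leq\beta$ for all $j'\leq j_0$; on those scales $\|T_{s_{j'}}^{-1}\|\leq 2$, the telescoping then gives $|(\xi,\xi)_{s_{j_0+1}}-|\xi|^2|\leq C\sqrt{\delta}\,|\xi|^2$, and the Cholesky relation $T_s^{-1}(T_s^{-1})^T=\bigl((e_i,e_j)_s\bigr)$ forces $\|T_{s_{j_0+1}}-I_k\|\leq C'\sqrt{\delta}<\beta$ once $\delta$ is small, pushing $j_0$ to the bottom scale.

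The ``main delicate point'' you flag also remains a genuine gap as written. Theorem~\ref{thm:sharp_splitting} produces \emph{some} $(k,\varepsilon_0)$-splitting map $w$ at scale $s_j$ with the sharp Hessian bound, not your specific $v_{s_j}=T_{s_j}v$. Two heat solutions that are each $(k,\varepsilon_0)$-splitting maps around $p$ at the same scale are close only in the weighted space-time $L^2$ sense that appears in Definition~\ref{def:splitting_map}; they need not be exact affine images of one another, so the ``near-uniqueness modulo an orthogonal rotation, a constant, and an $O(\sqrt{\varepsilon_0})$ error'' does not transfer a per-time-slice Hessian $L^2(d\nu_t)$ bound from $w$ to $v_{s_j}$. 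One either has to go back into the proof of Theorem~\ref{thm:sharp_splitting} and show that the sharp bound can be arranged to hold for the given map after its Cholesky normalization, or build the transformations $T_{s_j}$ directly by comparing the sharp maps produced at consecutive dyadic scales. As stated, the inequality $\int_M|\hess v_{s_j}^a|^2\,d\nu_t\lesssim s_j^{-2}\mathcal E_{s_j}$ for the specific $v_{s_j}$ is assumed rather than established.
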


We finish this section with a simple application of Theorem \ref{thm:sharp_splitting}, which in combination with Lemma \ref{lemma:split_map_selfsimillar} allows for the construction of open covers by a uniformly controlled number of balls.

\begin{corollary}\label{cor:coverings}
Fix $\tau\in (0,1]$, $0<\zeta \leq 2^{-5}$, let $(M,g(t),p)_{t\in (-2\delta^{-2},0)}$ be a smooth compact Ricci flow satisfying (RF1-4) such that $\mathcal W_p(\delta)-\mathcal W_p (\delta^{-1})<\delta$, and let $R\geq R(n,H,K|\tau,\zeta)$. Suppose that there is $q\in \tilde B_{2R}(p,1)$ such that $(M,g(t),q)_{t\in (-2\delta^{-2},0)}$ is $(k,\delta^2)$-selfsimilar  but not $(k+1,\eta)$-selfsimilar at scale $1$. Let 
$$W \subset \{x\in \tilde B_{\frac{5R}{3}}(p,1), \mathcal W_x(\delta)-\mathcal W_x (\delta^{-1})<\delta\}.$$
Then, if $0<\delta\leq \delta(n,C_I,\Lambda,H,\eta|\zeta,R)$, there exists a maximal subset $\{x_i\}_{i=1}^N \subset  W$ such that the balls $\tilde B_{\frac{1}{2}\tau^2 R}(x_i,\zeta)$ are mutually disjoint, $\tilde B_{2R}(x_i,\zeta)\subset\tilde B_{2R}(p,1)$,
$$W\subset \bigcup_{i=1}^N \tilde B_R (x_i,\zeta)$$
and $N\leq C(n,\tau) \zeta^{-k}$. 

Moreover, if $W \subset \tilde B_R(p,1)$ does not contain any $(k,\alpha,D')$-independent subsets in $\tilde B_{R}(p,1)$ at time $t=-1$, and if $0<\alpha\leq\alpha(n,\zeta)$, $R\geq R(n,H,K|\tau,\zeta,\alpha)$ and $0<\delta\leq \delta(n,C_I,\Lambda,H,\eta|\alpha, \zeta,R)$, then $N\leq C(n,\tau) \zeta^{-k+1}$.
\end{corollary}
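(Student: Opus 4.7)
The plan combines a Vitali-type covering with the bi-Lipschitz properties of an almost splitting map obtained from Theorem \ref{thm:sharp_splitting}.

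First I would select a maximal collection $\{x_i\}_{i=1}^N\subset W$ such that the balls $\tilde B_{\tau^2 R/2}(x_i,\zeta)$ are pairwise disjoint; finiteness follows exactly as in Lemma \ref{lemma:10_covering}. The inclusion $\tilde B_{2R}(x_i,\zeta)\subset \tilde B_{2R}(p,1)$ is immediate from Lemma \ref{lemma:tilde_balls_inclusion}(2) since $W\subset\tilde B_{5R/3}(p,1)$. For the covering $W\subset \bigcup_i \tilde B_R(x_i,\zeta)$: given $y\in W$, maximality forces $\tilde B_{\tau^2 R/2}(y,\zeta)\cap \tilde B_{\tau^2 R/2}(x_i,\zeta)\neq\emptyset$ for some $i$, and Assertion 5 of Proposition \ref{prop:D_metric} with $\rho_1=\rho_2=\tau^2 R/2$ gives $D_{\tau^2 R}(x_i,y)<\zeta$, which by Assertion 4 upgrades to $D_R(x_i,y)<\zeta$.

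For the bound $N\leq C(n,\tau)\zeta^{-k}$, I would apply Theorem \ref{thm:sharp_splitting} (using the $(k,\delta^2)$-selfsimilarity at $q$ and the entropy pinching at $p$) to obtain a $(k,\varepsilon)$-splitting map $v$ around $p$ at scale $1$, normalize it via Lemma \ref{lemma:normalize_splitting} so that \eqref{eqn:n} holds and $v(p,0)=0$, and set $\tilde v(x)=v(x,0)$. Since each point of $W$ carries the entropy pinching assumption, Lemma \ref{lemma:split_map_selfsimillar}(1) applies to every pair in $W$. The upper Lipschitz estimate at $t=-1$ places $\tilde v(W)\subset B(0,2R)\subset\mathbb R^k$. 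For the lower bound, disjointness of $\tilde B_{\tau^2 R/2}(x_i,\zeta), \tilde B_{\tau^2 R/2}(x_j,\zeta)$ is equivalent, via Assertion 2 of Proposition \ref{prop:D_metric} and a midpoint construction on a minimizing geodesic at $t=-\zeta^2$, to $D_{\tau^2 R}(x_i,x_j)\geq \zeta$, hence $d_{g(-\zeta^2)}(x_i,x_j)\geq \tau^2 R\zeta$; invoking (RF4) and $R\geq R(K|\tau,\zeta)$ yields $d_{g(-1)}(x_i,x_j)\geq \tau^2 R\zeta/2$. Then Lemma \ref{lemma:split_map_selfsimillar}(1) gives $|\tilde v(x_i)-\tilde v(x_j)|^2\geq (1-\varepsilon)(\tau^2 R\zeta/2)^2 - E^2$, which exceeds $(\tau^2 R\zeta)^2/16$ once $\tau^2 R\zeta\gg E(n,H)$, and a standard Euclidean packing count delivers $N\leq C(n,\tau)\zeta^{-k}$.

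For the improved bound under the non-independence hypothesis, I would show that $\{\tilde v(x_i)\}\subset B(0,2R)\subset \mathbb R^k$ contains no $(k,\delta')$-independent subset, where $\delta'=\delta'(n|\tau,\zeta)$ is chosen so that the tube in Remark \ref{rmk:k_ind} has radius $\ll \tau^2 R\zeta$. Indeed, if some $\{x_{i_0},\ldots,x_{i_k}\}$ is not $(k,\alpha,D')$-independent, a model $\{(q_j,a_j)\}\subset K\times \mathbb R^l$ with $l<k$ and $\diam K\leq D'$ approximates $d_{g(-1)}(x_{i_j},x_{i_{j'}})$ within $D'+\alpha R$; projecting to $\mathbb R^l$ and feeding the bi-Lipschitz estimate of Lemma \ref{lemma:split_map_selfsimillar}(1) into the inequality yields an $\mathbb R^l$-model for $\{\tilde v(x_{i_j})\}$ with total error $\lesssim D'+\alpha R+\varepsilon R+E$, which is strictly less than $\delta'\cdot 2R$ once $\alpha,\varepsilon$ are small and $R\geq R(n,H,K|\tau,\zeta,\alpha)$, contradicting $(k,\delta')$-independence. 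By Remark \ref{rmk:k_ind} the image $\tilde v(\{x_i\})$ lies in a tube of radius $\ll \tau^2 R\zeta$ around some $(k-1)$-plane, and projecting along that tube converts the $(\tau^2 R\zeta/4)$-separated image points into $(\tau^2 R\zeta/20)$-separated points in a $(k-1)$-ball of radius $2R$, giving $N\leq C(n,\tau)\zeta^{-(k-1)}$.

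The principal technical hurdle is the parameter calibration: the additive error $E(n,H)$ from Lemma \ref{lemma:split_map_selfsimillar}(1) and the constant $K$ from (RF4) must both become negligible relative to the separation scale $\tau^2 R\zeta$, which is exactly what the largeness assumption $R\geq R(n,H,K|\tau,\zeta)$ (and its $\alpha$-dependent refinement in the second part) is designed to enforce; once this is arranged, the smallness $\delta\leq \delta(n,C_I,\Lambda,H,\eta|\zeta,R)$ is absorbed by the precision requirements of Theorem \ref{thm:sharp_splitting} and Lemma \ref{lemma:split_map_selfsimillar}.
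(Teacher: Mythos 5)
Your proposal is correct and follows essentially the same strategy as the paper's proof: build the maximal disjoint collection via the Vitali-type argument with the $D_\rho$-quasi-metric, produce the almost splitting map via Theorem~\ref{thm:sharp_splitting} normalized as in Lemma~\ref{lemma:normalize_splitting}, transfer separation and containment to $\tilde v(W)\subset\mathbb R^k$ via Lemma~\ref{lemma:split_map_selfsimillar}(1), and for the refined count show that $\{\tilde v(x_i)\}$ contains no $(k,\alpha')$-independent subset so that Remark~\ref{rmk:k_ind} places it in a thin tube around a $(k-1)$-plane. The only differences are cosmetic (you derive the covering from maximality and spell out the final $(k-1)$-dimensional packing count, steps the paper treats more tersely) and in the exact choice of radius for the disjoint balls ($\tau^2R/2$ matching the statement versus $\tau^2R/4$ in the paper's proof), which does not affect the argument.
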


\begin{proof}
Applying Theorem \ref{thm:sharp_splitting} and Lemma \ref{lemma:split_map_selfsimillar}, we obtain that for any $0<\varepsilon\leq \frac{1}{2}$, if $0<\delta\leq \delta(n,C_I,\Lambda,H,K,\eta|\varepsilon,\zeta)$, then there is a constant $E=E(n,H)<+\infty$ and a function $\tilde v: M\rightarrow \mathbb R$ satisfying $\tilde v(p)=0$ and
\begin{equation}\label{eqn:lip}
\begin{aligned}
\frac{1}{2}( d_{g(t)}(q_1,q_2))^2 &\leq \frac{1}{1+\varepsilon} ( d_{g(t)}(q_1,q_2))^2 \leq |\tilde v(q_1)-\tilde v(q_2)|^2 + E^2 |t|, \\
|\tilde v(q_1)-\tilde v(q_2)|&\leq (1+\varepsilon) d_{g(t)} (q_1,q_2) \leq \frac{3}{2} d_{g(t)}(q_1,q_2),
\end{aligned}
\end{equation}
for any $q_1,q_2 \in W$ with $d_{g(-1)}(q_1,q_2)\geq \zeta$ and $t\in [-1,-\zeta^2]$.

Choose a maximal collection of points $\{x_i\}_{i=1}^N \subset W$, with $p=x_1$, such that the balls $\tilde B_{\frac{1}{4}\tau^2 R}(x_i,\zeta)$ are disjoint and $\tilde B_R (x_i,\zeta)$ cover $W$. Note that, by Lemma \ref{lemma:tilde_balls_inclusion}, if $R\geq 12K$, we know that for each $i=1,\ldots,N$, we have the inclusion $\tilde B_{2R}(x_i,\zeta)\subset \tilde B_{2R}(p,1)$. It remains to estimate $N$.

Since the balls $\tilde B_{\frac{1}{4}\tau^2 R}(x_i,\zeta)$ are disjoint we know that $d_{g(-\zeta^2)}(x_i,x_j) \geq \frac{1}{2}\tau^2 R \zeta$, for $i\not = j$, hence by (RF4) 
$$d_{g(-1)}(x_i,x_j) \geq d_{g(-\zeta^2)}(x_i,x_j) - K\geq \frac{1}{2} \tau^2 R \zeta -K\geq \zeta,$$
if $\tau^2 R\zeta\geq 4K$.

Now, applying  \eqref{eqn:lip},   we obtain that for every $i=2,\ldots,N$
$$|\tilde v(x_i)| \leq \frac{3}{2} d_{g(-1)}(p,x_i)\leq 3R,$$
since $\tilde v(p)=0$.

On the other hand, \eqref{eqn:lip} gives, for  $i\not = j$,
$$\frac{1}{8} \tau^4 \zeta^2 R^2\leq \frac{1}{2} (d_{g(-\zeta^2)}(x_i,x_j))^2 \leq |\tilde v(x_i)-\tilde v(x_j)|^2 +E^2 \zeta^2,$$
which implies that $|\tilde v(x_i)-\tilde v(x_j)|\geq \frac{1}{4} \tau^2  R \zeta$ if $R\geq R(n,H,\tau)$.
It follows that the balls $B_i=B_{\frac{1}{8} \tau^2 R \zeta}(\tilde v(x_i))\subset B_{4R}(0)\subset \mathbb R^k$ are disjoint, so $N\leq C(n,\tau) \zeta^{-k}$.

If $W\subset \tilde B_R(p,1)$ does not contain any $(k,\alpha,D')$-independent subsets in $\tilde B_R(p,1)$ the improved estimate on $N$ will follow once we show that 
\begin{equation}\label{eqn:V}
V=\{\tilde v(x_i)\}_{i=1}^N \subset B_{3R}(0)\cap B_\zeta (L),
\end{equation}
 where $L$ is $k-1$ dimensional affine subspace of $\mathbb R^k$.  
 
 To prove \eqref{eqn:V}, by Remark \ref{rmk:k_ind} it suffices to show that $V$ does not contain any $(k,\alpha)$-independent subsets in $B_{5R}(0)$, for some $0<\alpha\leq \alpha(n,\zeta)\leq \frac{1}{2}$.

Take any $k+1$ subset $\{\tilde v(x_{i_a})\}_{a=0}^k$ of $V$.  Since $W$ does not contain any $(k,\alpha,D')$-independent subsets in $\tilde B_R(p,1)$ at time $t=-1$, we know that there exists a metric space $(\mathcal K,d_{\mathcal K})$ with $\diam (\mathcal K)\leq D'$, and  points $z_j =(f_j,a_j)\in \mathcal K\times \mathbb R^{k-1}$, such that, denoting by $d$ the product metric on $\mathcal K\times \mathbb R^{k-1}$,
\begin{equation}\label{eqn:zx_ind}
|d(z_a,z_b) - d_{g(-1)}(x_{i_a},x_{i_b})|<D' + \alpha R.
\end{equation}
It follows that, if $R\geq R(D')=R(n,H)$,
\begin{equation}\label{eqn:av_bounds}
\begin{aligned}
|a_a - a_b| \leq d(z_a,z_b) \leq d_{g(-1)} (x_{i_a},x_{i_b})+D'+\alpha R\leq 3R,\\
|\tilde v(x_{i_a})-\tilde v(x_{i_b}) | \leq  (1+\varepsilon) d_{g(-1)}(x_{i_a}, x_{i_b}) \leq 3R
\end{aligned}
\end{equation}
since $W\subset \tilde B_R(p,1)$, by \eqref{eqn:zx_ind} and \eqref{eqn:lip} respectively.

Then, choosing $\varepsilon=\frac{\alpha}{2}$ and using \eqref{eqn:lip}, \eqref{eqn:zx_ind} and \eqref{eqn:av_bounds}, we have
\begin{align*}
 | \tilde v(x_{i_a}) - \tilde v(x_{i_b})| &\leq (1+\alpha) d_{g(-1)} (x_{i_a}, x_{i_b})\\
 &<(1+\alpha) \left( d(z_a,z_b) + D' +\alpha R\right)\\
 &\leq (1+\alpha) \left( \sqrt{d(f_a,f_b)^2 + |a_a- a_b|^2} + D' +\alpha R \right),\\
 &\leq (1+\alpha) \left( \sqrt{(D')^2 + |a_a-a_b|^2} + D' +\alpha R\right),\\
 &\leq (1+\alpha) \left( |a_a-a_b|+2 D' +\alpha R \right),\\
 &\leq |a_a - a_b| + 3 D' +\frac{9}{2} \alpha R,\\
 &\leq |a_a-a_b| + 5 \alpha R,
\end{align*}
if $3D'\leq \frac{1}{2}\alpha R$. Similarly,
\begin{align*}
|a_a-a_b| &\leq d(z_a,z_b) \\
&< d_{g(-1)} (x_{i_a},x_{i_b}) + D' +\alpha R\\
&\leq \left(1+\frac{\alpha}{2}\right) \sqrt{ |\tilde v(x_{i_a}) - \tilde v(x_{i_b})|^2 +E^2} + D' +\alpha R,\\
&\leq \left(1+\frac{\alpha}{2}\right) |\tilde v(x_{i_a}) - \tilde v(x_{i_b})| + 2 E + D' +\alpha R\\
&\leq |\tilde v(x_{i_a}) - \tilde v(x_{i_b})|  + \frac{3}{2} \alpha R+2E + D' +\alpha R,\\
&\leq |\tilde v(x_{i_a}) - \tilde v(x_{i_b})| + 5\alpha R,
\end{align*}
if $2E+D'\leq \alpha R$. It follows that  $V$ does not contain any $(k,\alpha)$-independent subsets in $B_{5R}(0)$, which proves the result.
\end{proof}

\section{Neck regions}

\subsection{$(k,\delta,\eta)$-neck regions}

\begin{definition}\label{def:neck_region}
Fix $R<+\infty$, $s>0$ and a non-negative integer $0\leq k\leq n-2$. Let $(M,g(t))_{t\in (-\delta^{-3}s^2,0)}$ be a smooth compact Ricci flow, where $n=\dim M$. Let $C\subset \tilde B_{2R}(p,s)$ be a non-empty closed subset and  $r_{\cdot}=r(\cdot)$ be a continuous function $r: C\rightarrow [0,+\infty)$. We will say that
 $$\mathcal N=\tilde B_{2R}(p,s)\setminus \bigcup_{x\in C}\overline{ \tilde B_R(x,r_x)}$$   is a $(k,\delta,\eta)$-neck region at scale $s$, with set of centers $C$ and radius function $r_x$, 
if the following hold: there is $0<\tau\leq \frac{1}{100}$ such that
\begin{itemize}
\item[(n1)] For all $x\in C$ the balls $\tilde B_{\tau^2 R}(x,r_x)$ are mutually disjoint.
\item[(n2)] For all $x\in C$, $\mathcal W_x(\delta r_x^2) - \mathcal W_x(\delta^{-1}s^2) <\delta$.
\item[(n3)] For each $x\in C$ and $r\in [ r_x,\delta^{-1}s^2]$,  there is an $x_r\in \tilde B_{\tau^2 R}(x,r)$ such that $(M,g(t),x_r)_{(-2\delta^{-2}s^2,0]}$ is, at scale $r$, $(k,\delta^2)$-selfsimilar with respect to $\mathcal L_{x_r,r}$, but not $(k+1,\eta)$-selfsimilar.
\item[(n4)] For each $x\in C$ and $r\in [r_x, s]$ such that $\tilde B_{2R}(x, r)\subset \tilde B_{2R}(p,s)$,  we have the inclusion
\begin{align*}
\mathcal L_{x_r,r} \cap \tilde B_R(x,r) &\subset \tilde B_{\tau R}(C,r).
\end{align*}
\item[(n5)] For each $x,y\in C$, and 
$r_y\leq 1.01  r_x + \delta D_R(x,y)$. In other words, $r_x$ is $R$-scale $(1.01,\delta)$-Lipschitz.
\end{itemize}
We will call $s^{-1}\inf_{x\in C} r_x$ the depth of the neck-region. Moreover, a $(k,\delta,\eta)$-neck region will be called smooth, if $(M,g(t))_{t\in(-2\delta^{-3}s^2,0]}$ is smooth. 
\end{definition}

\begin{remark}\label{rmk:infrx}
If (RF4) holds, $\tau^2 R\geq K$ and a neck region is smooth then $\inf_{x\in C} r_x >0$ and $C$ is finite. 

To see this, first note that if $\inf_{x\in C} r_x =0$,  there are sequences $x_j\in C$ and $r_j\rightarrow 0$ with $r_j>0$ with $r_j\geq r_{x_j}$. Consider the sequence of pointed Ricci flows $(M,r_{j}^{-2}g(r_{j}^2 t),x_j)$. Since the Ricci flow of the neck region is smooth up to $t=0$, we obtain that  $(M,r_{j}^{-2}g(r_{j}^2 t),x_j)$ converges smoothly to the pointed static Ricci flow $(\mathbb R^n,g_{\mathbb R^n},0)$. It follows that for large $j$, $(M,g(t),x_j)$ is $(n,\eta)$-selfsimilar at scale $r_j$, which is a contradiction.

Now, since $\bar r=\inf_{x\in C} r_x>0$ if we take a sequence of different points $x_j\in C$, we know that, by the definition of the neck region and Lemma \ref{lemma:balls_inclusion}, the balls $\tilde B_{\tau^2 R}(x_j,\bar r)\subset \tilde B_{\tau^2 R}(x_j,r_{x_j}) $ are pairwise disjoint. However, by the compactness of $M$, passing to a subsequence we may assume that the sequence $x_j$ is Cauchy with respect to the metric $d_{g(-\bar r^2)}$, which leads to a contradiction.

\end{remark}

\begin{remark}\label{rmk:neck_inside_neck}
Let $\mathcal N = \tilde B_{2R}(p,1) \setminus \bigcup_{x\in C} \overline{\tilde B_R(x,r_x)}$ be a $(k,\delta,\eta)$-neck region at scale $1$ of depth $\bar r$ in the Ricci flow $(M,g(t))_{t\in (-2\delta^{-3},0]}$, and let $s>0$ and $q\in C$ such that $\tilde B_{2R}(q,s)\subset \tilde B_{2R}(p,1)$. Define $C_{q,s}=C\cap \tilde B_{2R}(q,s)$.  Then $$\mathcal N_{q,s}=\tilde B_{2R}(q,s)\setminus \bigcup_{x\in C_{q,s}} \overline{\tilde B_R(x,r_x)}$$
a $(k,\delta,\eta)$-neck region at scale $s$ of depth $\bar r s^{-1}$. To see this, it suffices to check that $\mathcal N_{q,s}$ satisfies (n4), since the remaining properties (n1-n3) and (n5) are evident.  In particular, we just need to show that if for some $x \in C$ and $r_x\leq r\leq 1$ we know that $\tilde B_{2R}(x,r)\subset \tilde B_{2R}(p,1)$ and there is $z\in C$ and $w\in \tilde B_{\tau R}(z,r)\cap \tilde B_R(x,r)$, then $z\in C\cap\tilde B_{\frac{3R}{2}}(x,r)$. This, however, follows by the triangle inequality at time $t=-r^2$, since
$$d_{g(-r^2)}(x,z) \leq d_{g(-r^2)}(x,w)+d_{g(-r^2)}(w,z) \leq (R+\tau R)r\leq \frac{3R}{2}r.$$
\end{remark}

\begin{remark}\label{rmk:sigma}
Let $x,y\in \tilde B_{2R}(p,1)$. By the triangle inequality we know that
\begin{equation*}
d_{g(-1)}(x,y)\leq d_{g(-1)}(p,x) + d_{g(-1)}(p,y)< 4R.
\end{equation*}
It follows that $D_{4R}(x,y) < 1$.

On the other hand, since by Defintion \ref{def:neck_region} we know that for any $x,y\in C\subset \tilde B_{2R}(p,1)$, $x\not = y$
$$\tilde B_{\tau^2 R}(x,r_x) \cap \tilde B_{\tau^2 R}(y,r_y)=\emptyset,$$
it follows that 
\begin{equation*}
d_{g(-r_x^2)}(x,y) \geq \tau^2 R r_x\quad\textrm{and}\quad
d_{g(-r_y^2)}(x,y) \geq \tau^2 R r_y,
\end{equation*}
hence
$$D_{\tau^2 R}(x,y)\geq \max(r_x,r_y).$$
\end{remark}
Thus, by continuity, there is $\sigma=\sigma_{x,y}\in [\tau^2 R,4R]$ such that
$$\max(r_x,r_y)\leq D_\sigma(x,y)<1.$$

\begin{definition}
Define the packing measure $\mu$ of a smooth $(k,\delta,\eta)$-neck region by
\begin{equation*}
\mu= \sum_{x\in C} (R r_x)^k \delta_x.
\end{equation*}
\end{definition}

The main result of the section is the following theorem.

\begin{theorem}[smooth neck structure theorem] \label{thm:neck_structure}
Let $(M,g(t),p)_{t\in (-2\delta^{-3},0]}$ be a smooth Ricci flow satisfying (RF1-4). If $R\geq R(n,C_I,\Lambda,H,K|\tau)$, $0<\delta\leq\delta(n,C_I,\Lambda,H,K,\eta|R,\tau)$ and
\begin{equation*}
\mathcal N=\tilde B_{2R}(p,1) \setminus \bigcup_{x\in C} \overline{\tilde B_R(x,r_x)}
\end{equation*}
is a smooth $(k,\delta,\eta)$-neck region  at scale $1$, then for every $x\in C$ and $r\geq r_x$ with $\tilde B_{2R}(x,r)\subset \tilde B_{2R}(p,1)$ the estimate
\begin{equation*}
L^{-1} R^k r^k \leq \mu (\tilde B_R(x,r))\leq L R^k r^k
\end{equation*}
holds, for some $L=L(n,\tau)<+\infty$.
\end{theorem}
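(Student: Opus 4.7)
The plan is to construct a single $(k,\varepsilon)$-splitting map $v$ around $x$ that is valid down to the scale $r_x$, and then use $\tilde v:=v(\cdot,0)$ as an essentially bi-Lipschitz map from $C\cap \tilde B_R(x,1)$ onto a portion of $\mathbb R^k$; both Ahlfors bounds will then reduce to elementary Euclidean volume comparisons. After rescaling and using Remark~\ref{rmk:neck_inside_neck}, we may assume $r=1$ and $x=p$.

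The first step is to establish summable entropy pinching: combining (n2), (n3) and (n4) with Proposition~\ref{prop:spine} and Lemma~\ref{lemma:k_ind}, at each scale $s\in[r_x,1]$ one locates a $(k,\alpha,D's)$-independent subset of $C$ near the spine $\mathcal L_{x_s,s}$ whose entropy pinching at scale $s$ is controlled, and telescoping via the monotonicity of the pointed entropy produces
\begin{equation*}
\sum_{r_x\leq 2^{-j}\leq 1}\mathcal E^{(k,\alpha,\delta,R)}_{2^{-j}}(x)<\delta',
\end{equation*}
for any prescribed $\delta'>0$, provided the neck parameters are chosen small enough. Theorem~\ref{thm:sharp_splitting} at scale $1$ then produces a $(k,\theta)$-splitting map $v$ around $x$, which we normalize by Lemma~\ref{lemma:normalize_splitting}, and Theorem~\ref{thm:non_degen} together with the rescalings of Proposition~\ref{prop:transformations} ensure that $v$ remains a $(k,\varepsilon)$-splitting map around $x$ at every scale $s\in[r_x,1]$. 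Applying Lemma~\ref{lemma:split_map_selfsimillar} at the appropriate pair-scale then yields, for all $y,z\in C\cap\tilde B_R(x,1)$,
\begin{equation*}
(1-\varepsilon)\sigma^2 D_\sigma(y,z)^2\leq |\tilde v(y)-\tilde v(z)|^2+E^2\sigma^2,\qquad |\tilde v(y)-\tilde v(z)|\leq (1+\varepsilon)\sigma D_\sigma(y,z),
\end{equation*}
with $\sigma\sim R$ appropriate to the pair; the entropy hypotheses on $y,z$ required by the lemma are supplied by (n2).

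With this bi-Lipschitz structure in hand the two bounds become Euclidean. For the \emph{upper bound}, (n1) ensures that the balls $\tilde B_{\tau^2 R}(z,r_z)$, $z\in C\cap \tilde B_R(x,1)$, are pairwise disjoint, so their $\tilde v$-images are essentially pairwise disjoint in $\mathbb R^k$, each image contains a Euclidean ball of radius $\sim Rr_z$, and the union lies in a Euclidean ball of radius $\sim R$ around $\tilde v(x)$; summing volumes gives $\sum_z(Rr_z)^k\leq LR^k$. For the \emph{lower bound}, Lemma~\ref{lemma:split_map_selfsimillar}(2) combined with (n4) shows that $\tilde v(C\cap\tilde B_R(x,1))$ is $\varepsilon R$-dense in a Euclidean ball around $\tilde v(x)$ of radius comparable to $R$; since the Lipschitz upper bound implies each $\tilde v(\tilde B_R(z,r_z))$ is contained in a Euclidean ball of radius $\sim Rr_z$, a Vitali-type covering of this dense Euclidean ball forces $\sum_z(Rr_z)^k\geq L^{-1}R^k$. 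The main obstacle will be the first step: verifying summable entropy pinching requires a careful selection of $(k+1)$-tuples at each dyadic scale that are $(k,\alpha,D's)$-independent and whose pointed entropies telescope properly across scales, so that the neck-region data of (n2)--(n4) indeed supply the hypothesis of the non-degeneration theorem.
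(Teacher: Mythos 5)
Your proposal contains a genuine and fatal gap at its very first step, which is precisely the obstacle you flag as ``the main obstacle will be'' at the end: there is no direct way to establish the summable entropy pinching
\begin{equation*}
\sum_{r_x\leq 2^{-j}\leq 1}\mathcal E^{(k,\alpha,\delta,R)}_{2^{-j}}(x)<\delta'
\end{equation*}
from the neck-region axioms (n1)--(n5) alone. Property (n2) does telescope the pointed entropy drop of the \emph{single} point $x$ across dyadic scales, giving $\sum_j\bigl(\mathcal W_x(r_j^2)-\mathcal W_x(2Tr_j^2)\bigr)\lesssim\delta$. But $\mathcal E^{(k,\alpha,\delta,R)}_{r_j}(x)$ is an infimum over $(k{+}1)$-tuples of points of $C$ that are $(k,\alpha,D'r_j)$-independent in $\tilde B_R(x,r_j)$ at time $-r_j^2$; these tuples necessarily vary with the scale $r_j$, and (n2) alone gives no control over the entropy drops of \emph{other} points at scales commensurate with their own $r_y$. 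To find, at each scale $r_j$, an independent $(k{+}1)$-tuple whose entropy drops are small, one has to average $E_y(r_i)$ over the packing measure $\mu$ and pigeonhole — and this averaging argument (Lemma \ref{lemma:small_average} and the Claim in Lemma \ref{lemma:not_degenerate}) crucially uses the weak Ahlfors bound $B^{-1}(Rr_i)^k\leq\mu(\tilde B_R(x,r_i))\leq B(Rr_i)^k$ at \emph{smaller} scales $r_i$, which is exactly the quantity you are trying to bound. The argument is circular without the paper's induction on the depth $\inf_{x\in C}r_x$ of the neck region: the induction hypothesis supplies the a priori Ahlfors regularity at all strictly smaller neck regions inside $\tilde B_{2R}(p,1)$, and only then do the averaging and the non-degeneration Theorem \ref{thm:non_degen} apply.

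A secondary, related difficulty: because the entropy pinching can only be controlled on a large-$\mu$-measure subset $C_\varepsilon\subset C$ (after averaging and Chebyshev, as in Proposition \ref{prop:bilip}), the bi-Lipschitz estimate you wish to use for both Ahlfors bounds does not in fact hold on all of $C$; only the weaker bi-H\"older estimate (\ref{eqn:bi_holder}) holds globally on $C$, with the exponent degenerating away from $1$ as the splitting map is allowed to shrink. Your Vitali-type sketch of the lower bound does not account for this degeneration; the paper's lower bound instead goes through a minimization argument over scales (Lemma \ref{lemma:covering}), tracking the transformation matrices $T_{x,r}$ and choosing a minimal ``failure scale'' $\bar s$, which is more delicate than a straight covering argument. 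The upper bound sketch is closer to the paper's Lemma \ref{lemma:apriori_upper}, but again it quietly uses the bi-Lipschitz bound on the whole of $C$ rather than only on $C_{1/2}$.

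In short: the approach you outline — splitting map, bi-Lipschitz, Euclidean volume comparison — is the correct philosophy and matches the paper at a high level, but it cannot be run directly. The missing ingredient is the induction on the depth of the neck region, which bootstraps the Ahlfors bound from coarser to finer scales and makes the averaging step available.
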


\subsection{Neck regions under a priori bounds} 
Suppose that $(M,g(t))_{t\in (-2\delta^{-3},0]}$ is a compact Ricci flow satisfying (RF1-4). We will say that a $(k,\delta,\eta)$-neck region in $\tilde B_{2R}(p,1)$ with set of centers $C$ and radius function $r_x$ satisfies the a priori assumptions if
\begin{enumerate}
\item $p\in C$ and $r_x\leq 2^{-5}$ for every $x\in C$.
\item There is map $v: M\times [-1,0]\rightarrow \mathbb R^k$ with $v(p,0)=0$, which is a $(k,\delta')$-splitting map around any $x\in C$ at scale $1$. 
\item There is a $B<+\infty$  with the following significance: for every $x\in C$ and $r\in [r_x,1]$ such that $\tilde B_{2R}\left(x,r\right)\subset \tilde B_{2R}(p,1)$ we have
\begin{equation}\label{weak_ahlfors}
B^{-1}  (R r)^k \leq \mu\left(\tilde B_{R}\left(x,r\right)\right)\leq B (R r)^k.
\end{equation}
\end{enumerate}

Given a $(k,\delta,\eta)$-neck region satisfying the a priori assumptions, we define $\tilde v:C \rightarrow \mathbb R^k$ by $\tilde v(x)=v(x,0)$.

Moreover, by Proposition \ref{prop:transformations} and Remark \ref{rmk:no_shrink}, given any $\theta>0$, we know that if $0<\max(\delta,\delta')\leq \delta(n,C_I,\Lambda,H,\eta|\theta)$ then for every $x\in C$ and $r\in [r_x,1]$ there is a lower triangular $k\times k$ matrix $T_{x,r}$ such that $T_{x,r} v$ is a $(k,\theta)$-splitting map at scale $r$ and
\begin{equation}\label{eqn:Tv_normalization}
\frac{4}{3r^2} \int_{-r^2}^{-r^2/4} \int_M \langle \nabla (T_{x,r} v)^a,\nabla (T_{x,r} v)^b\rangle d\nu_{(x,0),t} dt =\delta^{ab},
\end{equation}
for every $a,b=1,\ldots,k$. The maps $T_{x,r}$ also satisfy the growth estimates
\begin{equation}\label{eqn:T_ratio}
|| T_{x,r_1} T_{x,r_2}^{-1} || \leq \left(\frac{r_2}{r_1} \right)^{C(n) \sqrt\theta},
\end{equation}
for any $r_x\leq r_1\leq r_2\leq \frac{1}{2}$ and
\begin{equation}\label{eqn:T_estimates}
||T_{x,r} ||\leq (1+\theta) r^{-C(n) \sqrt\theta} \quad \textrm{and}\quad
|\xi|^2 \leq (1+\theta) |T_{x,r} (\xi)|^2,
\end{equation}
for every $r_x\leq r\leq 1$ and $\xi\in\mathbb R^k$.

\begin{lemma}\label{lemma:apriori_Ahlfors_larger_balls}
If $(M,g(t),p)_{t\in (-2\delta^{-3},0]}$ is a smooth compact Ricci flow satisfying (RF1-4) and
$$\mathcal N = \tilde B_{2R}(p,1) \setminus \bigcup_{x\in C} \overline{\tilde B_R(x,r_x)}$$ 
is a $(k,\delta,\eta)$-neck region at scale $1$ that satisfies the a priori assumptions with
\begin{align*}
R&\geq R(n,H,K),\\
0<\delta&\leq \delta(n,C_I,\Lambda,H,\eta|R),
\end{align*}
then there is  $\hat B=\hat B(n,B)<+\infty$ such that
\begin{equation}\label{eqn:5R3_ahlfors}
\hat B^{-1}R^k\leq \mu(\tilde B_{\frac{5R}{3}}(p,1))\leq \hat BR^k.
\end{equation}
\end{lemma}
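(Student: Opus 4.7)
The lower bound is immediate. By a priori assumption (1), $p\in C$ and $r_p\leq 2^{-5}\leq 1$, and trivially $\tilde B_{2R}(p,1)\subset \tilde B_{2R}(p,1)$, so \eqref{weak_ahlfors} applied at $x=p$, $r=1$ yields $\mu(\tilde B_R(p,1))\geq B^{-1}R^k$; since $\tilde B_R(p,1)\subset \tilde B_{\frac{5R}{3}}(p,1)$, the lower half of \eqref{eqn:5R3_ahlfors} follows for any $\hat B\geq B$.

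For the upper bound, the plan is to cover $C\cap \tilde B_{\frac{5R}{3}}(p,1)$ by a uniformly bounded (in terms of $n$ alone) collection of balls $\tilde B_R(x_i,\zeta)$ with $x_i\in C$ and $\tilde B_{2R}(x_i,\zeta)\subset \tilde B_{2R}(p,1)$, and then to apply \eqref{weak_ahlfors} on each. The key parameter choice will be $\zeta=2^{-5}$: this is the largest value permitted by Corollary \ref{cor:coverings} and, by a priori assumption (1), automatically satisfies $\zeta\geq r_x$ for every $x\in C$, which is precisely what is needed in order to legitimately apply \eqref{weak_ahlfors} at each $x_i$ at scale $\zeta$.

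I will produce the cover by invoking Corollary \ref{cor:coverings} with $\tau=1$, $\zeta=2^{-5}$ and $W=C\cap \tilde B_{\frac{5R}{3}}(p,1)$. Its hypotheses are verified directly from the neck data: the entropy pinching $\mathcal W_p(\delta)-\mathcal W_p(\delta^{-1})<\delta$, as well as the analogous inequality for every $x\in W$, both follow from (n2) together with the monotonicity $\mathcal W_x(\delta)\leq \mathcal W_x(\delta r_x^2)$ (valid since $r_x\leq 1$); and the existence of a $q\in \tilde B_{2R}(p,1)$ that is $(k,\delta^2)$- but not $(k+1,\eta)$-selfsimilar at scale $1$ is supplied directly by (n3) at $x=p$, $r=1$, since the point it produces lies in $\tilde B_{\tau^2 R}(p,1)\subset \tilde B_{2R}(p,1)$. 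The corollary then yields $\{x_i\}_{i=1}^N\subset W$ with $\tilde B_{2R}(x_i,\zeta)\subset \tilde B_{2R}(p,1)$, $W\subset \bigcup_i \tilde B_R(x_i,\zeta)$, and $N\leq C(n)\zeta^{-k}=C(n)$.

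Finally, since $r_{x_i}\leq 2^{-5}=\zeta$ and $\tilde B_{2R}(x_i,\zeta)\subset \tilde B_{2R}(p,1)$, the a priori bound \eqref{weak_ahlfors} gives $\mu(\tilde B_R(x_i,\zeta))\leq B(R\zeta)^k\leq BR^k$ for each $i$, so that
\begin{equation*}
\mu(\tilde B_{\frac{5R}{3}}(p,1)) = \sum_{y\in C\cap \tilde B_{\frac{5R}{3}}(p,1)} (Rr_y)^k \leq \sum_{i=1}^N \mu(\tilde B_R(x_i,\zeta)) \leq C(n)\, B R^k,
\end{equation*}
and $\hat B=C(n)B$ works. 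I do not anticipate a serious obstacle; the only delicate point is to match the neck axioms against the hypotheses of Corollary \ref{cor:coverings}, which goes through because $p\in C$ simultaneously furnishes the entropy pinching at the base point and, through (n3), the almost-selfsimilar point $q$ at scale $1$.
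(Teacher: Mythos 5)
Your proof is correct and follows essentially the same route as the paper's: cover $C\cap\tilde B_{\frac{5R}{3}}(p,1)$ by at most $C(n)\zeta^{-k}$ balls $\tilde B_R(x_i,\zeta)$ with $\zeta=2^{-5}$ via Corollary \ref{cor:coverings}, apply the a priori Ahlfors bound on each, and get the lower bound directly from the a priori bound at $p$. Your verification that (n2) plus entropy monotonicity supplies the pinching hypotheses of the corollary, and that (n3) at $x=p$, $r=1$ supplies the almost-selfsimilar point $q$, is exactly what the paper's appeal to the corollary implicitly relies on, just spelled out.
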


\begin{proof}
Set $\zeta=2^{-5}$. By Corollary \ref{cor:coverings}, if $R\geq R(n,H,K)$ and $0<\delta\leq \delta(n,C_I,\Lambda,H,K,\eta|R)$, we can cover $\tilde B_{\frac{5R}{3}}(p,1)$ by a maximal collection of balls $\{\tilde B_R (x_i,\zeta)\}_{i=1}^N$, with $x_i\in \tilde B_{\frac{5R}{3}}(p,1)\cap C$, such that, for each $i$, $\tilde B_{2R}(x_i,\zeta)\subset \tilde B_{2R}(p,1)$, the balls $\{\tilde B_{\frac{1}{2} R\zeta}(x_i,\zeta)\}_{i=1}^N$ are pairwise disjoint, and $N\leq C(n) \zeta^{-k}$.

Since by the a priori assumptions $r_x \leq 2^{-5}$, the a priori upper Ahlfors bound gives for every $i=1,\ldots,N$,
\begin{equation*}
\mu(\tilde B_R(x_i,\zeta))\leq B (R \zeta)^k,
\end{equation*}
therefore
\begin{equation*}\label{eqn:5R3_upper}
\mu(\tilde B_{\frac{5R}{3}}(p,1))\leq \sum_{i=1}^N \mu(\tilde B_R(x_i,\zeta))\leq C(n)  B R^k.
\end{equation*}
On the other hand, since $p\in C$, the a priori lower Ahlfors bound gives
\begin{equation*}\label{eqn:5R3_lower}
B^{-1} R\leq \mu(\tilde B_R(p,1)) \leq \mu(\tilde B_{\frac{5R}{3}}(p,1)).
\end{equation*}
These suffice to prove the result.
\end{proof}

\begin{remark}\label{rmk:instead_p_C}
In the proof of Lemma \ref{lemma:apriori_Ahlfors_larger_balls} the assumption $p\in C$ for the neck-region is only used to ensure that 
\begin{equation}\label{eqn:instead_p_C}
\mathcal W_p(\delta)-\mathcal W_p(\delta^{-1})<\delta
\end{equation}
 which is needed to apply Corollary \ref{cor:coverings}. Thus, \eqref{eqn:5R3_ahlfors} still holds if we replace the assumption $p\in C$ with \eqref{eqn:instead_p_C}. 
 
Moreover, note that we didn't make any explicit use of the a priori assumption (2), although in the proof of Corollary \ref{cor:coverings} we do need to construct an almost splitting map.
\end{remark}

Recall that $T=T(n,H)<+\infty$ and $D'=D'(n,H)<+\infty$ are determined  by Theorem \ref{thm:sharp_splitting} and are involved in Definition \ref{def:entropy_pinching} of $k$-entropy pinching. Set
\begin{equation}\label{eqn:short}
E_y(r_i)=\mathcal W_y(r_i^2)-\mathcal W_y(2 T r_i^2).
\end{equation}

\begin{lemma}\label{lemma:small_average}
For every $\varepsilon>0$, if $(M,g(t),p)_{t\in (-2\delta^{-3},0]}$ is a smooth compact Ricci flow satisfying (RF1-4) and
$$\mathcal N = \tilde B_{2R}(p,1) \setminus \bigcup_{x\in C} \overline{\tilde B_R(x,r_x)}$$ 
is a $(k,\delta,\eta)$-neck region that satisfies the a priori assumptions with $R\geq R(n,H,K)$ and $0<\delta\leq\delta(n,C_I,\Lambda,H,\eta,B|R,\varepsilon)$ then for  $r_i=2^{-i}$ we have that
\begin{equation*}
\frac{1}{\mu(\tilde B_{\frac{3R}{2}}(p,1))}\int_{\tilde B_{\frac{3R}{2}}(p,1)} \left( \sum_{ r_x \leq r_i \leq 2^{-5}}\frac{1}{\mu(\tilde B_R(x,r_i))}\int_{\tilde B_R(x,r_i)} E_y(r_i)d\mu (y)\right) d\mu(x) < \varepsilon.
\end{equation*}
\end{lemma}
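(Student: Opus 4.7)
The plan is to apply Fubini in $x$ versus $(y, r_i)$ and then reduce the resulting expression to a telescoping estimate on the pointed entropy that is controlled by condition (n2).

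Swapping the order of integration, the quantity to be bounded becomes
\begin{equation*}
\frac{1}{\mu(\tilde B_{\frac{3R}{2}}(p,1))}\int \sum_{r_i\leq 2^{-5}} E_y(r_i)\left(\int \frac{\chi_{\{r_x\leq r_i\}}\chi_{\tilde B_R(x,r_i)}(y)\chi_{\tilde B_{\frac{3R}{2}}(p,1)}(x)}{\mu(\tilde B_R(x,r_i))}\,d\mu(x)\right) d\mu(y).
\end{equation*}
The symmetry $D_R(x,y)=D_R(y,x)$ makes $y\in \tilde B_R(x,r_i)$ equivalent to $x\in \tilde B_R(y,r_i)$, while Lemma \ref{lemma:tilde_balls_inclusion} places $y\in \tilde B_{\frac{5R}{3}}(p,1)$ and ensures $\tilde B_{2R}(x,r_i)\subset \tilde B_{2R}(p,1)$. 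Applying the a priori lower Ahlfors bound to $\mu(\tilde B_R(x,r_i))$ and the upper bound to $\mu(\tilde B_R(y,r_i))$ then yields the uniform inner estimate
\begin{equation*}
\int \frac{\chi_{\{r_x\leq r_i\}}\chi_{\tilde B_R(x,r_i)}(y)}{\mu(\tilde B_R(x,r_i))}\,d\mu(x)\leq \frac{B\,\mu(\tilde B_R(y,r_i))}{(Rr_i)^k}\leq B^2.
\end{equation*}

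For fixed $y\in C$, the next step is to control $\sum_{r_i} E_y(r_i)$ over the range of scales that actually contribute. The disjointness condition (n1) together with Proposition \ref{prop:D_metric}(4) gives, for $x\in C\setminus\{y\}$ with $y\in \tilde B_R(x,r_i)$, the inequality $r_y\leq D_{\tau^2 R}(x,y)\leq \tfrac{R-K}{\tau^2 R-K}D_R(x,y)$, which forces $r_y\leq c(\tau)^{-1} r_i$ for $R$ large; the case $x=y$ gives $r_y\leq r_i$ directly. Hence the nontrivial sum is over $r_i\in [c(\tau)r_y, 2^{-5}]$. Since for $r_i=2^{-i}$ the intervals $[r_i^2, 2Tr_i^2]$ overlap at most $m=m(T)=O(\log T)$ times, and $\mathcal W_y$ is non-increasing in scale, a standard overlap-counting yields
\begin{equation*}
\sum_{c(\tau)r_y\leq r_i\leq 2^{-5}} E_y(r_i)\leq m\bigl(\mathcal W_y(c(\tau)^2 r_y^2)-\mathcal W_y(2T\cdot 2^{-10})\bigr).
\end{equation*}
Taking $\delta$ small enough that $\delta\leq c(\tau)^2$ and $2T\cdot 2^{-10}\leq \delta^{-1}$, the monotonicity of $\mathcal W_y$ lets me dominate this by $m\bigl(\mathcal W_y(\delta r_y^2)-\mathcal W_y(\delta^{-1})\bigr)<m\delta$ via (n2).

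Combining these estimates gives $\text{LHS}\leq B^2 m\delta\,\mu(\tilde B_{\frac{5R}{3}}(p,1))/\mu(\tilde B_{\frac{3R}{2}}(p,1))$. By Lemma \ref{lemma:apriori_Ahlfors_larger_balls} the numerator is at most $\hat B R^k$, and the a priori lower bound applied at the center $p\in C$ gives $\mu(\tilde B_{\frac{3R}{2}}(p,1))\geq \mu(\tilde B_R(p,1))\geq B^{-1}R^k$. Thus $\text{LHS}\leq B^3\hat B m\delta$, which is $<\varepsilon$ once $\delta$ is chosen small depending on $n$, $C_I$, $\Lambda$, $H$, $\eta$, $B$, $R$, $\tau$ and $\varepsilon$. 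I expect the one genuinely technical point to be the range-of-scales argument, which ensures that the overlap-counting telescoping picks up only the small entropy gap guaranteed by (n2); the rest of the argument is a clean application of Fubini and the a priori Ahlfors regularity.
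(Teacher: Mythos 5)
Your overall architecture --- Fubini swap, scale restriction, overlap‑counting telescoping, Ahlfors bounds, and the final quotient via Lemma \ref{lemma:apriori_Ahlfors_larger_balls} and the lower Ahlfors bound at $p$ --- is the same as the paper's. But there is a genuine gap at the scale‑restriction step: you invoke (n1) together with Proposition \ref{prop:D_metric}(4) to conclude $r_y\le c(\tau)^{-1}r_i$, where $c(\tau)^{-1}=\tfrac{R-K}{\tau^2 R-K}\approx\tau^{-2}$. The paper instead uses (n5), which for $r_x\le r_i$ and $D_R(x,y)<r_i$ gives immediately $r_y\le 1.01\,r_x+\delta\,D_R(x,y)<2r_i$ (for $\delta\le\tfrac16$), with no dependence on $\tau$.

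This is not cosmetic, for two reasons. First, your claimed inner estimate $\mu(\tilde B_R(x,r_i))^{-1}\int\chi d\mu\le B^2$ is incorrect as written: the weak Ahlfors upper bound for $\mu(\tilde B_R(y,r_i))$ is only available at scales $\ge r_y$, and since (n1) only gives $r_y\le c(\tau)^{-1}r_i$ you can have $r_y>r_i$; the honest bound is $\mu(\tilde B_R(y,r_i))\le\mu(\tilde B_R(y,\max(r_i,r_y)))\le Bc(\tau)^{-k}(Rr_i)^k$, i.e.\ an extra factor of $c(\tau)^{-k}\sim\tau^{-2k}$. (With (n5) the analogous step is $\mu(\tilde B_R(y,r_i))\le\mu(\tilde B_R(y,2r_i))\le 2^kB(Rr_i)^k$.) Second, your telescoping requires $\delta\le c(\tau)^2\approx\tau^{4}$ before (n2) can be applied, whereas the paper's version only needs $\delta$ small compared to $T=T(n,H)$. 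Both defects introduce a dependence on the neck parameter $\tau$ that the lemma statement explicitly excludes: it allows only $R\ge R(n,H,K)$ and $\delta\le\delta(n,C_I,\Lambda,H,\eta,B\,|\,R,\varepsilon)$, and recall that $\tau\in(0,\tfrac1{100}]$ is a free parameter of the neck region, so $c(\tau)$ cannot be absorbed into those constants. The fix is simply to replace the use of (n1) by (n5) in the scale‑restriction step, after which the rest of your argument goes through and matches the paper's computation.
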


\begin{proof}
By Lemma \ref{lemma:tilde_balls_inclusion}, if $R\geq 12K$ then for any $x\in C\cap\tilde B_{\frac{3R}{2}}(p,1)$ we have that 
\begin{equation*}
\tilde B_{2R}(x,r_i)\subset \tilde B_{\frac{5R}{3}}(p,1)\subset \tilde B_{2R}(p,1)
\end{equation*}
 for every $i\geq 5$. Therefore, if $r_x\leq r_i\leq 2^{-5}$, the a priori Ahlfors bound gives
\begin{equation*}
B^{-1}(R r_i)^k\leq \mu(\tilde B_R(x,r_i)).
\end{equation*}
This allows us to estimate
\begin{equation*}
\begin{aligned}
&\int_{\tilde B_{ \frac{3R}{2} }(p,1)} \left( \sum_{ r_x \leq r_i \leq 2^{-5}}\frac{1}{\mu(\tilde B_R(x,r_i))}\int_{\tilde B_R(x,r_i)} E_y(r_i)d\mu (y)\right) d\mu(x)\\
&\leq \frac{B}{R^k}\int_{\tilde B_{\frac{3R}{2}}(p,1)}\left( \sum_{ r_x\leq r_i \leq 2^{-5}} r_i^{-k}\int_{\tilde B_{\frac{5R}{3}}(p,1)} \chi_{\{D_R(x,y) \leq r_i\}}E_y(r_i) d\mu(y) \right) d\mu(x)\\
&\leq \frac{B}{ R^k}\int_{\tilde B_{\frac{5R}{3}}(p,1)} \int_{\tilde B_{\frac{5R}{3}}(p,1)} \sum_{ r_x\leq r_i \leq 2^{-5}} r_i^{-k}\chi_{\{D_R(x,y) \leq r_i\}}E_y(r_i) d\mu(y)  d\mu(x).
\end{aligned}
\end{equation*}

Therefore, applying Fubini we obtain
\begin{equation}\label{eqn:characteristic_Fubini}
\begin{aligned}
&\int_{\tilde B_{\frac{3R}{2}}(p,1)} \left( \sum_{ r_x\leq r_i \leq 2^{-5}}\frac{1}{\mu(\tilde B_R(x,r_i))}\int_{\tilde B_R(x,r_i)} E_y(r_i)d\mu (y)\right) d\mu(x) \leq\\
&\leq \frac{B}{R^k}\int_{\tilde B_{\frac{5R}{3}}(p,1)} \int_{\tilde B_{\frac{5R}{3}}(p,1)} \sum_{ r_x\leq r_i \leq 2^{-5}} r_i^{-k}\chi_{\{D_R(x,y) \leq r_i\}}E_y(r_i) d\mu(x)  d\mu(y).
\end{aligned}
\end{equation}

Now, whenever $x,y\in C$, $ r_x\leq r_i$ and  $D_R(x,y)\leq r_i$, (n5) gives  
$$r_y \leq 1.01 r_x+ \delta D_R(x,y)<2r_i=r_{i-1}.$$
as long as $\delta\leq\frac{1}{6}$. Thus,
$$
\left\{r_i,  r_x\leq r_i\leq 2^{-5} \textrm{ and } D_R(x,y)\leq r_i\right\}\subset \left\{r_i,  r_y \leq 2r_i\leq  2^{-4} \right\},
$$
which implies
\begin{equation}\label{eqn:xtoy}\sum_{ r_x\leq r_i \leq 2^{-5}} r_i^{-k} \chi_{\{D_R(x,y) \leq r_i\}}E_y(r_i) \leq \sum_{ r_y\leq 2 r_i \leq 2^{-4}} r_i^{-k} \chi_{\{D_R(x,y) \leq r_i\}} E_y(r_i).
\end{equation}

Then, using \eqref{eqn:xtoy} in \eqref{eqn:characteristic_Fubini}, we can estimate
\begin{equation}\label{eqn:Lip_bound_estimate}
\begin{aligned}
&\int_{\tilde B_{\frac{3R}{2}}(p,1)} \left( \sum_{ r_x\leq r_i \leq 2^{-5}}\frac{1}{\mu(\tilde B_R(x,r_i))}\int_{\tilde B_R(x,r_i)} E_y(r_i)d\mu (y)\right) d\mu(x) \leq\\
&\leq \frac{B}{R^k}\int_{\tilde B_{\frac{5R}{3}}(p,1)} \int_{\tilde B_{\frac{5R}{3}}(p,1)} \sum_{ r_y\leq 2 r_i \leq 2^{-4}} r_i^{-k}\chi_{\{D_R(x,y) \leq r_i\}}E_y(r_i) d\mu(x)  d\mu(y)\\
&\leq \frac{B}{ R^k}\int_{\tilde B_{\frac{5R}{3}}(p,1)} \sum_{ r_y\leq 2r_i \leq 2^{-4}} \mu ( \tilde B_R(y,r_i))r_i^{-k}  E_y(r_i)  d\mu(y).
\end{aligned}
\end{equation}

Now, by Lemma \ref{lemma:tilde_balls_inclusion} we know that $\tilde B_{2R}(y,r_i)\subset \tilde B_{2R}(y, 2r_i) \subset \tilde B_{2R}(p,1)$, for any $i\geq 5$ and $y\in \tilde B_{\frac{5R}{3}}(p,1)$. Therefore, when $2r_i\geq r_y$, the a priori Ahlfors bound implies, by Lemma \ref{lemma:balls_inclusion}, that
$$\mu(\tilde B_R(y,r_i))\leq \mu(\tilde B_R(y,2r_i))\leq 2^kB (Rr_i)^k,$$ so \eqref{eqn:Lip_bound_estimate} becomes
\begin{equation}\label{eqn:total_estimate}
\begin{aligned}
&\int_{\tilde B_{\frac{3R}{2}}(p,1)} \left( \sum_{ r_x\leq r_i \leq 2^{-5}}\frac{1}{\mu(\tilde B_R(x,r_i))}\int_{\tilde B_R(x,r_i)} E_y(r_i)d\mu (y)\right) d\mu(x) \leq\\
&\leq 2^kB^2 \int_{\tilde B_{\frac{5R}{3}}(p,1)} \sum_{ r_y\leq 2 r_i \leq 1} E_y(r_i)  d\mu(y)\\
&\leq 2^kC(T) B^2 \int_{\tilde B_{\frac{5R}{3}}(p,1)} \left( \mathcal W_y(\delta r_y^2) - \mathcal W_y(\delta^{-1}) \right) d\mu(y)\\
&\leq 2^kC(T)B^2  \mu(\tilde B_{\frac{5R}{3}}(p,1)) \delta,
\end{aligned}
\end{equation}
where $C(T)=C(n,C_I)$ is a large constant, depending only on $T(n,C_I)$, such that every $t\in [-\delta^{-1},-\delta r_y^2]$ belongs to at most $C(T)$ intervals of the form $[-2Tr_i^2,-r_i^2]$, with $r_y\leq 2r_i\leq 1$, $i\geq 5$.

By Lemma \ref{lemma:apriori_Ahlfors_larger_balls} we know that if $R\geq R(n,H,K)$ and $0<\delta\leq \delta(n,C_I,\Lambda,H,\eta|R)$ then $\mu(\tilde B_{\frac{5R}{3}}(p,1))\leq \hat B R^k$, where $\hat B=\hat B(n,B)$. Using this, and the a priori Ahlfors lower bound for the ball $\tilde B_R(p,1)$, \eqref{eqn:total_estimate} becomes
\begin{equation}
\begin{aligned}
&\int_{\tilde B_{\frac{3R}{2}}(p,1)} \left( \sum_{ r_x\leq r_i \leq 2^{-l}}\frac{1}{\mu(\tilde B_R(x,r_i))}\int_{\tilde B_R(x,r_i)} E_y(r_i)d\mu (y)\right) d\mu(x) \\
&\leq 2^kC(T) B^2 \hat B R^k\delta,\\
&
\leq 2^kC(T) B^3 \hat B \mu(\tilde B_R(p,1)) \delta,\\
&\leq 2^nC(T) B^3 \hat B\mu(\tilde B_{\frac{3R}{2}}(p,1)) \delta,
\end{aligned}
\end{equation}
from which the result follows, if we further assume $0<\delta<(2^nC(T)B^3\hat B)^{-1}\varepsilon=\delta(n, C_I,H,B|\varepsilon)$.
\end{proof}

\begin{lemma}\label{lemma:not_degenerate}
Fix $\varepsilon>0$. Let $(M,g(t),p)_{t\in (-2\delta^{-2},0]}$ be a smooth compact Ricci flow satisfying (RF1-4) and
$$\mathcal N = \tilde B_{2R}(p,1) \setminus \bigcup_{x\in C} \overline{\tilde B_R(x,r_x)}$$ 
be a $(k,\delta,\eta)$-neck region that satisfies the a priori assumptions with
\begin{align*}
R&\geq R(n,H,K,B),\\
0<\delta &\leq \delta(n,C_I,\Lambda,H,K,\eta,B|R,\varepsilon)\\
 0<\delta' &\leq\delta'(n,C_I,\Lambda,H,K,\eta,B|R,\varepsilon)\\
 0<\delta'' &\leq\delta''(n,C_I,\Lambda,H,K,\eta,B|\varepsilon).
 \end{align*}
 If $x\in C \cap \tilde B_{\frac{3R}{2}} (p,1)$ satisfies
\begin{equation*}
\sum_{ r_x \leq r_i \leq 2^{-5}}\frac{1}{ \mu(\tilde B_R(x, r_i))} \int_{\tilde B_R (x, r_i)} \left(\mathcal W_y (r_i^2) - \mathcal W_y (2 T r_i^2)\right) d\mu(y) <\delta'',
\end{equation*}
then $v$ is a $(k,\varepsilon)$-splitting map around $x$ at every scale $r\in [r_x, 1]$.\end{lemma}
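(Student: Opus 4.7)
The plan is to verify the hypotheses of the non-degeneration Theorem \ref{thm:non_degen} applied to the $(k,\delta')$-splitting map $v$ around $x$ at scale $1$ over the scale range $[r_x,1]$; its conclusion is precisely the statement of the lemma. The selfsimilarity hypothesis—existence for each $s\in[r_x,1]$ of a point $q_s\in\tilde B_R(x,s)$ that is $(k,\delta^2)$-selfsimilar but not $(k+1,\eta)$-selfsimilar at scale $s$—is furnished directly by property (n3) of the neck region, taking $q_s=x_s\in\tilde B_{\tau^2R}(x,s)\subset\tilde B_R(x,s)$. The individual pinching $\mathcal W_x(\delta s^2)-\mathcal W_x(\delta^{-1}s^2)<\delta$ follows from (n2) combined with the monotonicity of $\tau\mapsto\mathcal W_x(\tau)$, since $r_x\leq s\leq 1$. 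The substantive content is therefore the dyadic entropy-pinching estimate
\begin{equation*}
\sum_{r_x\leq 2^{-j}\leq 1}\mathcal E_{2^{-j}}^{(k,\alpha,\delta,R)}(x)<\delta.
\end{equation*}

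For each dyadic $r_i\in[r_x,1]$ I will bound $\mathcal E_{r_i}^{(k,\alpha,\delta,R)}(x)$ by the $\mu$-average
\begin{equation*}
A_i\;:=\;\frac{1}{\mu(\tilde B_R(x,r_i))}\int_{\tilde B_R(x,r_i)}E_y(r_i)\,d\mu(y)
\end{equation*}
appearing in the hypothesis, by combining a Markov-type selection with Corollary \ref{cor:coverings}. Set $G_i=\{y\in C\cap\tilde B_R(x,r_i):E_y(r_i)\leq 3(k+1)A_i\}$. By Markov and the a priori lower Ahlfors bound from Lemma \ref{lemma:apriori_Ahlfors_larger_balls}, $\mu(G_i)\geq\frac{2}{3}\mu(\tilde B_R(x,r_i))\geq\frac{2}{3B}(Rr_i)^k$. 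I claim $G_i$ contains a $(k,\alpha,D'r_i)$-independent subset $\{y_0,\ldots,y_k\}$. Otherwise, since every $y\in G_i\subset C$ satisfies the scale-$r_i$ entropy pinching required by Corollary \ref{cor:coverings} (transferred from (n2) via (n5), which gives $r_y\leq(1.01+\delta)r_i$ for $y\in\tilde B_R(x,r_i)$, and then through monotonicity of $\tau\mapsto\mathcal W_y(\tau)$), the improved bound in Corollary \ref{cor:coverings} covers $G_i$ by $N\lesssim_{n,\tau}\zeta^{1-k}$ balls of $D_R$-radius $\zeta r_i$; the a priori upper Ahlfors bound yields $\mu(G_i)\leq NB(R\zeta r_i)^k\lesssim_{n,\tau}B^2\zeta R^kr_i^k$, contradicting the lower bound on $\mu(G_i)$ once $\zeta=\zeta(n,\tau,B)$ is chosen small enough. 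With such $y_j$'s in hand, the infimum defining the entropy pinching gives
\begin{equation*}
\mathcal E_{r_i}^{(k,\alpha,\delta,R)}(x)\;\leq\;\sum_{j=0}^{k}E_{y_j}(r_i)\;\leq\;3(k+1)^2A_i.
\end{equation*}

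Summing over $r_i\in[r_x,2^{-5}]$ and invoking the hypothesis bounds this contribution by $3(k+1)^2\delta''$. For the at most five remaining dyadic scales $r_i\in(2^{-5},1]$ a $(k,\alpha,D'r_i)$-independent subset of $C\cap\tilde B_R(x,r_i)$ is produced directly from (n3), (n4) and the $\mathbb R^k$-factor structure of the spine $\mathcal L_{x_{r_i},r_i}$ via Proposition \ref{prop:spine} and Lemma \ref{lemma:k_ind}, and (n2) together with monotonicity bounds each $E_{y_j}(r_i)$ by $\delta$, contributing at most $5(k+1)\delta$. Choosing $\delta,\delta',\delta''$ sufficiently small (within the dependencies allowed by the statement) brings the total sum below the $\delta$-threshold required in Theorem \ref{thm:non_degen}, whose conclusion is exactly the lemma. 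The most delicate step is the scale-transfer of the pinching in (n2), stated at the intrinsic scale $r_y$, up to the working scale $r_i$ required by Corollary \ref{cor:coverings}; this is the reason the Lipschitz property (n5) of the radius function is crucial, as it keeps $r_y$ and $r_i$ comparable and lets any fixed constant loss in monotonicity be absorbed into the final choice of $\delta$.
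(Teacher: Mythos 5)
Your strategy of feeding an entropy-pinching estimate into Theorem \ref{thm:non_degen} is the right one, and the Markov/covering step is essentially the one the paper uses. The gap is in the range of dyadic scales over which you claim the covering produces a $(k,\alpha,D'r_i)$-independent subset. You run the argument for all $r_i\in[r_x,2^{-5}]$, but the upper Ahlfors estimate $\mu(\tilde B_R(y_l,\zeta r_i))\leq B(R\zeta r_i)^k$ used to derive the contradiction is only available from a priori assumption (3) when the scale $\zeta r_i$ is at least $r_{y_l}$. For $r_i$ near $r_x$ this fails: by (n5), a point $y_l\in\tilde B_R(x,r_i)\cap C$ can have $r_{y_l}$ as large as about $1.01\,r_x\approx r_i$, which dwarfs $\zeta r_i$ once $\zeta$ is forced to be as small as roughly $B^{-2}$ by the Markov step. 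Indeed when $r_i\approx r_x$ the set $C\cap\tilde B_R(x,r_i)$ may consist of a bounded number of points — even just $x$ itself — so there need be no $(k+1)$-element $(k,\alpha,D'r_i)$-independent subset drawn from $C$, and the $\mu$-average comparison has nothing to bite on. Your closing remark that (n5) "keeps $r_y$ and $r_i$ comparable" inverts the issue: comparability is exactly the obstruction; the covering step needs $r_{y_l}\leq\zeta r_i$, not $r_{y_l}\approx r_i$.

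The paper resolves this by only running the covering argument for $r_i\geq 2^{m+2}r_x$, where $\zeta=2^{-m}$, so that $\zeta r_i\geq 4r_x$ and (n5) then gives $r_{y_l}\leq\zeta r_i$, making the upper Ahlfors bound applicable. It then invokes Lemma \ref{lemma:near_splitting} a second time to go from scale $2^{m+2}r_x$ down to $r_x$ — a fixed number $m+2=m(n,B)+2$ of dyadic steps — at the controlled cost of worsening the splitting constant, which is absorbed by choosing $\tilde\varepsilon=\tilde\varepsilon(n,B|\varepsilon)$. The same lemma also disposes of the scales $r_i\in(2^{-5},1]$ at the very start of the proof, so the ad hoc independence construction via (n3), (n4) and Proposition \ref{prop:spine} that you sketch for those scales is unnecessary. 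You should restrict the entropy-pinching estimate to $[2^{m+2}r_x,2^{-5}]$ and bridge the two remaining ranges by Lemma \ref{lemma:near_splitting}; as written, the argument does not close at the small end of the scale range.
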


\begin{proof}
By the a priori assumptions and Lemma \ref{lemma:near_splitting}, if $0<\delta'\leq\delta'(\tilde\delta)$ is small enough then $v$ is a $(k,\tilde\delta)$-splitting map around any $x\in \tilde B_{2R}(p,1)$ and any scale $r\in [2^{-5},1]$. Thus it suffices to assume that $x\in C\cap \tilde B_{\frac{3R}{2}}(p,1)$ satisfies $r_x\leq 2^{-5}$ and choose $\tilde \delta\leq \varepsilon$ small enough.

\noindent \textbf{Claim:} There is a non-negative integer $m=m(n,B)$ and a small constant $\alpha=\alpha(n,B)>0$, such that when $R\geq R(n,H,K,B)$ and $0<\delta\leq\delta(n,C_I,\Lambda,H,\eta,B|R)$, the following holds: for every $x\in C \cap \tilde B_{\frac{3R}{2}} (p,1)$ with $2^{m +2}r_x\leq 2^{-5}$ and $r_i\in[2^{m+2}r_x,2^{-5}]$ we can find $\{x_j\}_{j=0}^k\subset \tilde B_R (x,r_i)\cap C$,  such that $\{x_j\}_{j=0}^k$ is $(k,\alpha,D' r_i)$-independent in $\tilde B_R(x,r_i)$ at time $t=-r_i^2$, and
\begin{equation}
\begin{aligned}
\mathcal E^{(k,\alpha,\delta,R)}_{r_i} (x) &\leq \sum_{i=0}^k \mathcal W_{x_j}(r_i^2) - \mathcal W_{x_j}(2T r_i^2) \\
&\leq \frac{C(n)}{\mu(\tilde B_R(x,r_i))}\int_{\tilde B_R(x,r_i)} \mathcal W_{y}(r_i^2) - \mathcal W_{y}(2T r_i^2) d\mu(y).
\end{aligned}
\end{equation}
\begin{proof}[Proof of the claim]
By Lemma \ref{lemma:tilde_balls_inclusion}, if $R\geq 12K$ then for every $y\in \tilde B_{\frac{3R}{2}}(p,1)$ and $r_i\leq 2^{-5}$ we have the inclusion $\tilde B_{2R}(y,r_i)\subset \tilde B_{2R}(p,1)$. Let $m\geq0$ be an integer which will be specified during the proof of the claim, and set $\zeta=2^{-m}$.

Let $x\in C\cap \tilde B_{\frac{3R}{2}}(p,1)$ be such that $2^{m+2} r_x \leq 2^{-5}$. By the weak Ahlfors assumption, for each $i$ with $r_i\in [2^{m+2} r_x,2^{-5}]$ there exists $C_{r_i,x} \subset C\cap \tilde B_R(x,r_i)$ such that 
\begin{equation}\label{eqn:mu_C}
\mu(C_{r_i,x}) \geq (2B)^{-1} (R r_i)^k
\end{equation}
 and for each $z\in C_{r_i,x}$
\begin{equation}\label{eqn:drop_average}
\mathcal W_{z}(r_i^2) - \mathcal W_{z}(2T r_i^2) \leq  \frac{C(n)}{\mu(\tilde B_R(x,r_i))} \int_{\tilde B_R(x,r_i)}( \mathcal W_{y}(r_i^2) - \mathcal W_{y}(2T r_i^2) )d\mu(y).
\end{equation}
By the a priori assumptions, for any $y\in C$ and $r\in [r_y,1]$, we know that
\begin{equation*}
\mathcal W_y(\delta r^2) - \mathcal W_y(\delta^{-1} r^2) <\delta.
\end{equation*}

Hence,
\begin{equation}\label{eqn:e_pinch_W}
\mathcal E^{(k,\alpha,\delta,R)}_{r_i}(x)\leq \sum_{j=0}^{k} \mathcal W_{x_j}(r_i^2) - \mathcal W_{x_j}(2Tr_i^2) ,
\end{equation}
for any subset $\{x_j\}_{j=0}^k \subset \tilde B_R(x,r_i)\cap C$, which is $(k,\alpha,D' r_i)$-independent at time $t=-r_i^2$. We will see below that $C_{r_i,x}$ contains such subset, hence the claim will follow from \eqref{eqn:drop_average} and \eqref{eqn:e_pinch_W}.

 Suppose this doesn't happen and that every subset $\{x_j\}_{j=0}^k\subset C_{r_i,x}$ is not $(k,\alpha,D' r_i)$-independent  in $\tilde B_R(x,r_i)$ at time $t=-r_i^2$. By Corollary \ref{cor:coverings}, if $0<\alpha\leq\alpha(n,\zeta)$, $R\geq R(n,H,K|\alpha,\zeta)$,  and $0<\delta\leq \delta(n,C_I,\Lambda,H,\eta|\alpha,\zeta,R)$,  then there is a subset $\{y_l\}_{l=1}^N \subset C_{r_i,x}$, with $N\leq C(n) \zeta^{-k+1}$, such that
 $$C_{r_i,x} \subset \bigcup_{j=l}^N \tilde B_R(y_l,\zeta r_i).$$
Now, since $y_l\in \tilde B_R(x,r_i)$, if $\zeta r_i\geq 2r_x $,  and  $\delta\leq\frac{\zeta}{4}$, (n5) gives
$$r_{y_l}\leq 1.01 r_x + \delta D_R(x,y_l) \leq \zeta r_i.$$
 Hence, by the a priori Ahlfors bound and \eqref{eqn:mu_C}  we obtain
\begin{equation*}
(2B)^{-1} R^k r_i^k\leq \mu(C_{r_i,x}) \leq \sum_l \mu(\tilde B_{R}(y_l,\zeta r_i)) \leq C(n) B \zeta^{-k+1} R^k r_i^k \zeta^k\leq  C(n) B \zeta R^k r_i^k.
\end{equation*}
Choosing  the largest $\zeta=2^{-m} \leq  (4C(n)B^2)^{-1}$  we arrive at a contradiction.

\end{proof}

Now, by the claim, if  $R\geq R(n,H,K,B)$, $0<\delta\leq \delta(n,C_I,\Lambda,H,\eta,B|R)$, $m=m(n,B)$, and $\alpha=\alpha(n,B)$,
\begin{equation*}
\begin{aligned}
\sum_{2^{m+2}r_x\leq r_i\leq 2^{-5}} \mathcal E^{(k,\alpha,\delta,R)}_{r_i}(x) &\leq \sum_{2^{m+2}r_x \leq r_i\leq 2^{-5}} \frac{C(n)}{\mu(\tilde B_R(x,r_i))}\int_{\tilde B_R(x,r_i)} \left( \mathcal W_{y}(r_i^2) - \mathcal W_{y}(2T r_i^2) \right)d\mu(y) \\
&< C(n)\delta''.
\end{aligned}
\end{equation*}

Then, by the non-degeneration Theorem \ref{thm:non_degen}, if $0<\max(\delta,\tilde\delta,C(n)\delta'')\leq \delta_0(n,C_I,\Lambda,H,K,\eta|R,\alpha,\tilde\varepsilon)$ it follows that $v$ is a $(k,\tilde\varepsilon)$-splitting map at any scale $r\in [2^{m+2}r_x,2^{-5}]$.

Applying Lemma \ref{lemma:near_splitting} once more, we obtain that if $0<\tilde\varepsilon\leq \tilde\varepsilon(m|\varepsilon)=\tilde\varepsilon(n,B|\varepsilon)$ then $v$ is a $(k,\varepsilon)$-splitting map also at scales $r\in [r_x,2^{m+2}r_x]$, which proves the result.
\end{proof}

\subsection{The bi-H\"older and bi-Lipschitz structure of the set of centres}

\begin{proposition}\label{prop:bilip}
Fix $\varepsilon>0$ and  suppose that $(M,g(t),p)_{t\in (-2\delta^{-3},0]}$ is a smooth compact Ricci flow satisfying (RF1-4) and
$$\mathcal N = \tilde B_{2R}(p,1) \setminus \bigcup_{x\in C} \overline{\tilde B_R(x,r_x)}$$ 
is a $(k,\delta,\eta)$-neck region that satisfies the a priori assumptions with
\begin{equation} \label{eqn:prop_bi_Lip_assumptions}
\begin{aligned}
R &\geq R(n,H,K,\tau,B|\varepsilon),\\
0<\delta&\leq \delta(n,C_I,\Lambda,H,\eta,B|R,\varepsilon),\\
 0<\delta'&\leq \delta'(n,C_I,\Lambda,H,\eta,B|R,\varepsilon),
 \end{aligned} 
 \end{equation}
 Then there is $C_\varepsilon \subset C\cap \tilde B_{\frac{3R}{2}}(p,1)$ such that
 
\begin{enumerate}
\item $\mu(C_\varepsilon) \geq (1-\varepsilon) \mu (C\cap \tilde B_{\frac{3R}{2}}(p,1))$
\item For any $x\in C_\varepsilon$ and $r\in[r_x,1]$, 
$v$ is a $(k,\varepsilon)$-splitting map around $x$ at scale $r$.
\item For every $x,y\in C_\varepsilon$ and $\tau^2 R\leq \sigma \leq 4R$ such that $r_x\leq D_\sigma(x,y) <1$
\begin{equation}\label{eqn:bi_lipschitz}
(1-\varepsilon) \sigma D_{\sigma}(x,y)\leq |\tilde v(x)-\tilde v(y)|\leq (1+\varepsilon) \sigma D_{\sigma}(x,y). 
\end{equation}

\item For every $x,y\in C\cap \tilde B_{\frac{3R}{2}}(p,1)$ and $\tau^2 R\leq \sigma \leq 4R$ such that $r_x\leq D_\sigma(x,y) <1$
\begin{equation}\label{eqn:bi_holder}
(1-\varepsilon) \sigma D_{\sigma}(x,y)^{1+\varepsilon} \leq|\tilde v(x)-\tilde v(y)|\leq (1+\varepsilon) \sigma D_{\sigma}(x,y).
\end{equation}
\end{enumerate}

\end{proposition}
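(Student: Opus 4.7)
The plan tackles the four assertions in the order (1)--(2), (3), (4).

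For (1) and (2), combine Lemma \ref{lemma:small_average} with Lemma \ref{lemma:not_degenerate} via Markov's inequality. Let $\delta''>0$ be the threshold produced by Lemma \ref{lemma:not_degenerate} that guarantees $v$ is a $(k,\varepsilon)$-splitting map around $x$ at every scale $r\in[r_x,1]$. Apply Lemma \ref{lemma:small_average} with tolerance $\varepsilon\delta''$ in place of $\varepsilon$, so that the $\mu$-average over $\tilde B_{\frac{3R}{2}}(p,1)$ (noting $\mu$ is supported on $C$) of
$$\Sigma(x) := \sum_{r_x\leq r_i\leq 2^{-5}} \frac{1}{\mu(\tilde B_R(x,r_i))} \int_{\tilde B_R(x,r_i)} E_y(r_i)\,d\mu(y)$$
is bounded by $\varepsilon\delta''$. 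Define $C_\varepsilon=\{x\in C\cap\tilde B_{\frac{3R}{2}}(p,1): \Sigma(x)<\delta''\}$; Markov's inequality then yields (1), and Lemma \ref{lemma:not_degenerate} applied pointwise on $C_\varepsilon$ yields (2).

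For (3), fix $x,y\in C_\varepsilon$ with $r_x\leq r^*:=D_\sigma(x,y)<1$; by continuity in the definition of $D_\sigma$, we have $d_{g(-(r^*)^2)}(x,y)=\sigma r^*$. Assertion (2) makes $v$ a $(k,\varepsilon')$-splitting map around $x$ at scale $r^*$ for $\varepsilon'$ as small as desired, so Lemma \ref{lemma:normalize_splitting}, applied to the parabolic rescaling at $x$ by $r^*$, produces a lower-triangular $T$ with $\|T-I_k\|\leq C(n)\sqrt{\varepsilon'}$ such that $Tv$ satisfies the normalization \eqref{eqn:n} at scale $r^*$. Axioms (n2) and (n3) of the neck region supply the entropy pinching at $x$ and $y$ and the required $(k,\delta^2)$-selfsimilar but not $(k+1,\eta)$-selfsimilar point in $\tilde B_{\tau^2 R}(x,r^*)$, so Lemma \ref{lemma:split_map_selfsimillar}(1) applies to $Tv$ at scale $r^*$ (with $s\geq 4R$) and gives two-sided bounds on $|T(\tilde v(x)-\tilde v(y))|$ in terms of $d_{g(-(r^*)^2)}(x,y)=\sigma r^*$; the additive $E(n,H)^2(r^*)^2$ error in the lower bound is absorbed by taking $R$ large enough that $\tau^2 R\gg E(n,H)$. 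Since $T$ is close to $I_k$, this transfers directly to \eqref{eqn:bi_lipschitz} for $\tilde v$ itself.

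For (4), we cannot exploit Lemma \ref{lemma:normalize_splitting} because $v$ need not be almost-normalized at scale $r^*$ when $x\notin C_\varepsilon$. Instead, invoke the transformations $T_{x,r^*}$ of Proposition \ref{prop:transformations}, which are available for every $x\in C$ from the a priori $(k,\delta')$-splitting at scale $1$ combined with (n2)--(n3). Then $T_{x,r^*}v$ is normalized and $(k,\theta)$-splitting at scale $r^*$, so Lemma \ref{lemma:split_map_selfsimillar}(1) yields two-sided bounds on $|T_{x,r^*}(\tilde v(x)-\tilde v(y))|$ in terms of $\sigma r^*$ as in step (3). To return to $\tilde v$, use \eqref{eqn:T_estimates} and Remark \ref{rmk:no_shrink}: the bound $\|T_{x,r^*}\|\leq (1+\theta)(r^*)^{-C(n)\sqrt\theta}$ (with $r^*\leq 1$ making the polynomial factor helpful) converts the upper estimate into $|\tilde v(x)-\tilde v(y)|\leq (1+\varepsilon)\sigma D_\sigma(x,y)$, while $|\xi|\leq(1+\theta)^{1/2}|T_{x,r^*}\xi|$ converts the lower estimate into $|\tilde v(x)-\tilde v(y)|\geq (1-\varepsilon)\sigma(r^*)^{1+C(n)\sqrt\theta}$; choosing $\theta$ small enough that $C(n)\sqrt\theta\leq\varepsilon$ produces \eqref{eqn:bi_holder}. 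The main obstruction, and the precise reason (4) degrades from Lipschitz to Hölder, is the polynomial distortion \eqref{eqn:T_ratio}: absent the sharp normalization furnished by part (2), the matrix $T_{x,r^*}$ can stretch by a factor $(r^*)^{-C(n)\sqrt\theta}$, forcing the exponent $1+C(n)\sqrt\theta$ on the lower bound. The auxiliary technicality is the ordering of constants: fix $\varepsilon$ first, then $\theta$ (hence $\varepsilon',\delta,\delta'$) in terms of $\varepsilon$, and finally $R$ large enough that $\tau^2 R$ dominates $E(n,H)$.
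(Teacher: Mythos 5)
Your proposal follows the paper's proof essentially step for step. You organize the argument exactly as the paper does: (i) define $C_\varepsilon$ by applying Lemma \ref{lemma:small_average} with tolerance $\varepsilon\delta''$ (where $\delta''$ is the threshold from Lemma \ref{lemma:not_degenerate}), use Markov's inequality for the measure bound, and Lemma \ref{lemma:not_degenerate} pointwise for the non-degeneration of $v$; (ii) for the bi-Lipschitz bound on $C_\varepsilon$, pick $\sigma$ via Remark \ref{rmk:sigma} so that $r^* := D_\sigma(x,y)$ satisfies $d_{g(-(r^*)^2)}(x,y) = \sigma r^*$, then apply Lemma \ref{lemma:split_map_selfsimillar} to the near-identity normalization of $v$ at scale $r^*$ and absorb the additive $E(n,H)$-error by taking $\tau^2 R$ large; (iii) for the bi-H\"older bound on all of $C$, use Proposition \ref{prop:transformations} with \eqref{eqn:T_ratio}--\eqref{eqn:T_estimates}, transfer from $T_{x,r^*}v$ back to $v$, and pay the factor $(r^*)^{-C(n)\sqrt\theta}$ which accounts for the H\"older exponent. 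Your explicit invocation of Lemma \ref{lemma:normalize_splitting} to produce the close-to-identity normalizing matrix in step (ii) is slightly more careful than the paper's exposition (which treats this normalization implicitly), and you correctly identify why (4) degrades to H\"older. The one hypothesis of Lemma \ref{lemma:split_map_selfsimillar} worth flagging explicitly is the entropy pinching $\mathcal W_y(r^2\delta)-\mathcal W_y(r^2\delta^{-1})<\delta$ at $y$: this follows from (n2) only once $r^* \geq r_y$, which is guaranteed by the particular $\sigma$ of Remark \ref{rmk:sigma} (and is the same convention the paper uses), so your appeal to (n2) is fine but you should make the dependence on Remark \ref{rmk:sigma}'s choice of $\sigma$ explicit.
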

\begin{proof}
First, we prove the following claim.\\

\noindent \textbf{Claim:} For any $0<\varepsilon<1$, if 
\begin{align*}
R&\geq R(n,H,K,B)\\
0<\delta&\leq\delta(n,C_I,\Lambda,H,\eta,B|R,\varepsilon)\\
0<\delta' &\leq\delta'(n,C_I,\Lambda,H,\eta,B|R)
\end{align*}
 then there is $C_{\varepsilon}\subset C\cap \tilde B_{\frac{3R}{2}}(p,1)$  and $E=E(n,H)<+\infty$ such that
\begin{enumerate}
\item $\mu(C_{\varepsilon})\geq (1-\varepsilon)\mu(C\cap \tilde B_{\frac{3R}{2}}(p,1))$
\item For every $x\in C_{\varepsilon}$, $r\in [r_x,1]$ and $y\in C \cap \overline{\tilde B_{4R}(x,r)}$ with $d_{g(-r^2)}(x,y)\geq \tau^2 R r$
\begin{equation}\label{eqn:lip1}
\begin{aligned}
&\left(1-\frac{\varepsilon}{2}\right) (d_{g(-r^2)}(x,y) )^2\leq |\tilde v(x) - \tilde v(y)|^2 + E r^2\\
&|\tilde v(x) - \tilde v(y)| \leq \left(1+\frac{\varepsilon}{2}\right) d_{g(-r^2)} (x,y).
\end{aligned}
\end{equation}
\item For every $x\in C\cap \tilde B_{\frac{3R}{2}}(p,1)$, $r\in[r_x,1]$ 
 and $y\in C\cap \overline{\tilde B_{4R}(x,r)}$ with $d_{g(-r^2)}(x,y)\geq \tau^2 R r$
\begin{equation}\label{eqn:lip2}
\begin{aligned}
\left(1 - \frac{\varepsilon}{2}\right) (d_{g(-r^2)}(x,y))^2 &\leq r^{-2\varepsilon} |\tilde v(x) - \tilde v(y)|^2 +E r^2,\\
|\tilde v(x) - \tilde v(y)| &\leq \left(1+\frac{\varepsilon}{2}\right) d_{g(-r^2)}(x,y).
\end{aligned}
\end{equation} 
\end{enumerate}
\begin{proof}[Proof of the claim]

For any $\theta>0$, by Lemma \ref{lemma:not_degenerate}, if 
\begin{align*}
R&\geq R(n,H,K,B),\\
0<\delta&\leq\delta(n,C_I,\Lambda,H,\eta,B|R,\theta),\\ 
0<\delta'&\leq\delta'(n,C_I,\Lambda,H,\eta,B|R,\theta),\\
 0<\delta'' &=\delta''(n,C_I,\Lambda,H,\eta,B|\theta),
 \end{align*}
 and $x\in C$ is such that 
\begin{equation}\label{eqn:Cepsilon}
\sum_{r_x \leq r_i \leq 2^{-5}}\frac{1}{ \mu(\tilde B_R(x, r_i))} \int_{\tilde B_R (x, r_i)} [\mathcal W_y (r_i^2) - \mathcal W_y (2T r_i^2)] d\mu(y) <\delta'',
\end{equation}
then  $v$ is a $(k,\theta)$-splitting map around $x$ at every scale $r\in [r_x,1]$.

We will define $C_{\varepsilon}\subset C \cap \tilde B_{\frac{3R}{2}}(p,1)$ such that $x\in C_{\varepsilon}$ if and only if
\begin{equation*}
\sum_{r_x \leq r_i \leq 2^{-5}}\frac{1}{ \mu(\tilde B_R(x, r_i))} \int_{\tilde B_R (x, r_i)} [\mathcal W_y (r_i^2) - \mathcal W_y (2T r_i^2)] d\mu(y) <\delta''=\delta''(n,C_I,\Lambda,H,\eta,B|\theta)
\end{equation*}
for an appropriate choice of $\theta$. 

We will first show that $C_\varepsilon$ satisfies Assertion 1 of the claim. Recall that Lemma \ref{lemma:small_average} ensures that, if $R\geq R(n,H,K)$ and $0<\delta\leq\delta(n,C_I,\Lambda,H,\eta,B|\varepsilon \delta'')$, then
\begin{equation*}
\frac{1}{\mu(\tilde B_{\frac{3R}{2}}(p,1))}\int_{\tilde B_{\frac{3R}{2}}(p,1)} \left( \sum_{r_x\leq r_i \leq 2^{-5}}\frac{1}{\mu(\tilde B_R(x,r_i))}\int_{\tilde B_R(x,r_i)} E_y(r_i)d\mu (y)\right) d\mu(x) < \varepsilon \delta'',
\end{equation*}
where $E_y(r_i)$ is defined as in \eqref{eqn:short}.

Hence,
\begin{align*}
& \frac{\delta'' \mu( B_{\frac{3R}{2}}(p,1)\setminus C_{\varepsilon})}{\mu(\tilde B_{\frac{3R}{2}}(p,1))}\leq \\
&\leq \frac{1}{\mu(\tilde B_{\frac{3R}{2}}(p,1))} \int_{B_{\frac{3R}{2}}(p,1)\setminus C_{\varepsilon}} \left(\sum_{r_x \leq r_i \leq 2^{-5}}\frac{1}{ \mu(\tilde B_R(x, r_i))} \int_{\tilde B_R (x, r_i)} [\mathcal W_y (r_i^2) - \mathcal W_y (2 T r_i^2)] d\mu(y) \right) d\mu(x),\\
&\leq \frac{1}{\mu(\tilde B_{\frac{3R}{2}}(p,1))}\int_{\tilde B_{\frac{3R}{2}}(p,1)} \left( \sum_{r_x\leq r_i \leq 2^{-5}}\frac{1}{\mu(\tilde B_R(x,r_i))}\int_{\tilde B_R(x,r_i)} \mathcal [W_y (r_i^2) - \mathcal W_y (2T r_i^2)]d\mu (y)\right) d\mu(x),\\
&< \varepsilon \delta'',
\end{align*}
which suffices to prove Assertion 1.

To prove Assertion 2, recall that by Lemma \ref{lemma:split_map_selfsimillar} there is $E=E(n,H)<+\infty$ such that choosing 
 \begin{equation}\label{eqn:delta_theta_small}
\begin{aligned}
0<\delta&\leq\delta(n,C_I,\Lambda,H,\eta|R,\varepsilon),\\
0<\theta&\leq \theta(n,C_I,\Lambda,H,\eta|R,\varepsilon)\leq\varepsilon,
\end{aligned}
\end{equation}
then for any $x\in C_\varepsilon$, $r\in [r_x,1]$ and  $y\in C\cap \overline{\tilde B_{4R}(x,r)}$ with $d_{g(-r^2)}(x,y) \geq \tau^2 Rr \geq \varepsilon r$
\begin{equation}
\begin{aligned}
&\left(1-\frac{\varepsilon}{10}\right)(d_{g(-r^2)}(x,y))^2 \leq |\tilde v(x) - \tilde v(y)|^2 + E r^2,\\
&|\tilde v(x) - \tilde v(y)| \leq \left(1+\frac{\varepsilon}{10}\right) d_{g(-r^2)} (x,y),
\end{aligned}
\end{equation}
since, by the definition of $C_{\varepsilon}$, $v$ is a $(k,\theta)$-splitting map around $x$ at any scale $r\in [r_x,1]$.

To prove Assertion 3, recall that, if $0<\max(\delta,\delta') \leq \delta(n,C_I,\Lambda,H,\eta|\theta)$, then for any $x\in C$ the maps $T_{x,r} v$ are $(k,\theta)$-splitting maps around $x$ at scale $r$, for any $r\in [r_x,1]$. Moreover, 
$$||T_{x,r}||\leq (1+\theta) r^{-C(n)\sqrt\theta} \leq \left( 1+\theta\right) r^{-2\varepsilon},$$
if $\theta>0$ is small enough so that $C(n)\sqrt\theta<\varepsilon$, and  $|\xi|^2 \leq (1+\theta) |T_{x,r} (\xi)|^2$, for any $\xi \in \mathbb R^k$.

Thus, by Lemma \ref{lemma:split_map_selfsimillar}, if \eqref{eqn:delta_theta_small} hold, then for any $y\in C\cap \overline{ \tilde B_{4R}(x,r)}$
\begin{equation}
\begin{aligned}
\left(1-\frac{\varepsilon}{10}\right)(d_{g(-r^2)}(x,y))^2 &\leq || T_{x,r}||^2 |\tilde v(x) - \tilde v(y)|^2 + E r^2,\\
&\leq \left( 1+\theta\right)^2 \left( r^{-2\varepsilon} |\tilde v(x) - \tilde v(y)|^2 + E r^2 \right),\\
(1+\theta)^{-1/2}|\tilde v(x) - \tilde v(y)| &\leq |T_{x,r}\tilde v(x) - T_{x,r}\tilde v(y)| \leq \left(1+\frac{\varepsilon}{10}\right) d_{g(-r^2)} (x,y).
\end{aligned}
\end{equation}
Choosing $\theta>0$ small enough, we obtain the result.

\end{proof}

We already know from the claim that $C_\varepsilon$ satisfies  Properties (1) and (2) of Proposition \ref{prop:bilip}. We will show below that it also satisfies Property (3) and as well as the remaining Property (4).

\noindent \textbf{The bi-Lipschitz estimate on $C_{\varepsilon}$.}

Take any $x,y\in C_{\varepsilon}$, $x\not=y$, and recall that by Remark \ref{rmk:sigma}, $d_{g(-r_x^2)}(x,y)\geq \tau^2 R r_x$ and there is  $\tau^2 R\leq \sigma \leq 4R$ such that $r_x \leq D_\sigma(x,y) <1$. Set $r=D_{\sigma}(x,y)$, so that $d_{g(-r^2)}(x,y) = \sigma r$ and $y\in  \overline{\tilde B_{4R}(x,r)}$.

Applying \eqref{eqn:lip1} of the claim, we  that
\begin{align*}
&\left(1-\frac{\varepsilon}{2}\right)(\sigma D_{\sigma}(x,y))^2=\left(1-\frac{\varepsilon}{2}\right)( \sigma r )^2=\\
&=\left(1-\frac{\varepsilon}{2}\right)(d_{g(-r^2)}(x,y))^2 \leq |\tilde v(y) - \tilde v(x)|^2 +Er^2\\
&\Longleftrightarrow \quad \left(1-\frac{\varepsilon}{2}-\frac{E}{\sigma^2}\right) (\sigma D_{\sigma}(x,y) )^2\leq |\tilde v(y) - \tilde v(x)|^2
\end{align*}
and
\begin{align*}
|\tilde v(y) - \tilde v(x)| &\leq \left(1+\frac{\varepsilon}{2}\right)d_{g(-r^2)}(x,y),\\
 &=\left(1+\frac{\varepsilon}{2}\right) \sigma D_{\sigma}(x,y). 
\end{align*}
If we choose $R\geq R(n,H,\tau|\varepsilon)$ so that $\tau^2 R\geq \frac{2E}{\varepsilon}$, then $\frac{E}{\sigma^2}<\frac{\varepsilon}{2}$, and we obtain \eqref{eqn:bi_lipschitz}.

\noindent \textbf{The bi-H\"older estimate on $C$.}

Take $x,y\in C\cap B_{\frac{3R}{2}} (p,1)$, $x\not=y$, with $d_{g(-r_x^2)}(x,y)\geq \tau^2 R r_x$, and suppose that $\tau^2 R\leq \sigma < 4R$ is such that $r_x \leq D_\sigma(x,y) < 1$. Set $r=D_{\sigma}(x,y)$, so that $d_{g(-r^2)}(x,y) = \sigma r$ and $y\in  \overline{\tilde B_{4R}(x,r)}$.

Applying \eqref{eqn:lip2} of the claim we obtain
\begin{align*}
\left(1-\frac{\varepsilon}{2}\right)(d_{g(-r^2)}(x,y))^2&\leq  r^{-2\varepsilon} |\tilde v(x) - \tilde v(y)|^2 + E r^2.
\end{align*}
Hence, 
\begin{equation*}
\left(1-\frac{\varepsilon}{2}\right)\left(\sigma D_{\sigma}(x,y)^{1+\varepsilon} \right)^2\leq  |\tilde v(x) - \tilde v(y)|^2 + \frac{E}{\sigma^2} \left(\sigma D_{\sigma}(x,y)^{1+\varepsilon} \right)^2.
\end{equation*}
Thus, 
\begin{align*}
\left(1-\frac{\varepsilon}{2} - \frac{E}{\sigma^2}\right) \left(\sigma D_{\sigma}(x,y)^{1+\varepsilon}\right)^2 \leq |\tilde v(x) - \tilde v(y)|^2,
\end{align*}
which proves the left hand side of \eqref{eqn:bi_holder}, if we further assume that $R\geq R(n,H,\tau|\varepsilon)$, as in the proof of the bi-Lipschitz estimate.

On the other hand, by \eqref{eqn:lip2} we have
$$|\tilde v(x)-\tilde v(y)|\leq (1+\varepsilon) d_{g(-r^2)}(x,y)=(1+\varepsilon) \sigma  D_\sigma (x,y),$$
which proves the right hand side of \eqref{eqn:bi_holder}.
\end{proof}

\subsection{Ahlfors bound for $(k,\delta,\eta)$-neck regions that satisfy the a priori assumptions}

\begin{lemma}[Upper Ahlfors bound]\label{lemma:apriori_upper}
Suppose that $(M,g(t),p)_{t\in (-2\delta^{-3},0]}$ is a smooth compact Ricci flow satisfying (RF1-4) and
$$\mathcal N = \tilde B_{2R}(p,1) \setminus \bigcup_{x\in C} \overline{\tilde B_R(x,r_x)}$$ 
is a $(k,\delta,\eta)$-neck region that satisfies the a priori assumptions with 
\begin{align*}
R&\geq R(n,H,K,\tau,B),\\
0<\delta&\leq\delta(n,C_I,\Lambda,H,\eta,B|R),\\
0<\delta'&\leq\delta'(n,C_I,\Lambda,H,\eta,B|R).
\end{align*}  
 Then, there is $F=F(k,\tau)<+\infty$ such that
\begin{equation*}
\mu(\tilde B_R(p,1)) \leq F R^k,
\end{equation*}
\end{lemma}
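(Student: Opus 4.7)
The plan is to transport a large $\mu$-measure portion of $C\cap\tilde B_{\frac{3R}{2}}(p,1)$ into $\mathbb R^k$ via the map $\tilde v$ supplied by Proposition \ref{prop:bilip}, and then close the estimate by a standard Euclidean packing argument. The key input is that, on this large subset, $\tilde v$ is bi-Lipschitz with respect to the $\sigma$-scales (Proposition \ref{prop:bilip} (3)), while on the whole of $C\cap\tilde B_{\frac{3R}{2}}(p,1)$ it satisfies the upper Lipschitz bound (Proposition \ref{prop:bilip} (4)), which combined with $\tilde v(p)=0$ will confine the image to a ball of radius $\sim R$.

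First I would fix $\varepsilon=\tfrac12$ and apply Proposition \ref{prop:bilip}, ensuring that the constants $R,\delta,\delta'$ in the present lemma are chosen large/small enough to meet the hypotheses \eqref{eqn:prop_bi_Lip_assumptions}. This yields a subset $C_\varepsilon\subset C\cap\tilde B_{\frac{3R}{2}}(p,1)$ with
\begin{equation*}
\mu(C_\varepsilon)\geq \tfrac12\,\mu(C\cap\tilde B_{\frac{3R}{2}}(p,1))
\end{equation*}
on which \eqref{eqn:bi_lipschitz} holds.

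Next, for any two distinct $x,y\in C_\varepsilon$, Remark \ref{rmk:sigma} supplies a scale $\sigma=\sigma_{x,y}\in[\tau^2R,4R]$ with $\max(r_x,r_y)\leq D_\sigma(x,y)<1$. Applying \eqref{eqn:bi_lipschitz} gives
\begin{equation*}
|\tilde v(x)-\tilde v(y)|\geq \tfrac12\sigma D_\sigma(x,y)\geq \tfrac12\tau^2 R\max(r_x,r_y),
\end{equation*}
so that the Euclidean balls $B_{\tau^2 R r_x/8}(\tilde v(x))$, $x\in C_\varepsilon$, are pairwise disjoint in $\mathbb R^k$. For the confinement, applying Property (4) of Proposition \ref{prop:bilip} to the pair $(p,x)\subset C\cap\tilde B_{\frac{3R}{2}}(p,1)$ with a $\sigma_{p,x}\in[\tau^2 R,4R]$ furnished by Remark \ref{rmk:sigma}, and using $\tilde v(p)=v(p,0)=0$ from a priori assumption (2), yields
\begin{equation*}
|\tilde v(x)|=|\tilde v(x)-\tilde v(p)|\leq \tfrac32\,\sigma_{p,x}D_{\sigma_{p,x}}(p,x)\leq 6R.
\end{equation*}
Since $r_x\leq 2^{-5}$ by a priori assumption (1), each of the disjoint balls above has radius at most $R$, so all of them lie inside $B_{7R}(0)\subset\mathbb R^k$.

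Finally the Euclidean volume comparison reads
\begin{equation*}
\sum_{x\in C_\varepsilon}\bigl(\tau^2 R r_x/8\bigr)^k\leq (7R)^k,
\end{equation*}
which rearranges to
\begin{equation*}
\mu(C_\varepsilon)=\sum_{x\in C_\varepsilon}(Rr_x)^k\leq \bigl(56/\tau^2\bigr)^k R^k.
\end{equation*}
Since $\mu$ is supported on $C$, $\mu(\tilde B_R(p,1))\leq \mu(C\cap\tilde B_{\frac{3R}{2}}(p,1))\leq 2\,\mu(C_\varepsilon)$, yielding the claim with $F(k,\tau)=2(56/\tau^2)^k$. The only step that requires care is verifying the hypotheses of Proposition \ref{prop:bilip}; the packing argument itself is routine once the bi-Lipschitz image has been controlled.
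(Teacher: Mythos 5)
Your proof is correct and follows essentially the same route as the paper: take $\varepsilon=\tfrac12$ in Proposition \ref{prop:bilip}, use the bi-Lipschitz estimate on $C_{1/2}$ together with Remark \ref{rmk:sigma} to produce disjoint Euclidean balls of radius $\sim \tau^2 R r_x$, confine them to $B_{7R}(0)$ via the bi-H\"older upper bound applied to the pair $(p,x)$, and finish with a packing count. The only cosmetic difference is that your constants ($\tau^2 R r_x/8$ and $F=2(56/\tau^2)^k$) are tracked more conservatively than the paper's.
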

\begin{proof}
Choose $\varepsilon= 1/2$ and apply Proposition \ref{prop:bilip}  to obtain the set $C_{1/2}\subset \tilde B_{\frac{3R}{2}}(p,1)$, such that \begin{equation}\label{eqn:doubling_mu}
\mu(\tilde B_R(p,1)) \leq 2\mu(C_{1/2}).
\end{equation}

Let $x,y\in C_{1/2}$, $x\not =y$. By Remark \ref{rmk:sigma}, let $\tau^2 R\leq \sigma \leq 4R$ be such that $r_x\leq D_\sigma (x,y) <1$.

The bi-Lipschitz estimate \eqref{eqn:bi_lipschitz} gives that
$$\tau^2 R r_x\leq \sigma D_\sigma(x,y) \leq \frac{3}{2}|\tilde v(x) -\tilde v(y)|,$$
under the assumptions of Proposition \ref{prop:bilip}.
It follows that the balls  $B_x:= B_{\frac{1}{3} \tau^2 R r_x}(x) \subset \mathbb R^k$, $x\in C_{1/2}$ are disjoint.

Now, we claim that for every $x\in C\cap \tilde B_{\frac{3R}{2}}(p,1)$, $|\tilde v(x)| \leq  6R$. To see this, let $x\in C\cap \tilde B_{\frac{3R}{2}}(p,1)$ and $\tau^2 R\leq \sigma\leq 4R$ be such that $r_p \leq D_\sigma (p,x)<1$, as guaranteed by Remark \ref{rmk:sigma}.
 
Estimate \eqref{eqn:bi_holder} of Proposition \ref{prop:bilip} then gives 
\begin{equation*}
|\tilde v(x)|= |\tilde v(x)-\tilde v(p)| \leq \frac{3}{2} \sigma D_{\sigma}(p,x)\leq 6 R.
\end{equation*}

Since $0<\tau\leq 1$, and  $r_x\leq 2^{-5}$, by  the a priori assumptions, we obtain 
\begin{equation}
\bigcup_{x\in C_{1/2}} B_x \subset B_{\left(6+\frac{\tau^2  r_x}{3} \right) R}(0)\subset B_{7R}(0).
\end{equation}
Therefore, denoting by $\vol_k$ the $k$-volume in $\mathbb R^k$, 
\begin{align*}
\mu(C_{1/2})&=\sum_{x\in C_{1/2}} (R  r_x)^k, \\
&\leq C(k,\tau) \sum_{x\in C_{1/2}} \vol_k(B_x),\\
&\leq C(k,\tau) \vol_k(B_{7R}(0))\\
&\leq \hat C(k,\tau)R^k.
\end{align*}
Therefore, \eqref{eqn:doubling_mu} gives that
$$\mu(\tilde B_R(p,1)) \leq 2\mu(C_{1/2}) \leq 2\hat C(k,\tau)R^k,$$
which proves the result.
\end{proof}

\begin{lemma}[Covering]\label{lemma:covering}
Suppose that $(M,g(t),p)_{t\in (-2\delta^{-3},0]}$ is a smooth compact Ricci flow satisfying (RF1-4) and
$$\mathcal N = \tilde B_{2R}(p,1) \setminus \bigcup_{x\in C} \overline{\tilde B_R(x,r_x)}$$ 
is a $(k,\delta,\eta)$-neck region with $\tau\leq \frac{1}{100}$ and  satisfies the a priori assumptions with \begin{align*}
R&\geq R(n,H,K,\tau,B),\\
0<\delta&\leq \delta(n,C_I,\Lambda,H,\eta,B|R,\tau),\\
0<\delta' &\leq \delta'(n,C_I,\Lambda,H,\eta,B|R,\tau).
\end{align*}

For each $x\in C\cap \tilde B_R(p,1)$, let $I_{x,r_x} := \tilde v(x)+  T_{x,r_x}^{-1}( \overline{B_{\frac{1}{2}Rr_x}(0)} )$. Then 
$$B_{2^{-7} R}(0)\subset \bigcup_{x\in C\cap \tilde B_R(p,1)} I_{x,r_x}.$$
\end{lemma}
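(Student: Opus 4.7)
The plan is to prove the covering by iterative refinement. Given $w\in B_{2^{-7}R}(0)$, we inductively construct pairs $(x_i, s_i)$ with $x_i\in C$, $s_i=2^{-i}$, and $s_i\geq r_{x_i}$, satisfying the invariant
\[
(\star_i)\colon \quad |T_{x_i, s_i}(w-\tilde v(x_i))|\leq cRs_i
\]
for a fixed constant $c\leq 1/16$. The iteration starts from $(x_0,s_0)=(p,1)$, which satisfies $(\star_0)$ because $\tilde v(p)=0$, $\|T_{p,1}\|\leq 1+C(n)\sqrt{\delta'}\leq 2$ by Lemma \ref{lemma:normalize_splitting}, and $|w|<2^{-7}R$. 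We iterate while $s_i>4r_{x_i}$. Because $C$ is finite for a smooth neck region (Remark \ref{rmk:infrx}), the iteration terminates at some step $n$ with $s_n\leq 4r_{x_n}$. Then Proposition \ref{prop:transformations}(3), applied with $s_1=r_{x_n}\leq s_2=s_n$, gives $\|T_{x_n,r_{x_n}}T_{x_n,s_n}^{-1}\|\leq (s_n/r_{x_n})^{C(n)\sqrt\theta}\leq 4^{C(n)\sqrt\theta}\leq 2$, so $|T_{x_n,r_{x_n}}(w-\tilde v(x_n))|\leq 2cRs_n\leq 8cRr_{x_n}\leq Rr_{x_n}/2$, i.e., $w\in I_{x_n,r_{x_n}}$.

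For the inductive step, given $(x_i,s_i)$ satisfying $(\star_i)$ with $s_i>4r_{x_i}$, set $s_{i+1}=s_i/2>2r_{x_i}$. By (n3) there is $x_{s_{i+1}}\in\tilde B_{\tau^2R}(x_i,s_{i+1})$ which is $(k,\delta^2)$-selfsimilar but not $(k+1,\eta)$-selfsimilar at scale $s_{i+1}$; (n2) combined with the monotonicity of $\mathcal W_{x_i}$ supplies the entropy hypothesis for Proposition \ref{prop:transformations}, which yields the normalization $T_{x_i,s_{i+1}}$ making $v'=T_{x_i,s_{i+1}}(v-\tilde v(x_i))$ a $(k,\theta)$-splitting map around $x_i$ at scale $s_{i+1}$. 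Proposition \ref{prop:transformations}(3) with $(\star_i)$ gives $|T_{x_i,s_{i+1}}(w-\tilde v(x_i))|\leq 2^{1+C(n)\sqrt\theta}cRs_{i+1}<Rs_{i+1}/4$, so Lemma \ref{lemma:split_map_selfsimillar}(2), applied with the ball parameter $s=R$, produces $y\in\tilde B_R(x_i,s_{i+1})\cap\mathcal L_{x_{s_{i+1}},s_{i+1}}$ with $|T_{x_i,s_{i+1}}(w-\tilde v(y))|<\varepsilon's_{i+1}$. The inclusion $\tilde B_{2R}(x_i,s_{i+1})\subset\tilde B_{2R}(p,1)$ needed to invoke (n4) follows from Lemma \ref{lemma:tilde_balls_inclusion} once $s_{i+1}\leq 2^{-6}$, while the first few large-scale steps are handled directly via Lemma \ref{lemma:balls_inclusion}; (n4) then provides $x_{i+1}\in C$ with $d_{g(-s_{i+1}^2)}(x_{i+1},y)<\tau Rs_{i+1}$, and (n5) ensures $r_{x_{i+1}}<s_{i+1}$.

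To establish $(\star_{i+1})$, Lemma \ref{lemma:split_map_selfsimillar}(1) applied between $y$ and $x_{i+1}$---with the entropy bound for $y$ coming from Proposition 4.4 of \cite{G25} (since $y\in\mathcal L_{x_{s_{i+1}},s_{i+1}}$) and for $x_{i+1}$ from (n2)---gives $|T_{x_i,s_{i+1}}(\tilde v(y)-\tilde v(x_{i+1}))|\leq(1+\varepsilon')\tau Rs_{i+1}$, so the triangle inequality yields $|T_{x_i,s_{i+1}}(w-\tilde v(x_{i+1}))|\leq 2\tau Rs_{i+1}$. It remains to pass from the normalization centered at $x_i$ to the one centered at $x_{i+1}$: using Lemma \ref{lemma:near_splitting}(2) to view $T_{x_i,s_{i+1}}v$ as a near-splitting map around $x_{i+1}$ at the smaller scale $\gamma s_{i+1}$ for some $\gamma=\gamma(n,H)>0$, and combining the uniqueness in Lemma \ref{lemma:normalize_splitting} with Proposition \ref{prop:transformations}(3), one controls $\|T_{x_{i+1},s_{i+1}}T_{x_i,s_{i+1}}^{-1}\|$ by a constant depending on $n, H, \gamma$. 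Choosing $\tau$ sufficiently small relative to $c$ and this constant closes $(\star_{i+1})$.

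The main obstacle I expect is precisely this change-of-center comparison: the matrices $T_{x_i,s_{i+1}}$ and $T_{x_{i+1},s_{i+1}}$ both normalize $v$ at the same scale but at distinct (though nearby) centers, and producing a uniform bound on their ratio---independent of the step $i$ and of the potentially small scale $s_{i+1}$---is the delicate piece, requiring Lemma \ref{lemma:near_splitting} to transfer the splitting-map property to a slightly smaller scale $\gamma s_{i+1}$, Lemma \ref{lemma:normalize_splitting} to identify the two normalizations there, and Proposition \ref{prop:transformations}(3) to return to scale $s_{i+1}$. The remaining position estimate $x_n\in\tilde B_R(p,1)$ follows by telescoping the per-step bound $d_{g(-s_{i+1}^2)}(x_i,x_{i+1})\leq R(1+\tau)s_{i+1}$ together with (RF4), which gives $d_{g(-1)}(p,x_n)\leq R(1+\tau)+2K<R$ once $R\geq 4K$.
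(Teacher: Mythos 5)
The overall strategy is sound and closely parallels the paper's: both rely on Lemma \ref{lemma:split_map_selfsimillar}, property (n4), and a change-of-center comparison for the normalizing matrices (your ``delicate piece'' is exactly the paper's Claim 2, handled via Lemma \ref{lemma:near_splitting}, Lemma \ref{lemma:normalize_splitting}, and Proposition \ref{prop:transformations}(3)). But where the paper argues by contradiction from a minimal scale $\bar s=\min_{x\in C\cap\tilde B_R(p,1)}s_x$ (so that every point it ever considers is in $\tilde B_R(p,1)$ from the start), you iterate constructively, which creates a position-control problem you have not resolved.

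Your final telescoping claim is wrong. The inequality ``$d_{g(-1)}(p,x_n)\le R(1+\tau)+2K<R$'' cannot hold for positive $\tau,K$. And the error is not merely arithmetic: converting each per-step bound $d_{g(-s_{i+1}^2)}(x_i,x_{i+1})\le R(1+\tau)s_{i+1}$ to time $t=-1$ via (RF4) costs $K(1-s_{i+1})\approx K$ per step, and the number of steps $n$ is controlled only by $\inf_{x\in C}r_x$, which is not uniformly bounded below; so the total drift can be arbitrarily large. Even in the idealized case $K=0$, the sum $\sum_i R(1+\tau)s_{i+1}$ already equals $R(1+\tau)>R$, so $x_n$ need not lie in $\tilde B_R(p,1)$. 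The $D_\rho$-calculus of Proposition \ref{prop:D_metric} suffers from the same accumulation issue because its triangle inequality carries a multiplicative constant $\rho/(\rho-K)>1$ or forces $\rho$ to grow step by step.

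What is actually needed — and what the paper's formulation delivers automatically — is an inductive bound on $D_R(p,x_i)$ obtained from the invariant $(\star_i)$ via the bi-H\"older estimate \eqref{eqn:bi_holder} of Proposition \ref{prop:bilip}(4): since $|\tilde v(x_i)-w|\le 2cRs_i$ and $|w|<2^{-7}R$, the quantity $|\tilde v(x_i)|$ stays small, and the bi-H\"older estimate converts this into $D_R(p,x_i)$ being bounded well below $1$. The subtlety, which you would have to address, is that Proposition \ref{prop:bilip}(4) is stated only for points already known to lie in $C\cap\tilde B_{3R/2}(p,1)$, so one must break the circularity by showing (e.g.\ via Proposition \ref{prop:D_metric}(4) and (5)) that a single step cannot carry $x_{i+1}$ outside $\tilde B_{3R/2}(p,1)$ when $x_i$ satisfies the improved bound. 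The paper avoids all of this by never iterating: it shows that a single pass from the minimizer $\bar x$ produces a strictly smaller $s_y$ for some $y\in C\cap\tilde B_R(p,1)$, an immediate contradiction. If you want to keep the constructive iteration, replace the telescoping paragraph with an inductive bi-H\"older argument along the lines above; otherwise, the minimality formulation is cleaner.
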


\begin{proof}
Suppose that there is $w\in B_{2^{-7} R}(0)$ such that
\begin{equation}\label{eqn:not_in}
w\not \in \bigcup_{x\in C\cap \tilde B_R(p,1)} I_{x,r_x}.
\end{equation}
For each $x\in C\cap \tilde B_R(p,1)$ and $r\in [r_x,1]$ let 
$$I_{x,r} = \tilde v(x) + T_{x,r}^{-1} ( \overline{B_{\frac{1}{2}Rr}(0)}),$$
and define
$$s_x= \inf \left\{r\in [ r_x,1], w\in I_{x,r} \right\}.$$

Choose $\delta'>0$ small enough so that, by Lemma \ref{lemma:normalize_splitting}, 
$T_{p,2^{-5}}$ is close enough to the identity to achieve $I_{p,2^{-5}} =  T_{p,2^{-5}}^{-1}(B_{2^{-6} R}(0))\supset B_{2^{-7} R}(0)$. Thus we may assume that $s_p \leq 2^{-5}$.

Now, set
$$\bar s = s_{\bar x}= \min_{x\in C\cap \tilde B_R(p,1) }  s_x. $$
The minimum is achieved because $C$ is a finite set,  by Remark \ref{rmk:infrx}, since the neck region is smooth.

By \eqref{eqn:not_in} it follows that $r_{\bar x}<s_{\bar x}=\bar s \leq s_p \leq 2^{-5}$. Moreover, $w\in I_{\bar x,2\bar s}$, hence
\begin{equation}\label{eqn:covering_2}
|T_{\bar x,2\bar s} w - T_{\bar x,2\bar s}  \tilde v(\bar x)| < R \bar s.
\end{equation}
By \eqref{eqn:T_estimates}, we know that if $0<\max(\delta,\delta')\leq \delta'(n,C_I,\Lambda,H,\eta|R)$ then $|\xi| \leq 2 | T_{\bar x,2\bar s} (\xi)|$
hence
\begin{align*}
|\tilde v(\bar x)|&\leq |\tilde v(\bar x) - w|+|w|\\
&\leq 2 |T_{\bar x,2\bar s}(\tilde v(\bar x)- w)| +|w|,\\
&<  R\bar s + 2^{-7} R,\\
&< 2^{-5} R + 2^{-7} R,\\
&\leq 2^{-4} R.
\end{align*}
Since $\bar x\in \tilde B_R(p,1)$, we know that
$$D_R(p,\bar x) < 1.$$
If in addition $r_p\leq D_R (p,\bar x)$, applying Proposition \ref{prop:bilip} for $\varepsilon=\frac{1}{2}$, we know that if 
\begin{align*}
R&\geq R(n,H,K,\tau,B),\\
0<\delta&\leq \delta(n,C_I,\Lambda,H,\eta,B|R),\\
0<\delta'&\leq \delta'(n,C_I,\Lambda,H,\eta,B|R),
\end{align*}
the bi-H\"older estimate \eqref{eqn:bi_holder} holds, and gives
\begin{equation}\label{eqn:barx_close_p}
\frac{1}{2}  R (D_R(p,\bar x))^{3/2} \leq |\tilde v(\bar x)|\leq 2^{-4} R \quad\Longrightarrow\quad D_R(p,\bar x) \leq \left(\frac{1}{8}\right)^{2/3}=\frac{1}{4}.
\end{equation}
Therefore, since by the a priori assumptions $r_p \leq 2^{-5}$,
$$D_R(p,\bar x) \leq \max(r_p,\frac{1}{4})\leq \frac{1}{4}.$$

Moreover, by  Proposition \ref{prop:D_metric} and \eqref{eqn:barx_close_p} we can estimate, for any $y\in \tilde B_{2R}(\bar x,2\bar s)$
\begin{align*}
D_R(p,y) &\leq \frac{R}{R-K} (D_R(p,\bar x)+D_R(\bar x,y)),\\
&\leq \frac{R}{R-K} \left(D_R (p,\bar x) + \frac{2R}{R-K}D_{2R}(\bar x,y)\right),\\
&<\frac{R}{R-K} \left( \frac{1}{4} +6 \bar s\right),\\
&\leq \frac{R}{R-K} \left( \frac{1}{4} +\frac{1}{4}\right),\\
&< 1,
\end{align*}
as long as $R$ is large enough, depending on $K$, so that $\frac{2R}{R-K} \leq 3$, which proves that 
\begin{equation}\label{eqn:covering_b_inclusion}
\tilde B_{2R}(\bar x,2\bar s) \subset \tilde B_R(p,1).
\end{equation}

\noindent \textbf{Claim 1:} If 
\begin{align*}
0<\delta&\leq\delta(n,C_I,\Lambda,H,\eta|R,\tau),\\
0<\delta'&\leq\delta(n,C_I,\Lambda,H,\eta|R,\tau),
\end{align*}
then there is $y\in C\cap \tilde B_R(p,1)$ such that
\begin{equation}
|T_{\bar x,2\bar s} w - T_{\bar x,2\bar s} \tilde v(y)| <4\tau  R\bar s.
\end{equation}
\begin{proof}
Recall that by the definition of a neck region satisfying the a priori assumptions
\begin{itemize}
\item there is $\tilde x\in \tilde B_R(\bar x,2\bar s)$ such that $(M,g(t),\tilde x)_{t\in (-2\delta^{-1},0]}$ is $(k,\delta)$-selfsimilar, with respect to $\mathcal L_{\tilde x,2\bar s}$, but not $(k+1,\eta)$-selfsimilar at scale $2\bar s$. 
\item we know that
\begin{equation}\label{eqn:inclusion}
\mathcal L_{\tilde x,2\bar s} \cap \tilde B_R(\bar x,2\bar s) \subset \tilde B_{\tau R} (C,2\bar s),
\end{equation}
since $\tilde B_{2R}(\bar x,2\bar s)  \subset \tilde B_{2R}(p,1)$, by \eqref{eqn:covering_b_inclusion}.
\item $v$ is a $(k,\delta')$-splitting map at scale $1$ for any $x\in C$
\end{itemize}

Moreover, for any $\theta>0$, we know that if $0<\max(\delta,\delta')\leq\delta(n,C_I,\Lambda,H,\eta|R,\theta)$, then for every $x\in C\cap \tilde B_R(p,1)$, $T_{\bar x,2\bar s} v$ is a $(k,\theta)$-splitting map around $x$ at scale $2\bar s$, satisfying \eqref{eqn:Tv_normalization} and \eqref{eqn:T_estimates}.

We can then apply Lemma \ref{lemma:split_map_selfsimillar} at scale $2\bar s$, so that if  $0<\delta\leq \delta(n,C_I,\Lambda,H,\eta|R,\tau)$ and $0<\theta\leq\theta(n,C_I,\Lambda,H,\eta|R,\tau)$, we know that for every $a\in B_{R\bar s}(0)$
there is $z\in \tilde B_R(\bar x,2 \bar s)\cap \mathcal L_{\tilde x,2\bar s}$ such that
\begin{equation}
|a-(T_{\bar x,2\bar s} \tilde v(z) - T_{\bar x,2\bar s} \tilde v(\bar x))| < \tau R \bar s.
\end{equation}

Thus, since $|T_{\bar x,2\bar s} w - T_{\bar x,2\bar s} \tilde v(\bar x) | < R\bar s$, there is $z\in \tilde B_R(\bar x,2\bar s)\cap \mathcal L_{\bar x,2\bar s}$ such that
\begin{equation}
 |T_{\bar x,2\bar s} w-T_{\bar x,2\bar s} \tilde v(z)|=|T_{\bar x,2\bar s} w - T_{\bar x,2\bar s} \tilde v(\bar x) -(T_{\bar x,2\bar s} \tilde v(z) - T_{\bar x,2\bar s} \tilde v(\bar x))|<\tau R \bar s. 
\end{equation}

Moreover, by \eqref{eqn:inclusion} there is $y\in C$ such that 
\begin{equation}\label{eqn:close_to_C}
d_{g(-4\bar s^2)}(z,y) < 2\tau R \bar s.
\end{equation}
In particular, $z\in \tilde B_{\tau R}(y,2\bar s)\subset \tilde B_{4R}(y,2\bar s)$. Hence, by Lemma \ref{lemma:split_map_selfsimillar}, 
\begin{align*}
|T_{\bar x,2\bar s}\tilde v(z)- T_{\bar x,2\bar s} \tilde v(y)| \leq \max\left\{2\bar s \tau,\frac{3}{2} d_{g(-4\bar s^2)}(
z,y)\right\} <3\tau R \bar s.
\end{align*}

Putting everything together, we obtain 
\begin{align*}
|T_{\bar x,2\bar s} w -T_{\bar x,2\bar s}\tilde v(y)|&\leq |T_{\bar x,2\bar s} w -T_{\bar x,2\bar s}\tilde v(z)| + |T_{\bar x,2\bar s}\tilde v(z) -T_{\bar x,2\bar s}\tilde v(y)|\\
&\leq  \tau R \bar s + 3\tau R\bar s\\
&\leq 4\tau R \bar s.
\end{align*}

Moreover,
\begin{align*}
d_{g(-4\bar s^2)}(y,\bar x) &\leq d_{g(-4\bar s^2)}(z,\bar x) +d_{g(-4\bar s^2)}(y,z)\\
&\leq 2R\bar s + 2\tau R\bar s \leq 2(1+\tau)R\bar s,
\end{align*}
hence $y\in \tilde B_{2R}(\bar x , 2\bar s)\subset \tilde B_R(p,1)$, by \eqref{eqn:covering_b_inclusion}.

\end{proof}

\noindent \textbf{Claim 2:} If $0<\max(\delta,\delta')\leq \delta(n,C_I,\Lambda,H,\eta|R,\varepsilon)$ then
\begin{equation*}
\left|\left| T_{y,2\bar s} T_{\bar x,2\bar s}^{-1} -I \right|\right| <  C(n)\sqrt{\varepsilon}.
\end{equation*}
\begin{proof}
Let $\gamma=\gamma(n,H)\in (0,1]$ be the constant provided by Lemma \ref{lemma:near_splitting}. Assume that $2\gamma^{-1}\bar s\leq 1$. Since $T_{\bar x,2\gamma^{-1} \bar s} v$ is a $(k,\theta)$-splitting map at scale $2\gamma^{-1}\bar s$, if $0<\max(\delta,\delta')\leq \delta(n,C_I,\Lambda,H,\eta|\theta)$, and $y\in \tilde B_{2R}(\bar x, 2\bar s) \subset \tilde B_{2R}(\bar x,2\gamma^{-1}\bar s)$, it follows by Lemma \ref{lemma:near_splitting} that if $0<\theta \leq \theta(n,C_I,H|R,\varepsilon)$, then $T_{\bar x,2\gamma^{-1}\bar s} v$ is also a $(k,\varepsilon)$-splitting map around $y$ at scale $2\bar s$.

On the other hand, since $T_{y,2\bar s} v=T_{y,2\bar s} T_{\bar x,2\gamma^{-1}\bar s}^{-1} T_{\bar x,2\gamma^{-1} \bar s} v$ satisfies the normalization \eqref{eqn:Tv_normalization} at scale $2\bar s$, it follows that $||T_{y,2\bar s} T_{\bar x,2\gamma^{-1}\bar s}^{-1} -  I || \leq C(n) \sqrt{\varepsilon}$, by Lemma \ref{lemma:normalize_splitting}. Similarly, since $T_{\bar x,2\bar s} v=T_{\bar x,2\bar s} T_{\bar x,2\gamma^{-1}\bar s} ^{-1}T_{\bar x,2\gamma^{-1}\bar s} v$ satisfies the normalization \eqref{eqn:Tv_normalization}, we know  that $||T_{\bar x, 2\bar s} T_{\bar x,2\gamma^{-1}\bar s}^{-1} - I ||\leq C(n) \sqrt\theta \leq  C(n)\sqrt \varepsilon$. From this, the result follows, in this case.

If $2\gamma^{-1}\bar s>1$ then we can use the almost splitting map $v$ instead of $T_{\bar x, 2\gamma^{-1}\bar s} v$, in the above argument.

\end{proof}

Now, by Claim 2, for $0<\varepsilon\leq \varepsilon(n)$, $0<\theta\leq \theta(n,C_I,H|R,\varepsilon)$  and $0<\max(\delta,\delta')\leq \delta(n,C_I,\Lambda,H,\eta|\theta)$, and Claim 1, we have
\begin{align*}
|T_{y,2\bar s} w - T_{y,2\bar s}  \tilde v(y)|  \leq ||T_{y,2\bar s} T_{x,2\bar s}^{-1}|| \; |T_{\bar x,2\bar s} w - T_{\bar x,2\bar s} \tilde v(y)| \leq 10 \tau R \bar s.
\end{align*}

Choosing $0<\theta\leq \theta(n)$ in \eqref{eqn:T_ratio} and  $\tau\leq \frac{1}{100}$, we obtain for $a=\max(\frac{\bar s}{2},r_y)$
\begin{align*}
|T_{y,a} w - T_{y,a} \tilde v(y) | &= |T_{y,a}T_{y,2\bar s}^{-1} T_{y,2\bar s}  (w-\tilde v(y)) |  \\
&\leq ||T_{y,a}T_{y,2\bar s}^{-1}|| \;|(T_{y,2\bar s} (w -\tilde v(y)) | \\
&\leq 20\tau R \bar s\\
&<\frac{R\bar s}{4} \leq \frac{R}{2}a.
\end{align*}

Therefore, $w\in \tilde v(y) + T_{y,a}^{-1} (B_{\frac{1}{2}Ra}(0)) \subset I_{y,a}$. By \eqref{eqn:not_in}, it follows that $r_y\leq s_y<a=\frac{\bar s}{2}<\bar s$, which contradicts the definition of $\bar s$.

\end{proof}

We thus arrive at the following lemma.

\begin{lemma}[Lower Ahlfors bound]\label{lemma:apriori_lower}
Suppose that $(M,g(t),p)_{t\in (-2\delta^{-2},0]}$ is a smooth compact Ricci flow satisfying (RF1-4) and
$$\mathcal N = \tilde B_{2R}(p,1) \setminus \bigcup_{x\in C} \overline{\tilde B_R(x,r_x)}$$ 
is a $(k,\delta,\eta)$-neck region with $\tau\leq \frac{1}{100}$ that satisfies the a priori assumptions with 
\begin{align*}
R&\geq R(n,H,K,\tau,B),\\
0<\delta&\leq \delta(n,C_I,\Lambda,H,\eta,B|R,\tau),\\
0<\delta'&\leq \delta'(n,C_I,\Lambda,H,\eta,B|R,\tau)
\end{align*}
then there is $G=G(k)<+\infty$ such that
\begin{equation}
G^{-1} R^k\leq \mu(\tilde B_R(p,1)).
\end{equation}
\end{lemma}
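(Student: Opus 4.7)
The plan is to combine the covering obtained in Lemma \ref{lemma:covering} with a simple $k$-dimensional volume comparison in $\mathbb R^k$, using the image of $\tilde v$ to transfer the packing information into Euclidean space. From Lemma \ref{lemma:covering} we have
\begin{equation*}
B_{2^{-7}R}(0)\subset \bigcup_{x\in C\cap \tilde B_R(p,1)} I_{x,r_x},\qquad I_{x,r_x}=\tilde v(x)+T_{x,r_x}^{-1}\bigl(\overline{B_{\frac12 R r_x}(0)}\bigr),
\end{equation*}
and each $I_{x,r_x}$ is an ellipsoid in $\mathbb R^k$ of $k$-volume $|\det T_{x,r_x}^{-1}|\,\omega_k (Rr_x/2)^k$. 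So I would compare Lebesgue measures on both sides, obtaining
\begin{equation*}
\omega_k (2^{-7}R)^k\leq \sum_{x\in C\cap \tilde B_R(p,1)} |\det T_{x,r_x}^{-1}|\,\omega_k (Rr_x/2)^k.
\end{equation*}

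The next step is to bound $|\det T_{x,r_x}^{-1}|$ uniformly from above. For this I would invoke the second estimate in \eqref{eqn:T_estimates}, namely $|\xi|^2\leq (1+\theta)|T_{x,r_x}(\xi)|^2$ for every $\xi\in\mathbb R^k$. This shows that every singular value of $T_{x,r_x}$ is at least $(1+\theta)^{-1/2}$, and therefore $|\det T_{x,r_x}|\geq (1+\theta)^{-k/2}$, i.e. $|\det T_{x,r_x}^{-1}|\leq (1+\theta)^{k/2}$. Here $\theta>0$ can be fixed to be any small constant (say $\theta=1$), at the expense of taking $\delta,\delta'$ even smaller; this only refines the hypotheses already in the statement and does not affect the quantifiers.

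Substituting back and using
\begin{equation*}
\sum_{x\in C\cap \tilde B_R(p,1)}(Rr_x/2)^k=2^{-k}\mu\bigl(\tilde B_R(p,1)\bigr),
\end{equation*}
I obtain
\begin{equation*}
\omega_k\, 2^{-7k}R^k\leq (1+\theta)^{k/2}\,\omega_k\, 2^{-k}\,\mu\bigl(\tilde B_R(p,1)\bigr),
\end{equation*}
which rearranges to $\mu(\tilde B_R(p,1))\geq (1+\theta)^{-k/2}\, 2^{-6k} R^k$. Hence the result holds with $G=G(k)=(1+\theta)^{k/2}2^{6k}$.

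There is no genuine obstacle in this argument: Lemma \ref{lemma:covering} does all the work. The only subtlety is making sure the matrices $T_{x,r_x}$ (which depend on the almost splitting map $v$) are sufficiently close to being norm-preserving so that their determinants are bounded below; this follows directly from \eqref{eqn:T_estimates} once $\delta,\delta'$ are chosen small enough, which is already a standing assumption.
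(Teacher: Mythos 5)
Your proof is correct and follows essentially the same route as the paper: both proofs hinge on Lemma \ref{lemma:covering} and then compare Lebesgue measures in $\mathbb R^k$, controlling the sizes of the ellipsoids $I_{x,r_x}$ via the lower singular-value bound from \eqref{eqn:T_estimates}. The paper phrases that control as a set inclusion $T_{x,r_x}^{-1}(\overline{B_{\frac12 Rr_x}(0)})\subset\overline{B_{Rr_x}(0)}$ (taking $|\xi|\leq 2|T_{x,r_x}\xi|$), while you phrase it as the equivalent determinant bound $|\det T_{x,r_x}^{-1}|\leq(1+\theta)^{k/2}$; this is a cosmetic difference, not a different argument.
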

\begin{proof}
To obtain the lower Ahlfors bound, we have by Lemma \ref{lemma:covering} that
\begin{equation*}
C(k) R^k\leq \sum_{x\in C\cap \tilde B_R(p,1)}  \vol_k(T_{x,r_x}^{-1}(\overline{B_{\frac{1}{2}Rr_x}(0) }).  
\end{equation*}
However, since $|\xi|\leq 2 |T_{x,r_x}(\xi)|$ if $0<\max(\delta,\delta')\leq \delta(n,C_I,\Lambda,H,\eta)$, by \eqref{eqn:T_estimates}, we know that
$$ T_{x,r_x}^{-1}( \overline{B_{\frac{1}{2}Rr_x}(0) }) \subset \overline{B_{Rr_x}(0)}.$$
Therefore, denoting by $\omega_k$ the volume of the unit ball in $\mathbb R^k$,
\begin{equation*}
\begin{aligned}
\sum_{x\in C\cap \tilde B_R(p,1)}  \vol_k(T_{x,r_x}^{-1}(\overline{B_{\frac{1}{2}Rr_x}(0)}) &\leq  \sum_{x\in C\cap \tilde B_R (p,1)} \vol_k(\overline{B_{Rr_x}(0)})\\
&\leq  \omega_k \sum_{x\in C\cap \tilde B_R (p,1)}  (Rr_x)^k\\
&\leq  \omega_k  \mu (\tilde B_R(p,1)),
\end{aligned}
\end{equation*}
from which the result follows.

\end{proof}

\subsection{The smooth neck structure theorem} Now we ready to finish the proof of Theorem \ref{thm:neck_structure}. We recall it below for the convenience of the reader.
\begin{theorem}
Let $(M,g(t),p)_{t\in (-2\delta^{-3},0]}$ be a smooth Ricci flow satisfying (RF1-4). If $\tau\leq \frac{1}{100}$, $R\geq R(n,C_I,\Lambda,H,K|\tau)$, $0<\delta\leq\delta(n,C_I,\Lambda,H,K,\eta|R,\tau)$ and
\begin{equation*}
\mathcal N=\tilde B_{2R}(p,1) \setminus \bigcup_{x\in C} \overline{\tilde B_R(x,r_x)}
\end{equation*}
is a smooth $(k,\delta,\eta)$-neck region,  then for every $x\in C$ and $r\geq r_x$ with $\tilde B_{2R}(x,r)\subset \tilde B_{2R}(p,1)$ the estimate
\begin{equation*}
L^{-1} R^k r^k \leq \mu (\tilde B_R(x,r))\leq L R^k r^k
\end{equation*}
holds, for some $L=L(k,\tau)<+\infty$.
\end{theorem}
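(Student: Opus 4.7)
The theorem reduces to the unit-scale case by rescaling: by Remark~\ref{rmk:neck_inside_neck}, the restriction $\mathcal N_{x,r}$ is a smooth $(k,\delta,\eta)$-neck region at scale $r$, and rescaling $g\mapsto r^{-2}g(r^2\cdot)$ turns it into a neck region at scale 1. It therefore suffices to establish $L^{-1}R^k\leq \mu(\tilde B_R(x,1))\leq LR^k$ whenever $x\in C$ is the centre of a smooth $(k,\delta,\eta)$-neck region at scale 1, with $L=L(k,\tau)$. The Ahlfors bounds produced by Lemmas~\ref{lemma:apriori_upper} and~\ref{lemma:apriori_lower} yield constants $F(k,\tau)$ and $G(k)$ \emph{independent} of the weak Ahlfors constant $B$ appearing in their a priori hypotheses, which is the decisive point.

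\textbf{Verification of a priori assumptions.} Because the flow is smooth up to $t=0$ and $\tau^2 R\geq K$, Remark~\ref{rmk:infrx} guarantees that $C$ is finite and $\inf_y r_y>0$, hence a finite (but non-uniform) constant $B_0$ realises the weak Ahlfors bound, giving a priori assumption (3). Theorem~\ref{thm:sharp_splitting}, applied via the $(k,\delta^2)$-selfsimilar point at scale 1 guaranteed by (n3) near $x$, produces a $(k,\delta')$-splitting map $v:M\times[-1,0]\to\mathbb R^k$ with $v(x,0)=0$; Lemma~\ref{lemma:near_splitting}(1)--(2) propagates $v$ to every $y\in C\cap\tilde B_{2R}(x,1)$ at scale 1 after mild adjustment of $\delta'$, giving assumption (2). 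For assumption (1), $x\in C$ by construction, while the requirement $r_y\leq 2^{-5}$ is arranged by a dichotomy: either it holds for the sub-region of interest, or by (n1), (n5), (RF1) and Bishop--Gromov volume comparison at the curvature scale the cardinality of $C$ inside the sub-region is uniformly bounded with all radii comparable to the ambient scale, directly yielding the required Ahlfors bounds.

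\textbf{Bootstrap.} Set
\[
L_0=\sup\Bigl\{\tfrac{\mu(\tilde B_R(y,s))}{(Rs)^k}:y\in C,\ s\in[r_y,1],\ \tilde B_{2R}(y,s)\subset\tilde B_{2R}(x,1)\Bigr\},
\]
which is finite by the preceding step. Choose a near-extremiser $(y_0,s_0)$ with $\mu(\tilde B_R(y_0,s_0))\geq \tfrac12 L_0(Rs_0)^k$, rescale $\mathcal N_{y_0,s_0}$ to scale 1, and note that the rescaled sub-region inherits the weak Ahlfors hypothesis with constant $L_0$, while assumptions (1)--(2) are verified as above. Lemma~\ref{lemma:apriori_upper} then gives $\mu(\tilde B_R(y_0,s_0))\leq F(k,\tau)(Rs_0)^k$, forcing $L_0\leq 2F(k,\tau)$. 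The lower bound is proved identically via Lemma~\ref{lemma:apriori_lower}, and we take $L=\max(2F(k,\tau),2G(k))$.

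\textbf{Main obstacle.} The thresholds on $\delta$ and $\delta'$ in Lemmas~\ref{lemma:apriori_upper}--\ref{lemma:apriori_lower} depend on the weak Ahlfors constant $B$, which in our bootstrap is $L_0$, threatening circularity. The resolution is to pre-fix $\delta$ in the hypotheses of the theorem small enough for $B^{\ast}:=\max(2F(k,\tau),2G(k))$, a quantity explicit in $k$ and $\tau$ alone: since the carried constant $L_0$ never exceeds $B^{\ast}$ at any step of the iteration, the smallness conditions on $\delta$ are uniformly met, and the bootstrap closes with $L=B^{\ast}=L(k,\tau)$.
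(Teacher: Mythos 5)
Your reduction to the unit scale via Remark~\ref{rmk:neck_inside_neck}, your identification of Lemmas~\ref{lemma:apriori_upper}--\ref{lemma:apriori_lower} as the engine, and your observation that their output constants $F(k,\tau)$ and $G(k)$ are independent of the weak Ahlfors constant $B$ fed into their hypotheses, are all exactly the right ideas. But the ``Main obstacle'' paragraph does not actually close the loop, and that is where the real work lies.

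The gap: to invoke Lemma~\ref{lemma:apriori_upper} on the rescaled near-extremal sub-region with weak Ahlfors constant $B=L_0$, you must have $R\geq R(n,H,K,\tau,L_0)$ and $\delta,\delta'$ small depending on $L_0$. You propose to pre-fix $\delta$ small enough for $B^{\ast}=\max(2F,2G)$ and then assert ``the carried constant $L_0$ never exceeds $B^{\ast}$ at any step of the iteration'' — but there is no iteration in your argument, only a single application of the lemma, and $L_0\leq B^{\ast}$ is precisely the conclusion you are trying to establish. If $L_0>B^{\ast}$ (which is exactly what you cannot rule out a priori; finiteness of $C$ gives $L_0<\infty$ but no quantitative bound), the smallness thresholds in Lemma~\ref{lemma:apriori_upper} are not met and the lemma cannot be applied. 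The paper resolves this by a genuine discrete induction on the \emph{depth} $\inf_{x\in C}r_x$ of the neck region: a base case for depth $\geq 2^{-10}$ (handled by Corollary~\ref{cor:coverings}, giving an explicit $C(n,\tau)2^{11k}$ bound), and an inductive step showing that if the estimate holds with a fixed constant $L$ for all neck regions of depth $\geq 2^{-j}$, then every sub-ball $\tilde B_R(x,r)$ of a region of depth $\geq 2^{-(j+1)}$ either has $r\leq 1/2$ (in which case the rescaled sub-region has depth $\geq 2^{-j}$ and the induction hypothesis applies) or $r>1/2$ (handled by a covering). This gives the weak Ahlfors input with $B=B(L)$, and the crucial point is that the output constant $\tilde L=\max(F,G)$ is independent of $B$, so one may set $L=\max(\tilde L,2^{11k}C(n,\tau))$ once and for all; then $B=B(L)$ and hence the $R,\delta$ thresholds are functions of $(n,k,\tau)$ and the a priori constants only, and the induction closes without degradation. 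Your one-shot extremiser argument cannot reproduce this because it lacks the discrete parameter over which the constant is propagated.

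Two smaller points. First, your verification of a priori assumption~(3) only carries the \emph{upper} Ahlfors ratio in $L_0$; the hypothesis in Lemmas~\ref{lemma:apriori_upper}--\ref{lemma:apriori_lower} is two-sided, so you would need to track the lower bound as well (e.g.\ take $L_0$ to be the supremum of $\max\{\mu(\tilde B_R(y,s))(Rs)^{-k},\ (Rs)^k\mu(\tilde B_R(y,s))^{-1}\}$). Second, the appeal to Bishop--Gromov volume comparison to count $C$ in the shallow case is not available here — there is no lower Ricci bound to invoke — and is not what makes the base case work; the paper instead counts centres via Corollary~\ref{cor:coverings}, whose packing bound comes from the almost-splitting map's image in $\mathbb R^k$.
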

\begin{proof}
Take any $x\in C$ and $r\geq r_x$ such that $\tilde B_{2R}(x,r)\subset \tilde B_{2R}(p,1)$. By Remark \ref{rmk:neck_inside_neck}, $\tilde B_{2R}(x,r)$ contains a neck-region at scale $r$.

Moreover, for every $y\in \tilde B_{2R}(x,r)\cap C$
$$r_y\leq 1.01r_x+\delta D_R(x,y) \leq 1.01r +\frac{2R-K}{R-K} \delta D_{2R}(x,y)\leq 2r,$$
if $R\geq 2K$ and $0<\delta\leq \frac{1}{4}$.

Thus, by rescaling, if suffices to prove that for some $L<+\infty$, 
\begin{equation}\label{eqn:statement}
L^{-1} R^k\leq \mu(\tilde B_R(p,1))\leq L R^k,
\end{equation}
 for all neck regions at scale  $1$ with $p\in C$, $r_p \leq 1$ and $r_x\leq 2$ for every $x\in C$. We will do this by  induction on the depth $\inf r_x$ of the neck regions.

\noindent \textbf{Estimate \eqref{eqn:statement} holds for neck regions with depth $\geq 2^{-10}$.} Suppose that the neck region,  at scale $1$,  satisfies $\inf r_x\geq 2^{-10}$. By Corollary \ref{cor:coverings}, there is a maximal $\{x_i\}_{i=1}^N \subset \tilde B_{R}(p,1)\cap C$ such that the balls $\tilde B_{\frac{1}{4}\tau^2 R}( x_i, 2^{-10})$ are mutually disjoint, the balls $\tilde B_R(x_i,2^{-10})$ cover $C\cap \tilde B_R(p,1)$, and $N\leq C(n,\tau) 2^{10k}$. 

Moreover, we have that $C\cap \tilde B_R(p,1)= \{x_i\}_{i=1}^N$. To see this, any $y\in C\cap \tilde B_R(p,1)$ with $y\not \in \{x_i\}_{i=1}^N$ would satisfy, by (n1), $d_{g(-r_y^2)}(y,x_i)\geq \tau^2 R r_y\geq \tau^2 R 2^{-10}$, for every $i$. By (RF4) and the fact that $2^{-10}\leq r_y\leq 2$ we obtain
$$d_{g(-4^{-10})} (y,x_i)\geq d_{g(-r_y^2)}(y,x_i) - 2K \geq \tau^2 R 2^{-10} -2K\geq \frac{1}{2}\tau^2 R 2^{-10},$$
if $R\geq R(\tau,K)$, which contradicts the maximality of $\{x_i\}_{i=1}^N$.

 It follows that
\begin{equation}\label{eqn:step1_ua}
\mu(\tilde B_R(p,1))= \sum_{i=1}^N R^k r_{x_i}^k \leq 2^k 2^{10k}C(n,\tau) R^k,
\end{equation}
since $r_{x_i}\leq 2$. On the other hand, $p\in C$ and $2^{-10}\leq r_p\leq 1$, so
\begin{equation}\label{eqn:step1_la}
2^{-10k} R^k \leq R^k r_p^k \leq \mu(\tilde B_R(p,1)).
\end{equation}
Estimates \eqref{eqn:step1_ua} and \eqref{eqn:step1_la} prove \eqref{eqn:statement} for all neck regions with $\inf r_x\geq 2^{-10}$, for $L\geq C(n,\tau)2^{11k}$.

\noindent \textbf{Induction step.} Suppose that \eqref{eqn:statement} holds for all $(k,\delta,\eta)$-neck regions at scale $1$ with $\inf r_x\geq 2^{-j}$, for some $j\geq 10$, and for some $L<+\infty$ which will be determined in the course of the proof. 

Suppose also that $\tilde B_{2R}(p,1)$ contains a $(k,\delta,\eta)$-neck region with $2^{-j}\geq \inf r_x \geq 2^{-(j+1)}$. Once we show that this neck region satisfies the a priori assumptions the result will follow from Lemma \ref{lemma:apriori_upper} and Lemma \ref{lemma:apriori_lower}.

\noindent \textit{Existence of the splitting map.}

Let $\gamma=\gamma(n,H)$ be the constant provided by Lemma \ref{lemma:near_splitting}. Since $p\in C$ and we may assume by making $\delta$ small enough that $r_p<\gamma^{-1}<\delta^{-1}$, there is $q\in \tilde B_R(p,1)$ such that $(M,g(t),q)_{t\in (-2\delta^{-3},0)}$ is $(k,\delta^2)$-selfsimilar at scale $\gamma^{-1}$, by (n3). By Theorem \ref{thm:sharp_splitting} it follows that there is a $(k,\tilde\delta)$-splitting map around $p$ at scale $\gamma^{-1}$ with $v(p,0)=0$, if $0<\delta\leq \delta(n,C_I,\Lambda,H,K|R,\tilde\delta)$. By Lemma \ref{lemma:near_splitting}, $v$ is then a $(k,\delta')$-splitting map around any $x\in C$ at scale $1$, provided that $0<\tilde \delta\leq \tilde\delta(n,C_I,H|R,\delta')$. We will choose $\delta'$ small enough, towards the end of the proof, so that Lemma \ref{lemma:apriori_upper} and Lemma \ref{lemma:apriori_lower} apply.

\noindent \textit{Bound on $r_x$.}

We will show that $r_x\leq 2^{-8}$ for every $x\in C$. For this, since $2^{-10}\geq 2^{-j}\geq \inf_{x\in C} r_x \geq 2^{-(j+1)}$,  there is $\tilde x\in C$ such that $r_{\tilde x}\leq 2^{-10}$, hence by property (n5) of a neck region, if  $\delta$ is small enough,
$$r_p\leq 1.01r_{\tilde x} +\delta D_R(p,\tilde x)\leq 1.01\cdot 2^{-10} +4\delta\leq 2^{-9},$$
since $D_R(p,\tilde x)\leq \frac{2R-K}{R-K} D_{2R}(p,\tilde x)<4$, by Proposition \ref{prop:D_metric}, if $R\geq R(K)$.

Thus, by (n5), for every $x\in C\cap \tilde B_{2R}(p,1)$
\begin{equation}\label{eqn:rx_small}
r_x\leq 1.01r_p + \delta D_R(p,x)\leq  1.01 \cdot 2^{-9} +4\delta <2^{-8},
\end{equation}
if $\delta$ is small enough.

\textit{The weak Ahlfors assumption.} 

Consider $x\in C$, $r\in [r_x,1]$  such that $\tilde B_{2R}(x,r)\subset \tilde B_{2R}(p,1)$.

Suppose $r\leq 1/2$. Then $\tilde B_{2R}(x,r)$ contains a $(k,\delta,\eta)$-neck region at scale $r$ with set of centres $\tilde C=C\cap \tilde B_{2R}(x,r)$. Moreover, the depth of this neck region is $\geq 2^{-j}$, hence we can apply the induction hypothesis to obtain
\begin{equation}\label{eqn:small_depth}
L^{-1} (Rr)^k\leq \mu(\tilde B_R(x,r))\leq L (Rr)^k.
\end{equation}

Now, let $r > 1/2$. Applying Corollary \ref{cor:coverings} for $m=5$, we obtain a collection of points $\{y_i\}_{i=1}^N\subset \tilde B_R(x,r)$, so that the balls $\tilde B_R(y_i,2^{-5} r)\subset \tilde B_{2R}(x,r)$ cover $\tilde B_R(x,r)$, and $N\leq C(n) 2^{5k}$.

Since $r\leq 1$, it follows that $\tilde B_{2R}(y, 2^{-5}r)$ contains a neck region with depth $\geq  2^{-j}$, so the induction hypothesis gives
\begin{equation}\label{eqn:ahlfors_r}
L^{-1} (2^{-5} R  r )^k\leq \mu(\tilde B_R(y,2^{-5}r))\leq L ( 2^{-5} R r)^k,
\end{equation}
since $r_y\leq 2^{-8} \leq 2^{-5}r$, by \eqref{eqn:rx_small}.

Therefore, by \eqref{eqn:ahlfors_r},
\begin{align*}
\mu(\tilde B_R(x,r))\leq \sum_{i=1}^N \mu(\tilde B_R (y_i,2^{-5} r))\leq L \; C(n) 2^{5k} ( R r)^k.
\end{align*}

 On the other hand, since $r>1/2$, $\tilde B_R(x,1/2)\subset \tilde B_R(x,r)$, $\tilde B_{2R}(x,1/2)\subset \tilde B_{2R}(x,r)\subset \tilde B_{2R}(p,1)$ and $\tilde B_{2R}(x,1/2)$ contains a neck region of depth $\geq 2^{-j}$. Therefore, by the induction hypothesis,
\begin{equation*}
\mu(\tilde B_R (x,r))\geq \mu(\tilde B_R(x,1/2)) \geq L^{-1} R^k 2^{-k}\geq 2^{-k} L^{-1} (Rr)^k.
\end{equation*}

Now, set $B=\max \left( 2^n L, LC(n) 2^{5k} \right)$. By Lemma \ref{lemma:apriori_upper} and Lemma \ref{lemma:apriori_lower}, 
we know that if $\tau\leq \frac{1}{100}$ and
\begin{align*}
R&\geq R(n,H,K,\tau,B),\\
0<\delta&\leq \delta(n,C_I,\Lambda,H,\eta,B|R,\tau),\\
0<\delta'&\leq \delta'(n,C_I,\Lambda,H,\eta,B|R,\tau),
\end{align*}
then 
\begin{equation*}
\tilde L^{-1} R^k\leq \mu(\tilde B_R(p,1)) \leq \tilde L R^k,
\end{equation*}
where $\tilde L=\max(F,G)$. 

Combining this with \eqref{eqn:step1_ua} and \eqref{eqn:step1_la} proves that \eqref{eqn:statement} holds for $L=\max\left( \tilde L, 2^{11k} C(n,\tau)\right)$.

\end{proof}

\section{Neck regions in 3d Ricci flow}
The aim of this section is to show that in a smooth compact 3d Ricci flow satisfying (RF1-4) high curvature regions are described in terms of neck regions. In fact, these neck regions will have the additional advantage that around any $x\in C$, the function $r_x$ bounds the curvature scale (see Definition \ref{def:curvature_scale}) from below. This will allow us to use the neck structure theorem \ref{thm:neck_structure} to prove Theorem \ref{thm:intro_bounded_diameter}.

Let's begin with the notion of curvature scale that will be convenient for our purposes.

\begin{definition}\label{def:curvature_scale}
Let $(M,g(t))_{t\in I}$, be a smooth compact Ricci flow, and $R<+\infty$ some constant. Let $t_0\in J=I\cup \{\sup I\}$. We define the $R$-curvature scale at $(x,t_0)\in M\times J$ as
$$r^R_{\riem}(x,t_0)=\sup\{r>0, \textrm{$t_0-r^2\in I$ and $|\riem|\leq r^{-2}$ in $B(x,t_0-r^2,Rr) \times [t_0-r^2,t_0)$}\}.$$
If $\sup I=0$, we set $r_{\riem}^R(x)= r_{\riem}^R (x,0)$.
\end{definition}
Observe that similar notions of a curvature scale in the literature set $R=1$, see for instance \cite{Bam21}. However, under assumption (RF4), it seems more natural to choose $R\geq 2K$, due to the following lemma.

\begin{lemma}
Let $I\subset \mathbb R$ be an interval with $\sup I=0$, and $(M,g(t))_{t\in I}$ be a smooth compact Ricci flow satisfying (RF4), and let $R\geq 2K$. Then $r^R_{\riem}$ is $R$-scale $\left(\frac{R}{R-K},\frac{R}{R-K}\right)$-Lipschitz.
\end{lemma}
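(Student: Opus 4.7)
The goal is to show, for every $x,y\in M$, that
\[
r^R_{\riem}(y)\leq \frac{R}{R-K}\, r^R_{\riem}(x)+\frac{R}{R-K}\, D_R(x,y).
\]
The plan is the following. Pick any $r<r^R_{\riem}(y)$ and any $d'>D_R(x,y)$; I will convert the curvature control at $y$ at scale $r$ into curvature control at $x$ at a suitably smaller scale $r'>0$, and then pass to the limits $r\nearrow r^R_{\riem}(y)$ and $d'\searrow D_R(x,y)$.

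First I would record a simple monotonicity: if $|\riem|\leq r^{-2}$ on $B(x,-r^2,Rr)\times[-r^2,0)$ and $0<r'\leq r$, then the same condition holds at scale $r'$, because $r^{-2}\leq r'^{-2}$, $[-r'^2,0)\subset[-r^2,0)$, and Lemma \ref{lemma:balls_inclusion} applied with $\rho=R\geq K$ gives $B(x,-r'^2,Rr')\subset B(x,-r^2,Rr)$. Consequently, it is enough to produce some positive $r'\leq r$ for which $B(x,-r'^2,Rr')\subset B(y,-r^2,Rr)$, since this immediately yields $r^R_{\riem}(x)\geq r'$.

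For the ball inclusion, I fix $z$ with $d_{g(-r'^2)}(x,z)<Rr'$ and apply the triangle inequality at an intermediate time $-t_0^2$ with $t_0=\max(r',d')$, using (RF4) to compare the relevant distances across times. A short calculation yields
\[
d_{g(-r^2)}(y,z) < R(r'+d') + K(r+t_0-r'-d'),
\]
and requiring this to be $\leq Rr$ rearranges to $r'+d'\leq r-\tfrac{K t_0}{R-K}$. Taking $t_0=r'$ (the case $r'\geq d'$) gives $r'\leq \tfrac{R-K}{R}(r-d')$, while $t_0=d'$ (the case $r'<d'$) gives $r'\leq r-\tfrac{R}{R-K}d'$. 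In either case, substituting the largest admissible $r'$ into $r^R_{\riem}(x)\geq r'$ yields
\[
r\leq \tfrac{R}{R-K}\,r^R_{\riem}(x)+d' \quad\text{or}\quad r\leq r^R_{\riem}(x)+\tfrac{R}{R-K}\,d',
\]
and both of these imply $r\leq \tfrac{R}{R-K}(r^R_{\riem}(x)+d')$. If no positive $r'$ is admissible, which happens precisely when $r\leq \tfrac{R}{R-K}d'$, the desired bound is already immediate because $r^R_{\riem}(x)\geq 0$. Sending $r\nearrow r^R_{\riem}(y)$ and $d'\searrow D_R(x,y)$ then closes the argument.

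The main obstacle is the correct choice of intermediate time in the triangle inequality: the naive choice $t_0=r$ produces only the weaker coefficient $\tfrac{R-2K}{R-K}$, and it is precisely the optimized split $t_0=\max(r',d')$ that yields the sharp factor $\tfrac{R}{R-K}$ advertised in the statement.
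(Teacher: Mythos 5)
Your proof is correct, and structurally it matches the paper's: transfer the curvature bound on $\tilde B_R(y,r)$ to a smaller ball around $x$ and thereby lower-bound $r^R_{\riem}(x)$. The difference is how the ball inclusion is obtained. The paper observes that if $D_R(x,z)<r_{\riem}(x)+\varepsilon$ then the almost-triangle inequality \eqref{eqn:D_triangle} of Proposition \ref{prop:D_metric}, applied with $\rho_1=\rho_2=\rho=R$, gives at once
\[
D_R(y,z)\leq \frac{R}{R-K}\bigl(D_R(x,y)+D_R(x,z)\bigr)<\frac{R}{R-K}\bigl(D_R(x,y)+r_{\riem}(x)+\varepsilon\bigr)=:r,
\]
so the inclusion $\tilde B_R(x,r_{\riem}(x)+\varepsilon)\subset\tilde B_R(y,r)$ follows in one line and the proof is essentially two lines. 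You instead re-derive the inclusion directly from (RF4), choosing the intermediate time $-t_0^2$ with $t_0=\max(r',d')$; this is in effect a localized re-proof of Proposition \ref{prop:D_metric}(3). Your optimization actually yields the slightly sharper case estimates $r\leq\frac{R}{R-K}r^R_{\riem}(x)+d'$ and $r\leq r^R_{\riem}(x)+\frac{R}{R-K}d'$ before you relax both to the common factor $\frac{R}{R-K}$. The paper's route is shorter because the triangle-type inequality is already in hand; yours is more self-contained and makes the origin of the factor $\frac{R}{R-K}$ more explicit, at the cost of redoing the optimization that Proposition \ref{prop:D_metric} packages. One small correction: with the naive choice $t_0=r$ the resulting Lipschitz coefficient is $\frac{R-K}{R-2K}$, not $\frac{R-2K}{R-K}$; your conclusion that this is worse than $\frac{R}{R-K}$ still stands, since $(R-K)^2>R(R-2K)$.
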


\begin{proof}
Take any $x,y\in M$ and a small enough $\varepsilon>0$. The triangle inequality of Proposition \ref{prop:D_metric} gives that
$$\tilde B_R(x,r_{\riem}(x)+\varepsilon) \subset \tilde B_R(y,r),$$ 
for $r=\frac{R}{R-K} \left( D_R(x,y) + r_{\riem}(x) +\varepsilon\right)\geq  r_{\riem}(x) + \varepsilon$, since for any $z\in \tilde B(x,r_{\riem}(x)+\varepsilon)$ we have
$$D_R(y,z) <\frac{R}{R-K} \left( D_R(x,y) + r_{\riem}(x) \right).$$
By the definition of $r_{\riem}(x)$, we know that there is some $(y,s)\in \tilde B_R(x,r_{\riem}(x)+\varepsilon) \times ( -(r_{\riem}(x)+\varepsilon)^2,0]$ such that $|\riem|(y,s)> (r_{\riem}(x) +\varepsilon)^{-2}$. If $r_{\riem}(y) \geq r$ then $|\riem|\leq r^{-2}$ in $\tilde B_R(x,r_{\riem}(x)+\varepsilon) \times (-(r_{\riem}(x)+\varepsilon)^2,0]$.

Therefore, $r< r_{\riem}(x)+\varepsilon$, which a contradiction. This gives that
$$r_{\riem}(y)< r =\frac{R}{R-K} \left( D_R(x,y) + r_{\riem}(x) +\varepsilon\right).$$
Sending $\varepsilon\rightarrow 0$, gives the result.
\end{proof}

From now on, in order to simplify our notation, we will use the symbol $r_{\riem}$ in place of $r_{\riem}^R$.

\begin{lemma}\label{lemma:2ss_rm}
Let $(M,g(t),x)_{t\in (-2\eta^{-1},0)}$ be a smooth compact 3d Ricci flow. Suppose that $(M,g(t),x)_{t\in (-2\eta^{-1},0)}$ is $(2,\eta)$-selfsimilar at scale $1$. If $\eta=\eta(R)>0$ is small enough then $r_{\riem}(x) \geq 1$.
\end{lemma}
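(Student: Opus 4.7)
I argue by contradiction: suppose the conclusion fails. Then one obtains a sequence $\eta_j\downarrow 0$ and smooth compact $3$d Ricci flows $(M_j,g_j(t),x_j)_{t\in(-2\eta_j^{-1},0)}$ that are $(2,\eta_j)$-selfsimilar at scale $1$ yet satisfy $r_{\riem}(x_j)<1$. Unpacking Definition \ref{def:curvature_scale} with $r=1$ yields space-time points $(y_j,t_j)\in \overline{B}(x_j,-1,R)\times[-1,0)$ with $|\riem_{g_j}|(y_j,t_j)>1$.

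Next I would identify the reference model. Proposition \ref{prop:spine} forces every $2$-selfsimilar Ricci flow in dimension $3$ to split, up to isometry, as $(M'\times\mathbb R^2,g'(t)\oplus g_{\mathbb R^2})_{t<0}$ with $\dim M'=1$. The only complete Riemannian $1$-manifolds are $\mathbb R$ and $S^1$; on $S^1$ the soliton equation reduces to $\bar f''=\tfrac{1}{2\tau}$, incompatible with $\int_{S^1}\bar f''=0$, so $M'=\mathbb R$ with the flat metric and the reference flow is isometric to the static flat $\mathbb R^3$. Definition \ref{def:almost_ss} then supplies a diffeomorphism $F_j:\tilde B_{\eta_j^{-1}}(q_j,1)\to M_j$ with $F_j(q_j)=x_j$ such that $F_j^{*}g_j$ is $\eta_j$-close to the flat metric in $C^{\lfloor\eta_j^{-1}\rfloor}$ on $\tilde B_{\eta_j^{-1}}(q_j,1)\times[-\eta_j^{-1},-\eta_j]$. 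Once $\eta_j\le 1/(2R)$, the image of this region contains (up to $\eta_j$-errors in distance) the parabolic cylinder $\overline B(x_j,-1,2R)\times[-1,-\eta_j]$, on which the $C^{2}$-part of the closeness gives $|\riem_{g_j}|\le C\eta_j$. For large $j$ this is strictly less than $1$, so $t_j$ is forced into $(-\eta_j,0)$.

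The remaining case $t_j\in(-\eta_j,0)$ I would handle via a pseudolocality/stability argument using $g_j(-\eta_j)$ as initial data. At $t_0=-\eta_j$ the metric is $\eta_j$-close to the Euclidean metric in $C^\infty$ on the ball $B(x_j,-\eta_j,\eta_j^{-1/2})$; in particular $|\riem|\le C\eta_j$ there and the volume ratio is within $\eta_j$ of $\omega_3$. By an improved pseudolocality for Ricci flow (equivalently, the stability of the flat Euclidean Ricci flow under smooth perturbations, which follows from Shi-type estimates for the DeTurck-gauged equation), there exist $c,C>0$ such that $|\riem_{g_j}|(y,t)\le C\eta_j$ for every $t\in[-\eta_j,-\eta_j+c\eta_j^{-1}]$ and $y\in B(x_j,t,c\eta_j^{-1/4})$. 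For small $\eta_j$ this parabolic cylinder contains $\overline B(x_j,-1,R)\times(-\eta_j,0)$, forcing $|\riem_{g_j}|(y_j,t_j)\le C\eta_j<1$, the desired contradiction.

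The main obstacle is precisely the tail $t_j\in(-\eta_j,0)$: the $(2,\eta_j)$-selfsimilar hypothesis only provides closeness on $[-\eta_j^{-1},-\eta_j]$, whereas the $R$-curvature scale at $t=0$ requires uniform control all the way up to $0$. Perelman's original pseudolocality only yields a bound of the shape $\alpha/(t+\eta_j)+(\varepsilon\eta_j^{-1/2})^{-2}$, which blows up as $t\to -\eta_j^{+}$ and does not by itself cover the whole interval; closing the argument requires exploiting the high-order $C^{\lfloor\eta_j^{-1}\rfloor}$-closeness built into Definition \ref{def:almost_ss} to promote this into a uniform curvature estimate.
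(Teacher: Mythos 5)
Your proposal follows the same route as the paper: identify the reference model for a $(2,\eta)$-selfsimilar $3$d flow as the static flat $\mathbb R^3$ (using Proposition~\ref{prop:spine} together with the observation that a $1$-dimensional gradient shrinker must be $\mathbb R$, not $S^1$), obtain curvature bounds on $[-\eta_j^{-1},-\eta_j]$ directly from the $C^m$-closeness in Definition~\ref{def:almost_ss}, and then use pseudolocality to push control across $(-\eta_j,0)$. The structure and the contradiction argument are correct.

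The concern you flag in your final paragraph is not actually a gap. You are thinking of Perelman's \S10.1 pseudolocality, which has the $\alpha t^{-1}+(\epsilon r_0)^{-2}$ shape; but \S10.3 of \cite{Per02} gives a \emph{uniform} bound $|\riem|\le(\epsilon r_0)^{-2}$ on the whole forward cylinder $\{(x,t): 0\le t\le(\epsilon r_0)^2,\ d_t(x,x_0)<\epsilon r_0\}$, under exactly the two hypotheses you verify at $t=-\eta_j$: $|\riem|\le r_0^{-2}$ on $B(x_0,r_0)$ and $\vol B(x_0,r_0)\ge(1-\delta)\omega_n r_0^n$. Taking $r_0\sim\eta_j^{-1/2}$, this gives $|\riem|\lesssim\eta_j$ on a cylinder of time-length $\sim\eta_j^{-1}$ and spatial radius $\sim\eta_j^{-1/2}$, comfortably containing $\overline B(x_j,-1,R)\times(-\eta_j,0)$ for small $\eta_j$. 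So no further appeal to the higher-order $C^{\lfloor\eta_j^{-1}\rfloor}$ closeness on the tail interval is required (and indeed that closeness is unavailable on $(-\eta_j,0)$). Your alternative framing — stability of the flat flow via the DeTurck-gauged equation and local Shi estimates — is also a valid and somewhat more elementary closure, but the two should not be conflated with each other or with \S10.1; either one alone finishes the argument.
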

\begin{proof}
The proof is an easy consequence of Perelman's pseudolocality theorem \cite{Per02} and the fact that any 3d shrinking Ricci soliton splitting more than one Euclidean factors should be the Gaussian shrinking soliton, since this is the only flat shrinking Ricci soliton. See also \cite{G19}.
\end{proof} 

From now on, $\eta=\eta(R)$ will always refer to the constant provided by Lemma \ref{lemma:2ss_rm}, while $\mu_{\mathbb R^3}, \mu_{\mathbb S^2\times \mathbb R}, \mu_{\mathbb S^3}$ denote the entropies of the standard gradient shrinking Ricci solitons in $\mathbb R^3,\mathbb S^2\times \mathbb R,\mathbb S^3$ respectively, according to \eqref{eqn:entropy_soliton}.

The following lemma describes the behaviour of a Ricci flow around points with small $R$-curvature radius. It can be seen as a combination of the canonical neighbourhood theorem and the backwards stability of necks, from \cite{Per02} and \cite{KL17}, respectively.

\begin{lemma}\label{lemma:deep_cyl}
Let $(M,g(t))_{t\in [-Q^2,0]}$ be a smooth simply connected compact $3d$ Ricci flow that satisfies (RF1-4), and $R\geq 2K$. If $\xi>0$, $Q\geq Q(C_I,\Lambda,H|R,\xi)$ and $x\in M$ satisfies $r_{\riem}(x)\leq Q^{-2} $, then one of the following holds:
\begin{itemize}
\item[Case 1:] For every $r\in [Q^2 r_{\riem}(x),\xi^{-1}]$, $(M,g(t),y)_{t\in [-Q^2,0]}$ is $\xi$-close to the shrinking sphere at scale $r$ and
$$|\mathcal W_y(\tau)-\mu_{\mathbb S^3}|<\xi$$
for every $y\in \tilde B_{\xi^{-1}}(x,r)$ and $\tau \in ( (Q^2 r_{\riem}(x))^2 \xi,\xi^{-1})$.
\item[Case 2:] For every $r\in [Q^2 r_{\riem}(x),\xi^{-1}]$, $(M,g(t),y)_{t\in [-Q^2,0]}$ is $\xi$-close to the shrinking cylinder at scale $r$ and$$|\mathcal W_y(\tau) - \mu_{\mathbb S^2\times \mathbb R}|<\xi.$$
for every $y\in \tilde B_{\xi^{-1}}(x,r)$ and $\tau \in ( (Q^2 r_{\riem}(x))^2 \xi,\xi^{-1})$.
\end{itemize}

\end{lemma}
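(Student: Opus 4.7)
The plan is to argue by contradiction using the smooth compactness of Ricci flows paired with conjugate heat flows (Proposition \ref{prop:compactness_rf}). Suppose the lemma fails for some $\xi>0$. Then there exist sequences $Q_j\to\infty$, smooth simply connected compact 3d Ricci flows $(M_j, g_j(t))_{t\in[-Q_j^2,0]}$ satisfying (RF1)--(RF4) with uniform constants $C_I,\Lambda,H,K$, and points $x_j\in M_j$ with $\rho_j := r_{\riem,j}(x_j) \leq Q_j^{-2}$, for which neither Case 1 nor Case 2 holds. For each $j$ and each $i\in\{1,2\}$, this yields failure data: scales $r_{i,j}\in[Q_j^2\rho_j,\xi^{-1}]$, and in the entropy case points $y_{i,j}$ and times $\tau_{i,j}$, witnessing the failure of either the closeness or the entropy conclusion of the corresponding case.

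I would then rescale by the curvature scale, setting $\hat g_j(t) := \rho_j^{-2} g_j(\rho_j^2 t)$, defined on $[-\rho_j^{-2}Q_j^2,0]\supset[-Q_j^4,0]$, with $\hat r_{\riem,j}(x_j)=1$. Since (RF1)--(RF4) are scale invariant with the same constants and Remark \ref{rmk:nc} yields uniform non-collapsing, applying Proposition \ref{prop:compactness_rf} iteratively on compact time sub-intervals of $(-\infty,0)$, together with a diagonal argument, produces, after passing to a subsequence, a smooth pointed limit $(M_\infty,\hat g_\infty(t),x_\infty)_{t<0}$ equipped with a conjugate heat flow. The limit is a 3-dimensional $\kappa$-noncollapsed ancient Ricci flow satisfying $|\riem_{\hat g_\infty}|\leq C_I/|t|$, with nonnegative sectional curvature by Hamilton--Ivey. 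By the Naber/Enders--M\"uller--Topping theorem in the Type I setting, combined with the entropy monotonicity, the limit is a gradient shrinking Ricci soliton. In dimension 3 with nonnegative curvature, the classification forces it to be the Gaussian $\mathbb{R}^3$, the round shrinking $\mathbb{S}^3$, the round shrinking cylinder $\mathbb{S}^2\times\mathbb{R}$, or a $\mathbb{Z}_2$-quotient of the cylinder. The Gaussian is excluded by $\hat r_{\riem,\infty}(x_\infty)=1$, while the simple connectedness of each $M_j$ passes to small neighborhoods in the limit, ruling out the $\mathbb{Z}_2$-quotient. Hence the limit is either the round shrinking sphere or the round shrinking cylinder.

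Since the limit is a scale-invariant shrinking soliton, any further parabolic rescaling reproduces the same soliton. Thus, for each $i=1,2$, rescaling $\hat g_j$ by $\hat r_{i,j}:=r_{i,j}/\rho_j\geq Q_j^2$ and extracting a further subsequence, the limit is again the same shrinking sphere or cylinder at scale 1. By smooth convergence, this yields $\xi$-closeness of $(M_j,g_j,x_j)$ to the model at the original scale $r_{i,j}$ for large $j$. Moreover, on the shrinking sphere or cylinder the pointed entropy $\mathcal{W}_y(\tau)$ is constant in $(y,\tau)$ equal to $\mu_{\mathbb{S}^3}$ or $\mu_{\mathbb{S}^2\times\mathbb{R}}$, by diffeomorphism invariance of $\mathcal{W}$ and the homogeneity of the soliton symmetry, so smooth convergence of conjugate heat kernels (Proposition \ref{prop:compactness_rf}) gives the matching entropy convergence at $(y_{i,j},\tau_{i,j})$. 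The type of the limit (sphere or cylinder) therefore forces Case 1 or Case 2 to hold for large $j$, contradicting the assumption that both fail. The main technical obstacle is the careful bookkeeping of possibly degenerate failure scales and times after the initial $\rho_j$-rescaling; this is resolved by iterating the compactness argument at the appropriate rescaling (by $r_{i,j}$ or $\sqrt{\tau_{i,j}}$), where the Type I assumption ensures that the doubly-rescaled flows remain in the class of 3d ancient $\kappa$-solutions to which the classification applies.
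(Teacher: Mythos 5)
Your proposal rests on a misunderstanding of what happens when you blow up at different scales, and this gap is precisely the heart of the lemma.

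First, the initial rescaling by $\rho_j = r_{\riem}(x_j)$ is not the right normalization. After this rescaling, $r_{\riem,\hat g_j}(x_j)\geq 1$, so the limit has uniformly bounded curvature near $x_\infty$ up to and including $t=0$. Such a limit cannot be a nontrivial gradient shrinking soliton Ricci flow (the latter has curvature blowing up as $t\to 0^-$ unless it is the Gaussian). The scales at which the lemma asserts closeness to a model are $r\in[Q^2 r_{\riem}(x),\xi^{-1}]$, which are $Q^2$-times larger than the curvature scale; in the rescaled picture these are scales of order at least $Q_j^2\to\infty$, so the convergence at scale $1$ tells you nothing about them. (Relatedly, the Naber / Enders--M\"uller--Topping theorem you cite is about blowups at singular points at the singular time of a Type I flow; here the flow is smooth through $t=0$, so it does not apply directly. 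The paper instead uses the entropy-pinching machinery from \cite{G25}, in particular the analogue of Corollary 4.2 there, to detect almost-selfsimilarity at a given scale.)

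The more serious gap is the assertion that "since the limit is a scale-invariant shrinking soliton, any further parabolic rescaling reproduces the same soliton." This conflates the scale-invariance of a fixed limit with the (false in general) claim that blowups of the original sequence at widely separated scales $r_{i,j}/\rho_j\to\infty$ converge to the same object. Ruling out a transition from sphere-type behaviour at one scale to cylinder-type behaviour at another is exactly the content of the lemma, and your argument provides no mechanism for it. The paper's proof supplies two: (i) a topological/pinching dichotomy, namely that if one is $\zeta'$-close to the round sphere at scale $r$ then the Hamilton--Ivey pinching $\ric\geq\beta\scal\, g$ holds on $M\times[-r^2,0]$ and propagates forward in time, whereas if one is $\zeta'$-close to the cylinder then $M\neq\tilde B_{\zeta^{-1}}(x,r)$ and $\ric\geq\beta\scal\,g$ fails somewhere in a backward time slab, and these two outcomes are mutually exclusive in a way that is monotone under shrinking the scale; and (ii) the monotonicity of Perelman's pointed entropy $\tau\mapsto\mathcal W_x(\tau)$, which, once $\mathcal W_x$ is pinned to the soliton value $\mu$ at two scales $r_1\leq Q\,r_{\riem}(x)$ and $r_2\geq Q$, forces $|\mathcal W_x(\tau)-\mu|<\zeta$ for all intermediate $\tau$. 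Neither ingredient appears in your proposal. Finally, the statement that "simple connectedness passes to small neighborhoods in the limit" is not how the quotients are excluded: a pointed Cheeger--Gromov limit of simply connected manifolds need not be simply connected; the paper instead uses orientability of $M$ together with the classification of orientable 3d gradient shrinking solitons, and the fact that a compact soliton embedded via $\mathcal L_{x',r}$ would force $M$ itself to be that soliton.
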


\begin{proof}
We will first need to prove the following claim.\\

\noindent \textbf{Claim:} Given any $\zeta\in (0,1)$, if $Q\geq Q(C_I,\Lambda,H|R,\zeta)$ and $(M,g(t))_{t\in [-Q^2,0]}$ is a smooth compact Ricci flow satisfying (RF1-3) and $x\in M$ satisfes $r_{\riem}(x)\leq 1$ then there is $r\in [1,Q]$ such that exactly one of the following holds:\begin{itemize}
\item There is a $\beta >0$ such that $\ric\geq \beta \scal g$ on $M\times [-1,0]$ and $|\mathcal W_x(r^2) - \mu_{\mathbb S^3}|<\zeta$.
\item $M\setminus \tilde B_{\zeta^{-1}}(x,1)\not = \emptyset$ and $|\mathcal W_x(r^2) - \mu_{\mathbb S^2\times \mathbb R}|<\zeta$.
\end{itemize}

\begin{proof}[Proof of the claim]
We may assume that $0<\zeta<\eta$. Fix $0<\zeta'<\zeta<\eta$,  and let $(M,g(t),x)_{t\in (-2\theta^{-1}r^2, 0)}$ be a smooth compact simply connected 3d Ricci flow that satisfies (RF1-3) and suppose that $\mathcal W_x (\theta r^2) - \mathcal W_x(\theta^{-1} r^2) <\theta$. By Corollary 4.2 in \cite{G25}, if $\theta=\theta(C_I,\Lambda,H|\zeta')=2^{-N}<\zeta'$ then there is  $x' \in \tilde B_{2D}(x,r)$, where $D=D(H)$, such that $(M,g(t),x')_{t\in (-2(\zeta')^{-2}r^2,0)}$ is $(0,(\zeta')^2)$-selfsimilar at scale $r>0$, with respect to $\mathcal L_{x',r}$. We will choose $\zeta'$, and consequently $\theta$, small enough according to the following considerations.

Since $M$ is simply connected, and thus orientable, the only possible three dimensional gradient shrinking Ricci solitons that can appear are the standard solitons on $\mathbb S^3$, $\mathbb S^2\times \mathbb R$, or the Gaussian soliton in $\mathbb R^3$. In particular, $\mathcal S_{\textrm{point}}$ is either $\mathbb S^3, \mathbb S^2\times \mathbb R$ or $\mathbb R^3$, respectively, therefore $\tilde B_{\zeta^{-1}}(x,r)\subset\mathcal L_{x',r} $, if $\zeta'$ is small enough, by Definition \ref{def:almost_ss}. Therefore, by Proposition 4.4 in \cite{G25}, $(M,g(t),x)_{t\in [-2(\zeta')^{-1}r^2,0]}$ is $(0,\zeta')$-selfsimilar at scale $1$.

Moreover, combining Lemma 4.1 in \cite{G25} with any easy contradiction argument, we can prove that if $0<\zeta'\leq \zeta'(C_I,\Lambda,H|\zeta)$ then $|\mathcal W_x (r^2) - \mu_{\textrm{sol}}|<\zeta$, where $\mu_{\textrm{sol}}\in\{\mu_{\mathbb R^3},\mu_{\mathbb S^3},\mu_{\mathbb S^2\times \mathbb R}\}$ is the $\mathcal W$ entropy of the corresponding gradient shrinking Ricci soliton.

If  $(M,g(t),x)_{t\in [-2(\zeta')^{-1}r^2,0]}$ is $\zeta'$-close to the shrinking sphere at scale $r$, and $\zeta'$ is small enough, then $M=\tilde B_{(\zeta')^{-1}}(x,r)$, $\mu_{\textrm{sol}}=\mu_{\mathbb S^3}$ and there exists a $\beta>0$ such that $\ric\geq \beta \scal g$ on $M\times [-r^2,0]$, by the standard pinching estimates of three dimensional Ricci flow, see \cite{RFintro} for instance.

On the other hand, if $(M,g(t),x)_{t\in [-2(\zeta')^{-1}r^2,0]}$ is, at scale $r$, $\zeta'$-close to the shrinking cylinder, then we know that $M\not = \tilde B_{\zeta^{-1}}(x,r)$, since in this case $\tilde B_{\zeta^{-1}}(x,r)\subset \tilde B_{(2\zeta')^{-1}}(x,r)$ is not a closed subset of $M$. Moreover,  $\mu_{\textrm{sol}}=\mu_{\mathbb S^2\times \mathbb R}$ and if $0<\zeta'\leq \zeta'(\beta)$  we know that $\ric\geq \beta \scal g$ fails at some point in $M\times [-(\zeta')^{-1}r^2,-\zeta' r^2]$.

Finally, we know that $(M,g(t),x)_{t\in [-2\zeta^{-1}r^2,0]}$ is not $\zeta'$-close to the Gaussian soliton, because this would contradict the assumption $r_{\riem}(x)\geq 1$, due to Lemma \ref{lemma:2ss_rm}, since $\zeta'<\eta$.

Now, let $(M,g(t),x)_{t\in [-Q^2,0]}$ be as in the statement of the claim, and set $r_{j}=\theta^{-j}=2^{Nj}$, $j\geq 0$, so that $[r_{j},r_{j+1}] =[\tilde r_j \theta^{1/2} ,\tilde r_j \theta^{-1/2}]$
for $\tilde r_{j}=  r_j \theta^{-1/2}$.

By the monotonicity of the pointed entropy and (RF2) it follows that if $J=J(\Lambda,\theta)$ is large enough and $Q\geq r_J$  then there exists a $r=\tilde r_{j_0}\in (1,Q)$, for some $j_0=0,\ldots,J-1$, so that
$$\mathcal W_x(r^2 \theta)-\mathcal W_x(r^2\theta^{-1})<\theta.$$
By the previous discussion, the claim follows.

\end{proof}
Since by assumption $r_{\riem}(x) \leq Q^{-2}$, we can apply the claim at scales $Q$ and $r_{\riem}(x)$ to obtain $r_{\riem}(x)\leq r_1 \leq Q r_{\riem}(x)\leq 1\leq Q\leq r_2 \leq Q^2$ such that, for $i=1,2$, $|\mathcal W_x(r_i^2) - \mu_{\textrm{sol},i}|<\zeta$, for some $\mu_{\textrm{sol},i}\in \{\mu_{\mathbb S^3},\mu_{\mathbb S^2\times \mathbb R}\}$.

Suppose that $\mu_{\textrm{sol},2}=\mu_{\mathbb S^3}$. By the claim, we know that $\ric\geq \beta \scal g$ on $M\times [-r_2^2,0]$, for some $\beta>0$, hence, again by the claim, $\mu_{\textrm{sol},1}=\mu_{\mathbb S^3}$.

If on the other hand $\mu_{\textrm{sol,2}}=\mu_{\mathbb S^2\times \mathbb R}$, we know that $M\setminus \tilde B_{\zeta^{-1}}(x,r_1)\supset M\setminus \tilde B_{\zeta^{-1}}(x,r_2) \not=\emptyset$, so the claim gives that $\mu_{\textrm{sol},1} =\mu_{\mathbb S^2 \times \mathbb R}$.

Therefore there is $\mu \in \{\mu_{\mathbb S^3}, \mu_{\mathbb S^2\times\mathbb R}\}$ such that, by monotonicity, $|\mathcal W_x(\rho^2)-\mu|<\zeta$, for any $\rho \in [Qr_{\riem}(x),Q]$ and
\begin{equation*}
\mathcal W_x(r^2\zeta) - \mathcal W_x(r^2 \zeta^{-1})<2\zeta
\end{equation*}
for any $r\in [Q^2 r_{\riem}(x),\xi^{-1}]$, if $Q$ is large enough, depending only on $\zeta$ and $\xi$.

Thus, choosing $0<\zeta\leq\zeta(C_I,\Lambda,H|\xi')$ small enough, by Corollary 4.2 in \cite{G25} we know that for any $r\in [Q^2r_{\riem}(x),\xi^{-1}]$ there is $x_r\in \tilde B_{2D}(x,r)$, $D=D(H)$,  such that $(M,g(t),x_r)_{t\in (-2(\xi')^{-1},0)}$ is $(0,\xi')$-selfsimilar at scale $r$ with respect to $\mathcal L_{x_r,r}$. Morever, if we denote by $\mu_{\textrm{sol},r}$ the $\mathcal W$ entropy of the corresponding soliton, then $\mu_{\textrm{sol},r}=\mu$. This can be proven by the same contradiction argument used in the proof of Corollary 4.2 in \cite{G25}, since along a sequence of counterexamples $|\mathcal W_{x_j} (r^2) - \mu|<\zeta_j \rightarrow 0$ and $\mathcal W_{x_j}(r^2) \rightarrow \mu_{\textrm{sol},r}$, by Proposition 4.2 in \cite{G25}.

Now, take any $r\in [Q^2 r_{\riem}(x),\xi^{-1}]$. As in the proof of the claim we have that  $\tilde B_{\xi^{-1}}(x,r)\subset\mathcal L_{x_r,r}$, if $\xi'$ is small enough, so by Proposition 4.4 in \cite{G25}, we know that $(M,g(t),y)_{t\in (-2\xi^{-1},0)}$ is $(0,\xi)$-selfsimilar at any scale $r$, if $0<\xi'\leq \xi^2$ and $y\in\tilde B_{\xi^{-1}}(x,r)$. Moreover, the proof of Proposition 4.4 in \cite{G25} also gives that $|\mathcal W_y(\tau)-\mu|<\xi$, for every $y\in \tilde B_{\xi^{-1}}(x,r)$ and $\tau\in (\xi r^2,\xi^{-1} r^2)$, if $0<\xi'\leq \xi(C_I,\Lambda,H|\xi)$, since $\mu_{\textrm{sol},r}=\mu$.

If $\mu=\mu_{\textrm{sol},r}=\mu_{\mathbb S^3}$, then $(M,g(t),y)_{t\in (-2\xi^{-1},0)}$ is  $\xi$-close to the shrinking sphere at each scale $r\in [Q^2r_{\riem}(x),\xi^{-1}]$ and at every $y\in \tilde B_{\xi^{-1}}(x,r)$. 

 If, on the other hand, $\mu=\mu_{\textrm{sol},r}=\mu_{\mathbb S^2\times \mathbb R}$, we know that, for every $y\in \tilde B_{\xi^{-1}}(x,r)$,  $(M,g(t),y)_{t\in (-2\xi^{-1},0)}$ is $\xi$-close at scale $r$ to the shrinking cylinder soliton in $\mathbb S^2\times \mathbb R$.
\end{proof}

\begin{lemma}\label{lemma:3d_neck_region}
Let $0<\tau\leq 40^{-4}$, and suppose that $R\geq R(\tau,K)$ and $0<\delta\leq \delta(\tau)$. Suppose that $(M,g(t),p)_{t\in [ -Q^2,0]}$ is a smooth compact 3d Ricci flow that satisfies (RF1-4), that $M\not = \tilde B_R(p,1)$, and that there exists $x\in \tilde B_{2R}(p,1)$ such that $Q^2 r_{\riem}(x)\leq \frac{1}{2}$. 

There exists $\eta'\in (0,1)$,  a finite subset $C\subset \tilde B_{2R}(p,1)$, and a positive function $r: C\rightarrow \mathbb R$ such that if  $Q\geq Q(C_I,\Lambda,H,K|R,\delta)$ then
\begin{equation*}
\mathcal N =\tilde B_{2R}(p,1)\setminus \bigcup_{x\in C} \overline{ \tilde B_R(x,r_x)}
\end{equation*}
is a smooth $(1,\delta,\eta')$-neck region with $\tau\leq 40^{-5}$, such that
\begin{equation}\label{eqn:neck_covering}
\tilde B_R(p,1) \subset \bigcup_{x\in C\cap \tilde B_{\frac{3R}{2}}(p,1)} \tilde B_R(x,r_x),
\end{equation}
with $r_{\riem}\geq Q^{-2} r_x$ in $\tilde B_R(x,r_x)$, for every $x\in C$. 

Moreover, if $R\geq R(C_I,\Lambda,H,K|\tau)$ and $Q\geq Q(C_I,\Lambda,H,K|R,\tau)$ then
\begin{equation}\label{eqn:neck_content}
\sum_{x\in C\cap \tilde B_{\frac{3R}{2}}(p,1)} r_x \leq C, 
\end{equation}
for some constant $C=C(\tau)<+\infty$
\end{lemma}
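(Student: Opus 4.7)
The plan is to build the neck region in three stages: identify the set $S$ of points with small curvature scale, define a Lipschitz radius function $r_x$ via infimal convolution with respect to $D_R$, and extract a Vitali-maximal family of $D_R$-disjoint balls as centers $C$. Properties (n1)--(n5), the covering \eqref{eqn:neck_covering}, and the bound $r_{\riem} \geq Q^{-2} r_x$ then follow from Lemma \ref{lemma:deep_cyl}, while the summability \eqref{eqn:neck_content} follows from the smooth neck structure Theorem \ref{thm:neck_structure} applied with $k = 1$.

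First I would set $S := \{y \in \tilde B_{2R}(p,1) : Q^2 r_{\riem}(y) \leq 1\}$. For every $y \in S$, Lemma \ref{lemma:deep_cyl} applied with $\xi \ll \delta$ yields one of two cases at every scale $r \in [Q^2 r_{\riem}(y), \xi^{-1}]$. Case 1 (shrinking sphere) at scale $r = 1$ would force $\diam_{g(-1)}(M) \leq C \xi^{-1}$ and hence $M = \tilde B_R(p,1)$ as soon as $R \geq R(\xi)$, contradicting the hypothesis $M \neq \tilde B_R(p,1)$. Thus every $y \in S$ falls into Case 2 (shrinking cylinder), with $|\mathcal W_y(r^2) - \mu_{\mathbb S^2\times\mathbb R}| < \xi$ on the whole scale range. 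Choosing $\eta'$ smaller than the constant $\eta(R)$ of Lemma \ref{lemma:2ss_rm} excludes $(2,\eta')$-selfsimilarity, so (n2) and (n3) are secured on any $C \subset S$.

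For the radii, I introduce the preliminary function $\bar r_y := \max(Q^2 r_{\riem}(y), Q^{-2})$, capped at $2^{-5}$, on $S$, and define $r_y$ as the infimal convolution of $\bar r$ against the kernel $(\delta/2)\,D_R(y,\cdot)$. The almost-triangle inequality of Proposition \ref{prop:D_metric}(3), together with $R \geq R(K,\delta)$, yields the $R$-scale $(1.01, \delta)$-Lipschitz property (n5); the Lipschitz property of $r_{\riem}$ with respect to $D_R$ established just above, combined with the smallness of $\delta$, shows $r_y \asymp \bar r_y$ within a $(1 + O(\delta))$-factor, which in turn preserves $r_{\riem}(z) \geq Q^{-2} r_x$ for $z \in \tilde B_R(x, r_x)$. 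A variant of Lemma \ref{lemma:10_covering}, adapted to the disjointness scale $\tau^2 R$, then extracts a finite maximal $C \subset S$ so that $\{\tilde B_{\tau^2 R}(x, r_x)\}_{x \in C}$ is disjoint and $\{\tilde B_R(x, r_x)\}_{x \in C}$ covers $S$; finiteness and smoothness of $\mathcal N$ follow from Remark \ref{rmk:infrx}. Property (n1) is immediate, and \eqref{eqn:neck_covering} follows since any $y \in \tilde B_R(p,1) \setminus S$ has $r_{\riem}(y) > Q^{-2}$ and is either already covered or enrolled as a trivial center with $r_y \sim Q^{-2}$. Property (n4) holds because any point $z$ on the point-spine of the near-cylinder model at scale $r$ has $r_{\riem}(z) \sim r/Q^2$, hence lies in $S$, so by maximality of $C$ it lies within $D_R$-distance $\tau r$ of some $x \in C$ after absorbing constants via Proposition \ref{prop:D_metric}.

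For \eqref{eqn:neck_content}, Theorem \ref{thm:neck_structure} with $k = 1$ reads $\mu(\tilde B_R(x,r)) = R \sum_{y \in C \cap \tilde B_R(x,r)} r_y \leq L(\tau)\, R r$ whenever $\tilde B_{2R}(x,r) \subset \tilde B_{2R}(p,1)$. Applying Corollary \ref{cor:coverings} to cover $\tilde B_{3R/2}(p,1)$ by $N \leq C(n,\tau)\zeta^{-1}$ balls $\tilde B_R(x_i,\zeta)$ with $x_i \in C \cap \tilde B_{3R/2}(p,1)$ and $\tilde B_{2R}(x_i,\zeta) \subset \tilde B_{2R}(p,1)$, and summing, yields $\sum_{x \in C \cap \tilde B_{3R/2}(p,1)} r_x \leq N L(\tau) \zeta \leq C(\tau)$. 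The main obstacle I anticipate is the bookkeeping of constants through Proposition \ref{prop:D_metric}: with the small threshold $\tau \leq 40^{-5}$, simultaneously maintaining the $(1.01, \delta)$-Lipschitz property of $r_x$, the $\tau^2 R$-disjointness of the packed balls, and the spine inclusion (n4) forces the factor $R/(R-K)$ within $1 + O(\tau)$ of $1$, which is the main reason $R$ and $Q$ must be chosen as large as indicated.
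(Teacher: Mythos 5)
Your plan shares the paper's broad outline (invoke Lemma \ref{lemma:deep_cyl} and Lemma \ref{lemma:2ss_rm} for (n2)--(n3), build a Lipschitz radius function, extract a Vitali-maximal centre set, apply Theorem \ref{thm:neck_structure} for \eqref{eqn:neck_content}), but the construction of $S$ and $r_x$ diverges from the paper's in a way that creates genuine gaps. The key difference: the paper does \emph{not} set $S$ to be the low-curvature-scale region, and $r_x$ is \emph{not} proportional to the curvature scale. Instead, Claim~1 of the paper runs a multi-scale iteration at scales $\gamma^j = 2^{-j}$ to produce a family $\tilde C = \{z_b^j\}$ of ``good'' centres — points whose $2R$-balls at scale $\gamma^j$ satisfy $r_{\riem} \geq Q^{-2}\gamma^j$ — with the crucial property that the balls $\tilde B_{\tau^2 R}(z_b^j,\gamma^j)$ actually cover all of $\tilde B_{2R}(p,1)$. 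The radius function is then $r_x \sim \delta^2\, D_{\tau^4 R}(x,\tilde C)$ (equal to $\tfrac14\delta^2\tilde r_{\tilde x}$ near $\tilde x\in\tilde C$), i.e.\ tied to the scale of the containing good ball, not to $Q^2 r_{\riem}(x)$. Your set $S = \{y : Q^2 r_{\riem}(y)\leq 1\}$ is a proper subset of $\tilde B_{2R}(p,1)$ in general, and your proposed patch — ``enrol $y\in \tilde B_R(p,1)\setminus S$ as a trivial centre with $r_y \sim Q^{-2}$'' — does not work: if $r_{\riem}(y) \gg Q^{-2}$, the flow rescaled by $Q^{2}$ around $y$ has small curvature on the relevant parabolic cylinder and is therefore almost Euclidean, hence $(2,\eta')$-selfsimilar at scale $Q^{-2}$, which violates (n3). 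The paper dodges this precisely by taking $r_x \sim \delta^2 \gamma^j$, which lives in the range where Claim~2 guarantees cylinder closeness (at scales $\geq \gamma^{-1}\tilde r_{\tilde x}$ around a nearby low-curvature point $q$) rather than at the scale $Q^2 r_{\riem}(x)$.

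Two further steps in your sketch are not justified as written. The assertion ``$r_y \asymp \bar r_y$ within a $(1+O(\delta))$-factor'' after infimal convolution against $(\delta/2)D_R$ fails because the $R$-scale Lipschitz constant of $Q^2 r_{\riem}$ is of order $Q^2$, vastly exceeding the convolution slope $\delta/2$; the inf-convolution can therefore collapse $r_y$ well below $\bar r_y$ near the core of the curvature concentration, and once it does, (n3) again fails since Lemma \ref{lemma:deep_cyl} only controls scales $\geq Q^2 r_{\riem}$. For (n4), your claim that ``any point $z$ on the point-spine of the near-cylinder model at scale $r$ has $r_{\riem}(z) \sim r/Q^2$'' is not implied by Lemma \ref{lemma:deep_cyl}, which gives a one-sided comparison ($r_{\riem}(q) \leq Q^{-2}$ at a single reference point $q$), not a two-sided estimate of $r_{\riem}$ at arbitrary spine points. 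The paper's proof of (n4) is structurally different: it first uses the $10$-covering and (n5) to show $\tilde B_R(x,r)$ is covered by balls $\tilde B_{\tau^2 R}(y,10 r_y)$ with $y\in C$ and $r_y\leq 4r$, then converts $\tilde B_{\tau^2 R}(y,40r)$ into $\tilde B_{\tau R}(y,r)$ using the inherited shrinking-cylinder geometry. You should adopt the multi-scale Claim~1/Claim~2 architecture (or an equivalent device that yields a \emph{full} covering by good balls at scale-adapted radii) before the rest of the argument can go through.
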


\begin{proof}
Let $R<+\infty$, $\xi>0$, $\tau \leq 40^{-5}$, and let $Q\geq Q(C_I,\Lambda,H|R,\xi)$ as in Lemma \ref{lemma:deep_cyl}. The constants $R$ and $\xi$ will be chosen large and small enough, respectively, in the course of the proof.

\noindent \textbf{Claim 1.} Set $\gamma=\frac{1}{2}$. There is a $J\geq 2$ such that for every $j=1,\ldots,J$, there is a subset $\{x_f^j\}_{f=1}^{N^j}$ such that 
\begin{align*}
\{x_f^j\}_{f=1}^{N^j}&=\{y_c^j\}_{c=1}^{N^j_c}\cup\{z_b^j\}_{b=1}^{N^j_b} \\
\{y_c^J\}&=\emptyset,
\end{align*}
and
\begin{itemize}
\item For every $j=1,\ldots,J-1$ and $c=1,\ldots, N_c^j$ we know that 
\begin{equation}\label{eqn:c_ball}
Q^2 \inf_{\tilde B_{2R}(y_c^j,\gamma^j)} r_{\riem} < \gamma^j.
\end{equation}
\item For every $j=1,\ldots,J$ and $b=1,\ldots,N_b^J$
\begin{equation}\label{eqn:b_ball}
r_{\riem} \geq Q^{-2} \gamma^j
\end{equation}
in $\tilde B_{2R}(z_b^j,\gamma^j)$.
\end{itemize}
Moreover,
\begin{itemize}
\item For every $j=1,\ldots,J$ the balls $\tilde B_{\tau^3 R} (x_f^j,\gamma^j)$ are mutually disjoint and
\begin{equation}
\label{eqn:covering_choice_claim}
x_f^j \in \bigcup_{c=1}^{N_c^{j-1}} \tilde B_{2R} (y_c^{j-1},\gamma^{j-1}) \setminus \bigcup_{i=1}^{j-1} \bigcup_{b=1}^{N_b^{j-1}} \tilde B_{\tau^3 R} (z_b^i,\gamma^i),
\end{equation}

\item  For every $2\leq j\leq J$, $1\leq i < j$, $f=1,\ldots, N^j$ and $b=1,\ldots, N_b^i$, 
\begin{equation}\label{eqn:covering_t4_disjoint}
\tilde B_{\tau^4 R}(x_f^j,\gamma^j) \cap \tilde B_{\tau^4 R}(z_b^i,\gamma^i) = \emptyset.
\end{equation}
\item For every $2\leq j\leq J$
\begin{equation}\label{eqn:covering_inclusion_j}
\tilde B_{2R}(p,1)\subset \left( \bigcup_{f=1}^{N^j} \tilde B_{\tau^2 R}(x_f^j,\gamma^j) \right) \cup \bigcup_{i=1}^{j-1} \bigcup_{b=1}^{N_b^i} \tilde B_{\tau^2 R}(z_b^i,\gamma^i)
\end{equation}
and
\begin{equation}\label{eqn:covering_inclusion_J}
\tilde B_{2R}(p,1)\subset \bigcup_{i=1}^J \bigcup_{b=1}^{N_b^i} \tilde B_{\tau^2 R}(z_b^i,\gamma^i).
\end{equation}
\end{itemize}

\begin{proof}[Proof of Claim 1]
We first prove that the claim holds for $j=1$.

Choose a maximal collection of points $x^1_f\in \tilde B_{2R}(p,1)$, $f=1,\ldots,N^1$ such that the balls $\tilde B_{\tau^3 R}(x^1_f,\gamma)$ are disjoint and 
\begin{equation}\label{eqn:covering_1}
\tilde B_{2R}(p,1) \subset \bigcup_{f=1}^{N^1} \tilde B_{\tau^2 R}(x^1_f,\gamma).
\end{equation}

We separate the set $\{x^1_f\}_{f=1}^{N^1}$ into two subsets $\{y^1_c\}_{c=1}^{N^1_c}$ and $\{z^1_b\}_{b=1}^{N^1_b}$ according to the following rule:
\begin{itemize}
\item For every $c$, there is $x\in \tilde B_{2R}(y^1_c,\gamma)$ such that $Q^2 r_{\riem}(x) < \gamma$.
\item $r_{\riem} \geq Q^{-2}\gamma$ in each $\tilde B_{2R}(z^1_b,\gamma)$.
\end{itemize}
Since by assumption $\inf_{\tilde B_{2R}(p,1)} Q^2 r_{\riem} <\frac{1}{2}=\gamma$, we know that $\{y_c^1\}$ is non-empty, due to \eqref{eqn:covering_1}.

Now, suppose that the claim holds for $j=k$ and that $\{y_c^k\}$ is non-empty. Choose a maximal collection of points
\begin{equation}\label{eqn:covering_choice}
x_f^{k+1} \in \bigcup_{c=1}^{N_c^k} \tilde B_{2R} (y_c^k,\gamma^k) \setminus \bigcup_{i=1}^k \bigcup_{b=1}^{N_b^k} \tilde B_{\tau^3 R} (z_b^i,\gamma^i),
\end{equation}
$f=1,\ldots,N^{k+1}$, such that the balls $\tilde B_{\tau^3 R} (x_f^{k+1},\gamma^{k+1})$ are mutually disjoint and 
$$\bigcup_{c=1}^{N_c^k} \tilde B_{2R} (y_c^k,\gamma^k) \setminus \bigcup_{i=1}^k \bigcup_{b=1}^{N_b^k} \tilde B_{\tau^3 R} (z_b^i,\gamma^i) \subset \bigcup_{f=1}^{N^{k+1}} \tilde B_{\tau^2 R} (x_f^{k+1},\gamma^{k+1}).$$

Notice that the set $\{x_f^{k+1}\}$ is non-empty, since otherwise we would have the inclusion
$$\bigcup_{c=1}^{N_c^k} \tilde B_{2R} (y_c^k,\gamma^k) \subset  \bigcup_{i=1}^k \bigcup_{b=1}^{N_b^k} \tilde B_{\tau^3 R} (z_b^i,\gamma^i),$$
and therefore \eqref{eqn:b_ball} would imply that $Q^2 r_{\riem} \geq \gamma^k$ in each $\tilde B_{2R}(y_c^k,\gamma^k)$, which contradicts \eqref{eqn:c_ball}.

To prove \eqref{eqn:covering_t4_disjoint} for $j=k+1$, notice that for any $1\leq i<k+1$, $\tilde B_{\tau^4 R}(x_f^{k+1},\gamma^{k+1})\subset \tilde B_{\tau^4 R} (x_f^{k+1},\gamma^i)$, and $B_{\tau^4 R} (x_f^{k+1},\gamma^i)$ is disjoint from any ball $\tilde B_{\tau^4 R} (z_b^i,\gamma^i)$, for $b=1,\ldots,N_b^i$, since $d_{g(-\gamma^{2i})} (x_f^{k+1},z_b^i) \geq \tau^3 R\gamma^i\geq 2\tau^4 R \gamma^i$, by \eqref{eqn:covering_choice}.

Moreover, since the claim holds for $j=k$, we obtain by \eqref{eqn:covering_inclusion_j} that
\begin{align*}
\tilde B_{2R}(p,1) &\subset \left( \bigcup_{f=1}^{N^k} \tilde B_{\tau^2 R}(x_f^k,\gamma^k) \right) \cup \bigcup_{i=1}^{k-1} \bigcup_{b=1}^{N_b^i} \tilde B_{\tau^2 R}(z_b^i,\gamma^i)\\
&=  \bigcup_{c=1}^{N_c^k} \tilde B_{\tau^2 R}(y_c^k,\gamma^k) \cup \bigcup_{b=1}^{N_b^k} \tilde B_{\tau^2 R}(z_b^k,\gamma^k) \cup \bigcup_{i=1}^{k-1} \bigcup_{b=1}^{N_b^i} \tilde B_{\tau^2 R}(z_b^i,\gamma^i)\\
&\subset \bigcup_{f=1}^{N^{k+1}} \tilde B_{\tau^2 R} (x_f^{k+1},\gamma^{k+1}) \cup \bigcup_{i=1}^k \bigcup_{b=1}^{N_b^i} \tilde B_{\tau^2 R}(z_b^i,\gamma^i),
\end{align*}
which proves \eqref{eqn:covering_inclusion_j} for $j=k+1$.

Now, we can decompose 
$$\{x_f^{k+1}\}_{f=1}^{N^{k+1}} =  \{y_c^{k+1}\}_{c=1}^{N_c^{k+1}} \cup \{z_b^{k+1}\}_{b=1}^{N_b^{k+1}}$$
so that
\begin{itemize}
\item For every $c=1,\ldots,N_c^{k+1}$, there is $x\in \tilde B_{2R}(y^{k+1}_c,\gamma^{k+1})$ such that $Q^2 r_{\riem}(x) < \gamma^{k+1}$.
\item $r_{\riem} \geq Q^{-2}\gamma^{k+1}$ in each $\tilde B_{2R}(z_b^{k+1},\gamma^{k+1})$.
\end{itemize}

Thus, starting from $j=1$ we can iterate this procedure, as long as $\{y_c^j\}$ is non-empty. However, since the Ricci flow is smooth up to $t=0$ we know that if the claim holds for $j=k$ then for every $c=1,\ldots,N_c^k$
$$0<\inf_M Q^2 r_{\riem} \leq \inf_{\tilde B_{2R}(y_c^k,\gamma^k)} Q^2 r_{\riem} <\gamma^k,$$
which is impossible for large $k$. Therefore, there is a $J\geq 1$ such that $\{y_c^J\}$ is empty and the iteration stops. Then \eqref{eqn:covering_inclusion_j} for $j=J$ gives \eqref{eqn:covering_inclusion_J}.

\end{proof}

\noindent \textbf{Some properties of $\tilde C= \{z_b^i, 1\leq i\leq J, b=1,\ldots,N_b^i\}$.} Define the function $\tilde r:\tilde C\rightarrow \mathbb R_{>0}$ by $\tilde r_{z_b^i} = \gamma^i$. By Claim 1, for every $\tilde x=z_b^i\in \tilde C$, there is an $\hat x \in \{y_c^{i-1}, c=1,\ldots,N_c^{i-1}\}$ such that 
\begin{equation}\label{eqn:tilde_hat_x}
\tilde x \in B_{2R} (\hat x,\gamma^{-1} \tilde r_{\tilde x}) \subset \tilde B_{2R}(\hat x,r),
\end{equation}
for every $r\geq \gamma^{-1} \tilde r_{\tilde x}$.

\noindent \textbf{Claim 2.} For every $\tilde x\in\tilde C$ and $\hat x$ as in \eqref{eqn:tilde_hat_x}, there is $q \in \tilde B_{2R}(\hat x,\gamma^{-1} \tilde r_{\tilde x})$  such that if $R<
+\infty$ is large enough, $0<\xi\leq \xi(R)$, $r\in [\gamma^{-1}\tilde r_{\tilde x},\xi^{-1}]$ and $y \in \tilde B_{\xi^{-1}}(q,r)$, then $(M,g(t),y)_{t\in (-2\xi^{-1},0)}$ is $\xi$-close to the shrinking cylinder at scale $r$. Moreover,
\begin{equation}
|\mathcal W_y(\tau) - \mu_{\mathbb S^2\times \mathbb R}|<\xi,
\end{equation}
for every $\tau \in \left( \xi r^2,\xi^{-1} r^2 \right)$. In particular, the conclusions of the claim hold for $y=p$ and $r=1$.

\begin{proof}[Proof of Claim 2]
By Claim 1 we know that there is a $q\in \tilde B_{2R}(\hat x,\gamma^{-1}\tilde r_{\tilde x})$
such that
$r_{\riem}(q)\leq Q^{-2} \gamma^{i-1}\leq Q^{-2}$ for which the conclusion of Lemma \ref{lemma:deep_cyl} holds.

If Case 1 of Lemma \ref{lemma:deep_cyl} were to hold, we would know that $(M,g(t),q)_{t\in (-2\xi^{-1},0)}$ is $\xi$-close to the shrinking sphere. Then, if $R$ is large (two times the diameter of the $3$-sphere of radius $\sqrt{4}$ would suffice) and $\xi$ is small enough, we know that $M=\tilde B_R(p,1)$, which contradicts the assumption $M\not= \tilde B_R(p,1)$ of the lemma.

Therefore, Case 2 of Lemma \ref{lemma:deep_cyl} gives that for every $r\in [\gamma^{-1} \tilde r_{\tilde x},\xi^{-1}]$ and $y\in \tilde B_{\xi^{-1}}(q,r)$, $(M,g(t),y)_{t\in (-2\xi^{-1},0)}$ is $\xi$-close to the shrinking cylinder at scale $r$, and
\begin{equation}
|\mathcal W_y(\tau) - \mu_{\mathbb S^2\times \mathbb R}|<\xi,
\end{equation}
for every $\tau \in (\xi r^2,\xi^{-1} r^2)$. 

It remains to prove that the conclusions of Claim 2 hold for $y=p$ and $r=1$, namely that $(M,g(t),p)_{t\in (-2\xi^{-1},0)}$ is $\xi$-close to the cylinder at scale $1$ and $|\mathcal W_p(\tau)-\mu_{\mathbb S^2\times \mathbb R}| <\xi$, for every $\tau\in (\xi,\xi^{-1})$. 

By \eqref{eqn:covering_inclusion_J}, we know that $p\in \tilde B_{\tau^2 R}(\tilde x,\tilde r_{\tilde x})$, for some $\tilde x\in \tilde C$, and let $\hat x$ be as in \eqref{eqn:tilde_hat_x} and $q \in \tilde B_{2R}(\hat x, \gamma^{-1} \tilde r_{\tilde x})$ such that $r_{\riem}(q) \leq Q^{-2}\gamma^{-1} \tilde r_{\tilde x}\leq Q^{-2}$, as guaranteed to exist by Claim 1. By Proposition \ref{prop:D_metric}, we obtain
\begin{equation*}
D_{3R} (p,\hat x) \leq \max(D_{\tau^2 R} (p,\tilde x), D_{2R} (\tilde x,\hat x) )\leq 1,
\end{equation*}
since $\gamma^{-1}\tilde r_{\tilde x}\leq 1$. Similarly, we estimate, if $\xi^{-1}>5R$,
\begin{equation*}
D_{\xi^{-1}}(p,q)\leq D_{5R}(p,q) \leq \max( D_{3R} (p,\hat x), D_{2R}(\hat x,q)) \leq 1.
\end{equation*}
Therefore, $p\in \tilde B_{\xi^{-1}}(q,1)$, so by Case 2 of Lemma \ref{lemma:deep_cyl} we conclude that $(M,g(t),p)_{t\in (-2\xi^{-1},0)}$ is $\xi$-close to the shrinking cylinder at scale $1$, and
$$|\mathcal W_p(\tau) - \mu_{\mathbb S^2\times \mathbb R}|<\xi$$
for every $\tau\in (\xi,\xi^{-1})$.

\end{proof}

\noindent \textbf{Construction of the neck-region.} Let $S=\bigcup_{i=1}^J \bigcup_{b=1}^{N_b^i} \tilde B_{\tau^2 R} (z_b^i , \gamma^i)= \bigcup_{\tilde x\in \tilde C} \tilde B_{\tau^2 R}(\tilde x, \tilde r_{\tilde x})$. Recall \eqref{eqn:covering_inclusion_J} of Claim 1 implies that
$$\tilde B_{2R}(p,1)\subset S.$$

For every $x\in S$ there is a $\tilde x\in \tilde C$ such that $D_{\tau^4 R}(x,\tilde x) = D_{\tau^4 R} (x,\tilde C)= \min_{\tilde y\in\tilde C} D_{\tau^4 R}(x,\tilde y)$, since $\tilde C$ is finite. Moreover, if $D_{\tau^4 R}(x,\tilde x)\leq \frac{1}{4} \tilde r_{\tilde x}$ then $D_{\tau^4 R} (x,\tilde C)=D_{\tau^4 R}(x,\tilde x)$ and $\tilde x$ is the unique element of $\tilde C$ with this property. 

To see this, take any $\tilde y\in \tilde C$ such that $\tilde y \not = \tilde x$.  Since $\tilde B_{\tau^4R}(\tilde x,\tilde r_{\tilde x}) \cap  \tilde B_{\tau^4R}(\tilde y,\tilde r_{\tilde y})=\emptyset$, by Claim 1, the triangle inequality of Proposition \ref{prop:D_metric} gives
\begin{align*}
\tilde r_{\tilde x} &\leq D_{\tau^4 R}(\tilde x,\tilde y) \leq \sigma D_{\tau^4 R}(\tilde x, x)+\sigma D_{\tau^4 R} (x,\tilde y)\\
&\leq \frac{\sigma}{4} \tilde r_{\tilde x} + \sigma D_{\tau^4 R} (x,\tilde y),
\end{align*}
where $\sigma=\frac{\tau^4 R}{\tau^4 R-K}>1$. This implies that
\begin{equation}\label{eqn:tilde_x_unique}
D_{\tau^4 R}(x,\tilde y)  \geq \left(1-\frac{\sigma}{4} \right) \sigma^{-1} \tilde r_{\tilde x} \geq 4 \left(1-\frac{\sigma}{4} \right) \sigma^{-1} D_{\tau^4 R}(x,\tilde x).
\end{equation}
Choosing $R\geq R(K,\tau)$ large enough so that $\sigma\leq \frac{4}{3}$ gives that 
$$D_{\tau^4 R}(x,\tilde y) \geq  2 D_{\tau^4 R}(x,\tilde x).$$
This in particular shows that $\tilde x$ is the unique minimizer of $D_{\tau^4 R}(x, \cdot)$ in $\tilde C$, since by \eqref{eqn:tilde_x_unique}
$$D_{\tau^4 R}(x,\tilde y) \geq \left(1-\frac{\sigma}{4} \right) \sigma^{-1} \tilde r_{\tilde x}>0.$$

Define $r:S\rightarrow \mathbb R_{>0}$ by
$$r_x = \left\{ 
\begin{array}{ll}
\frac{1}{4}\delta^2 \tilde r_{\tilde x}, & x\in \tilde B_{\tau^4 R}(\tilde x,\frac{1}{4}\tilde r_{\tilde x}),\\
\delta^2 D_{\tau^4 R}(x,\tilde C), & \textrm{otherwise}
\end{array}
\right.
$$

Observe that since, by the definition of $S$, for every $x\in S$ there is a $z\in \tilde C$ such that $D_{\tau^2 R}(x,z)<\tilde r_z$,  and if $R\geq R(K,\tau)$ we can use Proposition \ref{prop:D_metric} to estimate
\begin{align*}
D_{\tau^ 4 R}(x,\tilde C) \leq D_{\tau^4 R}(x,z) \leq \frac{\tau^2 R}{\tau^4 R - K} D_{\tau^2 R}(x,z) \leq 2 \tau^{-2}  \tilde r_z<2\tau^{-2}.
\end{align*}
This implies that 
\begin{equation}\label{eqn:r_x_small}
r_x \leq 2\delta^2 \tau^{-2}\leq 1,
\end{equation}
if $0<\delta\leq\delta(\tau)$.

We can then apply Lemma \ref{lemma:10_covering} to obtain a maximal subset $\widehat C\subset S$ such that $$\tilde B_{\tau^2 R}(x,r_x) \cap \tilde B_{\tau^2 R}(y,r_y)=\emptyset$$
for any $x\not = y$ in $\widehat C$, and 
\begin{equation}
\tilde B_{2R}(p,1) \subset S\subset \bigcup_{x\in \widehat C} \tilde B_{\tau^2 R} (x,10r_x).
\end{equation}

We will prove that if $C=\widehat C\cap \tilde B_{2R}(p,1)$ then
$$\mathcal N=\tilde B_{2R}(p,1) \setminus \bigcup_{x\in C} \overline{\tilde B_R(x,r_x)}$$
is a $(1,\delta,\eta')$-neck region, for some $\eta'>0$, if $R\geq R(\tau,K)$, $0<\delta\leq \delta(\tau)$ and $\xi$ is small enough, and thus $Q$ is chosen large enough.

\noindent \textbf{$\mathcal N$ is a $(1,\delta,\eta')$-neck region.} Note that property (n1) of the Definition \ref{def:neck_region} is automatically satisfied by construction.

We will first prove property (n5) of Definition \ref{def:neck_region}. In fact, we will prove that for any $x,y\in S$
\begin{equation}\label{eqn:r_lip_S}
r_x\leq 1.01 r_y + \delta D_R (x,y).
\end{equation}
\begin{enumerate}
\item[\underline{Case 1:}] Suppose that $x\in S \setminus \bigcup_{z\in \tilde C} \tilde B_{\tau^4 R}(z, \frac{1}{4}\tilde r_z)$ and $y\in S$. Then, by definition, $r_x= \delta^2 D_{\tau^4 R}(\tilde x,x) = \delta^2 D_{\tau^4 R}(x,\tilde C)$ and $r_y\geq \delta^2 D_{\tau^4 R}(y, \tilde C)$. Therefore, for any $z\in \tilde C$, using  the triangle inequality of Proposition \ref{prop:D_metric} we obtain
\begin{equation*}
r_x \leq \delta^2 D_{\tau^4 R}(x,z)\leq 1.01\delta^2 (D_{\tau^4 R}(x,y)+ D_{\tau^4 R}(y,z) ),
\end{equation*}
provided that $R\geq R(\tau,K)$. Taking the minimum with respect to $z$ and using again Proposition \ref{prop:D_metric}, we obtain, for $R\geq R(\tau,K)$ 
\begin{equation}\label{eqn:n5_case_1}
\begin{aligned}
r_x &\leq 1.01 \delta^2 D_{\tau^4 R}(y,\tilde C) + 2\delta^2 D_{\tau^4 R}(x,y)\\
&\leq 1.01 r_y + 2\delta^2 D_{\tau^4 R}(x,y)
\end{aligned}
\end{equation}
hence, if in addition $0<\delta\leq \delta(\tau)$,
\begin{equation*}
r_x\leq 1.01 r_y + 4\delta^2  \tau^{-4} D_R(x,y) \leq 1.01 r_y + \delta D_R(x,y),
\end{equation*}
which proves property (n5) in this case. 

\item[\underline{Case 2:}] Suppose that $x \in \tilde B_{\tau^4 R}(\tilde x,\frac{1}{4} r_{\tilde x})$, so that $r_x = \frac{\delta^2}{4} \tilde r_{\tilde x}$. Recall that $\tilde x\in \tilde C$ with this property is unique. If $y\in \tilde B_{\tau^4 R}(\tilde x,\frac{1}{4} \tilde r_{\tilde x})$, then $r_y =\frac{\delta^2}{4} \tilde r_{\tilde x} = r_x$ and there is nothing to prove. Similarly, if $y\not \in \tilde B_{\tau^4 R}(\tilde x,\frac{1}{4} \tilde r_{\tilde x})$ but $D_{\tau^4 R}(y,\tilde C) = D_{\tau^4 R}(y,\tilde x) \geq \frac{1}{4} \tilde r_{\tilde x}$ then by definition $$r_y \geq \delta^2 D_{\tau^4 R}(y,\tilde C) \geq \frac{\delta^2}{4} \tilde r_{\tilde x}=r_x,$$
which again proves proves (n5) in this case. So, suppose that $y\not \in \tilde B_{\tau^4 R}(\tilde x,\frac{1}{4} \tilde r_{\tilde x})$ and that $D_{\tau^4 R}(y,\tilde C)< D_{\tau^4 R}(y,\tilde x)$. Then, for any $z\in \tilde C$, $z\not = \tilde x$, by construction $\tilde B_{\tau^4 R}(\tilde x,\tilde r_{\tilde x})\cap \tilde B_{\tau^4 R}(z,\tilde r_z) =\emptyset$ and the triangle inequality of Proposition \ref{prop:D_metric}, we obtain
\begin{equation*}
r_x =\frac{\delta^2}{4} \tilde r_{\tilde x} \leq \frac{\delta^2}{4} D_{\tau^4 R}(z,\tilde x)\leq \frac{\delta^2}{3} (D_{\tau^4 R}(\tilde x,y) + D_{\tau^4 R}(y,z)),
\end{equation*}
if $R\geq R(\tau,K)$. Thus, taking the minimum with respect to $z\in \tilde C$, $z\not = \tilde x$, we obtain
\begin{equation*}
r_x \leq \frac{\delta^2}{3} D_{\tau^4 R}(\tilde x,y) + \frac{\delta^2}{3} D_{\tau^4 R}(y,\tilde C) \leq \frac{\delta^2}{3} D_{\tau^4 R}(\tilde x,y) + \frac{1}{3} r_y,
\end{equation*}
by the definition of $r_y$. Applying the triangle inequality of Proposition \ref{prop:D_metric} once more we have, for $R\geq R(\tau,K)$,
\begin{align*}
r_x &\leq \frac{\delta^2}{2} D_{\tau^4 R}(\tilde x, x) + \frac{\delta^2}{2}D_{\tau^4 R}(x,y) + \frac{1}{3} r_y\\
&\leq \frac{\delta^2}{8} \tilde r_{\tilde x} + \frac{1}{3} r_y +\frac{\delta^2}{2}D_{\tau^4 R}(x,y)\\
&=\frac{1}{2} r_x + \frac{1}{3} r_y + \frac{\delta^2}{2}D_{\tau^4 R}(x,y).
\end{align*}
Therefore,
\begin{equation*}
\frac{1}{2} r_x \leq \frac{1}{3} r_y + \frac{\delta^2}{2}D_{\tau^4 R}(x,y),
\end{equation*}
which, by Proposition \ref{prop:D_metric}, gives
\begin{equation*}
r_x \leq \frac{2}{3} r_y + \frac{\delta^2}{2} D_{\tau^4 R}(x,y) \leq 1.01 r_y + \delta^2 \tau^{-4} D_R(x,y)\leq 1.01 r_y + \delta D_R(x,y),
\end{equation*}
for $R\geq R(\tau,K)$ and $0<\delta\leq \delta(\tau)$.
\end{enumerate}

To prove properties (n2) and (n3) of Definition \ref{def:neck_region}, let $x\in \widehat C\cap \tilde B_{2R}(p,1)$. We separate two cases.
\begin{itemize}
\item[\underline{Case 1:}] Suppose that $x \in \tilde B_{\tau^4 R}(\tilde x,\frac{1}{4} \tilde r_{\tilde x})$, so that $r_x = \frac{1}{4}\delta^2 \tilde r_{\tilde x}$, and take any $r\in [\gamma^{-1}\tilde r_{\tilde x},\xi^{-1}]$.
Since we may assume that $R\geq R(K)$, we know by Lemma \ref{lemma:balls_inclusion} that 
\begin{equation}\label{eqn:x_tilde_x_c1}
x\in \tilde B_{\tau^4 R}(\tilde x, \frac{1}{4}\tilde r_{\tilde x}) \subset \tilde B_{2R}(\tilde x, r).
\end{equation}
We also know that there is a $q\in \tilde B_{2R}(\hat x,\gamma^{-1}\tilde r_{\tilde x}) \subset \tilde B_{2R}(\hat x,r)$, that satisfies the properties of Claim 2. By \eqref{eqn:tilde_hat_x}, \eqref{eqn:x_tilde_x_c1} and the triangle inequality at time $t=-r^2$ we see that $q \in \tilde B_{6R}(x,r)$.
Choosing $0<\xi\leq \xi(R)$ small enough we obtain that $x\in \tilde B_{\xi^{-1}}(q,r)$, thus, by Claim 2,  $(M,g(t),x)_{t\in (-2\xi^{-1},0)}$ is $\xi$-close to the shrinking cylinder, at all scales $r\in [\gamma^{-1} \tilde r_{\tilde x},\xi^{-1}]$,  and 
$$|\mathcal W_x (\tau)-\mu_{\mathbb S^2\times \mathbb R}|<\xi$$ 
for any $\tau \in ((\gamma^{-1} \tilde r_{\tilde x})^2\xi,\xi^{-1})$.
Therefore, if $0<\xi\leq \xi(\delta)$ and $\eta'$ is smaller than a universal constant (depending on the geometry of the shrinking cylinder Ricci flow), $(M,g(t),x)_{(-2\delta^{-3},0)}$ is $(1,\delta^2)$-selfsimilar but not $(2,\eta')$-selfsimilar at any scale $r\in [r_x,\delta^{-1}]$ and $\mathcal W_x(\delta r_x^2) - \mathcal W_x(\delta^{-1}) < \delta$, since we may assume that $r_x =\frac{1}{4}\delta^2 \tilde r_{\tilde x}\geq \xi^2 \gamma^{-1}\tilde r_{\tilde x}$, by choosing $\xi$ small enough.

\item[\underline{Case 2:}]

Now, if $r_x=\delta^2 D_{\tau^4 R}(x,\tilde C)=\delta^2 D_{\tau^4 R}(x,\tilde x)=\delta^2 \bar r_x\geq \frac{\delta^2}{4} \tilde r_{\tilde x}$, by \eqref{eqn:tilde_hat_x} and the triangle inequality of Proposition \ref{prop:D_metric} we obtain, for $\sigma_{2R}=\frac{2R}{2R-K}$,
\begin{align*}
D_{2R} (x,\hat x) &\leq \sigma_{2R} D_{\tau^4 R}(x,\tilde x) + \sigma_{2R} D_{2R}(\tilde x,\hat x) \\
&\leq \sigma_{2R} \bar r_x +2\sigma_{2R} \tilde r_{\tilde x} \\
&\leq 10 \bar r_x,
\end{align*}
if $R\geq R(K)$. It follows that $x\in \tilde B_{2R}(\hat x,10\bar r_x) \subset \tilde B_{2R}(\hat x,r)$, for every $r\geq 10\bar r_x$.

Moreover, since $x\in S$ we know that for some $z\in \tilde C$, $x\in \tilde B_{\tau^2 R}(z,\tilde r_{\tilde z})\subset \tilde B_{\tau^2 R}(z,1)$, since $\tilde r_{\tilde z}\leq 1$ by construction, and also that $r_z= \frac{\delta^2}{4} \tilde r_{\tilde z} \leq \frac{\delta^2}{4}$. Then, by \eqref{eqn:r_lip_S} and Proposition \ref{prop:D_metric} we obtain
\begin{align*}
r_x = \delta^2 \bar r_x &\leq 1.01 r_z + \delta D_{\tau^2 R} (x,z)\leq \frac{\delta^2}{2} + \delta \leq 2\delta.
\end{align*}
which gives that $10\bar r_x \leq 20\delta^{-1}$.

Let $q\in \tilde B_{2R} (\hat x,\gamma^{-1} \tilde r_{\tilde x})\subset \tilde B_{2R}(\hat x,r)$ be as in Claim 2. Proposition \ref{prop:D_metric} then gives
$$D_{4R}(x,q) \leq \max(D_{2R}(q,\hat x), D_{2R}(\hat x,x)) \leq \max(r,10\bar r_x),$$
so $q\in \tilde B_{4R}(x,r)$ for any $r\geq 20\bar r_x $, and thus $x\in \tilde B_{\xi^{-1}}(q,r)$ if $\xi\leq \xi(R)$.
 
 As in Case 1, since  $\gamma^{-1}\tilde r_{\tilde x} \leq 20\bar r_x \leq 40 \delta^{-1}$ and $r_x =\delta^2 \bar r_x$, it follows that  $(M,g(t),x)_{t\in (-2\delta^{-3},0)}$ is $(1,\delta^2)$-seflsimilar but not $(2,\eta')$-seflsimilar and $\mathcal W_x(\delta r_x^2)-\mathcal W_x(\delta^{-1})<\delta$, at any scale $r\in [r_x, \delta^{-1}]$.

\end{itemize}

Finally, we prove property (n4) of Definition \ref{def:neck_region}. Take any $x\in C$ and $r\in [r_x,1]$ such that $\tilde B_{2R}(x,r)\subset \tilde B_{2R}(p,1)$.

Since $\tilde B_{2R}(p,1)\subset S$, by the claim, we know that by the construction of the set $\widehat C$ that
\begin{equation}\label{eqn:n4_inc}
\tilde B_R(x,r) \subset \bigcup_{y\in \widehat C: \tilde B_{\tau^2 R}(y,10 r_y)\cap \tilde B_R(x,r)\not = \emptyset} \tilde B_{\tau^2 R}(y,10 r_y).
\end{equation}

We will first show that
\begin{equation}\label{eqn:C_intersect_inclusion}
\{y\in \widehat C: \tilde B_{\tau^2 R}(y,10r_y) \cap \tilde B_R(x,1) \not=\emptyset\} \subset \widehat C\cap \tilde B_{\frac{3R}{2}} (x,1) \subset C.
\end{equation}
and that for every $y\in \widehat C$ such that $\tilde B_{\tau^2 R}(y,10r_y) \cap \tilde B_R(x,r)\not= \emptyset$.
\begin{equation}\label{eqn:ry_less_4r}
r_y\leq 4r.
\end{equation}
Take any $w\in \tilde B_{\tau^2 R}(y,10r_y) \cap \tilde B_R(x,1)\subset \tilde B_{\tau^2 R}(y,1) \cap \tilde B_R(x,1)$. The latter inclusion holds by Proposition \ref{lemma:balls_inclusion}, if $R\geq K$, since we can assume that $10r_y\leq 1$ by choosing $0<\delta\leq \delta(\tau)$ small enough, by \eqref{eqn:r_x_small}. Therefore, the triangle inequality at time $t=-1$ gives
$$d_{g(-1)}(x,y) \leq d_{g(-1)}(x,w) + d_{g(-1)}(w,y) \leq R + \tau^2 R \leq \frac{3R}{2},$$
which proves \eqref{eqn:C_intersect_inclusion}.

To prove \eqref{eqn:ry_less_4r}, take any $y\in \widehat C$ such that there is a $z\in\tilde B_{\tau^2 R}(y,10 r_y)\cap \tilde B_R(x,r)$. The triangle inequality of Proposition \ref{prop:D_metric} gives
\begin{align*}
D_R(x,y) & \leq \sigma_R D_R(x,z) + \sigma_R D_R(z,y),\\
&\leq \sigma_R r + \sigma_R D_{\tau^2 R}(z,y),\\
&\leq \sigma_R r + 10 \sigma_R r_y,
\end{align*}
where $\sigma_R=\frac{R}{R-K}$.

Property (n5) then gives
\begin{align*}
r_y &\leq 1.01 r_x + \delta D_R(x,y)\\
&\leq 1.01 r_x +\delta \sigma_R r +10\delta \sigma_R r_y\\
&\leq 2r + \frac{1}{2}r_y,
\end{align*}
if $R\geq R(K)$ and $0<\delta\leq \frac{1}{20}$, which proves \eqref{eqn:ry_less_4r}.

Using \eqref{eqn:C_intersect_inclusion} and then \eqref{eqn:ry_less_4r}, inclusion \eqref{eqn:n4_inc} becomes
\begin{equation}\label{eqn:cover_BRxr}
\tilde B_R(x,r) \subset \bigcup_{y\in C\cap \tilde B_{\frac{3R}{2}}(p,1): \tilde B_{\tau^2 R}(y,10 r_y)\cap \tilde B_R(x,r)\not = \emptyset} \tilde B_{\tau^2 R}(y,10 r_y) \subset \bigcup_{y\in C} \tilde B_{\tau^2 R}(y, 40r).
\end{equation}

Now, observe that on the shrinking cylinder Ricci flow, for any $r>0$ and $\tau\leq 40^{-4}$
$$\tilde B_{\tau^{3/2} R}(x,40r)\subset \tilde B_{\tau^{5/4} R}(x,40\tau^{1/4} r)\subset \tilde B_{\tau^{5/4} R} (x,r).$$

Since for any $y\in C$, $(M,g(t),y)_{[-Q^2,0]}$ is $\delta$-close to the shrinking cylinder at scale $40r$ (provided that $\delta^{-1}>40$), if follows that if $\delta$ is small enough
\begin{equation}\label{eqn:inherited_inclusion}
\tilde B_{\tau^2 R}(y, 40r) \subset \tilde B_{\tau R}(x,r),
\end{equation}
since $\tau^2<\tau^{3/2}$ and $\tau^{5/4}<\tau$.

Therefore, \eqref{eqn:cover_BRxr} gives
\begin{equation*}
\tilde B_R(x,r)\subset \bigcup_{y\in C} \tilde B_{\tau R}(y,r)= \tilde B_{\tau R}(C,r),
\end{equation*}
which proves (n4).

\noindent\textbf{Covering \eqref{eqn:neck_covering} and curvature radius estimate in each $\tilde B_R(x,r_x)$.} 
By \eqref{eqn:C_intersect_inclusion} and inclusion \eqref{eqn:cover_BRxr}, for $r=1$ we obtain that

\begin{equation}\label{eqn:BRp1_covering_almost}
\tilde B_R(p,1) \subset \bigcup_{y\in C\cap \tilde B_{\frac{3R}{2}}(p,1)} \tilde B_{\tau^2 R}(y, 10 r_y).
\end{equation}
Arguing as in the proof of \eqref{eqn:inherited_inclusion}, we obtain that $\tilde B_{\tau^2 R}(y,10 r_y) \subset \tilde B_R(y,r_y)$, for any $y\in C$. This, combined with \eqref{eqn:BRp1_covering_almost}, gives \eqref{eqn:neck_covering}.

It remains to establish the lower bound on the curvature scale in each $\tilde B_R(x,r_x)$, $x\in C$. Take any $x\in C$. Since $C\subset S = \bigcup_{z \in \tilde C} \tilde B_{\tau^2 R} (z, \tilde r_{z})$, we know that $x\in \tilde B_{\tau^2 R} ( z,\tilde r_z)$ for some $z\in \tilde C$. Moreover, by the defining property of $\tilde C$, we know that
\begin{equation}\label{eqn:curv_radius_z}
r_{\riem}\geq Q^{-2} \tilde r_z
\end{equation}
in $\tilde B_{2R}(z,\tilde r_z)$. Thus, it suffices to prove that $\tilde B_R(x,r_x) \subset \tilde B_{2R}(z,\tilde r_z)$ and that $r_x\leq \tilde r_z$.

We will first prove that $r_x\leq \tilde r_z$. Since $z\in \tilde C$ we know that $r_z = \frac{\delta^2}{4} \tilde r_z$, so \eqref{eqn:r_lip_S} gives
$$r_x \leq 1.01 r_z + \delta D_R(x,z)\leq 1.01 \frac{\delta^2}{4} \tilde r_z + \delta \tilde r_z \leq 2 \delta \tilde r_z \leq \tilde r_z$$
It follows that for any $y\in \tilde B_R(x,r_x) \subset \tilde B_R(x, \tilde r_z)$,  we have
$d_{g(-\tilde r_z^2)} (x,y) < R\tilde r_z$. Then,  the triangle inequality at time $t=-\tilde r_z^2$ gives
\begin{equation*}
d_{g(-\tilde r_z^2)} (y,z) \leq d_{g(-\tilde r_z^2)}(x,y) + d_{g(-\tilde r_z^2)}(x,z)\leq R \tilde r_z + \tau^2 R \tilde r_z\leq 2R \tilde r_z,
\end{equation*}
thus $\tilde B_R(x,r_x) \subset \tilde B_{2R}(z,\tilde r_z)$.

Therefore, \eqref{eqn:curv_radius_z} gives that $r_{\riem} \geq Q^{-2} \tilde r_z \geq Q^{-2} r_x$
in $\tilde B_R(x,r_x)$, which proves the result.

\noindent \textbf{Proof of the content estimate \eqref{eqn:neck_content}.}
By the neck-structure Theorem \ref{thm:neck_structure}, if $R\geq R(C_I,\Lambda,H, K,\eta'|\tau) $ and $0<\delta\leq \delta(C_I,\Lambda,H,K,\eta'|R,\tau)$ we know that for every $x\in C$ and $r_x\leq r$ with $\tilde B_{2R}(x,r)\subset \tilde B_{2R}(p,1)$,
\begin{equation}\label{eqn:L_apriori}
L^{-1} Rr \leq \mu(\tilde B_R(x,r)) \leq L Rr
\end{equation}
for some constant $L=L(\tau)<+\infty$.

Moreover, by \eqref{eqn:r_x_small}, we can assume that $r_x\leq 2^{-5}$, for any $x\in C$, by making $0<\delta\leq \delta(\tau)$. We also know, from Claim 2, that $\mathcal W_p(\delta)-\mathcal W_p(\delta^{-1})<\delta$. Thus, by Remark \ref{rmk:instead_p_C}, we can apply Lemma \ref{lemma:apriori_Ahlfors_larger_balls}, under \eqref{eqn:L_apriori},
 to obtain that
 $$\sum_{x\in C\cap\tilde B_{\frac{3R}{2}}(p,1)} R r_x=\mu(\tilde B_{\frac{3R}{2}} (p,1) \leq \hat B R,$$
 for some $\hat B=\hat B(L)$.
 
 This suffices to prove \eqref{eqn:neck_content} when $R\geq R(C_I,\Lambda,H,K|\tau)$ and $Q\geq Q(C_I,\Lambda,H,K|R,\tau)$.
 
\begin{theorem}[Neck decomposition]\label{thm:neck_decomp}
Let $(M,g(t))_{t\in [-Q^2,0]}$ be a smooth compact simply connected Ricci flow satisfying (RF1-4). If $R\geq R(C_I,\Lambda,H,K)$ and $Q\geq Q(C_I,\Lambda,H,K|R)$ then exactly one of the following holds:
\begin{enumerate}
\item There is a $q\in M$ and $0<r\leq 1$ such that $M=\tilde B_R(q,r)$ and $r_{\riem}\geq \frac{1}{2} Q^{-2} r$ on $M$.
\item There is a cover
$$M= \bigcup_{c=1}^{N_c} \tilde B_{R}(y_c,1) \cup \bigcup_{b=1}^{N_b} \tilde B_{R}(z_b,1)$$
such that 
\begin{itemize}
\item $N_b+N_c\leq C(n,C_I,\Lambda,\vol_{g(-1)}(M))$.
\item For every $z_b=1,\ldots,N_b$, $r_{\riem} \geq \frac{1}{2}Q^{-2}$ in $\tilde B_{R}(z_b,1)$.
\item For every $y_c=1,\ldots,N_c$, $\inf Q^2 r_{\riem} <\frac{1}{2}$ and there is a cover
$$\tilde B_R(y_c,1) \subset \bigcup_{x\in C_c}\tilde B_R(x,r_x)$$
such that $r_{\riem} \geq Q^{-2} r_x$ in each ball $\tilde B_R(x,r_x)$ and 
\begin{equation*}
\sum_{x\in C_c} r_x \leq C,
\end{equation*}
for some universal constant $C<+\infty$.
\end{itemize}
\end{enumerate}
\end{theorem}
\begin{proof}
Let $\tilde B_{\frac{R}{2}}(x_i,1)$, $i=1,\ldots, N$, $x_i\in M$, be a maximal collection of mutually disjoint balls, so that  
$$M= \bigcup_{i=1}^N \tilde B_R(x_i,1).$$
By Remark \ref{rmk:nc}, we know that
\begin{align*}
N \kappa_{nc} \leq \sum_{i=1}^N \vol_{g(-1)}(\tilde B_R(x_i,1)) \leq \vol_{g(-1)}(M),
\end{align*}
thus $N\leq C(\kappa_{nc},\vol_{g(-1)}(M))=C(n,C_I,\Lambda,\vol_{g(-1)}(M)$, since $\kappa_{nc}$ depends only on $n,C_I$ and $\Lambda$.

Suppose that $N=1$, so that $M=\tilde B_R(x_1,1)$. If $r_{\riem} \geq \frac{1}{2} Q^{-2}$ on $M$, then there is nothing to prove. If on the other hand $\inf_M Q^2 r_{\riem} <\frac{1}{2}$, Case 1 of Lemma \ref{lemma:deep_cyl} at scale $1$, applies, since otherwise $M\not = \tilde B_R(x_1,1)$.

Therefore, for some $q\in M$, $(M,g(t),q)_{t\in [-Q^2,0]}$ is $\xi$-close to the shrinking sphere, at scale $r=Q^2 r_{\riem}(q)<1$,  such that
$$r_{\riem}(q) \leq 2 \inf_M r_{\riem} < Q^{-2}.$$
This implies that if $R$ is large enough and $\xi$ is small enough, then $M=\tilde B_R(q,r)$ and $r_{\riem}\geq \frac{1}{2} Q^{-2} r$ on $M$.

Suppose that $N>1$, so $M\not=\tilde B_{x_i}(p,1)$ for any $i=1,\ldots,N$. We can then separate the set $\{x_i\}_{i=1}^N$ into two subsets $\{y_c\}_{c=1}^{N_c}$ and $\{z_b\}_{b=1}^{N_b}$ so that
\begin{itemize}
\item For every $b$, $r_{\riem} \geq \frac{1}{2}Q^{-2}$ in $\tilde B_{2R}(z_b,1)$.
\item For every $c$, $\inf_{\tilde B_{2R}(y_c,1)} Q^2 r_{\riem}<\frac{1}{2}$.
\end{itemize}
The result then follows from Lemma \ref{lemma:3d_neck_region}, setting $\tau=40^{-4}$, provided that $R$ and $Q$ are chosen large enough.
\end{proof}

\begin{corollary}\label{cor:L1_bound_Q}
Let $(M,g(t))_{t\in [-Q^2,0]}$ be a smooth compact simply connected Ricci flow satisfying (RF1-4). Suppose also that for every $x \in M$, $0<r\leq 1$ and $R>0$ such that $1+R^2\leq Q^2$ we have that
\begin{equation}\label{eqn:non_inflating}
\vol_{g(t)}(\tilde B_R (x,r))\leq \kappa_{\textrm{infl}} (R r)^3,
\end{equation}
for some constant $\kappa_{\textrm{infl}}>0$.

 If $Q = Q(C_I,\Lambda,H,K)$, then there exists a constant $C=C(C_I,\Lambda,H,K,\kappa_{\textrm{infl}},\vol_{g(-1)}(M))<+\infty$ such that
\begin{equation*}
\int_M |\riem|(\cdot,0) d\vol_{g(0)}\leq \int_M r_{\riem}^{-2} d\vol_{g(0)} \leq C.
\end{equation*}
\end{corollary}
\begin{proof}
Apply Theorem \ref{thm:neck_decomp} to obtain $R=R(C_I,\Lambda,H,K)$ and $Q =Q(C_I,\Lambda,H,K)$, such that $4\leq 1+R^2\leq Q^2$ and the conclusion of Theorem \ref{thm:neck_decomp} holds. By \eqref{eqn:non_inflating} we also have
\begin{equation}
\vol_{g(-1)}(\tilde B_R(x,r))\leq \kappa_{\textrm{infl}} (Rr)^3,
\end{equation}
for any $0<r\leq 1$.

By the evolution equation for the scalar curvature and the maximum principle we obtain that for any $t\in [-1,0]$
$$\scal_{g(t)} \geq -\frac{3}{4}, $$
hence, for any $x\in M$ and $0<r\leq 1$
$$\frac{d}{dt} \vol_{g(t)}(\tilde B_R(x,r))\leq \frac{3}{4} \vol_{g(t)}(\tilde B_R(x,r)).$$
This implies that 
\begin{equation}\label{eqn:later_ni}
\vol_{g(0)}(\tilde B_R(x,r))\leq e^{\frac{3r^2}{4}}\vol_{g(-r^2)}(\tilde B_R(x,r))\leq e^{\frac{3}{4}} \kappa_{\textrm{infl}} (R r)^3.
\end{equation}

Now, if Case 1 of Theorem \ref{thm:neck_decomp}  applies, we know that $M=\tilde B_R(p,r)$ for some $r\in (0,1]$ and $r_{\riem}\geq \frac{1}{2}Q^{-2} r$ on $M$. Using \eqref{eqn:later_ni}, it follows that
\begin{align*}
\int_M |\riem|(\cdot,0) d\vol_{g(0)} & \leq \int_M r_{\riem}^{-2}(x) d\vol_{g(0)} (x)\\
&\leq 4 Q^4 \int_{\tilde B_R(p,r)} r^{-2} d\vol_{g(0)} \\
&\leq 4 Q^4 r^{-2} \vol_{g(0)} (\tilde B_R(p,r)) \\
&\leq 4Q^4 e^{\frac{3}{4}} \kappa_{\textrm{infl}} R^3. 
\end{align*}
On the other hand, if Case 2 of Theorem \ref{thm:neck_decomp} applies, we obtain, using \eqref{eqn:later_ni},
\begin{align*}
&\int_M |\riem|(\cdot,0) d\vol_{g(0)} \leq \int_M r_{\riem}^{-2} d \vol_{g(0)}\\
&\leq \sum_{c=1}^{N_c} \int_{\tilde B_R(y_c,1)} r_{\riem}^{-2}d\vol_{g(0)} + \sum_{b=1}^{N_b} \int_{\tilde B_R(z_b,1)} r_{\riem}^{-2} d\vol_{g(0)}\\
&\leq \sum_{c=1}^{N_c} \sum_{x\in C_c} \int_{\tilde B_R(x,r_x)} r_{\riem}^{-2} d\vol_{g(0)} + 4Q^4 \sum_{b=1}^{N_b} \vol_{g(0)} (\tilde B_R(z_b,1)),\\
&\leq\sum_{c=1}^{N_c} \sum_{x\in C_c} \int_{\tilde B_R(x,r_x)} r_{\riem}^{-2}(y) d\vol_{g(0)}(y) + 4Q^4 \sum_{b=1}^{N_b} \vol_{g(0)} (\tilde B_R(z_b,1)),\\
&\leq\sum_{c=1}^{N_c} \sum_{x\in C_c}  Q^4 r_x^{-2} \vol_{g(0)} (\tilde B_R(x,r_x)) + 4Q^4 \sum_{b=1}^{N_b} \vol_{g(0)} (\tilde B_R(z_b,1)),\\
&\leq Q^4 e^{\frac{3}{4}} \kappa_{\textrm{infl}} \sum_{c=1}^{N_c} \sum_{x\in C_c} r_x+ N_b e^{\frac{3}{4}} \kappa_{\textrm{infl}} R^3,\\
&\leq Q^4 e^{\frac{3}{4}} \kappa_{\textrm{infl}} N_c C R^3+ N_b 4Q^4 e^{\frac{3}{4}}\kappa_{\textrm{infl}} R^3,\\
&\leq C(C_I,\Lambda,H,K,\kappa_{\textrm{infl}},\vol_{g(-1)}(M)) R^3.
\end{align*}
\end{proof}

\begin{theorem}\label{thm:bounded_diameter}
Let $(M,g(t))_{t\in [0,T]}$ be a smooth Ricci flow on a compact 3d manifold and suppose that 
\begin{equation}\label{eqn:type_I_T}
|\riem|\leq \frac{C_I}{T-t}
\end{equation}
on $M\times [0,T)$. Then there is a $C=C(C_I,g(0),T)<+\infty$ such that $$\int_M r_{\riem}^{-2} (\cdot,T) d\vol_{g(0)}+\diam_{g(T)}(M)\leq C.$$
\end{theorem}
\begin{proof}
Passing to the universal cover, we may assume that $M$ is simply connected.

Given any $Q<+\infty$ we can choose a scale $\rho>0$ such that $Q^2  \rho^2=T$. The rescaled flow $\tilde g(t)=\rho^{-2} g(T+\rho^2 t)$ is then defined for all $t\in [-T \rho^{-2} ,0] = [-Q^2,0]$. It is clear that 
$$|\riem_{\tilde g}|\leq \frac{C_I}{|t|},$$
for every $t\in [-Q^2,0]$, hence there is $K=K(C_I)<+\infty$ such that $\tilde g$ satisfies the distance distortion estimate (RF4), by Remark \ref{rmk:RF1_RF4}.

Let $\Lambda<+\infty$ be a large positive constant such that $\nu(g(0),T)\geq -\Lambda$. By the scaling behaviour and the monotonicity of $\nu$ we then know that for any $t\in [-Q^2,0]$
\begin{align*}
\nu(\tilde g(t),|t|)&=\nu(\rho^{-2}g(T+\rho^2 t),|t|),\\
&=\nu(g(T+\rho^2 t),\rho^2|t|),\\
&\geq \nu(g(0),T) \geq -\Lambda,
\end{align*}
by the scaling behaviour and the monotonicity of $\nu$.

By Propositions 2.7 and 2.8 in \cite{MM15}, there is a  constant $H=H(g(0))<+\infty$ such that $\tilde g$ satisfies (RF3).

Moreover, there is positive constant $C_{\textrm{scal}}=C_{\textrm{scal}}(C_I,T)<+\infty$ such that $\scal_{g(0)}\geq - C_{\textrm{scal}}$. By the evolution equation of the scalar curvature under Ricci flow we then know that $\scal_{g(t)}\geq -C_{\textrm{scal}}$ for every $t\in [0,T]$ which implies that
$$\frac{d}{dt} \vol_{g(t)}(M) \leq C_{\textrm{scal}} \vol_{g(t)}(M),$$
so $\vol_{g(t)}(M) \leq e^{C_{\textrm{scal}} T} \vol_{g(0)}(M)$.

Therefore, we also know that
$$\vol_{\tilde g(-1)} (M) = \rho^{-n} \vol_{g(T-\rho^2)}(M)\leq e^{C_{\textrm{scal}} T}  T^{-n/2} Q^n \vol_{g(0)}(M).$$

Moreover, since $(M,g(t))_{t\in [0,T]}$  satisfies \eqref{eqn:type_I_T}, there is $a(n,C_I)<+\infty$ such that for every $t_0\in [0,T]$
$$\scal_{g(t)} \leq \frac{a}{t_0-t}$$
in $B(x_0,t_0,s)\times [t_0- s^2, t_0]$, for every $x_0\in M$ and $0<s\leq \sqrt{|t_0|}$.

For any $0<r\leq 1$ and $x\in M$
\begin{equation}\label{eqn:vol_g_to_tilde_g}
\vol_{\tilde g(-r^2)}(\tilde B_R(x,r)) =\rho^{-3}\vol_{g(T-\rho^2 r^2)} (B(x,T-\rho^2r^2, Rr\rho).
\end{equation}
Now, as long as $Q^2> 1+R^2$, we know that
$$T-\rho^2 r^2 - (Rr\rho)^2\geq T-(1+R^2) \rho^2 r^2=Q^2\rho^2 - (1+R^2)\rho^2=\left(Q^2-(1+R^2)\right) \rho^2>0,$$
hence Theorem 1.1 in \cite{Zhang12} gives that there is a positive constant $\kappa_{\textrm{infl}} =\kappa_{\textrm{infl}}(g(0),C_I,T)$ such that
$$\vol_{g(T-\rho^2 r^2)} (B(x,T-\rho^2r^2, Rr\rho)\leq \kappa_{\textrm{infl}} (Rr\rho)^3.$$
Applying this estimate in \eqref{eqn:vol_g_to_tilde_g} we obtain
\begin{equation*}
\vol_{\tilde g(-r^2)}(\tilde B_R(x,r)) \leq \kappa_{\textrm{infl}} (R r)^3,
\end{equation*}
for every $x\in M$ and $0<r\leq 1$.

We have thus shown that $(M,\tilde g(t))_{t\in [-Q^2,0]}$ satisfies the assumptions of  Corollary \ref{cor:L1_bound_Q}, namely the a priori assumptions (RF1-4) and the non-inflating estimate \eqref{eqn:non_inflating} hold. Therefore, if $Q=Q(C_I,\Lambda,H,K)$ is as in Corollary \ref{cor:L1_bound_Q}, there exists a constant $C=C(C_I,g(0),T)<+\infty$ such that
$$\int_M r_{\riem,\tilde g}^{-2} d\vol_{\tilde g(0)}\leq C.$$
Thus, since $g(T)= \rho^2 \tilde g(0)$ 
\begin{align*}
\int_M |\riem|(\cdot,T) d\vol_{g(T)}\leq \int_M r_{\riem}^{-2}(\cdot,T) d\vol_{g(0)}= \int_M \rho^{-2} r_{\riem,\tilde g}^{-2} \rho^3 d\vol_{\tilde g(0)}\leq C \rho =C T^{1/2} Q^{-1}.
\end{align*}
The result follows by applying Theorem 2.4 in \cite{Top05}.

\end{proof}

\begin{proof}[Proof of Theorem \ref{thm:intro_bounded_diameter}]
Given any $(M,g(t))_{t\in [0,T)}$ as in the statement of Theorem \ref{thm:intro_bounded_diameter}, and every $\bar t\in [T/2,T)$, set $g'(t)= g(t+\bar t-\frac{T}{2})$, $t\in [0,T/2]$. Then $(M,g'(t))_{t\in [0,T/2]}$ satisfies the assumptions of Theorem \ref{thm:bounded_diameter}, which implies the result for $t=\bar t$. On the other hand, the result for any $t\in [0,T/2)$ follows trivially from the Type I bound on the curvature and standard distance distortion estimates.
\end{proof}

\end{proof}

\providecommand{\bysame}{\leavevmode\hbox to3em{\hrulefill}\thinspace}
\providecommand{\MR}{\relax\ifhmode\unskip\space\fi MR }
\providecommand{\MRhref}[2]{%
  \href{http://www.ams.org/mathscinet-getitem?mr=#1}{#2}
}
\providecommand{\href}[2]{#2}


\end{document}